\DeclareFontFamily{U}{rsfs}{\skewchar\font127 }
\DeclareFontShape{U}{rsfs}{m}{n}{%
   <-6> rsfs5
   <6-8> rsfs7
   <8-> rsfs10
}{}
\newcommand\subline[2]{\stackon[-1.5pt]{#1}{\rule[2pt]{\widthof{$#1$}}{.4pt}_{#2}}}
\newcommand*{\be}[1]{\begin{equation}\label{#1}}
\newcommand*{\ee}{\end{equation}}
\DeclareMathOperator{\Span}{span}
\newtheorem{theorem}{Theorem}[section]
\newtheorem{lemma}{Lemma}[section]
\newtheorem{proposition}{Proposition}[section]
\theoremstyle{remark}
\newtheorem{remark}{Remark}[section]
\definecolor{pink}{RGB}{255,45,115}
\DeclareMathOperator{\grad}{grad}
\DeclareMathOperator{\hess}{hess}
\DeclareMathOperator{\curl}{curl}
\DeclareMathOperator{\inc}{inc}
\DeclareMathOperator{\dev}{dev}
\DeclareMathOperator{\sym}{sym}
\DeclareMathOperator{\diverenge}{div}
\DeclareMathOperator{\tr}{tr}
\newcommand\vskw{\operatorname{vskw}}
\newcommand\mskw{\operatorname{mskw}}
\newcommand\skw{\operatorname{skw}}
\newcommand\sskw{\operatorname{sskw}}
\newcommand{\bS}{\mathbb S}
\newcommand{\bT}{\mathbb T}
\newcommand\K{\mathbb{K}}
\newcommand\M{\mathbb{M}}
\renewcommand\S{{\mathbb S}}
\newcommand\E{{\mathcal{E}}}
\renewcommand{\div}{\diverenge}
\newcommand{\R}{\mathbb{R}}
\newcommand{\lt}[1]{{[\color{cyan}Linting:~#1}]}
\newcommand{\kh}[1]{{[\color{blue}KH:~#1}]}
\renewcommand\ker{\mathcal{N}}
 \newcommand{\bs}{{\scriptscriptstyle \bullet}}
 \newcommand\ran{\mathcal{R}}
 \newcommand\alt{\mathrm{Alt}}
 \newcommand\deff{\operatorname{def}}
 \numberwithin{equation}{section}
\DeclareMathOperator{\rot}{rot}
\title{Distributional Hessian and divdiv complexes on triangulation and cohomology}
\begin{document}

\author{Kaibo Hu}
\address{School of Mathematics, the University of Edinburgh, James Clerk Maxwell Building, Peter Guthrie Tait Rd, Edinburgh EH9 3FD, UK.}
\email{kaibo.hu@ed.ac.uk}
\author{Ting Lin}
\address{School of Mathematics, Peking University, Beijing 100871, P.R.China}
\email{lintingsms@pku.edu.cn}
\author{Qian Zhang}
\address{Department of Mathematical Sciences, Michigan Technological University, Houghton, MI 49931, USA}
\email{qzhang15@mtu.edu}

\maketitle 

\begin{abstract}
In this paper, we construct discrete versions of some Bernstein-Gelfand-Gelfand (BGG) complexes, i.e., the Hessian and the divdiv complexes, on triangulations in 2D and 3D. The sequences consist of finite elements with local polynomial shape functions and various types of Dirac measure on subsimplices. The construction generalizes Whitney forms (canonical conforming finite elements) for the de~Rham complex and Regge calculus/finite elements for the elasticity (Riemannian deformation) complex from discrete topological and Discrete Exterior Calculus perspectives. We show that the cohomology of the resulting complexes is isomorphic to the continuous versions, and thus isomorphic to the de~Rham cohomology with coefficients. 
\end{abstract}

\smallskip
\noindent \textbf{Keywords.} Bernstein-Gelfand-Gelfand sequences, cohomology, finite element exterior calculus, discrete exterior calculus, Regge calculus

%\tableofcontents

\section{Introduction}

Preserving cohomological structures is crucial for reliable and efficient numerical solutions of a large class of numerical PDEs \cite{arnold2018finite,arnold2006finite,arnold2010finite}. For problems involving vector-valued functions and differential forms, important structures are encoded in the de~Rham complex. There are canonical discretizations of the de~Rham complex as discrete differential forms in finite element exterior calculus \cite{arnold2018finite,arnold2006finite,arnold2010finite,hiptmair1999canonical}. In the lowest order case, this coincides with the Whitney forms \cite{bossavit1988whitney,hiptmair2001higher}, and the degrees of freedom of $k$-forms are distributed on $k$-simplices, reflecting a discrete topological structure. See Figure \ref{fig:deRham-whitney} for an illustration of the 3D case. 
\begin{figure}[ht!]
    \centering
    \includegraphics[width=0.75\linewidth]{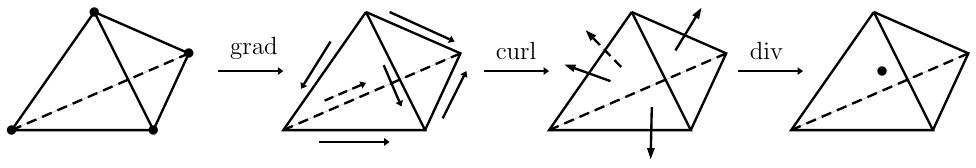}
    \caption{The lowest-order finite element de Rham complex, consisting of Whitney forms (the lowest-order Lagrange, N\'ed\'elec \cite{nedelec1980mixed}, and Raviart-Thomas \cite{raviart2006mixed} elements).}
    \label{fig:deRham-whitney}
\end{figure}
%We formally denote the complex by
%\begin{equation}\label{cplx:3D-Whitney}
%\begin{tikzcd}
%0 \arrow{r} & W^0 \arrow{r}{\grad} & W^1 \arrow{r}{\curl} & W^2\arrow{r}{\div}& W^3\arrow{r}&0. 
%\end{tikzcd}
%\end{equation}
%The finite element de~Rham complexes have been implemented in modern software, e.g., \cite{alnaes2015fenics}. 
Moreover, discretizations of differential forms are of fundamental interest. Finite element de~Rham complexes lead to Discrete Exterior Calculus (DEC) schemes in some cases when the mass matrix is approximated \cite{desbrun2005discrete,hirani2003discrete} and the idea of discrete forms and cochains can be extended to graphs \cite{lim2020hodge}. These finite elements are conforming in the sense that the $k$-th space is a subspace of $H(d^{k}):=\{u\in L^2\Lambda^{k}: d^{k}u\in L^2\Lambda^{k+1}\}$, where $d^{k}$ is the $k$-th exterior derivative.

Motivated by Equilibrated Residual Error Estimators, Braess and Sch\"oberl \cite{braess2008equilibrated} extended the concept of finite elements by permitting distributions as shape functions. Specifically, they proposed a complex that, in 3D, comprises piecewise constants, Dirac delta in the face normal direction, Dirac delta in the edge tangential direction, and vertex deltas, respectively.  
See Figure \ref{fig:deRham-distributional}.
\begin{figure}
    \centering
    \includegraphics[width=0.7\linewidth]{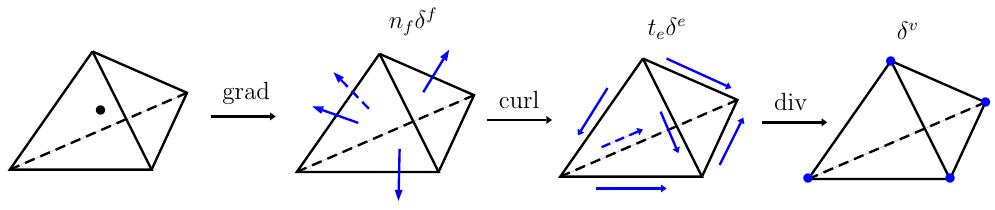}
    \caption{Distributional finite element de~Rham complex in 3D.}
    \label{fig:deRham-distributional}
\end{figure}
The distributional complex (Figure \ref{fig:deRham-distributional}) can be viewed as the dual of the lowest order Whitney forms (Figure \ref{fig:deRham-whitney}).
The construction has been systematically extended to a double complex in \cite{licht2017complexes} and the cohomology is also provided.

There is significant interest in identifying and discretizing differential structures in other problems. A systematic approach is inspired by the Bernstein-Gelfand-Gelfand (BGG) construction \cite{arnold2021complexes,vcap2001bernstein,vcap2022bgg}, where one derives new complexes from existing ones (mostly de~Rham complexes), and the cohomology of the resulting complexes is isomorphic to the input. 

For the three basic examples of the BGG complexes in 3D, i.e., the Hessian, elasticity, and divdiv complexes, conforming finite elements on simplicial meshes in 2D and 3D have been discussed in, e.g., \cite{chen2022finite2D,chen2022finitedivdiv,chen2022finiteelasticity,chen2022complexes,christiansen2023finite,gong2023discrete,hu2021conforming,hu2015family,hu2023nonlinear}. There have also been results in nD \cite{bonizzoni2023discrete,chen2022finite}. These results focus on conforming finite elements and can be viewed as an extension of the study of multivariate (simplicial) splines \cite{lai2007spline} from a homological perspective. Due to the conformity requirement, the results unavoidably involve either high-order polynomials and supersmoothness or special meshes. Discretizations for BGG complexes involving Dirac delta also exist. For the elasticity complex, Christiansen \cite{christiansen2011linearization} interpreted Regge calculus as a finite element fitting in a discrete complex (see Figure \ref{fig:regge}). The metric is in the Regge space, which is a piecewise constant symmetric field with continuous tangential-tangential components. The linearized curvature then consists of Dirac deltas along hinges (edges). The Regge finite element has been extended in \cite{li2018regge} to an arbitrary polynomial degree, and was investigated for discretizing various kinds of curvature tensors \cite{gawlik2023finite,berchenko2022finite,gopalakrishnan2022analysis,gawlik2023finite,gopalakrishnan2023analysis} and solving problems from continuum mechanics and general relativity \cite{li2018regge,neunteufel2021avoiding,neunteufel2023hellan}. For elasticity problems, the TDNNS discretization involving distributional spaces leads to a convergent scheme for the Hellinger-Reissner principle \cite{pechstein2011tangential}. For fluid problems, analogous schemes can be found in \cite{gopalakrishnan2020mass}. For plate problems, spaces in the Hellan-Herrmann-Johnson (HHJ) method can be viewed as a rotation of the 2D Regge element \cite{neunteufel2023hellan,li2018regge}. 
\begin{figure}
    \centering
    \includegraphics[width=0.75\linewidth]{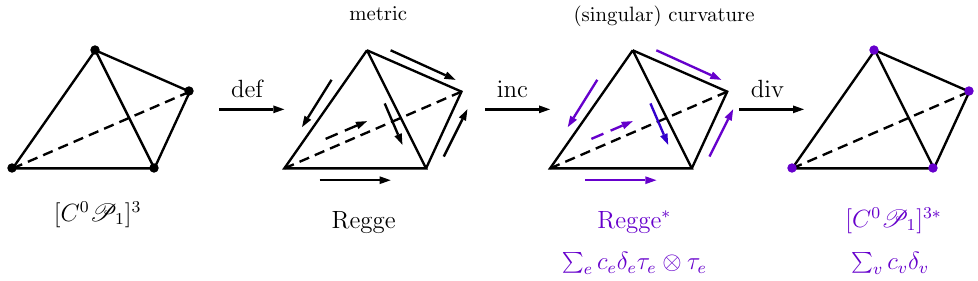}
    \caption{Regge-elasticity complex \cite{christiansen2011linearization}.}
    \label{fig:regge}
\end{figure}

As the Whitney forms both lead to successful finite element schemes and have a neat discrete topological interpretation, it will be natural to ask the question of {\it what are the analogy of the Whitney forms for the BGG complexes}. Answers to this question may extend the discrete topological structures encoded in the de~Rham complexes to other differential structures encoded in BGG, such as discrete Riemannian geometry. The Regge element and complex seem to be a natural candidate for the elasticity complex due to the canonical degrees of freedom, discrete geometric interpretation (in terms of Regge metric) and the duality (the elasticity complex is formally self-adjoint on the continuous level, which is preserved in the Regge complex). 

In this paper, we generalize the Regge complex to the other two BGG complexes in 3D, i.e., the Hessian complex and the divdiv complex, and show that their cohomologies are isomorphic to the continuous versions. This construction is inspired by a Discrete Exterior Calculus observation for the Regge complex. We also obtain the corresponding results in 2D. This thus establishes the cohomology for the Regge complex in 2D.

The rest of the paper will be organized as follows. In \Cref{sec:preliminaries}, we introduce notations of topology, spaces and complexes at the continuous level. In \Cref{sec:main-result}, we summarize the motivation and the main results. The technical details are provided in \Cref{sec:2d} and \Cref{sec:3d}. In Section \ref{sec:conclusion}, we provide concluding remarks and an outlook.

\section{Function spaces and BGG complexes}
\label{sec:preliminaries}
\subsection{Domain and topology}
\label{sec:topology}
Throughout this paper, we assume that $\Omega$ is a bounded Lipschitz domain following the definition of \cite{costabel2010bogovskiui}, i.e., $\Omega$ is a connected bounded open set in $\mathbb{R}^{n}$ which is strong Lipschitz. Let $\Delta$ be a triangulation (simplicial complex) of $\Omega$, and 
we use $\mathsf V, \mathsf E, \mathsf F, \mathsf K$ to denote the set of vertices, edges, faces, and 3D cells, respectively. Note that in 2D, $\mathsf F$ denotes the top (two) dimensional cells.
Their cardinality is denoted as $\sharp_V, \sharp_E, \sharp_F, \sharp_K$, respectively. We use $\mathsf V_{0}$ and $\mathsf V_{\partial}$ to denote the set of internal vertices and boundary vertices, respectively. Their cardinality is denoted as $\sharp_{V_0}$ and $\sharp_{V_\partial}$, respectively. Similar notations are used for edges and faces. 

We refer to standard texts (e.g., \cite{arnold2018finite,hatcher2002algebraic}) for definitions of homology and cohomology. But we provide a brief reminder on simplicial homology $\mathcal H_{\bs}(\Delta)$ and its relative version $\mathcal H_{\bs}(\Delta; \partial \Delta)$, together with their relations to the de Rham cohomologies, which will be used in the rest of the paper.

We first introduce the orientation function. Given a $k$-simplex $\sigma = [ x_1, x_2,\cdots, x_k]$ and a $(k-1)-$simplex $\tau$, define 
\begin{equation}
\mathcal O^{\mathcal H_{\bs}(\Delta)}(\tau, \sigma) = \left\{\begin{aligned} (-1)^j & \text{ if } \tau = [ x_1,  x_2, \cdots, \widehat{ x_j}, \cdots,  x_k] \text{ for some index }j, \\ 0 & \text{ otherwise. } 
\end{aligned}\right.
\end{equation}
 {Here, the superscript means that this orientation is used to compute the simplicial homology $\mathcal H_{\bs}(\Delta)$ (distinguished from the relative version below). }
Now, we fix a vector space $V$, and consider the free abelian group $C_k(\Delta, V)$ with basis $\Delta_k$, the $k$-simplices of the complex $\Delta$, and coefficients in $V$. Then the simplicial boundary operator $\partial_k: C_k(\Delta, V) \to C_{k-1}(\Delta, V)$ can be written as 
\begin{equation}
\partial^{\mathcal H_{\bs}(\Delta)}_k \sigma = \sum_{\tau \in \Delta_{k-1}} \mathcal O^{\mathcal H_{\bs}(\Delta)}(\tau, \sigma) \tau,
\end{equation}
for any $k$-simplex $\sigma$. 
A straightforward calculation yields that $\partial_{k-1} \circ \partial_{k} = 0$, and the standard simplicial homology is then defined by $$\mathcal H_k(\Delta,V) := \ker(\partial_{k}^{\mathcal H_{\bs}(\Delta)})/ \ran(\partial_{k-1}^{\mathcal H_{\bs}(\Delta)}).$$

In this paper, we will also use a special kind of relative homology, $\mathcal H_{\bs}(\Delta; \partial \Delta).$ Here, the chain group $C_k(\Delta, V; \partial \Delta)$ is generated by $k$-simplices in $\Delta \setminus \partial \Delta$. Given a $k$-simplex $\sigma = [ x_1, x_2,\cdots,  x_k]$ and a $(k-1)-$simplex $\tau$, define 
\begin{equation}
\mathcal O^{\mathcal H_{\bs}(\Delta; \partial \Delta)}(\tau, \sigma) = \left\{\begin{aligned} \mathcal O^{\mathcal H_{\bs}(\Delta)}(\tau, \sigma) & \text{ if } \tau \not\in \partial \Delta, \\ 0 & \text{ otherwise. } 
\end{aligned}\right.
\end{equation}
Similarly, we define 
\begin{equation}
    \partial^{\mathcal H_{\bs}(\Delta;\partial \Delta)}_k \sigma = \sum_{\tau \in \Delta_{k-1}} \mathcal O^{\mathcal H_{\bs}(\Delta; \partial \Delta)}(\tau, \sigma)\tau,
    \end{equation}
and $$\mathcal H_k(\Delta, V; \partial \Delta) = \ker(\partial_{k}^{\mathcal H_{\bs}(\Delta; \partial \Delta)})/ \ran(\partial_{k-1}^{\mathcal H_{\bs}(\Delta; \partial \Delta)}).$$

We simply use the notations $\mathcal H_{\bs}(\Delta)$ and $\mathcal H_{\bs}(\Delta; \partial \Delta)$ when $V = \mathbb Z$. Two theorems from algebraic topology will be relevant for the discussions below. The first is the universal coefficient theorem, computing the homology with coefficients in $V$. 
% We use the notation $H_*(Delta, V)$ and $H_*(\Delta, V; \partial \Delta)$ to address the coefficient space is in $V$, if necessary. 
\begin{theorem}[Universal coefficient theorem, simplified version]\label{UCT}
For any vector space $V$, it holds that 
\begin{equation}
\mathcal H_{\bs}(\Delta,V) \cong \mathcal H_{\bs}(\Delta) \otimes V,
\end{equation}
and 
\begin{equation}
    \mathcal H_{\bs}(\Delta,V; \partial \Delta) \cong \mathcal H_{\bs}(\Delta; \partial \Delta) \otimes V.
\end{equation}
\end{theorem}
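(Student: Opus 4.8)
The plan is to deduce the statement from the classical universal coefficient theorem for a chain complex of free abelian groups, by using that every vector space is flat over $\mathbb{Z}$.

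First I would record the elementary observation that, by the very definitions in the excerpt, the complex computing $\mathcal{H}_\bullet(\Delta, V)$ is obtained from the integral one by tensoring: $C_k(\Delta, V) \cong C_k(\Delta) \otimes_{\mathbb{Z}} V$ for each $k$, and under this identification the boundary map with $V$-coefficients is $\partial_k \otimes \mathrm{id}_V$, since the orientation function $\mathcal{O}^{\mathcal{H}_\bullet(\Delta)}$ does not depend on the coefficient group. The same holds for the relative complex: $C_k(\Delta, V; \partial\Delta) \cong C_k(\Delta; \partial\Delta) \otimes_{\mathbb{Z}} V$, where $C_\bullet(\Delta; \partial\Delta)$ is the free abelian group on $k$-simplices not contained in $\partial\Delta$ --- canonically the quotient complex $C_\bullet(\Delta)/C_\bullet(\partial\Delta)$ --- and $\mathcal{O}^{\mathcal{H}_\bullet(\Delta;\partial\Delta)}$ is the orientation function induced on that quotient. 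In particular, both $C_\bullet(\Delta)$ and $C_\bullet(\Delta;\partial\Delta)$ are complexes of free abelian groups.

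Next, since $V$ is a vector space over a field of characteristic zero (in the applications, a finite-dimensional real vector space), it is torsion-free as an abelian group, hence flat over $\mathbb{Z}$; equivalently $\mathrm{Tor}_1^{\mathbb{Z}}(A, V) = 0$ for every abelian group $A$. I would then invoke the classical universal coefficient theorem (see, e.g., \cite{hatcher2002algebraic}): for any complex $C_\bullet$ of free abelian groups there is a short exact sequence, natural in $C_\bullet$,
\[
0 \longrightarrow H_k(C_\bullet) \otimes_{\mathbb{Z}} V \longrightarrow H_k(C_\bullet \otimes_{\mathbb{Z}} V) \longrightarrow \mathrm{Tor}_1^{\mathbb{Z}}\bigl(H_{k-1}(C_\bullet), V\bigr) \longrightarrow 0 .
\]
Applying it to $C_\bullet = C_\bullet(\Delta)$ and to $C_\bullet = C_\bullet(\Delta; \partial\Delta)$, the rightmost term vanishes, so the first arrow is an isomorphism; this gives $\mathcal{H}_k(\Delta, V) \cong \mathcal{H}_k(\Delta) \otimes V$ and $\mathcal{H}_k(\Delta, V; \partial\Delta) \cong \mathcal{H}_k(\Delta; \partial\Delta) \otimes V$, and in fact these isomorphisms are natural.

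If one prefers a self-contained argument, flatness of $V$ says exactly that $-\otimes_{\mathbb{Z}} V$ is an exact functor, and an exact functor commutes with passage to homology: tensoring the short exact sequences $0 \to Z_k \to C_k(\Delta) \to B_{k-1} \to 0$ and $0 \to B_k \to Z_k \to \mathcal{H}_k(\Delta) \to 0$ (cycles/chains/boundaries of the integral complex) with $V$ preserves exactness, identifies the cycles and boundaries of $C_\bullet(\Delta)\otimes V$ with $Z_k \otimes V$ and $B_k \otimes V$, and hence yields $\mathcal{H}_k(\Delta)\otimes V \cong \mathcal{H}_k(C_\bullet(\Delta)\otimes V)$; the relative case is identical. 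I do not anticipate a real obstacle here --- the only point deserving a line of care is the identification of the relative complex of the excerpt with the standard quotient $C_\bullet(\Delta)/C_\bullet(\partial\Delta)$, after which the same homological algebra applies verbatim.
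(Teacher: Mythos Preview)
Your proof is correct and is the standard argument; the paper itself does not prove this theorem but merely states it as a known result from algebraic topology, so there is nothing to compare against.
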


The next celebrated result is the Lefschetz duality, which relates the relative homology to the de~Rham cohomology. 
\begin{theorem}[Lefschetz duality theorem]
Let $\Delta$ be a $n$-complex with boundary $\partial \Delta$. Then it holds that 
\begin{equation}
\mathcal H_k(\Delta; \partial \Delta) \cong \mathcal H^{n-k}(\Delta),
\end{equation}
and 
\begin{equation}
\mathcal H_k(\Delta) \cong \mathcal H^{n-k}(\Delta; \partial \Delta),
\end{equation}
Here $\mathcal H^{n-k}$ is the simplicial cohomology.
\end{theorem}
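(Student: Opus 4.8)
Since $\Omega$ is a bounded Lipschitz domain, its closure $\overline\Omega$ is a compact topological $n$-manifold with boundary $\partial\Omega$, oriented by the ambient orientation of $\mathbb R^n$; it therefore carries a fundamental class in $\mathcal H_n(\Delta;\partial\Delta)$, and the two displayed identities are exactly the classical Lefschetz duality for compact oriented manifolds with boundary, which one may invoke directly (e.g.\ \cite{hatcher2002algebraic}). To keep the argument close to the explicit chain complexes used later in this paper, the plan is instead to prove it combinatorially, via the dual block decomposition of $\Delta$.

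First I would pass to the barycentric subdivision $\mathrm{sd}\,\Delta$ and, for each $p$-simplex $\sigma\in\Delta$, form its dual block $\overline D(\sigma)$, the union of the closed simplices $[\widehat{\sigma_0},\dots,\widehat{\sigma_j}]$ of $\mathrm{sd}\,\Delta$ with $\sigma=\sigma_0\subsetneq\cdots\subsetneq\sigma_j$. The facts to record are standard for manifolds with boundary: the open dual blocks partition $\overline\Omega$; each $\overline D(\sigma)$ is a closed $(n-p)$-ball; $\partial\overline D(\sigma)=\bigcup_{\tau\supsetneq\sigma}\overline D(\tau)$; and the incidence lemma, namely that with dual-block orientations chosen compatibly with that of $\overline\Omega$, the cellular incidence number of $\overline D(\tau)$ in $\partial\overline D(\sigma)$ equals $\pm\,\mathcal O^{\mathcal H_{\bs}(\Delta)}(\sigma,\tau)$ whenever $\sigma$ is a facet of $\tau$. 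Consequently, for any family $S$ of simplices of $\Delta$ that is closed under passing to cofaces, the block chain complex generated by $\{\overline D(\sigma):\sigma\in S\}$, graded so that $\overline D(\sigma)$ (with $\sigma$ a $p$-simplex) sits in degree $n-p$, is isomorphic, up to signs and with the degree relabelling $n-p\leftrightarrow p$, to $\mathrm{Hom}(C_{\bs}(S),\mathbb Z)$ equipped with the simplicial coboundary.

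I would then extract both identities from two instances of this. Since faces of boundary simplices are boundary simplices, $\Delta\setminus\partial\Delta$ is closed under cofaces, so $\mathbf D_{\mathrm{int}}:=\{\overline D(\sigma):\sigma\in\Delta\setminus\partial\Delta\}$ is a subcomplex; its blocks all lie in $\Omega^{\circ}$, they form a regular CW structure on a polyhedron $X\subset\Omega^{\circ}$, and $X\hookrightarrow\overline\Omega$ is a deformation retract (pushing in along a collar of $\partial\Omega$), so $\mathcal H_{\bs}(\mathbf D_{\mathrm{int}})\cong\mathcal H_{\bs}(\Delta)$; matching $\mathbf D_{\mathrm{int}}$ in degree $n-p$ with $\mathrm{Hom}(C_p(\Delta\setminus\partial\Delta),\mathbb Z)$ and the coboundary, i.e.\ with the relative cochain complex of $(\Delta;\partial\Delta)$, yields $\mathcal H_k(\Delta)\cong\mathcal H^{n-k}(\Delta;\partial\Delta)$. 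For the other identity, the full block complex $\mathbf D:=\{\overline D(\sigma):\sigma\in\Delta\}$ is, again by the incidence lemma, degreewise isomorphic to $\mathrm{Hom}(C_{n-\bs}(\Delta),\mathbb Z)$ with the coboundary, so $\mathcal H_{\bs}(\mathbf D)\cong\mathcal H^{n-\bs}(\Delta)$; the quotient $\mathbf D/\mathbf D_{\mathrm{int}}$, spanned by the boundary blocks, coincides up to a shift of degree by one with the dual block complex of the closed $(n-1)$-manifold $\partial\Omega$ triangulated by $\partial\Delta$, so $\mathcal H_k(\mathbf D/\mathbf D_{\mathrm{int}})\cong\mathcal H_{k-1}(\partial\Delta)$; comparing the long exact homology sequence of the pair $(\mathbf D,\mathbf D_{\mathrm{int}})$ with that of $(\overline\Omega,\partial\Omega)$ and invoking the five lemma gives $\mathcal H_k(\mathbf D)\cong\mathcal H_k(\Delta;\partial\Delta)$, i.e.\ $\mathcal H_k(\Delta;\partial\Delta)\cong\mathcal H^{n-k}(\Delta)$. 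Finally \Cref{UCT} promotes both isomorphisms to coefficients in an arbitrary vector space $V$.

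The main obstacle is not the topology — that the dual blocks are balls, the collar retraction, and $\partial\Omega$ being a closed manifold are all routine once $\overline\Omega$ is recognised as a manifold with boundary — but the bookkeeping: fixing a coherent system of orientations on all the dual blocks and verifying the incidence lemma with the correct signs, and then checking that the connecting homomorphisms in the two long exact sequences of the last step genuinely agree, which is where one uses that the boundary blocks sit inside a collar that the retraction collapses onto $\partial\Omega$.
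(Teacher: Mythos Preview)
The paper does not actually prove this statement. The Lefschetz duality theorem is stated in Section~\ref{sec:topology} as a classical result from algebraic topology, on the same footing as the universal coefficient theorem, and is simply invoked as background (the paper cites \cite{hatcher2002algebraic} for standard homology/cohomology material). There is therefore nothing to compare your argument against: the paper's ``proof'' is an appeal to the literature.

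That said, your outline is the standard textbook approach via the dual block decomposition (essentially the proof in \cite{hatcher2002algebraic}), and it is correct in structure. Your own first sentence already gives the appropriate response for this paper: since $\overline\Omega$ is a compact oriented manifold with boundary, Lefschetz duality applies directly, and one cites it. The combinatorial elaboration you sketch is sound but goes well beyond what the paper intends or needs here; the orientation bookkeeping and five-lemma comparison you flag as the main obstacle are precisely why one defers to a reference rather than reproducing the argument.
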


Now we consider the relation to de Rham cohomology. We use $\mathcal H^{\bs}_{dR}(\Omega)$ to denote the standard de Rham cohomology, and $\mathcal H^{\bs}_{dR,c}(\Omega)$ to denote the compactly supported version. Using the de~Rham theorem, we get the following result. 
\begin{theorem}\label{thm:deRham}
The following results hold.

\begin{enumerate}
    \item $\mathcal H_k(\Delta; \partial \Delta) \cong \mathcal H^{n-k}_{dR}(\Omega).$
    \item $\mathcal H_k(\Delta) \cong \mathcal H^{n-k}_{dR,c}(\Omega).$
%    \item The Stokes' lemma gives a dual pairing $\mathcal H^k_{dR}(\Omega) \times \mathcal H^k_{dR,c}(\Omega) \to \mathbb R$. 
\end{enumerate}
\end{theorem}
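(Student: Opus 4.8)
The plan is to deduce Theorem \ref{thm:deRham} by combining the Lefschetz duality theorem (already stated) with the de~Rham theorem, which identifies simplicial cohomology of the triangulation with the de~Rham cohomology of the underlying domain. For part (1), I would start from the Lefschetz duality isomorphism $\mathcal H_k(\Delta;\partial\Delta)\cong\mathcal H^{n-k}(\Delta)$, where the right-hand side is the simplicial cohomology of $\Delta$ with (say) real coefficients. Since $\Delta$ is a triangulation of the Lipschitz domain $\Omega$, the geometric realization $|\Delta|$ is homeomorphic to $\overline\Omega$, which is homotopy equivalent to $\Omega$ (a bounded Lipschitz domain deformation retracts onto its interior in the sense needed here, or one invokes that $\Omega$ and $\overline\Omega$ have the same singular cohomology). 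Simplicial cohomology of $\Delta$ agrees with singular cohomology of $|\Delta|$, and the de~Rham theorem gives $\mathcal H^{n-k}_{\mathrm{sing}}(\Omega;\mathbb R)\cong\mathcal H^{n-k}_{dR}(\Omega)$. Chaining these isomorphisms yields $\mathcal H_k(\Delta;\partial\Delta)\cong\mathcal H^{n-k}_{dR}(\Omega)$.

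For part (2), I would argue symmetrically using the second Lefschetz isomorphism $\mathcal H_k(\Delta)\cong\mathcal H^{n-k}(\Delta;\partial\Delta)$, the relative simplicial cohomology. The relative simplicial cohomology $\mathcal H^{n-k}(\Delta;\partial\Delta)$ corresponds to the singular cohomology of the pair $(|\Delta|,|\partial\Delta|)\simeq(\overline\Omega,\partial\Omega)$, and by excision/the long exact sequence of the pair together with the identification of relative cohomology with compactly supported cohomology of the open manifold $\Omega=\overline\Omega\setminus\partial\Omega$, one has $\mathcal H^{n-k}(\overline\Omega,\partial\Omega;\mathbb R)\cong\mathcal H^{n-k}_{c}(\Omega;\mathbb R)$. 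Finally the compactly supported de~Rham theorem gives $\mathcal H^{n-k}_{c}(\Omega;\mathbb R)\cong\mathcal H^{n-k}_{dR,c}(\Omega)$. Composing gives the claimed isomorphism.

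The main obstacle is not any deep computation but the careful bookkeeping of which category of cohomology is being used at each step and ensuring the identifications are legitimate under the hypothesis that $\Omega$ is merely Lipschitz rather than smooth: one must make sure the de~Rham theorem and its compactly supported analogue apply (they do, since a Lipschitz domain is a manifold with a sufficiently regular boundary, and one can either work on a smooth collar or invoke that the relevant cohomologies are homotopy/homeomorphism invariants), and that the triangulation $\Delta$ is compatible with the boundary in the sense that $\partial\Delta$ triangulates $\partial\Omega$. A secondary subtlety is the passage between $\overline\Omega$ and $\Omega$: for the ordinary cohomology in part (1) this is harmless because $\Omega\hookrightarrow\overline\Omega$ is a homotopy equivalence, whereas for part (2) the compactly supported (equivalently, relative) version is genuinely different and is exactly what the relative simplicial cohomology computes, so the identification $\mathcal H^{\bs}(\overline\Omega,\partial\Omega)\cong\mathcal H^{\bs}_c(\Omega)$ must be stated precisely. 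I would present this as a short sequence of cited isomorphisms rather than reproving the de~Rham theorem.
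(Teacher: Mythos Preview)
Your proposal is correct and follows exactly the approach the paper takes: the paper does not give a detailed proof but simply states ``Using the de~Rham theorem, we get the following result'' immediately after recording the Lefschetz duality theorem, i.e., it chains Lefschetz duality with the de~Rham identification just as you outline. Your write-up is in fact more careful than the paper's, spelling out the passage between simplicial, singular, and de~Rham (compactly supported) cohomology and the subtleties about $\Omega$ versus $\overline\Omega$.
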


% Note that by the de~Rham theorem, $\mathcal H^{n-k}(\Delta) $ is further isomorphic to the singular cohomology, which is isomorphic to simplicial cohomology (not depending on the triangulation).

%\lt{The (co)homology does not depend on the simplicial triangulation. so shall we use $\Omega$ and $\partial \Omega$ instead?}

Unless otherwise specified, we use $\mathcal O$ to denote the orientation with respect to the relative homology $\mathcal O^{\mathcal H_{\bs}(\Delta; \partial \Delta)}$, use $\mathcal O_0$ to denote $\mathcal O^{\mathcal H_{\bs}(\Delta)}$, in accordance with the compact supported de Rham cohomology.

In the rest of this paper, we will relate functions and distributions to an element in the chain group $C_k(\Delta)$. To this end, we denote by $\|\sigma\|$ the free element in $C_k(\Delta)$ associated with $k$-th simplex $\sigma$. This does not lead to confusion since we will not use $\|\sigma\|$ as a norm.

\subsection{Function spaces}

In this subsection, we define various spaces of vector-valued and matrix-valued functionals.

We introduce some scalar spaces following \cite{costabel2010bogovskiui}. Recall that $\Omega$ is a bounded Lipschitz domain. Let $s$ be any real number. Then $H^{s}(\Omega)$ denotes the quotient of $H^{s}(\mathbb{R}^{n})$ with distributions vanishing in $\Omega$, which is isomorphic to the standard definition as a space of distributions in $\Omega$. Throughout this paper, we will simply consider $H^s(\Omega)$ as a space of distributions over $\Omega$. 

Let $H_{\overline{\Omega}}^{s}(\mathbb{R}^{n})$ be the subspace of $H^{s}(\mathbb{R}^{n})$ consisting of all the distributions with support in $\overline{\Omega}$. For any $s$, $H_{\overline{\Omega}}^{s}(\mathbb{R}^{n})$ can be identified with the closure of $C_{c}^{\infty}$ in $H^{s}(\mathbb R^{n})$, and also the dual space of $H^{-s}(\Omega)$, i.e., 
{\renewcommand\stackalignment{l}
\begin{equation}\label{H0-space}
H_{\overline{\Omega}}^{s}(\mathbb{R}^{n})=\subline{C_{c}^{\infty}}{H^{s}(\mathbb R^{n})}=[H^{-s}(\Omega)]^{\ast}, \quad \forall s\in \mathbb R.
\end{equation}
}
We also have
$$
H_{\overline{\Omega}}^{s}(\mathbb{R}^{n})=H^{s}_{0}(\Omega), \quad \forall \mbox{ $s>0$ and $s-\frac{1}{2}$ is not an integer,}
$$
where $H^{s}_{0}(\Omega)$ denotes the closure of $C^{\infty}_{c}(\Omega)$ in $H^{s}(\Omega)$. 
% For $s=0$, by definition, 
% $$
% H^{0}(\Omega)=H^{0}_{0}(\Omega)=H_{\overline{\Omega}}^{0}(\mathbb{R}^{n})=L^{2}(\Omega).
 %$$
%\kh{but then why the claim above in C-M paper excludes $s=0$?}

%The spaces $H^s(\Omega)$ and $H^s_{\overline\Omega}(\Omega)$ can be defined as the closure of $C^\infty(\overline{\Omega})$ and $C^\infty_0(\Omega)$, respectively. For any real number, $H^{-s}(\Omega)$ can be identified as the dual space of $H^{s}_{\overline\Omega}(\Omega)$. When there is no danger of confusion, we also drop $\Omega$ and write $H^s$ and $H^s_{\overline \Omega}$, respectively. 
Let $H^{s}\Lambda^{k}$ (or $H^{s}_{\overline\Omega}\Lambda^{k}$) be the space of differential $k$-forms with coefficients in $H^{s}(\Omega)$ (or correspondingly, $H^{s}_{\overline\Omega}(\mathbb{R}^{n})$). Following \cite{costabel2010bogovskiui}, we have two complexes:
\begin{equation}\label{Hs-complex}
\begin{tikzcd}
  0\arrow{r}{}&H^s\Lambda^0\arrow{r}{d^0}& H^{s-1}\Lambda^1\arrow{r}{d^1} & \cdots \arrow{r}{d^{n-1}} & H^{s-n}\Lambda^n\arrow{r}{}&0,
\end{tikzcd}
\end{equation}
\begin{equation}\label{Hs0-complex}
\begin{tikzcd} 
  0\arrow{r}{}&H^s_{\overline \Omega}\Lambda^0\arrow{r}{d^0}& H^{s-1}_{\overline \Omega}\Lambda^1\arrow{r}{d^1} & \cdots \arrow{r}{d^{n-1}} & H^{s-n}_{\overline \Omega}\Lambda^n\arrow{r}{}&0.
\end{tikzcd}
\end{equation}
The complexes \eqref{Hs-complex} and \eqref{Hs0-complex} both have uniform cohomology in the sense that the cohomology is isomorphic to the smooth versions. 

\begin{remark}
\label{rmk:subspace-diff}
Note that the differential operators in the complexes actually have different meanings. In \eqref{Hs-complex}, we consider distributions in $\Omega$, where the test function is chosen as $C_c^{\infty}(\Omega)$. While in \eqref{Hs0-complex}, we consider distributions in $\mathbb R^n$, where the test function is in $C_c^{\infty}(\mathbb R^n).$ 
% In this case, the test function can be replaced by $C^{\infty}(\overline{\Omega})$.  
\end{remark}

Most of the examples in this paper will be given in vector/matrix forms. We introduce some notation following  \cite{arnold2021complexes,vcap2022bgg}.
\begin{table}[h!]
\begin{center}
\begin{tabular}{c|c}
$\mathbb V$ & $\mathbb R^n$\\
$\mathbb M$ &the space of all $n\times n$-matrices\\
$\mathbb S$ & symmetric matrices\\
$\mathbb K$ & skew symmetric matrices\\
$\mathbb T$ & trace-free matrices\\
$\skw: \M\to \K$ & skew symmetric part of a matrix\\
$\sym: \M\to \S$ & symmetric part of a matrix\\
$\tr:\M\to\R$ & matrix trace\\
$\iota: \R\to \M$  & the map $\iota u:= uI$ identifying a scalar with a scalar matrix\\
$\dev:\mathbb{M}\to \mathbb{T}$ & deviator (trace-free part of a matrix) given by $\dev u:=u-1/n \tr (u)I$\\

\end{tabular}
\end{center}
\end{table}

We use $\mathcal D'$ to denote the space of distributions and $\langle \cdot, \cdot\rangle$ for dual pairs. We use $(\cdot, \cdot)$ to denote the $L^{2}$ inner product. This can be naturally extended to vector- or matrix-valued functions. For example, $(\sigma, \tau):=\int \sigma:\tau\, dx$ when $\sigma$ and $\tau$ are both matrix fields. The above matrix operations can be extended to functions and distributions naturally. For example, we define $\sym: \mathcal{D}'(\mathbb{M})\to \mathcal{D}'(\mathbb{S})$ by 
$ \langle \sym  \xi,  \sigma \rangle = \langle \xi, \sym  \sigma \rangle,\quad \forall  \sigma \in \mathcal{D}(\mathbb{M}),$
and define $\dev : \mathcal D'(\mathbb M) \to \mathcal D'(\mathbb T)$ by $\langle \dev \xi, \sigma \rangle = \langle \xi, \dev \sigma \rangle, \quad \forall \sigma \in \mathcal D(\mathbb M).$

  Let  $\mathbb{X}_{i}$ be one of the above vector or matrix spaces ($\mathbb{V}, \mathbb{M}, \mathbb{S}$, etc.) and $D^{i}$ be a linear differential operator with constant coefficients of order $\alpha_{i}$. Suppose that we have the following short sequence 
\begin{equation}\label{short-sequence}
\begin{tikzcd}
\cdots\arrow{r}{}& H^{s_{0}, s_1}(D^{1}, \mathbb{X}_{1})\arrow{r}{D^{1}} & H^{s_{1}, s_{2}}(D^{2}, \mathbb{X}_{2})\arrow{r}{D^{2}} & H^{s_{2}, s_{3}}(D^{3}, \mathbb{X}_{3})\arrow{r}{} & \cdots,
\end{tikzcd}
\end{equation}
where 
$$
H^{s_{1}, s_{2}}(D; \mathbb{X}_{i}):=\{u\in H^{s_{1}}\otimes \mathbb{X}_{i}: Du\in H^{s_{2}}\otimes \mathbb{X}_{i+1}\}.
$$
Similarly, we define 
$$
H_{0}^{s_{1}, s_{2}}(D; \mathbb{X}_{i}):=\{u\in H_{\overline\Omega}^{s_{1}}\otimes \mathbb{X}_{i}: Du\in H_{\overline\Omega}^{s_{2}}\otimes \mathbb{X}_{i+1}\},
$$
and they also form a complex. When $s_{1}=s_{2}$, we denote $H^{s}(D; \mathbb{X}_{i}):=H^{s, s}(D; \mathbb{X}_{i})$, and when $s=0$, we further denote $H(D; \mathbb{X}_{i}):=H^{0}(D; \mathbb{X}_{i})$ and $H_{0}(D; \mathbb{X}_{i}):=H^{0}_{0}(D; \mathbb{X}_{i})$.

To state regularity and duality results, we assume that the sequence
\begin{equation}
\begin{tikzcd}
\cdots\arrow{r}{}&H^{q}\otimes  \mathbb{X}_{1}\arrow{r}{D^{1}} & H^{q-\alpha_{1}} \otimes\mathbb{X}_{2}\arrow{r}{D^{2}} & H^{q-\alpha_{1}-\alpha_{2}} \otimes\mathbb{X}_{3}\arrow{r}{D^{3}} & \cdots,
\end{tikzcd}
\end{equation}
is a Hilbert scale in the sense that for any $q\in \mathbb{R}$, we have 
\begin{equation}\label{uniform-cohomology-1}
\ker(D^{2},  H^{q-\alpha_{1}} \otimes\mathbb{X}_{2})=(D^{1}H^{q}\otimes  \mathbb{X}_{1})\oplus\mathscr{H}^{2},
\end{equation}
and
\begin{equation}\label{uniform-cohomology-2}
\ker(D^{3},  H^{q-\alpha_{1}-\alpha_{2}} \otimes\mathbb{X}_{3})=(D^{2}H^{q-\alpha_{1}}\otimes  \mathbb{X}_{2})\oplus\mathscr{H}^{3},
\end{equation}
where $\mathscr{H}^{2}$ and $\mathscr{H}^{3}$ are finite dimensional spaces not depending on $q$ (thus containing smooth functions) \cite{arnold2021complexes}.  

Let $(D^{1})^{\ast}$ be the formal adjoint of $D^{1}$. % and assume that we have 
%$$
%(D^{1}u, v)=(u, (D^{1})^{\ast}v), \quad\forall u\in H^{\alpha_{1}}_{0}\otimes \mathbb{X}_{1}, (D^{1})^{\ast}v\in H^{-\alpha_{1}}\otimes \mathbb{X}_{1}.  
%$$
 Related to \eqref{short-sequence}, we have the following, generalizing the regular decomposition argument \cite[Theorem 3]{arnold2021complexes} and the duality result in \cite[Lemma 1]{pechstein2011tangential}.

\begin{theorem}\label{thm:decomp-duality}
 We have
\begin{equation}\label{regular-decomposition}
H^{s_{1}, s_2}(D^{2}; \mathbb{X}_{2})=H^{s_{2}+\alpha_{2}}\otimes \mathbb{X}_{2} +D^{1}H^{s_{1}+\alpha_{1}}(D^{1}, \mathbb{X}_{1}),
\end{equation}
and the duality
\begin{equation}\label{duality-sobolev}
H_{0}^{s_{1}, s_{2}}(D^2, \mathbb{X}_{2})^{\ast}=H^{-s_{2}-\alpha_{2}, -s_{1}-\alpha_{1}}((D^{1})^{\ast}, \mathbb{X}_{2}).
\end{equation}
%where
%$$
%H^{-\alpha_{2}, -\alpha_{1}}((D^{1})^{\ast}, \mathbb{X}_{2}):=\{u\in H^{-\alpha_{2}}\otimes \mathbb{X}_{2} : (D^{1})^{\ast}u\in H^{-\alpha_{1}}\otimes \mathbb{X}_{1}\}.
%$$
Similarly, for another type of boundary conditions, we have
$$
H_{0}^{s_{1}, s_2}(D^{2}; \mathbb{X}_{2})=H_{0}^{s_{2}+\alpha_{2}}\otimes \mathbb{X}_{2} +D^{1}H_{0}^{s_{1}+\alpha_{1}}(D^{1}, \mathbb{X}_{1}),
$$
and the duality
$$
H^{s_{1}, s_{2}}(D^2, \mathbb{X}_{2})^{\ast}=H_{0}^{-s_{2}-\alpha_{2}, -s_{1}-\alpha_{1}}((D^{1})^{\ast}, \mathbb{X}_{2}).
$$
% where
%$$
%H_{0}^{-\alpha_{2}, -\alpha_{1}}((D^{1})^{\ast}, \mathbb{X}_{2}):=\{u\in H_{\overline{\Omega}}^{-\alpha_{2}}\otimes \mathbb{X}_{2} : (D^{1})^{\ast}u\in H_{\overline{\Omega}}^{-\alpha_{1}}\otimes \mathbb{X}_{1}\}.
%$$
%$$
%\|(D^{1})^{\ast}w\|_{-\alpha_{1}}+\|w\|_{-\alpha_{2}}.
%$$
\end{theorem}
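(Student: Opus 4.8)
The plan is to first establish the regular decomposition \eqref{regular-decomposition}, and then derive the duality \eqref{duality-sobolev} from it by a straightforward annihilator argument. For the decomposition, the inclusion $\supseteq$ is essentially trivial: any element of $H^{s_2+\alpha_2}\otimes\mathbb X_2$ lies in $H^{s_1,s_2}(D^2;\mathbb X_2)$ provided $s_1\le s_2+\alpha_2$ (which holds in the relevant range because $D^2$ has order $\alpha_2$, so $D^2$ maps $H^{s_2+\alpha_2}$ into $H^{s_2}$, and $H^{s_2+\alpha_2}\hookrightarrow H^{s_1}$ when $s_1\le s_2+\alpha_2$), and $D^1 H^{s_1+\alpha_1}(D^1,\mathbb X_1)\subseteq H^{s_1}$ with $D^2 D^1 = 0$ landing (trivially) in any $H^{s_2}$. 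The substance is the inclusion $\subseteq$. Given $u\in H^{s_1,s_2}(D^2;\mathbb X_2)$, we have $D^2 u\in H^{s_2}\otimes\mathbb X_3$, and since $D^2 D^2 =0$ (or rather $D^3 D^2 = 0$) we have $D^2 u\in\ker(D^3, H^{s_2}\otimes\mathbb X_3)$. Shifting the Hilbert-scale hypothesis \eqref{uniform-cohomology-2} with $q-\alpha_1-\alpha_2 = s_2$, we may write $D^2 u = D^2 v + h$ with $v\in H^{s_2+\alpha_2}\otimes\mathbb X_2$ and $h\in\mathscr H^3$ a smooth harmonic representative; but applying $D^3$ and using that $D^2 u, D^2 v$ are exact forces the cohomology component $h$ to be the class of $D^2 u$, which is zero in cohomology, hence $h = 0$ after possibly adjusting $v$ by a smooth closed form (absorbed into the $D^1$-term below). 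Then $u - v$ satisfies $D^2(u-v) = 0$, so $u-v\in\ker(D^2, H^{s_1}\otimes\mathbb X_2)$; invoking \eqref{uniform-cohomology-1} with $q = s_1$, we get $u - v = D^1 w + h'$ with $w\in H^{s_1+\alpha_1}\otimes\mathbb X_1$ and $h'\in\mathscr H^2$ smooth. Since $h'$ is smooth it lies in $H^{s_2+\alpha_2}\otimes\mathbb X_2$, so we absorb it into the first summand, and we note $w\in H^{s_1+\alpha_1}(D^1,\mathbb X_1)$ automatically because $D^1 w = u - v - h'\in H^{s_1}$ and in fact one wants $D^1 w \in H^{s_1}$, which is what membership in that graph space requires (the relevant regularity of $D^1 w$ is exactly $s_1$, matching the index). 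This yields $u = v + h' + D^1 w\in H^{s_2+\alpha_2}\otimes\mathbb X_2 + D^1 H^{s_1+\alpha_1}(D^1,\mathbb X_1)$, as claimed.

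For the duality \eqref{duality-sobolev}, the plan is to identify $H_0^{s_1,s_2}(D^2,\mathbb X_2)^{\ast}$ by a direct computation with the dual pairing, in the spirit of \cite[Lemma 1]{pechstein2011tangential}. We equip $H_0^{s_1,s_2}(D^2,\mathbb X_2)$ with its graph norm $\|u\| = \|u\|_{H^{s_1}} + \|D^2 u\|_{H^{s_2}}$, so it is a closed subspace of $H^{s_1}_{\overline\Omega}\otimes\mathbb X_2 \times H^{s_2}_{\overline\Omega}\otimes\mathbb X_3$ via $u\mapsto (u, D^2 u)$. By Hahn–Banach, any continuous functional extends to this product space, whose dual is $H^{-s_1}(\Omega)\otimes\mathbb X_2 \times H^{-s_2}(\Omega)\otimes\mathbb X_3$ using \eqref{H0-space}; thus every $\ell\in H_0^{s_1,s_2}(D^2,\mathbb X_2)^{\ast}$ is represented as $\ell(u) = \langle f, u\rangle + \langle g, D^2 u\rangle$ for some $f\in H^{-s_1}(\Omega)\otimes\mathbb X_2$, $g\in H^{-s_2}(\Omega)\otimes\mathbb X_3$. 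Integrating by parts (legitimate since $u$ has support in $\overline\Omega$ and test functions on $\Omega$ are used, cf. \Cref{rmk:subspace-diff}), $\langle g, D^2 u\rangle = \langle (D^2)^{\ast} g, u\rangle$ — wait, here the correct adjoint pairing is with $(D^1)^{\ast}$ acting on the $\mathbb X_2$-slot since the complex at $\mathbb X_2$ receives $D^1$ from the left; the relevant identity is that the functional is $\langle F, u\rangle$ with $F = f + (D^1)^{\ast} g$ — reorganizing, the representative is not unique, and the quotient that remains is exactly $H^{-s_2-\alpha_2,-s_1-\alpha_1}((D^1)^{\ast},\mathbb X_2)$: the component $g$ carries the information of $(D^1)^{\ast}$-regularity and the index shifts by $\alpha_1$ (the order of $D^1$, hence of $(D^1)^{\ast}$) and by $\alpha_2$ appropriately. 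One then checks the map $\ell\mapsto$ its representing distribution is a well-defined isomorphism onto $H^{-s_2-\alpha_2,-s_1-\alpha_1}((D^1)^{\ast},\mathbb X_2)$ with equivalence of norms, using the regular decomposition \eqref{regular-decomposition} to show surjectivity (every such distribution pairs continuously against the decomposition) and injectivity (density of smooth compactly supported fields). The other-boundary-condition statements follow by the symmetric argument, swapping the roles of $H^s$ and $H^s_{\overline\Omega}$ and using that $(H^s_{\overline\Omega})^{\ast} = H^{-s}(\Omega)$ in both directions by \eqref{H0-space}.

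The main obstacle I expect is bookkeeping the index shifts correctly and identifying precisely which adjoint ($D^1$ versus $D^2$, and on which slot) appears in the dual — this is where \cite[Lemma 1]{pechstein2011tangential} and \cite[Theorem 3]{arnold2021complexes} must be matched carefully against the grading conventions of \eqref{short-sequence}, since an off-by-one in $\alpha_i$ would break the statement. A secondary technical point is justifying the integration-by-parts/adjoint step in the distributional setting: one must be careful that for $H_0$-type spaces the boundary terms genuinely vanish (elements are supported in $\overline\Omega$ so paired against $C_c^\infty(\Omega)$ there is no boundary contribution), whereas for the non-$H_0$ version the pairing is with $H_0$-type test distributions and the adjoint is taken in the dual sense — this is exactly the distinction flagged in \Cref{rmk:subspace-diff} and must be invoked explicitly. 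The cohomology-class cancellation (that the $\mathscr H^3$-component of an exact form vanishes) is routine once one observes $D^2 u$ is by construction in the image of $D^2$, hence trivial in the $D^3$-cohomology.
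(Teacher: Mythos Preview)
Your argument for the regular decomposition \eqref{regular-decomposition} is essentially the paper's proof, just with more commentary on the cohomology cancellation; the paper simply invokes \eqref{uniform-cohomology-2} to write $D^{2}u=D^{2}\varphi$ with $\varphi\in H^{s_{2}+\alpha_{2}}\otimes\mathbb X_{2}$, then \eqref{uniform-cohomology-1} to write $u-\varphi=D^{1}\beta+h$ with $h$ smooth, and absorbs $h$ into the regular term.

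For the duality \eqref{duality-sobolev}, however, your approach has a genuine gap. In your graph-embedding argument the correct identity is $\langle g, D^{2}u\rangle=\langle (D^{2})^{\ast}g,u\rangle$, so the representing distribution is $w=f+(D^{2})^{\ast}g$ with $f\in H^{-s_{1}}\otimes\mathbb X_{2}$, $g\in H^{-s_{2}}\otimes\mathbb X_{3}$. Your mid-proof ``wait'' and switch to $F=f+(D^{1})^{\ast}g$ does not typecheck: $g$ is $\mathbb X_{3}$-valued and $(D^{1})^{\ast}$ acts on $\mathbb X_{2}$. The space $\{f+(D^{2})^{\ast}g\}$ \emph{can} be identified with the graph space $H^{-s_{2}-\alpha_{2},-s_{1}-\alpha_{1}}((D^{1})^{\ast},\mathbb X_{2})$, but this identification is itself a regular decomposition for the adjoint complex, which you never invoke; you just assert ``the quotient that remains is exactly\dots'' without justification.

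The paper avoids all of this by a direct dual-norm computation that uses the regular decomposition on the \emph{primal} side, and this is where $(D^{1})^{\ast}$ enters naturally. For $w$ in the dual one writes
\[
\|w\|_{H_{0}^{s_{1},s_{2}}(D^{2})^{\ast}}=\sup_{v}\frac{\langle v,w\rangle}{\|v\|}\cong\sup_{y,z}\frac{\langle D^{1}y+z,w\rangle}{\|y\|_{s_{1}+\alpha_{1}}+\|z\|_{s_{2}+\alpha_{2}}}=\sup_{y,z}\frac{\langle y,(D^{1})^{\ast}w\rangle+\langle z,w\rangle}{\|y\|_{s_{1}+\alpha_{1}}+\|z\|_{s_{2}+\alpha_{2}}},
\]
with $y\in H_{0}^{s_{1}+\alpha_{1}}\otimes\mathbb X_{1}$ and $z\in H_{0}^{s_{2}+\alpha_{2}}\otimes\mathbb X_{2}$, and the last expression is precisely $\|(D^{1})^{\ast}w\|_{-s_{1}-\alpha_{1}}+\|w\|_{-s_{2}-\alpha_{2}}$. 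Here $(D^{1})^{\ast}$ appears because the decomposition of $v$ involves $D^{1}$, not because of any adjoint of $D^{2}$. This three-line computation replaces your entire Hahn--Banach paragraph and makes the index shifts transparent.
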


\begin{proof}
For any $u\in H^{s_{1}, s_{2}}(D^{2}; \mathbb{X}_{2})$, $D^{2}u\in H^{s_{2}}\otimes \mathbb{X}_{3}$. Due to \eqref{uniform-cohomology-2}, there exists $\varphi\in H^{s_{2}+\alpha_{2}}\otimes \mathbb{X}_{2}$  such that $D^{2}\varphi=D^{2}u$, i.e., $D^{2}(u-\varphi)=0$. Using \eqref{uniform-cohomology-1}, we have 
$$
u-\varphi=D^{1}\beta+h,
$$
where $\beta\in H^{s_{1}+\alpha_{1}}\otimes \mathbb{X}_{1}$ and $h$ is smooth. This proves \eqref{regular-decomposition}.

%In particular, for $s_{1}=s_{2}=0$, 
%$$
%H_{0}(D^{2}; \mathbb{X}_{2})=D^{1}(H_{0}^{\alpha_{1}}\otimes \mathbb{X}_{1})+H_{0}^{\alpha_{2}}\otimes \mathbb{X}_{2}.
%$$
For $w\in H_{0}^{s_{1}, s_{2}}(D^2, \mathbb{X}_{2})^{\ast}$, 
\begin{align*}
\|w\|_{H_{0}^{s_{1}, s_{2}}(D^2, \mathbb{X}_{2})^{\ast}}&=\sup_{v\in H_{0}^{s_{1}, s_{2}}(D^2, \mathbb{X}_{2}) } \frac{\langle v, w\rangle}{\|v\|_{H_{0}^{s_{1}, s_{2}}(D^2, \mathbb{X}_{2})}}\\&
\cong\sup_{y\in H^{s_{1}+\alpha_{1}}_{0}\otimes \mathbb{X}_{1},  z\in H^{s_{2}+\alpha_{2}}_{0}\otimes \mathbb{X}_{2}}\frac{ \langle D^{1}y+z, w\rangle}{\|y\|_{s_{1}+\alpha_{1}}+\|z\|_{s_{2}+\alpha_{2}}}\\
&\cong\sup_{y\in H^{s_{1}+\alpha_{1}}_{0}\otimes \mathbb{X}_{1},  z\in H^{s_{2}+\alpha_{2}}_{0}\otimes \mathbb{X}_{2}}\frac{ \langle D^{1}y, w\rangle}{\|y\|_{s_{1}+\alpha_{1}}}+\frac{\langle z, w\rangle}{\|z\|_{s_{2}+\alpha_{2}}}\\
&=\sup_{y\in H^{s_{1}+\alpha_{1}}_{0}\otimes \mathbb{X}_{1},  z\in H^{s_{2}+\alpha_{2}}_{0}\otimes \mathbb{X}_{2}}\frac{ \langle y, (D^{1})^{\ast}w\rangle}{\|y\|_{s_{1}+\alpha_{1}}}+\frac{\langle z, w\rangle}{\|z\|_{s_{2}+\alpha_{2}}}\\
&=\|(D^{1})^{\ast}w\|_{-s_{1}-\alpha_{1}}+\|w\|_{-s_{2}-\alpha_{2}}.
\end{align*}
\end{proof}

As special cases of Theorem \ref{thm:decomp-duality}, we recover results from the literature (c.f. \cite{pechstein2011tangential})
$$
H_{0}(\curl)^{\ast}=H^{-1}(\div), \quad H_{0}(\div)^{\ast}=H^{-1}(\curl), 
$$
and obtain new results. For example, associated with the Hessian complex, we have
$$
H_{0}(\curl; \mathbb{S})^{\ast}=H^{-1, -2}(\div\div; \mathbb{S}):=\{w\in H^{-1}\otimes \mathbb{S} : \div\div w\in H^{-2}\},
$$
and associated with the elasticity complex, 
$$
H_{0}(\inc; \mathbb{S})^{\ast}=H^{-2, -1}(\div; \mathbb{S}):=\{w\in H^{-2}\otimes \mathbb{S} : \div w\in H^{-1}\}.
$$
%Discussions for another type of boundary conditions are similar. 

\subsection{BGG complexes}

  The BGG construction is a machinery to derive other complexes from the de~Rham complexes. We follow and review the constructions in \cite{arnold2021complexes,vcap2022bgg}. Even though we only focus on complexes in 2D and 3D in this case,  the more general differential form point of view shall provide discrete topological inspirations and interpretations for our construction in this paper.

%%%%%%%%%%%%%%%%%%%%%%%%%%%%%%%%%%%%%%%%%%%%%%%%%%%%%%%%%%%

%%%%%%%%%%%%%%%%%%%%%%%%%%%%%%%%%%%%%%%%%%%%%%%%%%%%%%%%%%%
%The setup relevant to the purpose of this paper is the following (see \cite{arnold2021complexes}). 
For $i\ge 0$, let $\alt^i \R^n$ be the space of algebraic $i$-forms, that is, of alternating $i$-linear maps on $\mathbb{R}^{n}$.  We also set $\alt^{i, j}\R^n=\alt^{i}\R^n\otimes \alt^{j}\R^n$, the space of $\alt^{j}\R^n$-valued $i$-forms or, equivalently, the space of $(i+j)$-linear maps on $\mathbb{R}^{n}$ which are alternating in the first $i$ variables and also in the last $j$ variables. A major example in \cite{arnold2021complexes} is constructed from the following diagram:
\begin{equation}\label{diagram-nD}
\begin{tikzcd}[column sep=tiny]
0 \arrow{r} & H^{q}\otimes\alt^{0,0}  \arrow{r}{d} &H^{q-1}\otimes\alt^{1,0}  \arrow{r}{d} & \cdots \arrow{r}{d} & H^{q-n}\otimes \alt^{n,0} \arrow{r}{} & 0\\
0 \arrow{r} & H^{q-1}\otimes\alt^{0,1}  \arrow{r}{d} \arrow[ur, "S^{0,1}"] &H^{q-2}\otimes\alt^{1,1}  \arrow{r}{d}  \arrow[ur, "S^{1,1}"] & \cdots \arrow{r}{d}  \arrow[ur, "S^{n-1,1}"] & H^{q-n-1}\otimes \alt^{n,1} \arrow{r}{} & 0\\[-15pt]
 & \vdots & \vdots & {} & \vdots & {} \\[-15pt]
0 \arrow{r} & H^{q-n+1}\otimes\alt^{0,n-1}  \arrow{r}{d} &H^{q-n}\otimes\alt^{1,n-1}  \arrow{r}{d} & \cdots \arrow{r}{d} & H^{q-2n+1}\otimes \alt^{n,n-1} \arrow{r}{} & 0\\
0 \arrow{r} & H^{q-n}\otimes\alt^{0,n}  \arrow{r}{d} \arrow[ur, "S^{0,n}"] &H^{q-n-1}\otimes\alt^{1,n}  \arrow{r}{d}  \arrow[ur, "S^{1,n}"] & \cdots \arrow{r}{d}  \arrow[ur, "S^{n-1,n}"] & H^{q-2n}\otimes \alt^{n,n} \arrow{r}{} & 0.
\end{tikzcd}
\end{equation}
Here  $s^{i, j}: \alt^{i, j} \R^n\to \alt^{i+1, j-1}\R^n$ are algebraic maps, and $S^{i,j}=I\otimes s^{i,j}:H^q\otimes\alt^{i, j}\to H^q\otimes\alt^{i+1, j-1}$ for any Sobolev order $q$. The $k$-th row of \eqref{diagram-nD} is obtained by tensoring a de~Rham complex with alternating $k$-forms.

The BGG diagram~\eqref{diagram-nD} has the following vector/matrix proxies for $n=3$:
 \begin{equation}\label{diagram-4rows}
\begin{tikzcd}
0 \arrow{r} &H^{q}\otimes \mathbb{R}  \arrow{r}{\grad} &H^{q-1}\otimes \mathbb{V} \arrow{r}{\curl} &H^{q-2}\otimes \mathbb{V} \arrow{r}{\div} & H^{q-3}\otimes \mathbb{R} \arrow{r}{} & 0\\
0 \arrow{r}&H^{q-1}\otimes \mathbb{V}\arrow{r}{\grad} \arrow[ur, "I"]&H^{q-2}\otimes \mathbb{M}  \arrow{r}{\curl} \arrow[ur, "2\vskw"]&H^{q-3}\otimes \mathbb{M} \arrow{r}{\div}\arrow[ur, "\tr"] & H^{q-4}\otimes \mathbb{V} \arrow{r}{} & 0\\
0 \arrow{r} &H^{q-2}\otimes \mathbb{V}\arrow{r}{\grad} \arrow[ur, "-\mskw"]&H^{q-3}\otimes \mathbb{M}  \arrow{r}{\curl} \arrow[ur, "\mathcal{S}"]&H^{q-4}\otimes \mathbb{M} \arrow{r}{\div}\arrow[ur, "2\vskw"] & H^{q-5}\otimes \mathbb{V} \arrow{r}{} & 0\\
0 \arrow{r} &H^{q-3}\otimes \mathbb{R}\arrow{r}{\grad} \arrow[ur, "\iota"]&H^{q-4}\otimes \mathbb{V}  \arrow{r}{\curl} \arrow[ur, "-\mskw"]&H^{q-5}\otimes \mathbb{V} \arrow{r}{\div}\arrow[ur, "I"] & H^{q-6}\otimes \mathbb{R} \arrow{r}{} & 0,
 \end{tikzcd}
\end{equation}
where, for any matrix $u$,  $\mathcal{S}u=u^{T}-\tr(u)I$.

%Beyond de~Rham complexes, BGG complexes play a fundamental role in many circumstances. 
In 3D, there are three basic examples of BGG complexes \cite{arnold2021complexes,vcap2022bgg}:
the {Hessian complex} 
\begin{equation}\label{grad-grad0}
\begin{tikzcd}
0\arrow{r}  & H^{q}\otimes \mathbb{R} \arrow{r}{\hess} & H^{q-2}\otimes \mathbb{S} \arrow{r}{\curl} & H^{q-3}\otimes \mathbb{T} \arrow{r}{\div} & H^{q-4}\otimes \mathbb{V} \arrow{r} & 0,
\end{tikzcd}
\end{equation}
where $\hess:=\grad\grad$;
the {elasticity complex}
 \begin{equation}\label{sequence:hs}
\begin{tikzcd}
0\arrow{r} & H^{q-1}\otimes \mathbb{V} \arrow{r}{{\deff}} & H^{q-2}\otimes \mathbb{S} \arrow{r}{\inc} & H^{q-4}\otimes \mathbb{S} \arrow{r}{\div} & H^{q-5}\otimes \mathbb{V} \arrow{r} & 0,
\end{tikzcd}
\end{equation}
where $\deff:=\sym\grad$ is the linearized deformation (symmetric part of gradient) and $\inc = \curl \mathcal{S}^{-1}\curl$ leads to the linearized Einstein tensor;
and the {$\div\div$ complex}
 \begin{equation}\label{div-div0}
\begin{tikzcd}
0 \arrow{r}& H^{q-2}\otimes \mathbb{V}  \arrow{r}{\dev\grad} & H^{q-3}\otimes \mathbb{T}  \arrow{r}{\sym\curl} & H^{q-4}\otimes \mathbb{S}  \arrow{r}{\div\div} & H^{q-6}\otimes \mathbb{V} \arrow{r} &0.
\end{tikzcd}
\end{equation}
The three basic examples \eqref{grad-grad0}-\eqref{div-div0} are  derived from \eqref{diagram-4rows}. The first two rows imply the Hessian complex \eqref{grad-grad0}; the middle two rows derive the elasticity complex \eqref{sequence:hs}; the last two rows yield the $\div\div$ complex \eqref{div-div0}. The complexes \eqref{grad-grad0}-\eqref{div-div0} are resolutions of finite dimensional spaces 
$$
\mathcal{P}_1=\ker (\hess), \quad \mathcal{RM}:=\ker (\deff), \quad\mathcal{RT}:=\{\bm a+b\bm x: \bm a\in \mathbb{R}^n, b\in \mathbb{R}\}=\ker (\dev\grad), 
$$
%\kh{later we will use these spaces in both 2D and 3D. be careful with notation and definitions. }
%$$
%\mathcal{RT}:=\{\bm a+b\bm x: \bm a\in \mathbb{R}^n, b\in \mathbb{R}\}=\ker (\dev\grad), 
%$$
respectively. Here $ \mathcal{RM}$ and $ \mathcal{RT}$ refer to infinitesimal rigid body motions and the Raviart-Thomas \cite{raviart2006mixed} shape functions.
The cohomology of \eqref{grad-grad0}-\eqref{div-div0} is isomorphic to the de~Rham cohomology with $\mathcal{P}_1$, $\mathcal{RM}$, and $\mathcal{RT}$ coefficients, respectively.

The general idea of deriving a BGG complex from a diagram is to eliminate spaces connected by the algebraic operators as much as possible. We use the elasticity complex \eqref{sequence:hs} to explain this construction. The first connecting map $-\mskw$ is surjective. Therefore we eliminate $H^{q-2}\otimes \mathbb{V}$ and the image of $-\mskw$ (as a subspace of $H^{q-2}\otimes \mathbb{M}$). The remaining part in $H^{q-2}\otimes \mathbb{M}$ is thus symmetric matrices. The second connecting map, $\mathcal{S}$, is bijective. Hence we eliminate the two spaces connected by $\mathcal S$ from the diagram and connect the two complexes by a zig-zag $\curl \circ \mathcal S^{-1}\circ \curl$. Finally, the last operator $2\vskw$ is surjective. We eliminate $H^{q-4}\otimes \mathbb{V}$ and the corresponding part in $H^{q-4}\otimes \mathbb{M}$. Again, this leaves symmetric matrices in the derived complex.

In $\mathbb{R}^{2}$, a skew-symmetric matrix can be identified with a scalar. Using the same notation as in 3D, define $\mskw: \mathbb{R}\to \mathbb{K}$ by 
$$
\mskw(u):= \left ( 
\begin{array}{cc}
0 &u\\
-u & 0
\end{array}
\right )\quad \mbox{in } \mathbb{R}^{2}.
$$
Let $\sskw=\mskw^{-1}\circ \skw: \mathbb{M}\to \mathbb{R}$ be the map taking the skew part of a matrix and identifying it with a scalar (see \cite{arnold2021complexes}).

The 2D version of the diagram \eqref{diagram-4rows} is
\begin{equation}\label{diagram-3rows2D}
\begin{tikzcd}
0 \arrow{r}  &H^{q}\otimes \mathbb{R} \arrow{r}{\grad} &H^{q-1}\otimes \mathbb{V} \arrow{r}{\rot} & H^{q-2}\otimes \mathbb{R} \arrow{r}{} & 0\\
0 \arrow{r} & H^{q-1}\otimes \mathbb{V} \arrow{r}{\grad}\arrow[ur, "I"] &H^{q-2}\otimes \mathbb{M} \arrow{r}{\rot}\arrow[ur, "-2\sskw"] & H^{q-3}\otimes \mathbb{V} \arrow{r}{} & 0\\
0 \arrow{r}  &H^{q-2}\otimes \mathbb{R} \arrow{r}{\grad}\arrow[ur, "\mskw"] &H^{q-3}\otimes \mathbb{V} \arrow{r}{\rot}\arrow[ur, "\mathrm{id}"] & H^{q-4}\otimes \mathbb{R} \arrow{r}{} & 0.
 \end{tikzcd} 
\end{equation}
The derived complexes are a rotated stress complex
\begin{equation} 
\begin{tikzcd}
0\arrow{r}  &H^{q} \arrow{r}{\hess} & H^{q-2}\otimes \mathbb{S}  \arrow{r}{\rot} &H^{q-3}\otimes \mathbb{V}   \arrow{r}{} & 0,
\end{tikzcd}
\end{equation}
and the strain complex
\begin{equation} 
\begin{tikzcd}
0\arrow{r}  &H^{q-1} \otimes \mathbb{V} \arrow{r}{\sym\grad} & H^{q-2}\otimes \mathbb{S}  \arrow{r}{\rot\rot} &H^{q-4}   \arrow{r}{} & 0.
\end{tikzcd}
\end{equation}

The BGG framework provides a differential form interpretation for the Hessian, elasticity, and $\div\div$ complexes. In 3D, they can be sketched as follows:
\begin{equation} 
\begin{tikzcd}
0\arrow{r}  &\Lambda^{0, 0} \arrow{r}{d\circ I\circ d} & \Lambda^{1, 1} \arrow{r}{d} & \Lambda^{2, 1} \arrow{r}{d} & \Lambda^{3, 1}\arrow{r} & 0,
\end{tikzcd}
\end{equation}
\begin{equation} \label{elasticity-Lambda}
\begin{tikzcd}
0\arrow{r}  &\Lambda^{0, 1} \arrow{r}{d} & \Lambda^{1, 1} \arrow{r}{d\circ \mathcal S^{-1}\circ d} & \Lambda^{2, 2} \arrow{r}{d} & \Lambda^{3, 2}\arrow{r} & 0,
\end{tikzcd}
\end{equation}
\begin{equation} 
\begin{tikzcd}
0\arrow{r}  &\Lambda^{0, 2} \arrow{r}{d} & \Lambda^{1, 2} \arrow{r}{d} & \Lambda^{2, 2} \arrow{r}{d\circ I\circ d} & \Lambda^{3, 3}\arrow{r} & 0.
\end{tikzcd}
\end{equation}

In this paper, we mainly consider the following Sobolev versions of the BGG complexes:
 \begin{equation} \label{cplx:hessian-sobolev}
\begin{tikzcd}
0\arrow{r}  &H^1 \arrow{r}{\hess} & H^{-1}(\curl, \mathbb{S}) \arrow{r}{\curl} & H^{-1}(\div, \mathbb{T}) \arrow{r}{\div} & H^{-1}\otimes \mathbb{V}\arrow{r} & 0,
\end{tikzcd}
\end{equation}
 \begin{equation} \label{cplx:elasticity-sobolev}
\begin{tikzcd}
0\arrow{r}  &H^1\otimes \mathbb{V} \arrow{r}{\deff} & H^{0, -1}(\inc, \mathbb{S}) \arrow{r}{\inc} & H^{-1}(\div, \mathbb{S}) \arrow{r}{\div} & H^{-1}\otimes \mathbb{V}\arrow{r} & 0,
\end{tikzcd}
\end{equation}
 \begin{equation} \label{cplx:divdiv-sobolev}
\begin{tikzcd}
0\arrow{r}  &H^1 \arrow{r}{\dev\grad} & H(\sym\curl, \mathbb{S}) \arrow{r}{\sym\curl} & H^{0, -1}(\div\div, \mathbb{T}) \arrow{r}{\div\div} & H^{-1}\otimes \mathbb{V}\arrow{r} & 0.
\end{tikzcd}
\end{equation}
Note that \eqref{cplx:hessian-sobolev}-\eqref{cplx:divdiv-sobolev} have a symmetric pattern. First, we observe that $H^{1}=H^{-1}(\hess)$ (by the Lions lemma \cite{lions2012non,ciarlet2013linear}, $\hess u\in H^{-1}$ implies that $u\in H^{1}$), and thus all the spaces in \eqref{cplx:hessian-sobolev} has the form of $H^{-1}(D)$, where $D$ is the corresponding BGG differential operator. Second, due to Theorem \ref{thm:decomp-duality}, the divdiv complex \eqref{cplx:divdiv-sobolev} is dual (adjoint) to the compactly supported version of \eqref{cplx:hessian-sobolev}. Finally, the elasticity complex \eqref{cplx:elasticity-sobolev} has a self-dual pattern: $H^1(\mathbb{V})^{\ast}=H_0^{-1}(\mathbb{V})$, and $H^{0, -1}(\inc, \mathbb{S})^{\ast}=H^{-1}_0(\div, \mathbb{S})$. %It seems promising to generalize such patterns to BGG sequences. However, we will not pursue further as in this paper we focus on discrete complexes in 2D and 3D.

All these complexes also have compactly supported versions. For example, another version of \eqref{cplx:hessian-sobolev} is
 \begin{equation} \label{eq:hessian_lowreg-exact}
\begin{tikzcd}
0\arrow{r}  &H^1_{0} \arrow{r}{\hess} & H^{-1}_{0}(\curl, \mathbb{S}) \arrow{r}{\curl} & H^{-1}_{0}(\div, \mathbb{T}) \arrow{r}{\div} & H^{-1}_{0}\otimes \mathbb{V}\arrow{r} & 0.
\end{tikzcd}
\end{equation}

To clarify notations in 2D, we define $\curl u = [-\frac{\partial u}{\partial y}, \frac{\partial u}{\partial x}]$, and $\rot([w_x, w_y]) = \frac{\partial w_y}{\partial x} - \frac{\partial w_x}{\partial y}.$ By definition, $\rot$ and $-\curl$ are formal adjoints to each other. Moreover, we define $([x,y])^{\perp} = [-y, x]$. 
Therefore, $\curl u = (\grad u)^{\perp}$ and $\rot(\bm w) = \div(\bm w^{\perp})$. For each edge we assume that $\bm n_e = \bm t_e^{\perp}$. It is easy to see that $\curl u \cdot \bm n_e = \grad u \cdot \bm t_e$. For matrices, we define $\curl$ and $\rot$ rowwise.

In 2D, we consider the (rotated) stress complex
 \begin{equation} \label{cplx:stress-sobolev}
\begin{tikzcd}
0\arrow{r}  &H^1 \arrow{r}{\hess} & H^{-1}(\curl, \mathbb{S}) \arrow{r}{\rot} & H^{-1}(\mathbb{V})\arrow{r}{} &  0,
\end{tikzcd}
\end{equation}
and the stain complex
 \begin{equation} \label{cplx:strain-sobolev}
\begin{tikzcd}
0\arrow{r}  &H^1 \arrow{r}{\deff} & H^{0, -1}(\rot\rot, \mathbb{S})\arrow{r}{\rot\rot} & H^{-1}\arrow{r} & 0.
\end{tikzcd}
\end{equation}
Now the two complexes \eqref{cplx:stress-sobolev} and \eqref{cplx:strain-sobolev} are dual (adjoint) to each other when one of them has compact support.

\section{Main results}
\label{sec:main-result}
In this section, we summarize the main results of this paper. 

We first introduce the Dirac delta located on (sub)simplices. Given a simplex $\sigma$, we define the scalar delta $\delta_{\sigma}$ by 
$$\langle \delta_{\sigma}, u \rangle = \int_{\sigma} u,$$
and the vector delta $\delta_{\sigma}[\bm a]$ by 
$$\langle \delta_{\sigma}[\bm a], \bm v \rangle  = \int_{\sigma} \bm a \cdot \bm v,$$
and the tensor (matrix) delta $\delta_{\sigma}[\bm A]$ by 
$$\langle \delta_{\sigma}[\bm A], \bm W \rangle = \int_{\sigma} \bm A : \bm W,$$
for smooth function $u$, vector field $\bm v$, and tensor field $\bm W$, respectively. Here $\bm a\in \mathbb R^n$ and $\bm A\in \mathbb R^{n\times n}$ for $n = 2,3$. We say that $\delta_{\sigma}[\bm A]$ is symmetric (traceless) if $\bm A$ is symmetric (traceless). {We will often use $\delta_{\sigma}[\bm u \otimes \bm v]$, which by definition is $\bm W \mapsto \int_{\sigma} \bm W: (\bm u\otimes \bm v) = \int_{\sigma} \bm u\cdot \bm W \cdot \bm v$. It is symmetric if $\bm u = \bm v$, and it is traceless if $\bm u \perp \bm v$.}

\subsection{Motivation: Discrete Exterior Calculus (DEC) interpretations of the distributional de~Rham and Regge complexes}

We first provide some remarks for the de~Rham complex as a motivation.
The finite element de~Rham complex (Whitney forms, Figure \ref{fig:deRham-whitney}) can be viewed as a discretization for
\begin{equation}\label{cplx:3D-Whitney-sobolev}
\begin{tikzcd}
0 \arrow{r} & H^1 \arrow{r}{\grad} & H(\curl) \arrow{r}{\curl} & H(\div)\arrow{r}{\div}& L^2\arrow{r}&0,
\end{tikzcd}
\end{equation}
while the distributional complex (Figure \ref{fig:deRham-distributional}) is a slightly nonconforming discretization for 
\begin{equation}\label{cplx:3D-Whitney-sobolev0}
\begin{tikzcd}
0 \arrow{r} & L^2 \arrow{r}{\grad} & H^{-1}_0(\curl) \arrow{r}{\curl} & H_0^{-1}(\div)\arrow{r}{\div}& H_0^{-1}\arrow{r}&0,
\end{tikzcd}
\end{equation}
or equivalently, due to the duality \eqref{duality-sobolev},
\begin{equation}\label{cplx:3D-Whitney-sobolev-2}
\begin{tikzcd}
0 \arrow{r} & {[L^2]}^\ast \arrow{r}{\grad} & {[H(\div)]}^\ast \arrow{r}{\curl} & {[H(\curl)]}^\ast\arrow{r}{\div}& {[H^1]}^\ast\arrow{r}&0.
\end{tikzcd}
\end{equation}
The nonconformity is due to the fact that the canonical degrees of freedom (integrating $k$-forms on $k$-chains) is not well-defined for $L^2$ spaces.

We observe that the distributional de~Rham complex  (Figure \ref{fig:deRham-distributional}) is formally a reversed version of the standard de~Rham finite element complex, i.e., the $k$-th space is discretized on $(n-k)$-cells. In the Discrete Exterior Calculus (DEC) perspective, this means that $k$-forms are discretized on dual $k$-chains and the operators become discrete exterior derivatives on the dual mesh in the usual sense (see Figure \ref{fig:dec}). 
 This leads to another perspective of the distributional complexes: {\it DEC uses dual of meshes, while distributional finite elements use dual of spaces and operators}. This establishes a link between DEC and finite elements. In DEC, there are different ways to define dual meshes, which might affect the convergence; while for distributional finite elements, there is a canonical definition for the distributional derivatives.  Note that the dual of a simplicial mesh is not simplicial. Therefore, there are no trivial generalizations of Whitney forms (shape functions) on the dual mesh.  

\begin{figure}[htbp] %  figure placement: here, top, bottom, or page
   \centering
  \includegraphics[width=0.8\textwidth]{./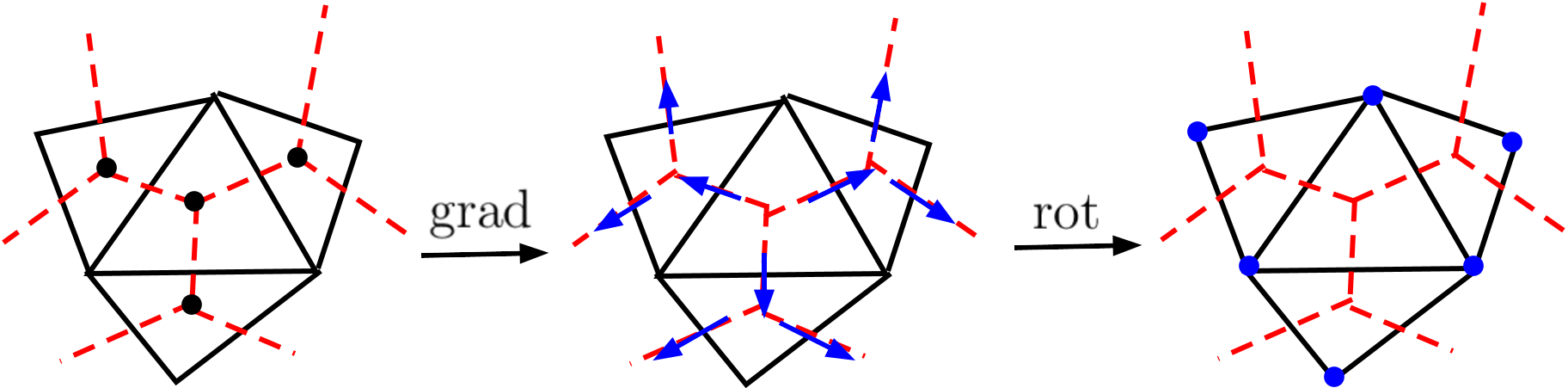} 
   \caption{Discrete Exterior Calculus perspective of a distributional de~Rham complex in 2D. The black cells form the primal mesh, and the red dashed lines form the dual mesh. The degrees of freedom and Dirac delta in the distributional complex correspond to DEC discrete forms on the dual mesh. Alternatively, one may view the red cells as primal and black as dual. Then the distributional spaces correspond to discrete forms on the primal mesh.}
   \label{fig:dec}
\end{figure}

% , i.e., the complex \eqref{cplx:3D-Whitney-dual} corresponds to discrete differential forms and exterior derivatives on the dual mesh. From this point of view, the spaces in \eqref{cplx:3D-Whitney-dual} can be viewed as dual $k$-forms on the dual mesh, and the operators become discrete exterior derivatives on the dual mesh in the normal sense. This establishes a link between Discrete Exterior Calculus and finite elements. Note that the dual of a simplicial mesh is not simplicial. Therefore there are no trivial generalizations of Whitney forms (shape functions) on the dual mesh. [Insert Figure]

To motivate the construction in this paper for other BGG complexes, we make some observations on the Regge complex (Figure \ref{fig:regge}) from a discrete form point of view. 
  We denote the Regge complex (Figure \ref{fig:regge}, \cite{christiansen2011linearization}) as follows (compared with \eqref{elasticity-Lambda}):
\begin{equation}\label{cplx:regge}
\begin{tikzcd}
0\arrow{r}&\Lambda_{h}^{0, 1} \arrow{r}{\deff} & \Lambda_{h}^{1, 1} \arrow{r}{\inc} &\Lambda_{h}^{2, 2} \arrow{r}{\div} &\Lambda_{h}^{3, 2} \arrow{r}{} &0.
\end{tikzcd}
\end{equation}
Here, $\Lambda_{h}^{0, 1}$ is a vector-valued version of the Lagrange finite element; $\Lambda_{h}^{1, 1}$ consists of Regge metrics, which are piecewise constant symmetric matrix fields, and the degrees of freedom are given by the lengths of edges $\int_{e}\bm t_{e}\cdot \bm g\cdot \bm t_{e}$; $\Lambda_{h}^{2, 2}$ is the dual of $\Lambda_{h}^{1, 1}$, which consists of tangential-tangential Dirac delta on the edges; and $\Lambda_{h}^{3, 2}$ is the dual of the Lagrange element (vertex evaluation). 
The Regge complex \eqref{cplx:regge} is a slightly nonconforming discretization of \eqref{cplx:elasticity-sobolev}, and is self-adjoint: 
 $\Lambda_{h}^{3, 2}$ is the dual of $\Lambda_{h}^{0, 1}$ and $\Lambda_{h}^{2, 2}$ is the dual of $\Lambda_{h}^{1, 1}$. This is consistent with the structure in \eqref{cplx:elasticity-sobolev}. %Moreover, the degrees of freedom have a discrete topology interpretation. 

%\subsection{Review: Regge element and elasticity complex}

%the regge complex can be regarded as a discretization of sobolev complex ...

%The Regge complex \eqref{cplx:regge} has two versions, depending on the boundary conditions. Corresponding to \eqref{cplx:elasticity-sobolev}, no boundary conditions are imposed on the first two spaces in \eqref{cplx:regge}. The subsequent two spaces consists of Dirac delta on interior edges and at interior vertices, respectively. This is consistent with the fact that $H^{-1}_0(\div, \mathbb{S})$ and $H^{-1}_{0}(\mathbb{V})$ are dual of spaces without boundary conditions. Another version of the Regge complex starts with Lagrange and Regge elements with homogeneous boundary conditions. Then the rest two spaces also include boundary modes. 

%\subsection{Main results: BGG complexes in 2D and 3D}

%The main result of this paper is a canonical construction of distributional BGG complexes in 2D and 3D, extending the Regge sequence to the Hessian and the divdiv complexes. 

On the continuous level, the sequence \eqref{elasticity-Lambda} starts with $\Lambda^{0, 1}$ (1-form-valued 0-form). In the Regge complex, this is discretized by a vector-valued Lagrange element and can be viewed as attaching an alternating 1-form at each 0-chain (recalling that in de~Rham sequences, one attaches a scalar value, or alternating 0-form, to each 0-chain). The discrete version of $\Lambda^{1, 1}$ in \eqref{elasticity-Lambda} is the Regge metric. The shape functions are $\bm t_{e}\odot \bm t_{e}, e\in \E$, and the degrees of freedom can be viewed as $\omega_{e}\odot \omega_{e}$, where $\omega_{e}$ is the dual 1-form of the constant vector field $\bm t_{e}$. This is a natural extension of discrete 1-forms in the de~Rham case. The third space, $\Lambda^{2, 2}$ seems rather different from the de~Rham case, as 1) we have distributions here, rather than piecewise functions; 2) the degrees of freedom are located on edges, rather than on faces as de Rham 2-forms in 3D are. Our interpretation of this difference is that, in the extended picture for \eqref{elasticity-Lambda}, following the zig-zag in the BGG diagram (which leads to the second-order operator $\inc$), we {\it move to the dual mesh}. Tangential-tangential components on the primal edges correspond to the normal-normal components on the dual mesh. Therefore, we interpret their space in the Regge complex (Figure \ref{fig:regge}) as $\tilde{\bm n}_{e}\odot \tilde{\bm n}_{e}$, where $\tilde{\bm n}_{e}= \bm t_{e}$ is the normal vector on the dual faces or the tangent vector on the primal edges. This thus has a neat correspondence to the de~Rham 2-forms in 3D, which are discretized in the normal direction of faces. Moreover, we can further interpret $\tilde{\bm n}_{e}\odot \tilde{\bm n}_{e}$ as $(\tilde{\bm t}_{e}^{1}\wedge \tilde{\bm t}_{e}^{2})\odot (\tilde{\bm t}_{e}^{1}\wedge \tilde{\bm t}_{e}^{2})$, where $\tilde{\bm t}_{e}^{1}$ and $\tilde{\bm t}_{e}^{2}$ are the two tangent vectors of the dual face of $e$, and $[\tilde{\bm t}_{e}^{1}, \tilde{\bm t}_{e}^{2}, \tilde{\bm n}_{e}]$ forms an orthonormal frame on the face dual to $e$. This interpretation allows us to view the third space in the Regge complex as a 2-form-valued 2-forms, or a 4-tensor which is skew-symmetric with respect to the first two legs and with respect to the last two legs, with the additional symmetry that we can exchange the first two indices with the last two. This is exactly the symmetry of the Riemannian tensor.  In fact, this corresponds to the identification of the Riemannian tensor with the Ricci or Einstein tensor in 3D.

\subsection{Hessian complex in 3D}
Inspired by these observations, we establish Hessian and divdiv complexes. For the Hessian complex, the Lagrange element is a natural discretization for $\Lambda^{0, 0}$ as it assigns a scalar value to each vertex (0-chain). In the Regge complex, we see that one turns to the dual mesh following the zig-zag ($\inc=\curl \circ \mathcal S^{-1}\circ \curl$). Correspondingly, in the Hessian complex, a natural guess is that $\hess=\grad\circ I \circ \grad$ takes us to the dual mesh, leading to discretizing $\Lambda^{1, 1}$ on the tangential direction of dual 1-chain, i.e., the normal direction of the primal 2-chains.  In fact, we verify below that this is the case. As a summary, our discrete Hessian complex is the following (see Figure \ref{fig:3D-hessian}): 
\begin{equation}\label{cplx:hessian-3d}
\begin{tikzcd}
0\arrow{r}&V^0 \arrow{r}{\hess} & \bm V^1 \arrow{r}{\curl} &\bm V^2\arrow{r}{\div} &\bm V^3 \arrow{r}{} &0.
\end{tikzcd}
\end{equation}
 {This is a discrete version of \eqref{cplx:hessian-sobolev}.}

\begin{figure}[htbp] %  figure placement: here, top, bottom, or page
   \centering
  \includegraphics[width=0.8\textwidth]{./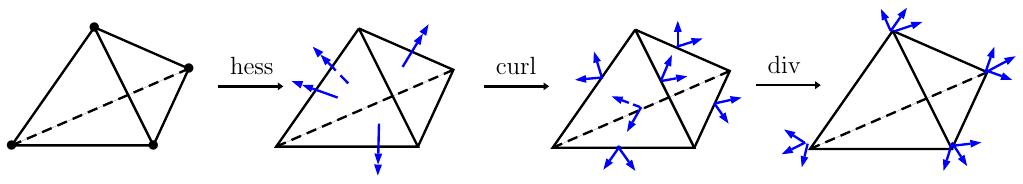} 
   \caption{Hessian complex in 3D \eqref{cplx:hessian-3d}.}
   \label{fig:3D-hessian}
\end{figure}

Now we construct the spaces $V^0$, $\bm V^1$, $\bm V^2$, and $\bm V^3$ using vector and tensor deltas.

The space $V^0$ is chosen as the standard Lagrange space, namely,
\begin{equation}
V^0 := \{ u \in C^0(\Omega) : u|_{K} \in \mathcal P_1(K), \text{ for each cell } K \in \mathsf K\}.
\end{equation}
The space $\bm V^1$ consists of normal-normal deltas on faces:
$$\bm V^1 := \Span\{ \delta_{f}[\bm n_{f} \otimes \bm n_{f}] ; f \in \mathsf F_0\},$$
where $\bm n_{f}$ is the normal vector of $f$. Clearly, the dimension of $\bm V^1$ is $\dim \bm V^1 = \sharp_{\mathsf F_0}.$
% Within this subsection, for simplicity, we do not distinguish the difference between the coefficient expression $\{a_{f}\}_{f}$ and the element $\sum_{f \in \mathsf F_0} a_{f} \delta_{f}^{\bm n, \bm n}$.
The space $\bm V^2$ consists of normal-tangential deltas on edges:

\begin{equation}
\bm V^2 := \Span\{ \delta_{e}[\bm m_{e}\otimes \bm t_{e}] :~ \bm m_{e} \text{ is normal to } e, ~e \in \mathsf E_{0}\}.
\end{equation}

Note that for each internal edge $e$, the normal vector (with respect to $e$) forms a two-dimensional space. We choose a basis $\bm n_{e, +}$ and $\bm n_{e,-}$ of this space. The space $\bm V^2$ can be equivalently reformulated as 
\begin{equation}
    \bm V^2 = \Span\{ \delta_{e}[\bm n_{e,+}\otimes \bm t_{e}], \,  \delta_{e}[\bm n_{e,-}\otimes \bm t_{e} ] :~e \in \mathsf E_{0}\}.
\end{equation}

%By \Cref{rmk:tensor-delta}, the distributions in $\bm V^2$ are 
As a consequence, the dimension of $\bm V^2$ is $\dim \bm V^2 = 2\sharp_{\mathsf E_0}$.

Finally,  $\bm V^3$ consists of the vector-valued delta at interior vertices: 
$$\bm V^3 := \Span\{ \delta_{ x}[\bm a_{ x}], \bm a_{ x} \in \mathbb R^3: x\in \mathsf V_{0}\}.$$

We have explicit forms of the differential operators on these function or distributional spaces, which we will verify using definitions of distributional derivatives. 

%\revise{Henceforth, we identify $u$ and $\kappa^0_-(u) = \sum_{K \in \mathsf K} u|_{K} \|K\|.$}
We can identify $u$ with $\sum_{K \in \mathsf K} u|_{K} \|K\|$. For $u\in V^{0}$, 
\begin{equation}
\hess u = \sum_{f \in \mathsf F_0} \delta_{f} \big[ [\![\nabla u ]\!]_{f} \otimes \bm n_{f}\big].
\end{equation}
Here $[\![\nabla u ]\!]_{f} = \sum_{K : K \supset f} \mathcal O(f,K) \nabla u|_K$ is the jump of $\nabla u$ on $f$, which has vanishing tangential components. 
For $ \bm \sigma = \sum_{f \in \mathsf f_0} \bm \delta_{f}[\bm m_{f} \otimes \bm n_{f}] \in \bm V^{1}$ with $\bm m_f$ normal to $f$, 
we have
    \begin{equation}
        \label{eq:curlV1}
        \curl \bm \sigma = \sum_{e \in \mathsf E_0} \sum_{f:f \supset e} \mathcal O(e, f) \delta_{e}[\bm m_{f} \otimes \bm t_{e}].
    \end{equation}
   The condition $f \supset e$ implies that $\bm m_{f}$ is normal to the edge $e$. Consequently, we have $\curl \bm V^1 \subseteq \bm V^2$.
    Let $\bm v = \sum_{e \in \mathsf E_0} \bm \delta_{e}[\bm m_{e} \otimes \bm t_{e}]\in \bm V^2$,
    with $\bm m_{e}$ normal to the edge $e$. 
    Then, it holds that 
    \begin{equation}\label{eq:div-formula}
        \div \bm v = -\sum_{x \in \mathsf V_0} \sum_{e:e \ni x} \mathcal O(x, e)\delta_{x}[\bm m_{e}] .
    \end{equation}
    This implies that, $\div \bm V^2 \subseteq \bm V^3.$
    % Equivalently,

\begin{theorem}
    \label{thm:hom-hessian-3d}
The sequence \eqref{cplx:hessian-3d} is a complex, and its cohomology is isomorphic to the $\mathcal{P}_{1}$-valued de Rham cohomology $ \mathcal{H}^{\bs}_{dR}(\Omega)\otimes \mathcal{P}_{1}$.
\end{theorem}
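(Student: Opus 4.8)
The plan is to first verify directly, from the explicit formulas recorded above for $\hess$, $\curl$ and $\div$, that \eqref{cplx:hessian-3d} is a cochain complex, and then to compute its cohomology by relating it --- via the BGG mechanism that produces \eqref{grad-grad0} from the top two rows of \eqref{diagram-4rows}, now carried out at the discrete level --- to discrete de~Rham complexes whose cohomology is known in terms of (relative) simplicial (co)homology, hence by \Cref{UCT} and \Cref{thm:deRham} in terms of $\mathcal H^{\bs}_{dR}(\Omega)$.

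That \eqref{cplx:hessian-3d} is a complex is essentially built into the formulas: the inclusions $\hess V^0\subseteq\bm V^1$, $\curl\bm V^1\subseteq\bm V^2$ and $\div\bm V^2\subseteq\bm V^3$ are exactly what the expression for $\hess u$, \eqref{eq:curlV1} and \eqref{eq:div-formula} assert (a continuous piecewise-linear function has continuous tangential derivative, so $\jump{\nabla u}_f\parallel\bm n_f$; and $f\supset e$ forces $\bm m_f\perp e$, so that $\bm V^1$ consists of symmetric and $\bm V^2$ of trace-free deltas, matching the spaces of \eqref{cplx:hessian-sobolev}), while $\curl\circ\hess=\curl\,\grad\,\grad$ and $\div\circ\curl$ vanish because these are the genuine distributional operators restricted to the discrete spaces --- or one checks the cancellation of coefficients combinatorially, the cancellation being the simplicial identity $\sum_{e\subset f\subset K}\mathcal O(e,f)\mathcal O(f,K)=0$ and its vertex--edge--face analogue. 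In degree zero this already shows that $H^0$ consists of the globally affine functions, which is $\cong\mathcal P_1$ when $\Omega$ is connected, consistent with $\mathcal H^0_{dR}(\Omega)\otimes\mathcal P_1$.

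For the remaining cohomology, the approach is to exhibit \eqref{cplx:hessian-3d} as the reduced (zig-zag) complex of a finite-dimensional discrete BGG diagram whose two rows are discrete de~Rham complexes --- a conforming Whitney-type one carrying the scalar ($\alt^0$) coefficient, and a distributional, Braess--Sch\"oberl--Licht type one \cite{braess2008equilibrated,licht2017complexes} on the interior subsimplices carrying the vector ($\alt^1$) coefficient, the latter supporting the normal-bundle (``dual-mesh'') data that make up $\bm V^1$, $\bm V^2$, $\bm V^3$ --- linked by the discrete algebraic operators $I$, $2\vskw$, $\tr$. One must then (a) check that the zig-zag elimination reviewed after \eqref{div-div0} --- discard $\im(-\mskw)$, the skew part killed by $2\vskw$; discard the trace part killed by $\tr$; identify the spaces paired by the isomorphism $I$, which produces the second-order leg $\hess=\grad\circ\grad$ --- reproduces \eqref{cplx:hessian-3d} exactly, and (b) supply the homological input making the reduction cohomology-preserving, namely that the discrete algebraic maps are, on the appropriate sub-bundles and cell by cell, surjective (resp.\ bijective) with the expected kernels, and that they induce the zero map on the de~Rham cohomology of the two rows (the discrete counterpart of the corresponding property in \cite{arnold2021complexes,vcap2022bgg}). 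Granting (a)--(b), $H^{\bs}$ of \eqref{cplx:hessian-3d} is governed by the long exact sequence relating the cohomology of the two rows to that of the total complex; since, by the known identification of finite element de~Rham complexes with (relative) simplicial (co)homology together with \Cref{UCT} and \Cref{thm:deRham}, the two rows carry $\mathcal H^{\bs}_{dR}(\Omega)$ with $\mathbb R$- and $\mathbb V$-coefficients respectively, and the connecting maps act trivially, that sequence splits and yields $\mathcal H^{\bs}_{dR}(\Omega)\otimes(\mathbb R\oplus\mathbb V)=\mathcal H^{\bs}_{dR}(\Omega)\otimes\mathcal P_1$, because $\mathcal P_1=\ker(\hess)$ sits in $0\to\mathbb R\to\mathcal P_1\to\mathbb V\to 0$ (constants and constant gradients). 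An alternative to the bicomplex is to identify each $H^k$ of \eqref{cplx:hessian-3d} directly, from the explicit differentials, with a relative, normal-bundle-twisted simplicial (co)homology group and then apply the same two topological theorems.

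I expect the real work --- and the main obstacle --- to be item (b) together with the bookkeeping needed so that the two rows, with the correct supports and boundary conditions, are glued by the vertical maps into precisely \eqref{cplx:hessian-3d}. One has to reconcile the ``dual-mesh'' structure --- the two-dimensional normal space of an interior edge being the tangent plane of its dual face, and likewise for vertices --- with the relative-homology orientation conventions $\mathcal O$ versus $\mathcal O_0$ of \Cref{sec:topology}, and to verify cell by cell that $2\vskw$ and $\tr$ are surjective with kernels exactly the skew and trace-free parts: the naive tensor products of the standard Whitney and distributional complexes do not even have matching dimensions, so one must restrict to the right sub-bundles before the algebraic maps become bijective, and then feed this into the (perturbed) homological-algebra machinery. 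This local unisolvence and the ensuing diagram chase are the technical content deferred to \Cref{sec:3d}, with the lighter 2D version underlying the corresponding 2D statement.
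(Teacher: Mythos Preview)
Your verification that \eqref{cplx:hessian-3d} is a complex is essentially the paper's, and correct.

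For the cohomology, your primary route --- building a discrete BGG bicomplex from a scalar Whitney row and a vector-valued distributional row, linked by discrete $I$, $2\vskw$, $\tr$ --- is genuinely different from what the paper does, and it runs into exactly the obstruction you flag: the discrete spaces that would sit at the spots joined by the isomorphism $I$ (Whitney $1$-forms on one side, vector-valued vertex data on the other) have no reason to match, so the zig-zag elimination cannot be carried out cell by cell. You acknowledge this but do not resolve it; without an explicit construction of the two rows and the connecting maps with the correct kernels/images, the argument is a programme, not a proof. Your fallback --- identifying each $H^k$ directly with a normal-bundle-twisted relative simplicial group --- is closer in spirit but also not what is done.

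The paper sidesteps the bicomplex entirely. It introduces an \emph{auxiliary} complex $V^{\bs}_-$ whose first space is the \emph{discontinuous} piecewise-linear space $C^{-1}\mathcal P_1$, and whose later spaces consist of distributions $\hat{\bm v}^k_\sigma[p]$ on $(3-k)$-simplices labelled by a full bivariate $p\in\mathcal P_1$ (not just the normal component). A direct computation shows that the maps $\kappa^k_-:\hat{\bm v}^k_\sigma[p]\mapsto p\,\|\sigma\|$ give a cochain isomorphism from $V^{\bs}_-$ to the relative simplicial chain complex $C_{3-\bs}(\Delta,\mathcal P_1;\partial\Delta)$, whence $\mathcal H^k(V^{\bs}_-)\cong\mathcal H^k_{dR}(\Omega)\otimes\mathcal P_1$ by \Cref{UCT} and \Cref{thm:deRham}. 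The original complex \eqref{cplx:hessian-3d} is then recovered as the kernel of the restriction maps $g^k:\hat{\bm v}^k_\sigma[p]\mapsto p|_\sigma$, whose cokernel complex $\bigoplus_\sigma\mathcal P_1(\sigma)$ (with the restricted boundary $\widetilde\partial$) is shown, by a vertex-patch decomposition, to have cohomology concentrated in degree~$0$ equal to $V^0$. The long exact sequence of this short exact sequence of complexes then gives the result.

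The conceptual gain of the paper's approach is that the $\mathcal P_1$-coefficient appears \emph{once}, directly on the simplicial chains, rather than being assembled as $\mathbb R\oplus\mathbb V$ from two separate de~Rham rows; this avoids the dimension-matching problem altogether. Your BGG picture is the right heuristic for \emph{designing} the spaces, but the paper's proof technique is an auxiliary-complex/snake-lemma argument, not a discrete BGG reduction.
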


%\lt{I add some geometric interpretation here/}
We now give a geometric interpretation of the distribution spaces defined above. The space $\bm V^1$ can be regarded as a  direct sum of $\mathcal P_0^n(f) \|f\|$, where $\mathcal P_0^n(f) := \{ \bm a \in \mathbb R^3 : \bm a \text{ is normal to } f\}.$ Similarly, we can define $\mathcal P_0^n(e)$ as $\mathcal P_0^n(e) := \{\bm a \in \mathbb R^3 : \bm a \text{ is normal to }e\}$, and $\mathcal P_0^n(x) = \mathcal P_0 \otimes \mathbb R^3.$ 
Therefore, there is an isomorphism from the distribution spaces $\bm V^i$ and the direct sum of $\mathcal P_0^n$'s. 

\begin{equation}\label{discrete-geo-hess}
\begin{tikzcd}
0 \ar[r] 
&
  \bm V^1 \ar[r]  \ar[d]
  & 
  \bm V^2 \ar[r] \ar[d]
  & 
  \bm V^3 \ar[r] \ar[d]
  & 
  0 
  \\
0 \ar[r] 
& 
\bigoplus_{f \in \mathsf F_0} \mathcal P_0^n(f) \ar[r,"\partial"] 
&  
\bigoplus_{e\in \mathsf E_0} \mathcal P_0^n(e) \ar[r,"\partial"] 
&
\bigoplus_{x \in \mathsf V_0} \mathcal P_0^n(x) \ar[r] 
& 
0.
\end{tikzcd}
\end{equation}
Here the boundary operators in the bottom row are the standard boundary operators (see Section \ref{sec:topology} and \cite{arnold2018finite,hatcher2002algebraic}). For the bottom row of \eqref{discrete-geo-hess},  we note the fact $\mathcal P_0^n(f) \subset \mathcal P_0^n(e)$ and identify $\mathcal P_0^n(f)$ as subspaces of $\mathcal P_0^n(e)$ in the definition of the boundary operators, see \Cref{rmk:subspace-diff}. 
%\kh{does $V^0$ fit in the above diagram?}\lt{Good question. There is some subtly here, leading to some additional treatment when chasing the diagrams.}

We also have another version of the Hessian complex with homogeneous boundary conditions:
\begin{equation}\label{cplx:hessian0-3d}
\begin{tikzcd}
0\arrow{r}&V^0_{0} \arrow{r}{\hess_0} & \bm V^1_{0} \arrow{r}{\curl_0} &\bm V^2_{0}\arrow{r}{\div_0} &\bm V^3_{0} \arrow{r}{} &0.
\end{tikzcd}
\end{equation}
 {
Here the differential operators call for more explanations. Note that \eqref{cplx:hessian0-3d} is a discrete version of \eqref{eq:hessian_lowreg-exact}, and the distributions in \eqref{eq:hessian_lowreg-exact} are defined as a space of distributions in $\mathbb R^n$. Therefore, the differential operators are defined with test functions from $C_c^{\infty}(\mathbb R^n)$.  
To emphasize the difference, we will use the subscript $0$ to represent the differential operators in $H_0^s = H_{\overline{\Omega}}^s$ sequence. More specifically, we define 
$$\langle \hess_0 u, \bm \sigma \rangle = \langle u, \div\div \bm \sigma \rangle, \quad \forall \bm \sigma \in C_c^{\infty}(\mathbb R^3; \mathbb S), $$
and similarly we define $\curl_0$ and $\div_0$ using test functions in $C_c^{\infty}(\mathbb R^3; \mathbb T)$ and $C_c^{\infty}(\mathbb R^3; \mathbb R^3)$.

A significant difference is that functions in $C_c^{\infty}(\Omega)$ have vanishing traces in $\partial \Omega$ while the traces of functions in $C_c^{\infty}(\mathbb R^3)$ are not necessarily zero. This implies that the operators $\hess_0$, $\curl_0$ and $\div_0$ might lead to boundary terms, which disappear in the spaces in \eqref{cplx:hessian-3d}. 

The observation above motivates us to define $\bm V_0^1$, $\bm V_0^2$, and $\bm V_0^3$ via modifications of boundary terms. Let $$\bm V^1_0 := \Span\{ \delta_{f}[\bm n_{f} \otimes \bm n_{f}]: f \in \mathsf F\},$$
$$
    \bm V^2_0 := \Span\{ \delta_{e}[\bm m_{e} \otimes \bm t_{e}] :~ \bm m_{e} \text{ is normal to } e, ~e \in \mathsf E\},
$$
and $$\bm V^3_0 := \Span\{ \delta_{ x}[\bm a_{ x}]: \bm a_{ x} \in \mathbb R^3, x\in \mathsf V\}.$$
Finally, $V_0^0$ is the piecewise linear Lagrange element space with homogeneous boundary conditions. 
\begin{theorem}
    \label{thm:hom-hessian0-3d}
The sequence \eqref{cplx:hessian0-3d} is a complex, and its cohomology is isomorphic to $ \mathcal{H}^{\bs}_{dR,c}(\Omega)\otimes \mathcal{P}_{1}$.
\end{theorem}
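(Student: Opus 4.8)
The plan is to mirror the argument for Theorem~\ref{thm:hom-hessian-3d}, the only substantive change being a systematic bookkeeping of boundary contributions. Since $\hess_0,\curl_0,\div_0$ are defined by duality against test fields in $C_c^\infty(\mathbb R^3)$ rather than $C_c^\infty(\Omega)$, integrating by parts over $\Omega$ now produces surface integrals on $\partial\Omega$; these are exactly accounted for by (i) allowing Dirac masses on boundary subsimplices (sums over $\mathsf F,\mathsf E,\mathsf V$ instead of $\mathsf F_0,\mathsf E_0,\mathsf V_0$), and (ii) replacing the relative orientation/boundary operator $\mathcal O$ (attached to $\mathcal H_{\bs}(\Delta;\partial\Delta)$, as used in Theorem~\ref{thm:hom-hessian-3d}) by the absolute one $\mathcal O_0$, $\partial_0:=\partial^{\mathcal H_{\bs}(\Delta)}$.

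First I would establish the explicit distributional formulas for the operators by cellwise integration by parts. For $u\in V_0^0$,
\[
\hess_0 u=\sum_{f\in\mathsf F}\delta_f\big[[\![\nabla u]\!]_f\otimes\bm n_f\big]=\sum_{f\in\mathsf F}[\![\partial_n u]\!]_f\,\delta_f[\bm n_f\otimes\bm n_f],
\]
where on a boundary face $f\subset\partial\Omega$ the ``jump'' is the one-sided trace, which lies in $\bm V_0^1$ because $u$ vanishes on $\partial\Omega$ so only its normal derivative survives; likewise $\curl_0\big(\sum_f\delta_f[\bm m_f\otimes\bm n_f]\big)=\sum_{e\in\mathsf E}\sum_{f\supset e}\mathcal O_0(e,f)\,\delta_e[\bm m_f\otimes\bm t_e]$ and $\div_0\big(\sum_e\delta_e[\bm m_e\otimes\bm t_e]\big)=-\sum_{x\in\mathsf V}\sum_{e\ni x}\mathcal O_0(x,e)\,\delta_x[\bm m_e]$. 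These identities show $\curl_0\bm V_0^1\subseteq\bm V_0^2$, $\div_0\bm V_0^2\subseteq\bm V_0^3$, and (being the distributional extensions of $\curl\grad\grad=0$ and $\div\curl=0$, or equivalently reducing to $\partial_0\partial_0=0$ under the identification below) that $\curl_0\hess_0=0=\div_0\curl_0$; hence \eqref{cplx:hessian0-3d} is a complex.

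Next I would identify its cohomology with that of a purely combinatorial complex, as in diagram~\eqref{discrete-geo-hess} but over all of $\Delta$ with $\partial_0$: the maps $\delta_\sigma[\bm m_\sigma\otimes\cdot]\mapsto\bm m_\sigma\|\sigma\|$ give isomorphisms $\bm V_0^1\cong\bigoplus_{f\in\mathsf F}\mathcal P_0^n(f)$, $\bm V_0^2\cong\bigoplus_{e\in\mathsf E}\mathcal P_0^n(e)$, $\bm V_0^3\cong\bigoplus_{x\in\mathsf V}\mathcal P_0^n(x)$ (dimensions $\sharp_{\mathsf F},2\sharp_{\mathsf E},3\sharp_{\mathsf V}$), and by the formulas above they intertwine $\curl_0,\div_0$ with the twisted boundary operator $\partial_0$ (recall $\mathcal P_0^n(f)\subset\mathcal P_0^n(e)\subset\mathcal P_0^n(x)=\mathbb R^3$). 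The degree-$0$ term is treated directly: $\hess_0$ is injective on $V_0^0$ (if all $[\![\partial_n u]\!]_f$ vanish then $u$ is globally affine with vanishing trace and normal trace on $\partial\Omega$, hence $u=0$), so $H^0=0$; and via the piecewise gradient $\nabla\colon V_0^0\hookrightarrow\bigoplus_{K\in\mathsf K}\mathbb R^3$ one has $\hess_0 V_0^0=\partial_0(\nabla V_0^0)$, so the degree-$0$ piece is absorbed as a degree-$3$ augmentation of the twisted complex. Thus the cohomology of \eqref{cplx:hessian0-3d} in degrees $1,2,3$ equals the homology in degrees $2,1,0$ of the (augmented) twisted chain complex $\bigoplus_f\mathcal P_0^n(f)\xrightarrow{\partial_0}\bigoplus_e\mathcal P_0^n(e)\xrightarrow{\partial_0}\bigoplus_x\mathcal P_0^n(x)$.

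The main obstacle is then computing the homology of this twisted complex, which is the discrete avatar of the two-row BGG diagram (rows~$1$ and~$2$ of \eqref{diagram-4rows}) generating the Hessian complex. Following the discrete BGG philosophy, I would split it into pieces computing $\mathcal H_{\bs}(\Delta)\otimes\mathbb R$ and $\mathcal H_{\bs}(\Delta)\otimes\mathbb R^3$: concretely, build a short exact sequence of chain complexes from the inclusions $N_\sigma\hookrightarrow\mathbb R^3$ and the quotients $\mathbb R^3\twoheadrightarrow\mathbb R^3/N_\sigma$, relating $\bigoplus_\sigma\mathcal P_0^n(\sigma)$ to a Whitney-type de~Rham complex (the $\mathbb R$-summand) and a distributional vector-valued de~Rham complex (the $\mathbb R^3$-summand), and verify that the connecting homomorphisms in the induced long exact sequence vanish by a local (star-wise) exactness / discrete Poincar\'e-lemma argument, exactly as the algebraic maps $s^{i,j}$ are bijective/surjective at the continuous level. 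Since those two de~Rham-type complexes have cohomology $\mathcal H_{\bs}(\Delta)\otimes\mathbb R$ and $\mathcal H_{\bs}(\Delta)\otimes\mathbb R^3$, this gives homology $\mathcal H_{\bs}(\Delta)\otimes(\mathbb R\oplus\mathbb R^3)=\mathcal H_{\bs}(\Delta)\otimes\mathcal P_1\cong\mathcal H_{\bs}(\Delta,\mathcal P_1)$ by Theorem~\ref{UCT}. Tracking degrees, $H^k$ of \eqref{cplx:hessian0-3d} equals $\mathcal H_{n-k}(\Delta,\mathcal P_1)$, and Theorem~\ref{thm:deRham}(2) gives $\mathcal H_{n-k}(\Delta)\cong\mathcal H^{k}_{dR,c}(\Omega)$, so the cohomology is $\mathcal H^{\bs}_{dR,c}(\Omega)\otimes\mathcal P_1$ as claimed. (Consistency checks: $H^0=0=\mathcal H^0_{dR,c}(\Omega)\otimes\mathcal P_1$, and $H^3\cong\mathcal H_0(\Delta)\otimes\mathcal P_1\cong\mathcal P_1$ for connected $\Omega$.) Relative to Theorem~\ref{thm:hom-hessian-3d} the core difficulty is unchanged; what is new is only that everything is run with the absolute homology $\mathcal H_{\bs}(\Delta)$ and that the boundary subsimplices and the one-sided jump/trace conventions must be tracked carefully throughout.
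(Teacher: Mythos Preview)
Your route diverges from the paper's. The paper does \emph{not} compute the cohomology via the twisted normal-space complex $\bigoplus_\sigma\mathcal P_0^n(\sigma)$ of diagram~\eqref{discrete-geo-hess}; that diagram is offered only as a geometric interpretation. Instead, the paper builds an auxiliary complex $V^0_{0,-}\to\bm V^1_{0,-}\to\bm V^2_{0,-}\to\bm V^3_{0,-}$ out of distributions $\hat{\bm v}^i_\sigma[p]$ parametrized by $p\in\mathcal P_1$ (as in \eqref{eq:hatv1-3d} and its edge/vertex analogues, now allowing boundary subsimplices). This auxiliary complex is isomorphic via $\kappa_-^\bullet$ to the ordinary simplicial chain complex $C_\bullet(\Delta,\mathcal P_1)$ with \emph{constant} coefficients, so its cohomology is $\mathcal H^{\bullet}_{dR,c}(\Omega)\otimes\mathcal P_1$ immediately from Theorems~\ref{UCT} and~\ref{thm:deRham}. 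The original complex \eqref{cplx:hessian0-3d} is then identified as the kernel of the restriction map $g^\bullet$ to the complex $\bigoplus_\sigma\mathcal P_1(\sigma)$ with $\widetilde\partial_0$, whose cohomology is $(V_0^0,0,0,0)$ by a vertex-patch decomposition; the long exact sequence finishes. The virtue of this approach is that the coefficients are constant ($\mathcal P_1$ everywhere), so no ``twisted'' computation is needed.

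Your proposal, by contrast, reduces to the variable-rank complex $\bigoplus_f N_f\to\bigoplus_e N_e\to\bigoplus_x\mathbb R^3$ and then tries to split it by a discrete BGG argument. There are two genuine gaps here. First, the short exact sequence $0\to N_\bullet\to\mathbb R^3_\bullet\to T_\bullet\to 0$ you propose has a quotient $T_\bullet$ (tangent spaces: dimensions $2,1,0$ on $f,e,x$) which is \emph{not} a standard de~Rham-type complex with known homology; identifying it with a ``Whitney-type de~Rham complex (the $\mathbb R$-summand)'' is not correct as stated, and the vanishing of the connecting homomorphisms is asserted but not argued. Second, your handling of degree~$0$ is not clean: you write $\hess_0 V_0^0=\partial_0(\nabla V_0^0)$ and call this an ``augmentation'', but $\nabla V_0^0$ is a proper subspace of $\bigoplus_K\mathbb R^3$, so the resulting $H^1$ is $\ker(\partial_0|_{N_f})/\partial_0(\nabla V_0^0)$, which is not the homology of the twisted complex augmented by $\bigoplus_K\mathbb R^3$; the difference has to be accounted for, and doing so essentially reproduces the paper's $\widetilde\partial_0$ step in disguise. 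Your distributional formulas and the complex property are fine, but the homological computation as sketched does not close.
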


}

\subsection{Divdiv complex in 3D}
\label{sec:main-divdiv3d}

The $\div\div$ complex (see Figure \ref{fig:3D-divdiv}) is inspired by the dual of the Hessian complex \eqref{cplx:hessian0-3d}, which we denote as 
\begin{equation}\label{cplx:divdiv-3d}
\begin{tikzcd}[column sep=large]
0 \arrow{r}& \bm U^0 \arrow{r}{\dev\grad}& \bm U^1 \arrow{r}{\sym\curl_{h}} &\bm U^2 \arrow{r}{\widehat{\div\div}} & U^3\arrow{r}& 0.
\end{tikzcd}
\end{equation}
\begin{figure}[htbp] %  figure placement: here, top, bottom, or page
   \centering
  \includegraphics[width=0.8\textwidth]{./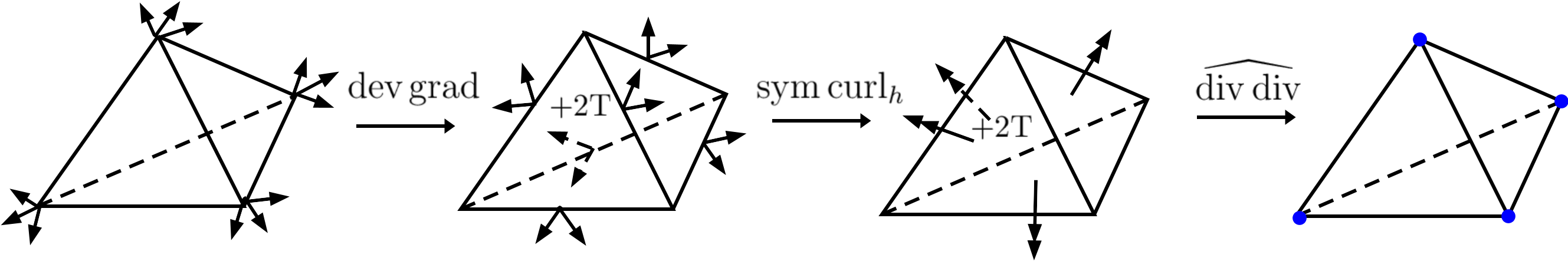} 
   \caption{Divdiv complex in 3D \eqref{cplx:divdiv-3d}.}
   \label{fig:3D-divdiv}
\end{figure}
The space $U^3$ is defined as the dual of $V^0_{0}$. Note that $V^0_{0}$ is the Lagrange finite element space with homogeneous boundary conditions, and therefore $U^3$ consists of Dirac delta at interior vertices.
 Clearly, there is a dual pairing between $ U^3$ and $V^0_{0}$: For given function $v \in V^0_{0}$ and $\hat{ v} = \sum_{ x \in \mathsf V} u_{ x} \delta_{ x} \in U^3$, we set
\begin{equation}
\langle v, \hat{v} \rangle_{V^0_{0}\times U^3} = \sum_{ x \in \mathsf V} u_{ x} v( x).
\end{equation}
The space $\bm U^0$ is defined as the dual of $\bm V_{0}^3$. Recall that $\bm V_{0}^3$ is the (vector) vertex delta function. As a consequence, we choose $\bm U^0$ to be the standard Lagrange space. 

The space $\bm U^2$ is inspired by the dual of $\bm V^1_{0}$. Recall that $\bm V^1_{0}$ consists of face normal-normal delta (including the boundary modes).
This motivates us to choose $\bm U^2$ as a finite element function space with normal-normal continuity. A natural choice is the stress element of the TDNNS formulation (see \cite{pechstein2011tangential}): 
\begin{equation}
\bm U^2 = \{ \bm \sigma \in L^2(\Omega; \mathbb S) : \bm \sigma|_{K} \in \mathcal P_0(\mathbb S), K\in \mathsf K, \bm n_f \cdot \bm \sigma \cdot \bm n_f \text{ is continuous across faces}\}.
\end{equation}
The shape function space of $\bm U^2$ is constant symmetric matrices, which has six dimensions. In the context of the divdiv complex, $\bm U^2$ and $U^3$ are related to the Hellan-Herrmann-Johnson (HHJ) formulation for the biharmonic problem \cite{chen2023new,li2018regge,neunteufel2023hellan}.

The degrees of freedom are given by the evaluation of the normal-normal component on each face, i.e., $\int_{f}\bm n_f \cdot \bm \sigma \cdot \bm n_f$, for any $f\in \mathsf F$, plus interior degrees of freedom.  More precisely, 
we first decompose constant symmetric matrices as follows:
\begin{equation}
    \mathbb S = \mathbb B_{K} \oplus \mathcal I_{ K},
\end{equation}
where
$\mathbb B_{K}$ is the normal-normal bubble, namely, 
\begin{equation}
\mathbb B_{K} = \{ \bm \sigma \in \mathbb S : \bm n_{ f} \cdot \bm \sigma \cdot \bm n_{ f}=0 \text{ for each face }  f \text{ of }  K\},
\end{equation}
and the $\mathcal I_{ K}$ is  the orthogonal complement of $\mathcal B_{K}$ with respect to the Frobenius inner product. 
%Clearly, it holds that $\dim \mathcal N_{K} = 2$, and \kh{$\dim \mathcal I_{K} = 4$}.
The degree of freedom of $\bm U^2$ can be given as the following: for a given $\sigma \in \mathbb S$, %\kh{add label, refer to dofs here later, now we can use \ref{dof:divdiv1}}
\begin{enumerate}[label = (1\alph*)]
\item\label{dof:divdiv1} The normal-normal component on each face $f$: $\int_{f}\bm n_{ f} \cdot \bm \sigma \cdot \bm n_{ f}$;
\item\label{dof:divdiv2} The inner product against the normal-normal bubble $\int_{K} \bm \sigma : \bm \tau$ for $\bm \tau \in \mathbb B_{K}$.
\end{enumerate}
The dimension of the global finite element space is thus $\sharp_{F}+2\sharp_{K}$.

Similarly, $\bm U^1$ is inspired by the dual of $\bm V_{0}^2$. As we have enriched $\bm U^2$ with two interior degrees of freedom for unisolvency, we should also enrich the local shape function space of $\bm U^1$. More precisely, we note that the following is an exact sequence:
\begin{equation}\label{seq:polynomial-divdiv}
\begin{tikzcd}[column sep=large]
\cdots \arrow{r}&\mathcal{P}_{1}(\mathbb{V})\arrow{r}{\dev\grad}& \mathcal{P}_{0}(\mathbb{T})+ \bm x\times \mathcal{P}_{0}(\mathbb{S})   \arrow{r}{\sym\curl} &  \mathcal{P}_{0}(\mathbb{S})\arrow{r}&\cdots.
\end{tikzcd}
\end{equation}
The exactness follows from the existence of null-homotopy operators \cite{vcap2023bounded}, as
$$
 \mathcal{P}_{0}(\mathbb{T})+ \bm x\times \mathcal{P}_{0}(\mathbb{S})=\dev\grad \mathcal{P}_{1}(\mathbb{V}) \oplus \kappa \mathcal{P}_{0}(\mathbb{S}),
$$ 
where $\kappa$ is the Poincar\'e operator from \cite{vcap2023bounded}. Note that $\bm x\times \mathbb{S}$ is automatically trace-free (the same algebraic structure as the fact that $\curl$ of a symmetric matrix is trace-free). 
\begin{remark}
To show the exactness of \eqref{seq:polynomial-divdiv}, it suffices to follow the construction in \cite{vcap2023bounded} with smooth functions. The formulas in \cite{vcap2023bounded} involve derivative terms (like the Ces\`aro-Volterra formula), but these terms vanish on constants. The exactness of the above complexes can be also checked with direct computation, see \cite{chen2022finitedivdiv}.
%smooth version of \cite{vcap2023bounded}, de~Rham poincare + derivative term, also by straightforward computation
\end{remark}
%To define the degrees of freedom, we first define a class of bubble functions.
\begin{comment}
Define $\mathbb B^{\ast}_{K}:=\{-\bm v+\bm x\times \bm \sigma\}$, where $\bm \sigma\in \mathcal{N}_K$ and $\bm v$ is determined by $\bm \sigma$ such that 
\begin{equation}\label{def:v}
    \int_e \bm t_e \cdot \bm v\cdot \bm n_e=\int_e\bm  t_e\cdot (\bm x\times\bm  \sigma)\cdot\bm  n_e, \quad \forall e\in \mathsf{E}(K).
\end{equation}
By the unisolvency of the evaluation of tangential-normal components on edges as degrees of freedom for trace-free matrices,
 \eqref{def:v} uniquely determines $\bm v$. This implies that $\dim \mathbb B^{\ast}=\dim \mathcal{N}_K=2$.
  By definition, $\mathbb B^{\ast}$ are bubble functions in the sense that the tangential-normal components vanish on edges.
\end{comment}

The degrees of freedom for $\bm U^1$ on $K$ are given by, for any $\bm \sigma\in \bm U^1$,
\begin{enumerate}
[label = (2\alph*)]
\item\label{dof:symcurl1}  $\int_{e}\bm n_{e, + }\cdot \bm \sigma\cdot\bm t_{e} , \,\, \int_{e}\bm n_{e, -}\cdot \bm \sigma\cdot \bm t_{e}, \quad\forall e\in \mathsf{E}(K)$,
\item\label{dof:symcurl2}  $\int_{K}\curl \bm \sigma :\bm b, \quad \forall \bm b\in \mathbb B_{K}$.
%the inner product $(\bm v, \bm b)_{K}$ for each $\bm b \in \mathbb B^*_{K}$.
\end{enumerate}
\begin{theorem}
\label{thm:unisolvency-U1}
The above degrees of freedom are unisolvent for  $\bm U^1$. %, and  $\bm U^1$ is $H(\sym\curl)$-conforming. 
\end{theorem}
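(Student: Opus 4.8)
The plan is to prove local unisolvency on an arbitrary (non-degenerate) tetrahedron $K$: since the count of functionals will match the dimension of the local shape space of $\bm U^1$, it suffices to show that a $\bm\sigma$ in that space annihilating all the functionals \ref{dof:symcurl1}--\ref{dof:symcurl2} must vanish.

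First I would do the dimension count. The local shape space is $\mathcal{P}_0(\mathbb{T})+\bm x\times\mathcal{P}_0(\mathbb{S})$; since $\bm\sigma\mapsto\bm x\times\bm\sigma$ is injective on $\mathbb{S}$ and its image consists of genuinely linear (hence non-constant) fields, the sum is direct and has dimension $\dim\mathbb{T}+\dim\mathbb{S}=8+6=14$ (equivalently, use the splitting $\dev\grad\mathcal{P}_1(\mathbb{V})\oplus\kappa\mathcal{P}_0(\mathbb{S})$ recorded after \eqref{seq:polynomial-divdiv}, of dimensions $8$ and $6$). On the functional side, $K$ has six edges so \ref{dof:symcurl1} gives $12$ functionals, while $\dim\mathbb{B}_K=2$: the map $\bm b\mapsto(\bm n_f^\top\bm b\,\bm n_f)_{f}$ sends $\mathbb{S}$ onto $\mathbb{R}^{4}$ because the four rank-one matrices $\bm n_f\bm n_f^\top$ are linearly independent for a non-degenerate simplex, so \ref{dof:symcurl2} gives $2$ more. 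The total is $14=\dim\bm U^1|_K$, so it is enough to show the common kernel is trivial.

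Next I would convert the vanishing conditions into linear algebra. Write $\bm\sigma=\bm C+\bm x\times\bm\tau$ with constant $\bm C\in\mathbb{T}$ and $\bm\tau\in\mathbb{S}$ (unique by the previous step). A direct computation gives $\curl(\bm x\times\bm\tau)=-2\bm\tau$, so $\curl\bm\sigma=-2\bm\tau$ is constant (and symmetric); hence \ref{dof:symcurl2} vanishes if and only if $\bm\tau\perp\mathbb{B}_K$, i.e. $\bm\tau\in\mathcal{I}_K=\Span\{\bm n_f\bm n_f^\top\}$. For the edge functionals, along an edge $e$ with midpoint $\bm a_e$ one has $\bm x\times\bm t_e=\bm a_e\times\bm t_e$ on $e$ and $(\bm x\times\bm\tau)\bm t_e=-\bm\tau(\bm x\times\bm t_e)$, so the integrand $\bm n_{e,\pm}^\top\bm\sigma\bm t_e$ is constant along $e$; therefore \ref{dof:symcurl1} vanishes if and only if $\bm C\bm t_e-\bm\tau(\bm a_e\times\bm t_e)\parallel\bm t_e$ for every edge $e$ of $K$.

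To finish I would show $\bm\tau=0$ first and then $\bm C=0$. The second step is easy: with $\bm\tau=0$ the edge conditions say $\bm t_e$ is an eigenvector of the constant trace-free matrix $\bm C$ for all six edges, and since the edge directions of a non-degenerate tetrahedron contain three linearly independent vectors plus a fourth in general position, all eigenvalues of $\bm C$ coincide, whence $\bm C=cI$ and then $\bm C=0$ by trace-freeness. The hard part is showing $\bm\tau=0$: after eliminating $\bm C$ — the map $\bm C\mapsto(\mathrm{proj}_{\bm t_e^\perp}\bm C\bm t_e)_e$ is injective on $\mathbb{T}$, with an $8$-dimensional image inside the $12$-dimensional $\bigoplus_e\bm t_e^\perp$ — the six edge conditions impose $4$ linear constraints on $\bm\tau$, and one must show that these, together with $\bm\tau\in\mathcal{I}_K$, force $\bm\tau=0$ for every non-degenerate tetrahedron. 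Equivalently, the interior-bubble space $\bm B$ of $\bm U^1|_K$ cut out by \ref{dof:symcurl1} is $2$-dimensional and \ref{dof:symcurl2} is injective on it. I expect to verify this by adapting coordinates to $K$, pairing each edge $e$ with the two faces $f_1,f_2$ through it (so that $\bm t_e\parallel\bm n_{f_1}\times\bm n_{f_2}$), and checking that the resulting small linear system has full rank. This simplex-geometry computation, rather than any formal manipulation, is where I expect the real difficulty to lie.
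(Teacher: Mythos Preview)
Your setup (dimension count, decomposition $\bm\sigma=\bm C+\bm x\times\bm\tau$, computation $\curl\bm\sigma=-2\bm\tau$) matches the paper's, but you have left the decisive step---showing $\bm\tau=0$---as an unfinished ``simplex-geometry computation''. The paper avoids that computation entirely by a Stokes-theorem observation you have not used, and this is the idea you are missing.

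For each face $f$ of $K$, the unit normal $\bm n_f$ is perpendicular to every edge $e\subset\partial f$; hence $\bm n_f$ lies in the span of $\bm n_{e,+},\bm n_{e,-}$ and the integrals $\int_e \bm n_f\cdot\bm\sigma\cdot\bm t_e$ are linear combinations of the edge functionals \ref{dof:symcurl1}. Stokes' theorem applied to the row vector $\bm n_f\cdot\bm\sigma$ gives
\[
-2\int_f \bm n_f\cdot\bm\tau\cdot\bm n_f
=\int_f \bm n_f\cdot\curl\bm\sigma\cdot\bm n_f
=\int_{\partial f}\bm n_f\cdot\bm\sigma\cdot\bm t_{\partial f},
\]
so vanishing of the edge degrees of freedom already forces $\bm n_f\cdot\bm\tau\cdot\bm n_f=0$ for all four faces, i.e.\ $\curl\bm\sigma=-2\bm\tau\in\mathbb B_K$. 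Now the interior degrees of freedom \ref{dof:symcurl2} say $\int_K\curl\bm\sigma:\bm b=0$ for all $\bm b\in\mathbb B_K$, whence $\bm\tau=0$. The remaining step---$\bm C\in\mathbb T$ with $\bm n_{e,\pm}\cdot\bm C\bm t_e=0$ for all edges implies $\bm C=0$---is the unisolvency of tangential-normal edge evaluations for constant trace-free matrices, and your eigenvector argument for that is fine.

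The reason your plan became hard is the order in which you used the two sets of functionals. Using \ref{dof:symcurl2} first only places $\bm\tau$ in the four-dimensional $\mathcal I_K$, and then the edge conditions---reformulated via midpoints---give you an opaque $4\times 4$ system. The paper instead extracts the face normal--normal components of $\bm\tau$ from the edge data via Stokes, putting $\bm\tau$ in the two-dimensional $\mathbb B_K$, after which the interior functionals finish the job without any coordinate calculation.
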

Traceless finite elements with tangential-normal continuity have been used in Mass Conserving mixed Stress (MCS) formulations for the Stokes equations \cite{gopalakrishnan2020mass}. Here $U^1$ is different from the construction in \cite[Section 5.1]{gopalakrishnan2020mass}. To see this, it suffices to note that in the lowest order case, the finite element in \cite[Section 5.1]{gopalakrishnan2020mass} has dimension 16 (see  \cite[Theorem 5.4]{gopalakrishnan2020mass}). Nevertheless, $U^1$ has dimension 14. Moreover,  the local shape function space of $U^1$ has a construction based on the Poincar\'e map $\bs\times x$. This is important for characterising the kernel of $\sym\curl_h$ (as a special case of the cohomology result).

% The space $\bm U^1$ 

Next, we explain the operators in \eqref{cplx:divdiv-3d}. The $\dev\grad$ operator is a canonical one. The operator $\sym\curl_{h}$ means the piecewise $\sym\curl$ operator.  Finally, as $\bm U^2$ is a piecewise function, $\div\div$ in the sense of distributions will map $\bm U^2$ to Dirac deltas on edges (codimension two cells). However, we introduce a new operator $\widehat{\div\div}$, which is defined with a similar idea as the TDNNS method \cite{pechstein2011tangential}. More precisely, the first $\div$ maps a normal-normal continuous matrix field to the edge deltas (dual of the N\'ed\'elec element) as in TDNNS. Then the second $\div$ has the usual definition in the sense of distributions. Alternatively, one may understand $\widehat{\div\div}$ as the distributional $\div\div$ restricted to vertices.  The rigorous definition is provided in \eqref{eq:hat-divdiv-3d}. 
\begin{theorem}
\label{thm:hom-divdiv-3d}
The sequence \eqref{cplx:divdiv-3d} is a complex, and the cohomology is isomorphic to $\mathcal{H}_{dR}^{\bs}\otimes \mathcal{RT}$. %Here $\mathbb R^3 \oplus \mathbb R \cong \mathcal{RT}$ (the shape function space of the Raviart-Thomas element \cite{raviart2006mixed}) is the kernel of $\dev \grad.$
\end{theorem}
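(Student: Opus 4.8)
I would identify the complex \eqref{cplx:divdiv-3d}, up to an acyclic correction, with the linear dual of the homogeneous Hessian complex \eqref{cplx:hessian0-3d}, whose cohomology $\mathcal H^{\bs}_{dR,c}(\Omega)\otimes\mathcal P_1$ is supplied by Theorem~\ref{thm:hom-hessian0-3d}, and then convert that into $\mathcal H^{\bs}_{dR}(\Omega)\otimes\mathcal{RT}$ via Poincar\'e--Lefschetz duality. As a preliminary I would check that the operators act between the stated spaces and compose to zero: $\dev\grad\,\bm U^0\subseteq\bm U^1$ and $\sym\curl_h\,\bm U^1\subseteq\bm U^2$ reduce to tangential--normal (resp. normal--normal) continuity across faces — for the first, $\bm n_{e,\pm}\cdot\dev\grad\,\bm u\cdot\bm t_e=\bm n_{e,\pm}\cdot(\partial_{\bm t_e}\bm u)$ is single valued along $e$ since $\bm u$ is continuous; for the second, $\int_f\bm n_f\cdot\sym\curl\,\bm\sigma\cdot\bm n_f=\int_{\partial f}\bm n_f\cdot\bm\sigma\cdot\bm t_{\partial f}$ by a Stokes-type identity, a combination of tangential--normal edge traces of $\bm\sigma$. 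Then $\sym\curl_h\circ\dev\grad=0$ holds elementwise by the polynomial exactness \eqref{seq:polynomial-divdiv}, and $\widehat{\div\div}\circ\sym\curl_h=0$ follows from $\div\curl=0$ together with the definition of $\widehat{\div\div}$.

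\textbf{Peeling off an acyclic bubble subcomplex.} Set $B^1:=\{\bm\sigma\in\bm U^1:\text{the edge degrees of freedom \ref{dof:symcurl1} vanish}\}$ and $B^2:=\bigoplus_{K\in\mathsf K}\mathbb B_K\subseteq\bm U^2$ (the elements of $\bm U^2$ whose normal--normal face degrees of freedom \ref{dof:divdiv1} vanish); both have dimension $2\sharp_{K}$. The interior degree of freedom \ref{dof:symcurl2} of $\bm U^1$, namely $\int_K\curl\,\bm\sigma:\bm b=\int_K\sym\curl\,\bm\sigma:\bm b$ for $\bm b\in\mathbb B_K$, was designed to equal the interior degree of freedom \ref{dof:divdiv2} of $\sym\curl_h\bm\sigma$; combined with the identity $\int_f\bm n_f\cdot\sym\curl\,\bm\sigma\cdot\bm n_f=0$ for $\bm\sigma\in B^1$ above and with the unisolvencies (Theorem~\ref{thm:unisolvency-U1} and \ref{dof:divdiv1}--\ref{dof:divdiv2}), this makes $\sym\curl_h\colon B^1\to B^2$ an isomorphism. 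Moreover $\widehat{\div\div}$ annihilates $B^2$: for $\bm\tau\in\mathbb B_K$ the field $\bm\tau\cdot\bm n_f$ is tangential to each face $f$, so the distributional $\div\div$ of such a piecewise-constant matrix field carries no vertex-delta contribution. Hence $\bigl(0\to B^1\xrightarrow{\sym\curl_h}B^2\to 0\bigr)$ is an acyclic subcomplex of \eqref{cplx:divdiv-3d}, and the long exact sequence of the associated short exact sequence of complexes shows that \eqref{cplx:divdiv-3d} has the same cohomology as the quotient ``core'' complex
\[
\begin{tikzcd}
0\arrow{r}&\bm U^0\arrow{r}{\dev\grad}&\bm U^1/B^1\arrow{r}{\sym\curl_h}&\bm U^2/B^2\arrow{r}{\widehat{\div\div}}&U^3\arrow{r}&0.
\end{tikzcd}
\]

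\textbf{Duality and conclusion.} The pairings $\langle\bm u,\delta_x[\bm a]\rangle=\bm u(x)\cdot\bm a$, $\langle\bm\sigma,\delta_e[\bm m_e\otimes\bm t_e]\rangle=\int_e\bm m_e\cdot\bm\sigma\cdot\bm t_e$, $\langle\bm\sigma,\delta_f[\bm n_f\otimes\bm n_f]\rangle=\int_f\bm n_f\cdot\bm\sigma\cdot\bm n_f$, and the pairing of $V^0_0$ with $U^3$ displayed in Section~\ref{sec:main-divdiv3d}, are all perfect — this is precisely the unisolvency of the edge/face/vertex degrees of freedom — giving $\bm U^0\cong(\bm V^3_0)^{*}$, $\bm U^1/B^1\cong(\bm V^2_0)^{*}$, $\bm U^2/B^2\cong(\bm V^1_0)^{*}$, $U^3\cong(V^0_0)^{*}$. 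Integration by parts produces no boundary terms, because the objects in \eqref{cplx:hessian0-3d} lie in $H_{\overline\Omega}$-type spaces and $\hess_0,\curl_0,\div_0$ are defined through $C_c^\infty(\mathbb R^3)$ test functions (cf. Remark~\ref{rmk:subspace-diff}); under the pairings above $\dev\grad$ is then adjoint to $\div_0$, $\sym\curl_h$ to $\curl_0$, and $\widehat{\div\div}$ to $\hess_0$, which is exactly the design behind $\widehat{\div\div}$. Thus the core complex is the linear dual of \eqref{cplx:hessian0-3d} read in reverse order, so its $j$-th cohomology is the dual of the $(3-j)$-th cohomology of \eqref{cplx:hessian0-3d}, namely $\bigl(\mathcal H^{3-j}_{dR,c}(\Omega)\otimes\mathcal P_1\bigr)^{*}$ by Theorem~\ref{thm:hom-hessian0-3d}. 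Finally, Poincar\'e--Lefschetz duality (equivalently, Theorem~\ref{thm:deRham} together with the Lefschetz duality theorem) gives $\bigl(\mathcal H^{3-j}_{dR,c}(\Omega)\bigr)^{*}\cong\mathcal H^{j}_{dR}(\Omega)$ in dimension three, while $\mathcal P_1^{*}\cong\mathcal{RT}$ (both four-dimensional, with the pairing that makes the Hessian and $\div\div$ complexes formally adjoint, cf. \eqref{cplx:divdiv-sobolev}); combining these identifies the $j$-th cohomology of \eqref{cplx:divdiv-3d} with $\mathcal H^{j}_{dR}(\Omega)\otimes\mathcal{RT}$, i.e. with $\mathcal H^{\bs}_{dR}\otimes\mathcal{RT}$.

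\textbf{Main obstacle.} The delicate points are bookkeeping rather than conceptual: extracting from unisolvency that the bubble subcomplex is genuinely acyclic and that $\widehat{\div\div}$ kills the $\mathbb B_K$-bubbles, and verifying the three adjointness relations with \emph{no} stray boundary terms, which forces scrupulous use of the $C_c^\infty(\mathbb R^3)$ versus $C_c^\infty(\Omega)$ distinction and of the precise definition of $\widehat{\div\div}$. An alternative route — imitating the proof of Theorem~\ref{thm:hom-hessian-3d} by relating each $\bm U^i$ to a direct sum of polynomial spaces over subsimplices with topological boundary maps — is also available, but is heavier here because of the interior (bubble) degrees of freedom.
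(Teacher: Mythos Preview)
Your proposal is correct and follows essentially the same route as the paper. Both separate off the acyclic bubble piece $0\to\bigoplus_K\mathbb B_K^\ast\xrightarrow{\sym\curl_h}\bigoplus_K\mathbb B_K\to 0$ and identify the remaining ``core'' complex with the dual of the homogeneous Hessian complex \eqref{cplx:hessian0-3d}, then invoke Theorem~\ref{thm:hom-hessian0-3d} and Lefschetz duality. The only cosmetic differences are that the paper realises the core complex as a direct summand (the spaces $\widehat{\bm U}^1,\widehat{\bm U}^2$) rather than a quotient, and that it phrases the duality step via explicit inner products and harmonic forms rather than the abstract ``cohomology of the dual is the dual of cohomology''; these are equivalent formulations. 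One small point: your justification that $\widehat{\div\div}$ kills $B^2$ (``no vertex-delta contribution'') is a little terse --- the clean way is to note that for $\bm\tau\in\mathbb B_K$ and $v$ piecewise linear, the tangential gradient $\bm n_f\times\nabla v$ on $f\subset\partial K$ agrees with $\bm n_f\times\nabla v|_K$, so $\langle\widehat{\div\div}\bm\tau,v\rangle=\int_{\partial K}(\bm\tau\cdot\bm n)\cdot\nabla v|_K=\int_K\div(\bm\tau\nabla v|_K)=0$.
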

The two spaces in the middle of  \eqref{cplx:divdiv-3d} contain interior degrees of freedom. To obtain a precise dual version of \eqref{cplx:hessian-3d} with a neat discrete form interpretation, we can eliminate the interior degrees of freedom from the two spaces simultaneously. {Let $\widehat{\bm U}^1$ and $\widehat{\bm U}^2$ be the space spanned by the dual basis of \ref{dof:divdiv1} and \ref{dof:divdiv2}, respectively. }This leads to 
 \begin{equation}\label{cplx:divdiv-}
\begin{tikzcd}[column sep=large]
0 \arrow{r}& \bm U^0 \arrow{r}{\dev\grad}&  {\widehat{ \bm U}}^1 \arrow{r}{\sym\curl_{h}} & \widehat{\bm U}^2 \arrow{r}{\widehat{\div\div}} & U^3\arrow{r}& 0.
\end{tikzcd}
\end{equation}
% Here ${\widehat{ \bm U}}^1$ and ${\widehat{ \bm U}}^2$ consist of the dual basis of the degrees of freedom on edges and faces. Another characterization of ${\widehat{ \bm U}}^2$ is by the shape functions 
% \begin{equation}
%  \widehat{\bm U}^2 := \{ \bm \sigma \in \bm U^2 : \bm \sigma|_{K} \in \mathcal I_{K}, \quad \forall K \in \mathsf K\},
% \end{equation}
% with the face degrees of freedom of $  {\bm U}^2$ (evaluation of normal-normal components on interior faces).
Here  $\widehat{\div\div}$ is defined as \begin{equation}
    \label{eq:hat-divdiv-3d}
    \langle \widehat{\div \div} \bm \sigma, v \rangle  :=  \sum_{f \in \mathsf F} \sum_{K: K \supset f}\int_{f} \Big[\mathcal O(f,K)\bm n_f  \times \bm \sigma|_K \cdot \bm n_f \Big] \cdot (\bm n_f\times\nabla v ).
    \end{equation}%\kh{divdiv hat already defined in main result sec? seems that this paragraph can be removed. to double check that divdiv hat composed with symcurl is 0, and mention this in the main theorem.}\lt{No, we use a word to give a handswaving definition there.} \lt{The proof based the identifications, more or less.}
The definition comes from the following observation. For the distributional $\div\div$ operator, it follows that for smooth $\bm u$,
\begin{equation}
\begin{split}
\langle \div \div \bm \sigma, \bm u \rangle := \langle \bm \sigma, \hess u \rangle = & \sum_{K \in \mathsf K} \int_{K}  \bm\sigma : \hess u \\
=& \sum_{K \in \mathsf K} \int_{\partial K} \bm \sigma\bm n \cdot \nabla u \\ 
= & \sum_{f \in \mathsf F_0} \sum_{K: K \supset f} \int_{f} \Big[ \mathcal O(f,K) \bm \sigma|_K \cdot \bm n_f \Big] \cdot \nabla u \\ 
= &\sum_{f \in \mathsf F_0} \sum_{K: K \supset f}\int_{f} \Big[\mathcal O(f,K) \bm n_f \times  \bm \sigma|_K \cdot \bm n_f \Big] \cdot ( \bm n_f\times \nabla  u ).
\end{split}
\end{equation}
The right-hand side makes sense for continuous, piecewise smooth function. Restricting the distributional $\div\div$ operator to the Lagrange space $\bm U^3$ leads to the $\widehat{\div\div}$ operator. %A consequence of \eqref{eq:hat-divdiv-3d} is that $\widehat{\div\div} \cdot\sym\curl_h = 0$.}
%\lt{I have put the definitions and motivation here.}

The trimmed space $  \widehat{\bm U}^2$ gains a precise duality to $\bm V^1_{0}$ and a canonical form interpretation (analogous to the Whitney forms) by sacrificing some approximation property since it does not contain piecewise constant. But further applications and analysis, e.g., in solving PDEs, are beyond the scope of this paper. 

%{\color{blue} The operators in \eqref{cplx:divdiv} and \eqref{cplx:divdiv-} call for explanations.}

We also propose a compactly supported version of the $\div\div$ complex as a dual of \eqref{cplx:hessian-3d},
\begin{equation}\label{cplx:divdiv0-3d}
    \begin{tikzcd}[column sep=large]
    0 \arrow{r}& \bm U^0_0 \arrow{r}{\dev\grad}& \bm U^1_0 \arrow{r}{\sym\curl_{h}} &\bm U^2_0 \arrow{r}{\widehat{\div\div}_0} & U^3_0\arrow{r}& 0.
    \end{tikzcd}
    \end{equation}
Here $U^3_0$ are spanned by vertex deltas of all the vertices $x \in \mathsf V$. Finally, $\bm U^i_0$ are the subspace of $\bm U^1$ consisting of functions  such that the degrees of freedom on the boundary vanish.
%whose functions vanish at the degrees of freedom on the boundary. 

\begin{theorem}
    \label{thm:hom-divdiv0-3d}
    The sequence \eqref{cplx:divdiv0-3d} is a complex, and the cohomology is isomorphic to $\mathcal{H}_{dR,c}^{\bs}\otimes \mathcal{RT}$.
    \end{theorem}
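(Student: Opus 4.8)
The plan is to deduce Theorem \ref{thm:hom-divdiv0-3d} from the compactly supported Hessian result (Theorem \ref{thm:hom-hessian0-3d}) by a duality argument, in exact parallel with how Theorem \ref{thm:hom-divdiv-3d} is obtained as the dual of \eqref{cplx:hessian0-3d}. First I would make precise the claim that \eqref{cplx:divdiv0-3d} is (up to the interior-degree-of-freedom enrichment) the adjoint complex of the non-homogeneous Hessian complex \eqref{cplx:hessian-3d}: the space $U^3_0$ spanned by vertex deltas at \emph{all} vertices is dual to $V^0$ (Lagrange without boundary conditions), $\bm U^0_0$ is dual to $\bm V^3$ (interior vertex deltas), and the pairings $\langle \bm U^1_0, \bm V^2\rangle$, $\langle \bm U^2_0, \bm V^1\rangle$ induced by the chosen degrees of freedom \ref{dof:symcurl1}--\ref{dof:symcurl2} and \ref{dof:divdiv1}--\ref{dof:divdiv2} are nondegenerate (this uses Theorem \ref{thm:unisolvency-U1} and the unisolvency statement for $\bm U^2$). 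Then I would verify the adjointness of the operators against these pairings: $\langle \dev\grad\,\bm u, \bm v\rangle = \langle \bm u, \div\,\bm v\rangle$ modulo the (vanishing) boundary terms, $\langle \sym\curl_h\,\bm\sigma,\bm\tau\rangle=\langle\bm\sigma,\curl\,\bm\tau\rangle$, and $\langle \widehat{\div\div}_0\,\bm\sigma, u\rangle=\langle \bm\sigma,\hess u\rangle$; the last identity is exactly the computation displayed just before \eqref{eq:hat-divdiv-3d}, now read with $u\in V^0$ so that the boundary faces contribute (hence the index set $\mathsf F$ rather than $\mathsf F_0$), and this is precisely why $\widehat{\div\div}_0$ is defined with the sum over all faces.

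Once the two complexes are identified as mutual adjoints of finite-dimensional cochain complexes, the cohomology of \eqref{cplx:divdiv0-3d} is isomorphic (as a vector space) to the cohomology of \eqref{cplx:hessian-3d}: for a finite complex of finite-dimensional spaces $(C^\bullet, d^\bullet)$ with adjoint $(C_\bullet^\ast, (d^\bullet)^\ast)$ one has $\ker(d^{k})^\ast/\ran(d^{k-1})^\ast \cong \big(\ker d^{k+1}/\ran d^{k}\big)^\ast$ by elementary linear algebra (rank-nullity plus the fact that $(d^k)^\ast$ has the annihilator of $\ran d^k$ as kernel and the annihilator of $\ker d^k$ as image). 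Combining with Theorem \ref{thm:hom-hessian-3d}, which identifies the cohomology of \eqref{cplx:hessian-3d} with $\mathcal H^{\bs}_{dR}(\Omega)\otimes\mathcal P_1$, gives that the cohomology of \eqref{cplx:divdiv0-3d} is isomorphic to $\mathcal H^{\bs}_{dR}(\Omega)^\ast\otimes\mathcal P_1^\ast$ in complementary degree. To finish, I would invoke Poincaré--Lefschetz duality at the continuous level, in the form already recorded in Theorem \ref{thm:deRham} and Theorem \ref{thm:decomp-duality}: $\mathcal H^{k}_{dR}(\Omega)^\ast\cong \mathcal H^{n-k}_{dR,c}(\Omega)$, and $\mathcal P_1^\ast\cong\mathcal{RT}$ as a consequence of the continuous duality between \eqref{cplx:hessian-sobolev} and \eqref{cplx:divdiv-sobolev} noted after Theorem \ref{thm:decomp-duality} (the resolutions have $\ker=\mathcal P_1$ and $\ker=\mathcal{RT}$ respectively, and these finite-dimensional kernels are dual). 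Matching degrees across the four-term complexes then yields exactly $\mathcal H^{\bs}_{dR,c}(\Omega)\otimes\mathcal{RT}$.

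Alternatively — and this is probably the cleaner write-up — one can avoid re-deriving the linear algebra each time by observing that \eqref{cplx:divdiv0-3d} and \eqref{cplx:divdiv-3d} differ only by the enrichment/trimming of interior degrees of freedom on each tetrahedron, which is the same operation that relates \eqref{cplx:divdiv-3d} to \eqref{cplx:divdiv-}; since interior bubbles form exact subcomplexes cell by cell (this is the content of the exact sequence \eqref{seq:polynomial-divdiv}, localized), adding or removing them does not change cohomology. So one route is: prove \eqref{cplx:divdiv0-3d} is a complex directly (checking $\sym\curl_h\circ\dev\grad=0$ pointwise and $\widehat{\div\div}_0\circ\sym\curl_h=0$ via the boundary computation), then reduce to a trimmed complex $\widehat{\bm U}^1_0,\widehat{\bm U}^2_0$ with the bubble argument, and finally identify the trimmed complex as the exact termwise adjoint of \eqref{cplx:hessian-3d} via the Whitney-type dual bases, transporting the cohomology through Theorems \ref{thm:hom-hessian-3d}, \ref{thm:deRham}, and \ref{thm:decomp-duality}.

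\textbf{Main obstacle.} The crux is establishing the nondegeneracy of the two middle pairings $\langle \widehat{\bm U}^1_0,\bm V^2\rangle$ and $\langle\widehat{\bm U}^2_0,\bm V^1\rangle$ together with the exact adjointness of $\sym\curl_h$ and $\widehat{\div\div}_0$ \emph{with the correct boundary contributions} — i.e.\ checking that the boundary faces and boundary edges enter on the divdiv side in precisely the pattern dual to the "all simplices vs.\ interior simplices" bookkeeping on the Hessian side, so that the homogeneous-vs-nonhomogeneous roles are swapped correctly. Getting these boundary terms to cancel is where the definition \eqref{eq:hat-divdiv-3d} and the choice of $\bm U^i_0$ as "degrees of freedom on the boundary vanish" must be used carefully; once that is in place, the homological and continuous-duality inputs are off-the-shelf.
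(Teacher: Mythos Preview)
Your proposal is correct and follows essentially the same route as the paper. The paper's proof (given in the appendix) is precisely your ``alternative'' write-up: one splits off the interior bubbles via the exact sequence $0\to\mathbb B_K^\ast\xrightarrow{\sym\curl_h}\mathbb B_K\to 0$, identifies the resulting trimmed complex $(\bm U^0_0,\widehat{\bm U}^1_0,\widehat{\bm U}^2_0,U^3_0)$ as the term-by-term dual of the non-homogeneous Hessian complex \eqref{cplx:hessian-3d} through the pairing diagram, and then transports the cohomology using the harmonic-form argument (as in the 2D proof of Theorem~\ref{thm:hom-divdiv-2d}) together with Theorem~\ref{thm:hom-hessian-3d} and Lefschetz duality. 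One small remark: your appeal to Theorem~\ref{thm:decomp-duality} for $\mathcal P_1^\ast\cong\mathcal{RT}$ is not quite the right citation---that theorem concerns Sobolev duality of the function spaces, not the kernels---but since both $\mathcal P_1$ and $\mathcal{RT}$ are four-dimensional in 3D the vector-space identification you need is immediate, and indeed the paper invokes it without further comment.
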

    
    \subsection{Complexes in 2D}

We construct discrete versions of the Hessian and divdiv complexes and show that their cohomology is isomorphic to the continuous version. %\revise{With slight abuse of notation, we use $\mathcal{RT} := \{ \bm a + b\bm x: \bm a \in \mathbb R^2, b \in \mathbb R\}$ to denote the two-dimensional Raviart--Thomas space, and note that here $\mathcal{RT} = \ker(\sym\curl)$.}

With a slight abuse of notation, we adopt the same notation as the 3D version and denote the 2D discrete Hessian complex as
\begin{equation}\label{cplx:hessian-2d}
\begin{tikzcd}[column sep=large]
0 \arrow{r}  &V^{0} \arrow{r}{\hess} &\bm V^{1} \arrow{r}{\rot}&\bm V^{2} \arrow{r}{} &  0.
 \end{tikzcd}
\end{equation}
  \begin{figure}[htbp]\centering
\includegraphics[width=0.6\linewidth]{./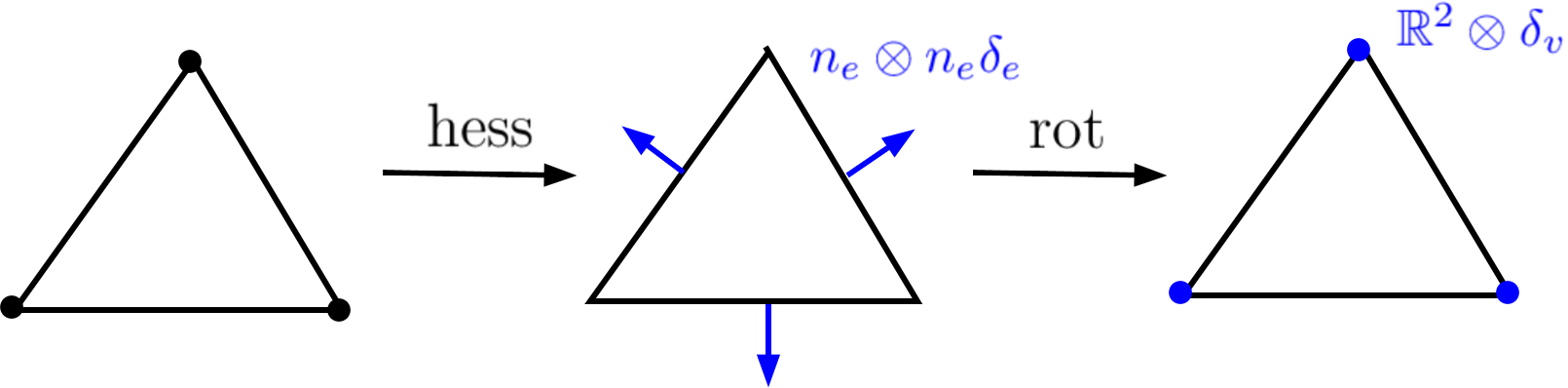}
\caption{Distributional Hessian complex in 2D \eqref{cplx:hessian-2d}. }
\label{fig:2D-hess} 
\end{figure}
  \begin{figure}[htbp]%\centering
\includegraphics[width=0.6\linewidth]{./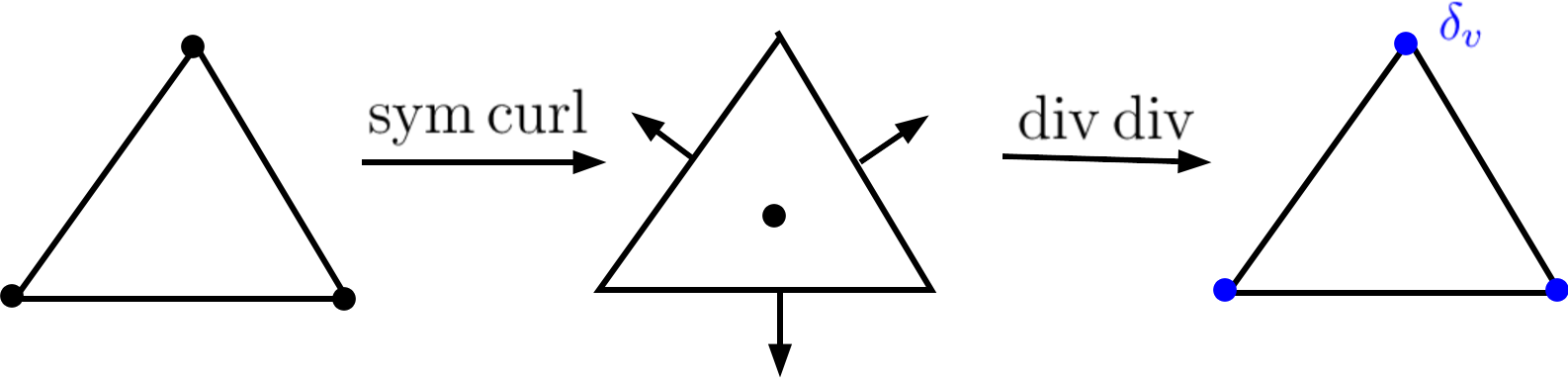} 
\caption{Distributional divdiv complex in 2D \eqref{cplx:divdiv-2d}. }
\label{fig:2D-divdiv} 
\end{figure}
The space $V^{0}$ is the first-order Lagrange finite element space consisting of continuous piecewise linear functions. 
 The space $\bm V^{1}$ consists of normal-normal deltas on each interior edge, i.e., 
$$
\bm V^{1}:=\left\{\sum_{e\in \mathsf E_0}u_{e}\delta_e[\bm n_e\bm n_e^{T}] : u_e\in \mathbb R \right\}.
$$
The space $\bm U^{2}$ consists of Dirac deltas at each interior vertex:
$$
\bm V^{2}:=\left\{\sum_{x\in \mathsf V_0}\delta_{x}[\bm a_{x}] :\bm a_{x}\in \mathbb R^2 \right\}.
$$
\begin{theorem}\label{lem:closedness-hessian-2d}
    The sequence \eqref{cplx:hessian-2d} is a complex. The cohomology of \eqref{cplx:hessian-2d} is isomorphic to $ \mathcal H^{\bs}_{dR}(\Omega)\otimes \mathcal P_1$.
\end{theorem}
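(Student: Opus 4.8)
\emph{Overview.} The plan has three stages: verify that \eqref{cplx:hessian-2d} is a complex by explicit distributional calculus; reduce the cohomology computation to that of a complex of polynomial spaces attached to the simplices (the $2$D analogue of \eqref{discrete-geo-hess}); and compute the latter, reading off $\mathcal H^{\bs}_{dR}(\Omega)\otimes\mathcal P_1$ via \Cref{UCT} and \Cref{thm:deRham}.

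\emph{Step 1 (the sequence is a complex).} I would first record the distributional action of the two maps, using that the test functions lie in $C_{c}^{\infty}(\Omega)$ so deltas carried by $\partial\Omega$ are invisible. Writing $u=\sum_{K\in\mathsf F}u|_{K}\|K\|\in V^{0}$, each $u|_{K}$ is affine, $\grad u$ is piecewise constant, and the only contribution to $\hess u\in\mathcal D'$ comes from jumps of $\grad u$ across interior edges; since the tangential derivative of $u$ is continuous, $\jump{\grad u}_{e}=\lambda_{e}\bm n_{e}$ for a scalar $\lambda_{e}$, whence $\hess u=\sum_{e\in\mathsf E_{0}}\lambda_{e}\,\delta_{e}[\bm n_{e}\otimes\bm n_{e}]\in\bm V^{1}$. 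Dually, for $\bm\sigma=\sum_{e}c_{e}\delta_{e}[\bm n_{e}\otimes\bm n_{e}]$, integration by parts along each one-dimensional edge moves the derivative to its endpoints and gives $\rot\bm\sigma=\sum_{e\in\mathsf E_{0}}c_{e}\sum_{x\in\partial e}\mathcal O(x,e)\,\delta_{x}[\bm n_{e}]\in\bm V^{2}$ (after discarding boundary vertices). That $\rot\circ\hess=0$ is then immediate from the distributional identity $\rot\grad=0$ applied to each row of $\grad\grad u$; equivalently, composing the two formulas, the coefficient at an interior vertex $x$ is $\sum_{e\ni x}\mathcal O(x,e)\lambda_{e}\bm n_{e}=\sum_{e\ni x}\mathcal O(x,e)\jump{\grad u}_{e}$, which telescopes to zero around $x$.

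\emph{Step 2 (reduction to a twisted simplicial complex).} As in the $3$D picture \eqref{discrete-geo-hess}, the assignment $\delta_{e}[\bm n_{e}\otimes\bm n_{e}]\mapsto\bm n_{e}$ identifies $\bm V^{1}$ with $\bigoplus_{e\in\mathsf E_{0}}\mathcal P_{0}^{n}(e)$, where $\mathcal P_{0}^{n}(e)=\Span\{\bm n_{e}\}$ is the rank-one space of constant vectors normal to $e$, and $\bm V^{2}\cong\bigoplus_{x\in\mathsf V_{0}}(\mathcal P_{0}\otimes\mathbb R^{2})$; under these identifications $\rot$ becomes the relative simplicial boundary operator $\partial$ of \Cref{sec:topology} restricted along the inclusions $\mathcal P_{0}^{n}(e)\hookrightarrow\mathbb R^{2}$, and $\hess$ becomes a map $V^{0}\to\bigoplus_{e}\mathcal P_{0}^{n}(e)$. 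It therefore suffices to compute the cohomology of
\[
0\longrightarrow V^{0}\xrightarrow{\;\hess\;}\bigoplus_{e\in\mathsf E_{0}}\mathcal P_{0}^{n}(e)\xrightarrow{\;\partial\;}\bigoplus_{x\in\mathsf V_{0}}\bigl(\mathcal P_{0}\otimes\mathbb R^{2}\bigr)\longrightarrow 0 .
\]
In degree $0$, $u\in\ker\hess$ has no gradient jumps, hence $u\in C^{1}$ is globally affine, so $H^{0}=\mathcal P_{1}(\Omega)$, which by \Cref{thm:deRham} equals $\mathcal H_{2}(\Delta;\partial\Delta)\otimes\mathcal P_{1}\cong\mathcal H^{0}_{dR}(\Omega)\otimes\mathcal P_{1}$. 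In degree $2$, I would show $\rot$ is onto $\bm V^{2}$ by proving its transpose injective: a family $(\bm\psi_{x})_{x\in\mathsf V_{0}}$ with $\sum_{x\in\partial e}\mathcal O(x,e)\,\bm n_{e}\cdot\bm\psi_{x}=0$ for every interior edge is a discrete curl-free vertex field with vanishing boundary trace, and following interior edge paths to $\partial\Omega$ (or invoking exactness of the discrete de~Rham complex) forces $\bm\psi\equiv0$; hence $H^{2}=0=\mathcal H_{0}(\Delta;\partial\Delta)\otimes\mathcal P_{1}$. In degree $1$, once $\rot$ is surjective, $\dim H^{1}=\sharp_{E_{0}}-2\sharp_{V_{0}}-(\sharp_{V}-3)$, and the elementary triangulation identities $3\sharp_{F}=2\sharp_{E_{0}}+\sharp_{E_{\partial}}$ and $\sharp_{E_{\partial}}=\sharp_{V_{\partial}}$ reduce this to $3(1-\chi(\Omega))=3\dim\mathcal H^{1}_{dR}(\Omega)=\dim\bigl(\mathcal H^{1}_{dR}(\Omega)\otimes\mathcal P_{1}\bigr)$. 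To upgrade the dimension count to the asserted isomorphism I would exhibit the classes explicitly: each independent relative $1$-cycle of $\Delta$ carries a three-dimensional family of $H^{1}$-classes obtained by pairing against the three independent affine functions (equivalently, by a cochain comparison with the continuous stress complex \eqref{cplx:stress-sobolev}), and a rank count shows these span $H^{1}$. \Cref{UCT} together with \Cref{thm:deRham} then rewrites all three groups uniformly as $\mathcal H^{\bs}_{dR}(\Omega)\otimes\mathcal P_{1}$.

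\emph{Main obstacle.} The only substantial points are the surjectivity of $\rot$ at the last slot and the identification of $H^{1}$. Because the coefficient data are twisted --- a rank-one normal space on each edge, the full $\mathbb R^{2}$ on each vertex, and the genuine Lagrange space $V^{0}$ (rather than a constant $\mathcal P_{1}$) at the start --- the complex of Step 2 is not literally a simplicial chain complex with $\mathcal P_{1}$-coefficients, so neither group can be read off directly from simplicial homology. One must either run the discrete BGG bookkeeping, realising \eqref{cplx:hessian-2d} as the complex derived from a discretized version of the diagram \eqref{diagram-3rows2D} whose rows are discrete de~Rham complexes with the expected cohomology, and then quoting the algebraic zig-zag lemma; or carry out the kernel/image accounting by hand, using the Euler characteristic identity above as a consistency check and the comparison with \eqref{cplx:stress-sobolev} to pin down the isomorphism type. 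Throughout, care is needed with the interior/boundary distinction and with the fact that testing against $C_{c}^{\infty}(\Omega)$ silently annihilates any delta supported on $\partial\Omega$.
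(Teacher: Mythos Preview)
Your Step~1 matches the paper's proof of Part~I.

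For Part~II the paper takes a different and more systematic route than your Step~2. It introduces an \emph{auxiliary complex} $0\to V^0_-\xrightarrow{\hess}\bm V^1_-\xrightarrow{\rot}\bm V^2_-\to 0$ in which $V^0_-$ is the space of \emph{discontinuous} piecewise affine functions and $\bm V^1_-,\bm V^2_-$ are enlarged distribution spaces carrying a full copy of $\mathcal P_1$ (not merely the normal line) on each interior edge and vertex. By explicit distributional calculus this auxiliary complex is shown to be cochain-isomorphic to the relative simplicial chain complex $C_{2-\bs}(\Delta,\mathcal P_1;\partial\Delta)$, so its cohomology is $\mathcal H^{\bs}_{dR}(\Omega)\otimes\mathcal P_1$ immediately via \Cref{UCT} and \Cref{thm:deRham}. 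The original complex \eqref{cplx:hessian-2d} is then realised as the kernel of a surjection from the auxiliary complex onto a third complex (polynomials restricted to subsimplices) whose cohomology is computed, by a vertex-patch decomposition, to be $V^0,0,0$. The long exact sequence finishes the argument, with no dimension counting and no ad hoc surjectivity check.

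Your direct approach has two gaps you correctly flag but do not close. The surjectivity of $\rot$ does not follow from ``following interior edge paths to $\partial\Omega$'': an interior edge with one boundary endpoint constrains only the single scalar $\bm n_e\cdot\bm\psi_x$, and the rank-one twisted coefficients do not obviously propagate vanishing along a path. A clean argument would identify the transpose with $\sym\curl$ acting on the vector Lagrange space with zero boundary values and use that its kernel is $\mathcal{RT}$, hence trivial under the boundary condition---but that is precisely the duality the paper exploits in the \emph{other} direction for the divdiv complex, after the Hessian cohomology is already known. For $H^1$, a dimension count does not yield the asserted isomorphism with $\mathcal H^1_{dR}(\Omega)\otimes\mathcal P_1$; one needs a cochain map to a complex with known cohomology, and your sketches (``pair relative $1$-cycles with the three affine functions'', ``discrete BGG bookkeeping'') would, once made precise, amount to constructing exactly the sort of auxiliary diagram the paper builds from the outset. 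That auxiliary-complex method is also what carries over uniformly to the $3$D Hessian complex and to the divdiv complexes.
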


Similarly, we can introduce the version of homogeneous boundary conditions:
\begin{equation}
  \label{cplx:hessian0-2d}
\begin{tikzcd}
    0 \ar[r] & V^0_0 \ar[r,"\hess_0"] &  \bm V^1_0 \ar[r,"\rot_0"]& \bm V^2_0 \ar[r] &  0.
\end{tikzcd}
    \end{equation}
 The first space $V^0_0$ is the piecewise linear Lagrange finite element space with zero boundary condition, and correspondingly, 
$$
  \bm V^1_0 := \Big\{\sum_{ e \in \mathsf E} u_{e} \delta_{e}[\bm n_{e} \otimes \bm n_{e}] \Big\},
\text{  and  }
\bm V^2_0 := \Big\{ \sum_{x \in \mathsf V} \delta_{x}[\bm a_x] \Big\}.
$$
 Compared to $\bm V^1$ and $\bm V^2$, the only difference here is that the distribution spaces $ \bm V^1_0$ and $\bm V^2_0$ consist of Dirac deltas on the boundary. The cohomology of \eqref{cplx:hessian0-2d} is isomorphic to the compactly supported version $ \mathcal H^{\bs}_{dR, c}(\Omega)\otimes \mathcal P_1$. 
 
 The discrete divdiv complex reads as follows:
\begin{equation}
    \label{cplx:divdiv-2d}
\begin{tikzcd}[column sep=large]
0 \ar[r] & \bm U^0 \ar[r,"\sym\curl"] & \bm U^1 \ar[r, "\div\div"]  &U^2 \ar[r] &0.
\end{tikzcd}
\end{equation}
Here, 
the space $\bm U^0$ is the vector Lagrange finite element space
\begin{equation}
    \bm U^0 := \{ \bm u \in C^0(\Omega; \mathbb R^2) : \bm u |_{f} \in \mathcal P_1 \otimes \mathbb R^2 \text{ for each face } \bm f \in \mathsf F\},
\end{equation}
the space $\bm U^1$ is the rotated Regge element space:
\begin{equation}\label{divdiv-2d-U1}
\bm U^1 := \{ \bm \sigma \in L^2(\Omega; \mathbb S^{2\times2}) : \bm \sigma|_{f} \in \mathbb S^{2\times 2}, \bm n_{e} \cdot \bm \sigma \cdot \bm n_{e} \text{ is continuous on each edge}.\}
\end{equation}
%\kh{the label of the above formula is dangerous (wrong). be careful to check if any place referred to wrong equations (I fixed some).}
The last space $ U^2$ is the scalar vertex delta : 
\begin{equation}
U^2 := \operatorname{span}\{ \delta_{x} : x \in \mathsf V_0\}.
\end{equation}
Similarly, we obtain the compactly supported version by alternating the boundary conditions. 
\begin{theorem}\label{thm:hom-divdiv-2d}
The cohomology of \eqref{cplx:divdiv-2d} is isomorphic to the de~Rham cohomology $ \mathcal H^{\bs}_{dR}(\Omega)\otimes \mathcal{RT}$, while the cohomology of the compactly supported version is isomorphic to $ \mathcal H^{\bs}_{dR, c}(\Omega)\otimes \mathcal{RT}$. %Here $\mathbb{R}^2\oplus \mathbb{R}$ represents the kernel of $\deff$, i.e., infinitesimal rigid body motions in 2D. 
\end{theorem}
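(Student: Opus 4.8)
The plan is to identify the discrete $\div\div$ complex \eqref{cplx:divdiv-2d} with the algebraic dual of the discrete Hessian complex \eqref{cplx:hessian0-2d} (the one with homogeneous boundary conditions), and the compactly supported $\div\div$ complex with the dual of \eqref{cplx:hessian-2d}; the cohomology statements then drop out of the already-established Hessian cohomology together with Lefschetz/Poincar\'e duality for $\Omega$. As a bonus, the complex property $\div\div\circ\sym\curl=0$ comes for free, a transpose of a cochain complex being a cochain complex. The first ingredient is a triple of perfect bilinear pairings between the three spaces of \eqref{cplx:divdiv-2d} and those of \eqref{cplx:hessian0-2d}, taken in reverse order: on $\bm U^0\times\bm V^2_0$, $(\bm u,\sum_{x}\delta_x[\bm a_x])\mapsto\sum_{x\in\mathsf V}\bm a_x\cdot\bm u(x)$; on $\bm U^1\times\bm V^1_0$, $(\bm\sigma,\sum_{e}u_e\delta_e[\bm n_e\otimes\bm n_e])\mapsto\sum_{e\in\mathsf E}u_e\int_e\bm n_e\cdot\bm\sigma\cdot\bm n_e$, which is well defined precisely because $\bm U^1$ has normal--normal continuity; and on $U^2\times V^0_0$, $(\sum_{x\in\mathsf V_0}c_x\delta_x,v)\mapsto\sum_{x\in\mathsf V_0}c_x\,v(x)$. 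Non-degeneracy here is Lagrange unisolvency for $\bm U^0$; unisolvency of the rotated Regge element $\bm U^1$ with respect to the edge moments $\int_e\bm n_e\cdot\bm\sigma\cdot\bm n_e$ together with the count $\dim\bm U^1=\sharp_E=\dim\bm V^1_0$ (from Euler's relation); and the fact that $v\in V^0_0$ is determined by its interior-vertex values.

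Next I would verify that, under these identifications, $\sym\curl\colon\bm U^0\to\bm U^1$ is (up to a global sign) the transpose of $\rot_0\colon\bm V^1_0\to\bm V^2_0$, and $\div\div\colon\bm U^1\to U^2$ is (up to a sign) the transpose of $\hess_0\colon V^0_0\to\bm V^1_0$. For the first, one uses that $-\curl$ is the formal adjoint of $\rot$ on matrix/vector fields and that $\bm n_e\cdot\sym\curl\bm u\cdot\bm n_e=\bm n_e\cdot(\partial_{\bm t_e}\bm u)$ is single-valued along each edge for $\bm u\in\bm U^0$ (which is also why $\sym\curl\,\bm U^0\subset\bm U^1$); unwinding the definition of the distributional $\rot_0$ then gives $\langle\rot_0\eta,\bm u\rangle=-\sum_{e}u_e\int_e\bm n_e\cdot\sym\curl\bm u\cdot\bm n_e$ for $\eta=\sum_e u_e\delta_e[\bm n_e\otimes\bm n_e]$. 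For the second, for $v\in V^0_0$ one first identifies $\hess_0 v=\sum_{e\in\mathsf E}c_e\,\delta_e[\bm n_e\otimes\bm n_e]\in\bm V^1_0$, where $c_e$ is the jump of $\partial_{\bm n_e}v$ across interior edges and the one-sided normal derivative on boundary edges (using that the tangential part of the jump vanishes and that $v$ itself vanishes on $\partial\Omega$, which kills the boundary term of the first integration by parts); dually, for $\bm\sigma\in\bm U^1$ the distributional $\div\div\bm\sigma$ is well defined --- the potential normal-direction edge delta cancels exactly by normal--normal continuity --- and lands in $U^2$ after restriction to vertices; two integrations by parts, retaining boundary terms, then give $\langle\div\div\bm\sigma,v\rangle=\pm\sum_{e}c_e\int_e\bm n_e\cdot\bm\sigma\cdot\bm n_e=\pm\langle\bm\sigma,\hess_0 v\rangle$. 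Interchanging ``interior'' and ``all'' simplices identifies the compactly supported $\div\div$ complex with the transpose of \eqref{cplx:hessian-2d} by the same computations.

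Since every space involved is finite dimensional, the cohomology of a transpose complex is the dual of its cohomology with the degrees reversed. With $n=2$ and the known cohomology $\mathcal H^{\bs}_{dR,c}(\Omega)\otimes\mathcal P_1$ of \eqref{cplx:hessian0-2d}, this yields in degree $j$
\[
 H^{j}\cong\big(\mathcal H^{\,n-j}_{dR,c}(\Omega)\otimes\mathcal P_1\big)^{*}\cong\big(\mathcal H^{\,n-j}_{dR,c}(\Omega)\big)^{*}\otimes(\mathcal P_1)^{*}.
\]
Now $(\mathcal P_1)^{*}\cong\mathcal{RT}$ (both are $(n+1)$-dimensional, compatibly with the continuous-level duality between \eqref{cplx:hessian-sobolev} and \eqref{cplx:divdiv-sobolev}), and $(\mathcal H^{\,n-j}_{dR,c}(\Omega))^{*}\cong\mathcal H^{j}_{dR}(\Omega)$, the latter being Poincar\'e--Lefschetz duality, which can be assembled from the tools of \Cref{sec:topology}: $(\mathcal H^{\,n-j}_{dR,c}(\Omega))^{*}\cong(\mathcal H_{j}(\Delta))^{*}\cong\mathcal H^{j}(\Delta)\cong\mathcal H_{n-j}(\Delta;\partial\Delta)\cong\mathcal H^{j}_{dR}(\Omega)$, using \Cref{thm:deRham}, the universal coefficient theorem over $\R$, and Lefschetz duality. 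Hence the cohomology of \eqref{cplx:divdiv-2d} is $\mathcal H^{\bs}_{dR}(\Omega)\otimes\mathcal{RT}$; applying the same argument to the compactly supported complex --- dual to \eqref{cplx:hessian-2d}, whose cohomology is $\mathcal H^{\bs}_{dR}(\Omega)\otimes\mathcal P_1$ --- gives $\mathcal H^{\bs}_{dR,c}(\Omega)\otimes\mathcal{RT}$.

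\textbf{Anticipated main obstacle.} The crux is the second step: confirming that the distributional operators $\sym\curl$ and $\div\div$ on the finite element spaces are precisely the transposes of $\hess_0$ and $\rot_0$. This calls for careful bookkeeping of every boundary contribution in the integrations by parts --- test functions in the $H_{\overline\Omega}$-based Hessian complex need not vanish on $\partial\Omega$, so the delta spaces genuinely carry boundary modes, and these must be matched on the $\div\div$ side --- together with the observation that $\div\div$ is well defined on $\bm U^1$ only thanks to normal--normal continuity, and the unisolvency/dimension count that makes all three pairings perfect. Everything downstream of these identifications is routine homological algebra.
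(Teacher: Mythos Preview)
Your proposal is correct and mirrors the paper's proof closely: the paper sets up exactly these three non-degenerate pairings between \eqref{cplx:divdiv-2d} and \eqref{cplx:hessian0-2d}, verifies the two adjointness identities $\langle v,\div\div\bm\sigma\rangle=\langle\hess_0 v,\bm\sigma\rangle$ and $\langle\bm\sigma,\sym\curl\bm u\rangle=\langle-\rot_0\bm\sigma,\bm u\rangle$, and then reads off the cohomology from that of the Hessian complex via Lefschetz duality (the paper phrases the last step through inner products and harmonic forms rather than ``transpose complex'', but in finite dimensions this is the same thing). One small correction: the dimension match $\dim\bm U^1=\sharp_E=\dim\bm V^1_0$ follows directly from the unisolvency of the rotated Regge element (three edge normal--normal moments determine a constant symmetric $2\times2$ matrix), not from Euler's relation.
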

We remark that  \eqref{cplx:divdiv-2d} (with $U^2$ replaced by its dual, the Lagrange finite element space) has appeared in \cite{chen2018multigrid}. Nevertheless, to the best of our knowledge, the cohomology was open.

%In the rest of the paper, we provide details for the constructions and claims above. In particular, we show that each sequence is a complex and compute the cohomology. 

\subsection{Tensor product construction}

Although in this paper we mostly focus on simplicial meshes and discrete structures on them,  we provide some brief remarks on a tensor product construction in this section. 

For the diagram \eqref{diagram-nD} and thus the complexes derived from it, there is a canonical construction on cubical grids \cite{bonizzoni2023discrete}. This was done by a tensor product of de~Rham and BGG complexes in 1D. The focus of \cite{bonizzoni2023discrete} was on conforming finite elements. Therefore, both the first row and the first {\it column} of \eqref{diagram-nD} were discretized by a standard conforming finite element sequence. However, much of the algebraic structure does not rely on this conformity and can be generalized once we have other patterns in 1D. In particular, we can start from distributional complexes in 1D and derive $n$D versions with a tensor product construction. The generalization is relatively straightforward. Therefore, we will not present the construction in full detail, but rather refer to Figure \ref{fig:1D-BGG}-\ref{fig:2D-tensor-product} for an illustration of the idea.
 \begin{figure} [htbp]
	\begin{minipage}[b]{0.4\textwidth}
%  figure placement: here, top, bottom, or page
    \includegraphics[width=2.in]{./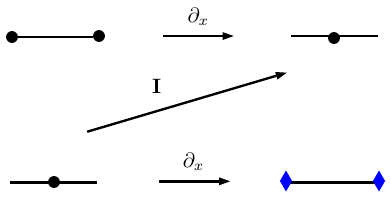} 
 \end{minipage}\hspace{+1.6cm}
 	\begin{minipage}[b]{0.4\textwidth}
	%  figure placement: here, top, bottom, or page
%\vspace{1.2cm}
    \includegraphics[width=2.in]{./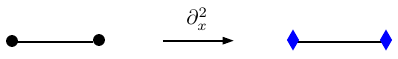} 
 \end{minipage}
    \caption{Discrete BGG diagram and complex in 1D. In the diagram (figure on the left), the top row is the canonical finite element de~Rham complex consisting of the Lagrange element (continuous piecewise linear) and piecewise constant. The bottom row starts with piecewise constants and ends up with Dirac delta at interior vertices. It is straightforward to check that the cohomology of both rows is isomorphic to the continuous version. Connecting the two rows, we derive the BGG (1D Hessian) complex (figure on the right). }
   \label{fig:1D-BGG}
  \end{figure}
\begin{figure}[H]%  figure placement: here, top, bottom, or page
   \centering
  \includegraphics[width=0.5\textwidth]{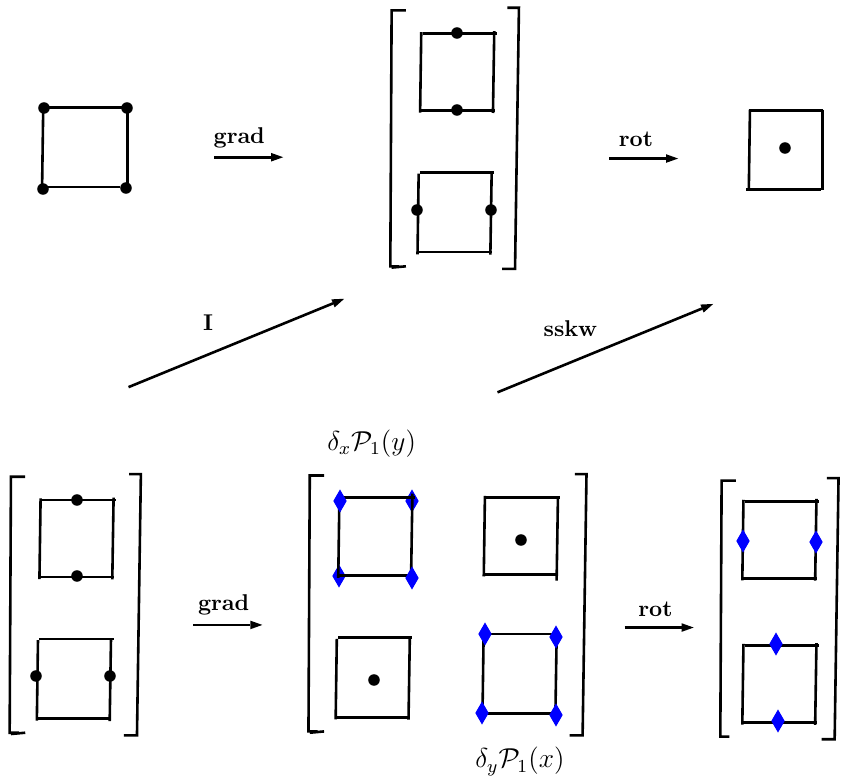} 
   \caption{BGG diagram for deriving the Hessian complex in 2D. The top row is a canonical finite element de Rham complex (see, e.g., \cite{buffa2011isogeometric,christiansen2009foundations}) obtained by a tensor product of the first row in Figure \ref{fig:1D-BGG} (left). The bottom row starts from a tensor product of constants and linear functions (the first space of the top and bottom rows in the diagram in Figure \ref{fig:1D-BGG}).}
   \label{fig:2D-tensor-product-bgg}
\end{figure}
\begin{figure}[htbp]%  figure placement: here, top, bottom, or page
   \centering
  \includegraphics[width=0.5\textwidth]{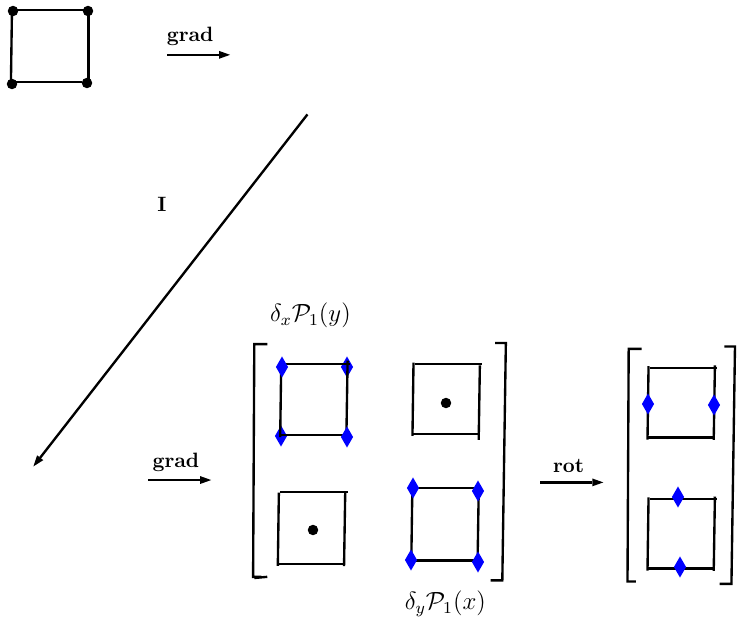} 
   \caption{Tensor product distributional Hessian complex in 2D derived from the diagram in Figure \ref{fig:2D-tensor-product-bgg}, which also follows from a generalization of \cite{bonizzoni2023discrete}.}
   \label{fig:2D-tensor-product}
\end{figure}

\section{Complexes in 2D}
\label{sec:2d}

To show the ideas of deriving cohomology in a relativity simple case, we start with complexes in 2D.

\subsection{2D Hessian complex}
In this section, we focus on the Hessian complex in 2D \eqref{cplx:hessian-2d}, showing that the sequence is a complex (\Cref{lem:closedness-hessian-2d}, part I) and  the cohomology is isomorphic to $ \mathcal H^{\bs}_{dR}(\Omega)\otimes \mathcal P_1$ (\Cref{lem:closedness-hessian-2d}, part II).

\begin{proof}[Proof of \Cref{lem:closedness-hessian-2d} (part I)]
{We identify $u$ with $ \sum_{f} u|_f\|f\|.$} 
For $u \in V^0$ and $\varphi\in C_c^{\infty}(\Omega; \mathbb R^{2\times 2})$, it holds that
\begin{align*}
	&\langle\grad\grad u,\varphi\rangle=\langle u,\div\div\varphi\rangle =\sum_{ f\in \mathsf F}\int_f u \div\div \varphi\\
	=&\sum_{f\in \mathsf F}\int_f \grad\grad u:\varphi-\int_{\partial f} \grad u \cdot \varphi\bm n =
  {-} \sum_{e\in \mathsf E_0} \sum_{f : f \supset e} \mathcal O(e,f) \int_e \nabla u|_f \cdot \varphi\bm n_e
  % \\
	% =&\sum_{e\in \mathsf E_0}\int_e ([\![\grad u]\!]_e\cdot \bm n_e) \bm n_e\bm n_e^{\mathrm T}: \varphi = \Big\langle \sum_{e\in \mathsf E_0}([\![\grad u]\!]_e\cdot \bm n_e) \delta_e[\bm n_e\bm n_e^{\mathrm T}],\varphi\Big\rangle,
\end{align*}
which implies 
\begin{equation}\label{eq:dist-gradgrad-2d}
\grad\grad u= {-}\sum_{e\in \mathsf E_0} 
\sum_{f \in \mathsf F} \mathcal O(e,f) (\nabla u|_f \cdot \bm n_e) \delta_e[\bm n_e\bm n_e^{\mathrm T}]\in \bm V^1.
\end{equation}
Here we have used the facts that $u$ is piecewise linear (thus $\grad\grad u$ vanishes on each cell), $u$ is continuous (thus integrating by parts the first $\div$ does not lead to jump terms), and $\varphi$ vanishes on the boundary (thus no boundary terms appear). 

For $\sigma=\sum_{e\in \mathsf E_0}\sigma_e\delta_e[ \bm n_e\bm n_e^{\mathrm T}]\in \bm V^1$ and $\varphi\in C_c^{\infty}(\Omega; \mathbb R^{2})$, 
\begin{align*}
	&{-} \langle\rot \sigma,\varphi\rangle = \langle \sigma, \curl \varphi\rangle = \left\langle \sum_{e\in \mathsf E_0}\sigma_e \delta_e[\bm n_e\bm n_e^{\mathrm T}], \curl \varphi\right\rangle =\sum_{e\in \mathsf E_0}\sigma_e\int_e\bm n_e\bm n_e^{\mathrm T}:\curl\varphi \mathrm d s\\
	=& \sum_{e\in \mathsf E_0}\sigma_e\int_e\bm n_e\bm n_e^{\mathrm T}:\curl\varphi \mathrm d s=\sum_{e\in \mathsf E_0}\sigma_e\int_e\bm n_e\cdot\curl\varphi \bm n_e \mathrm d s\\
	=&\sum_{e\in \mathsf E_0}\sigma_e\bm n_e\cdot\int_e\grad\varphi \bm \tau_e \mathrm d s=\left\langle\sum_{v\in\mathsf V_0}\sum_{e:v\subset e}\sigma_e(\bm\tau_e\cdot\bm\tau_e^v)\delta_v[\bm n_e],\varphi\right\rangle,	\end{align*}
where $\bm \tau_{e}^v$ a unit vector parallel to $\bm\tau_e$ pointing to $v$ (thus $\bm\tau_e\cdot\bm\tau_e^v=\pm 1$ reflects the orientation).  Therefore,
	\[\rot \sigma={-} \sum_{v\in \mathsf V_0}\sum_{e:v\subset e} \mathcal O(v,e) \sigma_e\delta_v[\bm n_e]\in \bm V^2. \]
 This proves that the sequence \eqref{cplx:hessian-2d} is a complex. 
\end{proof}

In the rest of this section, we compute the cohomology of \eqref{cplx:hessian-2d}. 
%The main result is the following.

% \lt{WORK STAMP.}
% We first consider the two-dimensional hessian complex: 
% \begin{equation}
% \mathcal P_1 \to U^0 \xrightarrow{\hess} \bm U^1 \xrightarrow{\curl} \bm U^2 \to 0,
% \end{equation}
% where 
% $U^0$ is the standard Lagrange element, 
% $\bm U^1$ is the normal-normal delta 
% $$\bm U^1 := \operatorname{span}\{ \delta_{\bm t}[\bm n_{\bm f} \otimes \bm n_{\bm f}]\},$$
% $\bm U^2$ is the vertex delta. 

% \begin{theorem}
% \label{lem:closedness-hessian-2d}
% The cohomology of \eqref{cplx:hessian-2d} is isomorphic to $ \mathcal H^{\bs}_{dR}(\Omega)\otimes\mathcal P_1$.
% \end{theorem}
The proof of \Cref{lem:closedness-hessian-2d} (part II) follows two steps. First, we construct an auxiliary sequence starting with piecewise polynomials without interelement continuity, see \eqref{cplx:hessian-L2-2d} below. A straightforward calculation shows that there is a correspondence between this sequence and the chain complex of $\Delta$. This thus identifies the cohomology of \eqref{cplx:hessian-L2-2d} with the homology  (with $\mathcal P_1$ coefficients) of the domain.
 Second, we form short exact sequences using \eqref{cplx:hessian-L2-2d} and \eqref{cplx:hessian-2d}. A diagram chase allows us to conclude with the cohomology of \eqref{cplx:hessian-2d}.

Next, we provide details of the above sketched proof. Consider an auxiliary sequence
\begin{equation}
\label{cplx:hessian-L2-2d}
\begin{tikzcd}
0 \ar[r] & V^0_{-} \ar[r,"\hess"] & \bm V^1_{-} \ar[r, "\rot"] & \bm V^2_{-} \ar[r] &  0.
\end{tikzcd}
\end{equation}

The space $V^0_-$ consists of piecewise linear, but discontinuous functions:  
\begin{equation}
    V^0_- = C^{-1}\mathcal{P}_1 := \{ u \in L^2(\Omega) : u|_{f} \in \mathcal P_1(f) \text{ for all } f \in \mathsf F\}.
 \end{equation}

      To proceed, we first consider  $\hess u$ for $u \in C^{-1}\mathcal{P}_1$ in the distributional sense. This resembles the procedure of \eqref{lem:closedness-hessian-2d}. Actually, for $\bm \sigma \in C_c^{\infty}(\Omega; \mathbb S)$, we have
      \begin{equation}
        \begin{split}
            \langle \hess u, \bm \sigma \rangle := & \langle u, \div\div \bm \sigma \rangle \\
            = & \sum_{f \in \mathsf F} \int_{f} u \cdot \div\div \bm \sigma \\ 
            = & \sum_{f \in \mathsf F} \int_{\partial f} u\cdot (\div\bm \sigma \bm n_{\partial f}) - \int_{f} \nabla u \cdot \div \bm \sigma  \\
            % = & \sum_{e \in \mathsf E_0} \sum_{f \in \mathsf F} \mathcal O(e,f) [\![ u]\!]_{e} \cdot (\div \bm \sigma \cdot \bm n_{e}) - [\![ \nabla u]\!]_{e} \cdot (\bm \sigma \bm n_{e}).
            = & \sum_{e \in \mathsf E_0} \sum_{f \in \mathsf F} \mathcal O(e,f) \Big[ u|_f \cdot (\div \bm \sigma \cdot \bm n_{e}) - (\nabla u|_f) \cdot (\bm \sigma \bm n_{e})\Big]. 
        \end{split}
      \end{equation}

      This invokes us to define the following functionals: for each edge $e \in \mathsf E$ and $p \in \mathcal P_1$, we define the distribution $\hat{\bm v}_{e}^1[p]$ as 
      \begin{equation}
        \label{eq:hatv1_e-2d}
        \langle \hat{\bm v}_{e}^1[p], \bm \sigma \rangle = \int_{e} (\div \sym(\bm \sigma) \cdot \bm n_{e}) p  - \sym\bm \sigma : {(\nabla p\otimes {\bm n}_{e})}.
      \end{equation}

\begin{remark}
By our convention of notation, $\mathcal P_1$ is a bivariate function. Therefore, $\nabla p$ is well-defined. In evaluating the integrals, we restrict $p$ and $\nabla p$ to $e$, with a slight abuse of notation. {Recall that in standard notation of Discontinuous Galerkin method, $\sum_{f} \mathcal O(e,f) u|_f$ refers to the jump of $u$ on edge $e$, while $\sum_f \mathcal O(e,f) \nabla u|_f$ refers to the jump of $\nabla u$.}
\end{remark}

We define the space $\bm V^1_-$ to be the span of all the distributions $\hat{\bm v}_{e}^1[p]$, namely, 
      \begin{equation}
        \bm V^1_- = \Big\{\sum_{e \in \mathsf E_0} \hat{\bm v}_{e}^1[p_{e}] : p_{e} \in \mathcal P_1, \forall e \in \mathsf E_0 \Big\}.
      \end{equation}
 
\begin{lemma}
    \label{lem:dist-hess-L2-2d}
    For $u \in V^0_-$, it holds that
      % \begin{equation}
      %   \hess u = \sum_{e  \in \mathsf E_0} \hat{\bm v}_{ e}^1\big[ [\![u]\!]_{ e} \big].
      % \end{equation}
      % This immediately indicates that $\hess V^0_- \subset \bm V^1_-$. 
      % If we identify 
      % $$u = \sum_{K} u|_{K}\|K\|$, then $
      $$\hess u = \sum_{e \in \mathsf E_0} \sum_{f: f\supset e} \mathcal O(e, f) \hat{\bm v}_e^1[u|_{f}].$$
    \end{lemma}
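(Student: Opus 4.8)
The plan is to derive the formula by the same double integration by parts that is carried out in the paragraph preceding the statement, and then to identify the resulting edge integrals with the functionals $\hat{\bm v}_e^1[\,\cdot\,]$ defined in \eqref{eq:hatv1_e-2d}. Since $\hess u$ is a symmetric-matrix-valued distribution, it is enough to verify the identity by pairing with an arbitrary symmetric test tensor $\bm\sigma\in C_c^\infty(\Omega;\mathbb S)$.

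First I would write $\langle\hess u,\bm\sigma\rangle = \langle u,\div\div\bm\sigma\rangle = \sum_{f\in\mathsf F}\int_f u\,\div\div\bm\sigma$ and integrate by parts once on each cell $f$; as $u|_f$ is polynomial this is legitimate and gives $\sum_f\big(\int_{\partial f}u\,(\div\bm\sigma\cdot\bm n_{\partial f}) - \int_f\nabla u\cdot\div\bm\sigma\big)$. Integrating by parts a second time in the volume term and using that $\nabla u|_f$ is constant — so that the would-be $\int_f\hess u:\bm\sigma$ contribution vanishes — turns this into $\sum_f\int_{\partial f}\big(u\,(\div\bm\sigma\cdot\bm n_{\partial f}) - \nabla u|_f\cdot\bm\sigma\bm n_{\partial f}\big)$. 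I would then regroup the boundary integrals edge by edge: each interior edge $e$ is shared by the faces $f\supset e$, with induced outward normal $\bm n_{\partial f}|_e = \mathcal O(e,f)\bm n_e$, while the boundary edges contribute nothing since $\bm\sigma$ is compactly supported in $\Omega$. This yields $\langle\hess u,\bm\sigma\rangle = \sum_{e\in\mathsf E_0}\sum_{f\supset e}\mathcal O(e,f)\int_e\big(u|_f\,(\div\bm\sigma\cdot\bm n_e) - \nabla u|_f\cdot\bm\sigma\bm n_e\big)$.

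The last step is to recognize the inner integral as $\langle\hat{\bm v}_e^1[u|_f],\bm\sigma\rangle$. Because $\bm\sigma$ is symmetric, $\sym\bm\sigma = \bm\sigma$, and $\bm\sigma:(\nabla p\otimes\bm n_e) = \nabla p\cdot\bm\sigma\bm n_e$; hence specializing \eqref{eq:hatv1_e-2d} to $p = u|_f$ reproduces exactly $\int_e(\div\bm\sigma\cdot\bm n_e)\,u|_f - \nabla u|_f\cdot\bm\sigma\bm n_e$. Summing over $e$ and $f$ — the sum over all faces collapsing to the faces containing $e$ since $\mathcal O(e,f) = 0$ otherwise — gives $\langle\hess u,\bm\sigma\rangle = \big\langle\sum_{e\in\mathsf E_0}\sum_{f\supset e}\mathcal O(e,f)\hat{\bm v}_e^1[u|_f],\bm\sigma\big\rangle$ for every $\bm\sigma\in C_c^\infty(\Omega;\mathbb S)$, which is the claimed identity; the right-hand side lies in $\bm V^1_-$ by definition.

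I do not expect a genuine obstacle here: the content is a bookkeeping exercise and the computation is essentially already displayed. The only points needing care are the orientation convention $\bm n_{\partial f}|_e = \mathcal O(e,f)\bm n_e$ that converts face-boundary integrals into edge integrals, and the observation that — in contrast to the $V^0$ case of Part I, where continuity of $u$ annihilated one family of jump terms — here $u$ is discontinuous, so already the first integration by parts produces the terms involving $u|_f$; these are precisely what the $p$-dependence of $\hat{\bm v}_e^1[p]$ is designed to carry. One should also note that $\hat{\bm v}_e^1[u|_f]$ is well defined because the bivariate polynomial $u|_f$ and its gradient restrict to $e$ as explained in the Remark after \eqref{eq:hatv1_e-2d}.
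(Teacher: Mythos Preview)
Your proposal is correct and follows essentially the same route as the paper: the computation you describe is precisely the one displayed immediately before the lemma (integration by parts twice on each cell, regrouping by interior edges via the orientation $\mathcal O(e,f)$, then matching against the definition \eqref{eq:hatv1_e-2d} of $\hat{\bm v}_e^1$). Your additional remarks on why testing against symmetric $\bm\sigma$ suffices and on the well-definedness of $\hat{\bm v}_e^1[u|_f]$ are accurate and make the argument slightly more explicit than the paper's presentation.
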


%\kh{to continue from here.}
    
    \begin{remark}
      \label{rmk:ker-g1}
      We note that \Cref{lem:dist-hess-L2-2d} is consistent with \eqref{eq:dist-gradgrad-2d} if $u\in \bm V^{1}$, i.e., $u$ is continuous. 
        In fact, the continuity of $u$ implies $p = \sum_{f : f\supset e} \mathcal O(e,f) u|_f$ vanishes at $e$. Using the fact that $p\in \mathcal{P}_{1}$ and $p|_{ e} = 0$, we obtain $\nabla p = \alpha \bm n_{e}$ for some (scalar) constant $\alpha$. 
         % In fact, continuity of $u$ implies that $\jump{u}_{e}|_{e} = 0$ (note that in general $\jump{u}_{e} \neq 0$ as $u$ is different polynomials on the two sides of $e$). In \eqref{eq:hatv1_e-2d}, substitute $p$ by $\jump{u}$. Using the fact that $p\in \mathcal{P}_{1}$ and $p|_{ e}=\jump{u}_{e}|_{ e}  = 0$, we obtain $\nabla p = \alpha \bm n_{e}$ for some (scalar) constant $\alpha$. 
      Therefore,
        \begin{equation}
            \langle \hat{\bm v}_{e}^1[p],\bm \sigma \rangle = \alpha \int_{e} \bm \sigma :(\bm n_{e} \otimes \bm n_{e}) = \langle  \alpha \delta_{e}[\bm n_e \otimes \bm n_e], \bm \sigma \rangle  
        \end{equation}
This has the form of  \eqref{eq:dist-gradgrad-2d}.
    \end{remark}
    
      Finally, we consider $\rot$ of $\hat{\bm v}_{e}^1[p]$. Again, a direct calculation yields 
      \begin{equation}
        \begin{split}
        \langle \rot \hat{\bm v}_{e}^1[p], \bm v\rangle = & - \langle \hat{\bm v}_{e}^1[p], \curl \bm v\rangle \\ =
        & \int_{e} \curl \bm v : \sym( \nabla p\otimes\bm n_{e}) {-} (\div \sym \curl \bm v \cdot \bm n_{e})p.
        \end{split}
      \end{equation}
 It holds that 
    \begin{equation}
        \begin{split}
        \int_e \curl \bm v : \sym(\nabla p\otimes\bm n_{e} ) & = \int_e \sym \curl \bm v : (\nabla p\otimes\bm n_{e} ) \\
        & = \int_{e}  \curl \bm v: (\nabla p\otimes\bm n_{e} ) - \frac{1}{2} \begin{bmatrix} 0 & \div \bm v \\ {-}\div \bm v & 0 \end{bmatrix} : (\nabla p\otimes\bm n_{e} ) \\
        & =  \int_{e} \frac{\partial }{\partial \bm t_e} ( \nabla p\cdot\bm v) - \frac{1}{2} \int_{e} \div \bm v \cdot \frac{\partial   p}{\partial \bm t_e},
        \end{split}
    \end{equation}
    and 
    \begin{equation}
      \begin{split}
 \int_{e} (\div \sym \curl \bm v \cdot \bm n_{e})p & = \frac{1}{2} \int_e (\curl(\div \bm v) \cdot \bm n_e) p = \frac{1}{2} \int_{e} \frac{\partial}{\partial \bm t_e}(\div \bm v) p.
      \end{split}
    \end{equation}
  %  \kh{double check the sign, as this is crucial for cancellation. Add explicit forms of $\curl$, $\rot$ in the notation section.} \lt{Fixed.}

Therefore, 
\begin{equation}
\begin{split}
\langle \rot \hat{\bm v}_{e}^1[p], \bm v \rangle & =  \int_{e} \frac{\partial}{\partial \bm t_e}(\bm v\cdot \nabla p - \frac{1}{2} \div \bm v \cdot p) =  (\bm v\cdot \nabla p - \frac{1}{2} \div \bm v \cdot p)|_{x_1}^{x_2}.
\end{split}
\end{equation}

We then define the final space $\bm V^2_-$ as follows:
\begin{equation}
\bm V^2_- = \Big\{ \sum_{x \in \mathsf V_0} \hat{\bm v}_{ x}^2[p_{x}] : p_{ x} \in \mathcal P_1 \Big\}, 
\end{equation}
where $\hat{\bm v}_{ x}^2[p_{ x}]$ is defined as 
\begin{equation}
  \label{eq:hatv2-2d}
\langle \hat{\bm v}_{x}^2[p_{x}], \bm w\rangle = (\bm w \cdot \nabla p - \frac{1}{2} \div \bm w \cdot p)(x).
\end{equation}
 
The above calculation yields the following lemma.
\begin{lemma}
  \label{lem:dist-rot-L2-2d}
For $\hat{\bm v}_{e}^1[p] \in \bm V^1_-$, 
\begin{equation}
\rot \hat{\bm v}_{e}^1[p] =  \sum_{x \in \mathsf V_0} \mathcal O(x, e) \hat{\bm v}_{x}^2[p].
\end{equation}
As a result, $\rot \bm V^1_-  \subset \bm V^2_-.$
\end{lemma}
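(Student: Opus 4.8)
The statement is an identity between distributions supported on $\bar e$, and the plan is to prove it by a direct distributional computation (essentially the one displayed just before the lemma). Since $\rot$ and $-\curl$ act as formal adjoints rowwise, for any test field $\bm v\in C_c^\infty(\Omega;\mathbb R^2)$ one has $\langle\rot\hat{\bm v}_e^1[p],\bm v\rangle=-\langle\hat{\bm v}_e^1[p],\curl\bm v\rangle$. I would then substitute the definition \eqref{eq:hatv1_e-2d} with $\bm\sigma=\curl\bm v$ and use that $\sym$ is self-adjoint for the Frobenius inner product to move $\sym$ onto $\curl\bm v$; this reduces the problem to evaluating the two line integrals $\int_e\curl\bm v:\sym(\nabla p\otimes\bm n_e)$ and $\int_e(\div\sym\curl\bm v\cdot\bm n_e)\,p$ over the single edge $e$.

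For the first integral I would use the planar proxy identity $\sym\curl\bm v=\curl\bm v-\tfrac12\mskw(\div\bm v)$ together with $\curl u\cdot\bm n_e=\grad u\cdot\bm t_e$ and $\bm n_e=\bm t_e^\perp$; since $p$ is affine its gradient is constant along $e$, so $\curl\bm v:(\nabla p\otimes\bm n_e)=\nabla p\cdot(\curl\bm v)\cdot\bm n_e$ becomes $\tfrac{\partial}{\partial\bm t_e}(\nabla p\cdot\bm v)$, while the skew correction contributes $-\tfrac12\div\bm v\,\tfrac{\partial p}{\partial\bm t_e}$. For the second integral, $\div\curl\bm v=0$ rowwise and $\div\mskw(\div\bm v)=-\curl(\div\bm v)$ give $\div\sym\curl\bm v=\tfrac12\curl(\div\bm v)$, hence $\div\sym\curl\bm v\cdot\bm n_e=\tfrac12\tfrac{\partial}{\partial\bm t_e}(\div\bm v)$. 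Assembling the two half-weight terms by the Leibniz rule into $\tfrac12\tfrac{\partial}{\partial\bm t_e}(\div\bm v\,p)$ leaves $\langle\rot\hat{\bm v}_e^1[p],\bm v\rangle=\int_e\tfrac{\partial}{\partial\bm t_e}\bigl(\bm v\cdot\nabla p-\tfrac12\div\bm v\,p\bigr)$.

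Finally I would apply the fundamental theorem of calculus along the oriented edge $e=[x_1,x_2]$ to obtain $\bigl(\bm v\cdot\nabla p-\tfrac12\div\bm v\,p\bigr)\big|_{x_1}^{x_2}$, which by the definition \eqref{eq:hatv2-2d} of $\hat{\bm v}_x^2$ equals $\sum_{x}\mathcal O(x,e)\langle\hat{\bm v}_x^2[p],\bm v\rangle$ with the sum over the two endpoints of $e$; an endpoint lying on $\partial\Omega$ drops out because $\bm v$ is supported in $\Omega$, leaving exactly the stated sum over interior vertices, and $\rot\bm V^1_-\subset\bm V^2_-$ follows by linearity. The only delicate point is the sign bookkeeping — matching the factor $\tfrac12$ produced by the skew part of $\curl$ against the endpoint signs $\mathcal O(x,e)$ so that the fundamental theorem of calculus reproduces precisely the simplicial boundary operator. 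There is no analytic subtlety: every pairing is between a distribution supported on $\bar e$ and a smooth function, and the only integration by parts used is the one-dimensional fundamental theorem of calculus on a segment.
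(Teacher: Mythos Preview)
Your proposal is correct and follows essentially the same direct distributional computation as the paper: pair with a test field via $\langle\rot\hat{\bm v}_e^1[p],\bm v\rangle=-\langle\hat{\bm v}_e^1[p],\curl\bm v\rangle$, use the identities $\sym\curl\bm v=\curl\bm v-\tfrac12\mskw(\div\bm v)$ and $\div\sym\curl\bm v=\tfrac12\curl(\div\bm v)$ together with $\curl u\cdot\bm n_e=\partial u/\partial\bm t_e$, collect everything into a total tangential derivative, and evaluate at the endpoints by the fundamental theorem of calculus. Your remark that boundary endpoints drop out because $\bm v\in C_c^\infty(\Omega)$ is the correct justification for restricting the sum to $\mathsf V_0$.
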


Combining both \Cref{lem:dist-hess-L2-2d} and \Cref{lem:dist-rot-L2-2d}, we have
\begin{proposition}
  \label{prop:hom-hessian-L2-2d}
The sequence \eqref{cplx:hessian-L2-2d} is a complex, and its cohomology is isomorphic to $\mathcal{H}^{\bs}_{dR}(\Omega)\otimes \mathcal P_1 $. %\kh{$\mathcal P_1 \otimes \mathcal{H}^{\bs}_{dR}(\Omega)$ or $ \mathcal{H}^{\bs}_{dR}(\Omega)\otimes \mathcal P_1$? Clearly isomorphic, but seems different when we view tensor product from the multilinear map point of view. We should be consistent throughout the paper. } %\lt{I suggest to use the form Homology times coefficient, in accordance with the universal coefficient theorm.} \lt{Fixed.}
\end{proposition}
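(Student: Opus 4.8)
The plan is to exhibit an explicit isomorphism between the complex \eqref{cplx:hessian-L2-2d} and the simplicial chain complex of $\Delta$ relative to $\partial\Delta$ with coefficients in $\mathcal P_1$, and then invoke the universal coefficient theorem (Theorem \ref{UCT}) and the identification $\mathcal H_k(\Delta;\partial\Delta)\cong\mathcal H^{n-k}_{dR}(\Omega)$ from Theorem \ref{thm:deRham}. Concretely, I would set up the vertical maps
\[
V^0_-\ \xrightarrow{\ \cong\ }\ \bigoplus_{f\in\mathsf F}\mathcal P_1\,\|f\|,\qquad
\bm V^1_-\ \xrightarrow{\ \cong\ }\ \bigoplus_{e\in\mathsf E_0}\mathcal P_1\,\|e\|,\qquad
\bm V^2_-\ \xrightarrow{\ \cong\ }\ \bigoplus_{x\in\mathsf V_0}\mathcal P_1\,\|x\|,
\]
sending $u\mapsto\sum_f u|_f\|f\|$, sending $\sum_e\hat{\bm v}^1_e[p_e]\mapsto\sum_e p_e\|e\|$, and $\sum_x\hat{\bm v}^2_x[p_x]\mapsto\sum_x p_x\|x\|$. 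Since $\mathsf F$ is the set of top cells and all of $\partial\Delta$ lives in dimensions $<n=2$, the bottom row is exactly $C_\bullet(\Delta,\mathcal P_1;\partial\Delta)$ with the relative boundary operator $\partial$ built from the orientation $\mathcal O=\mathcal O^{\mathcal H_\bullet(\Delta;\partial\Delta)}$ (recall $\mathcal O$ was fixed to mean the relative orientation). By Lemmas \ref{lem:dist-hess-L2-2d} and \ref{lem:dist-rot-L2-2d}, $\hess u=\sum_{e\in\mathsf E_0}\big(\sum_{f\supset e}\mathcal O(e,f)u|_f\big)\hat{\bm v}^1_e[\,\cdot\,]$ and $\rot\hat{\bm v}^1_e[p]=\sum_{x\in\mathsf V_0}\mathcal O(x,e)\hat{\bm v}^2_x[p]$, so the vertical maps intertwine $\hess$ with $\partial_2$ and $\rot$ with $\partial_1$; hence the diagram commutes and \eqref{cplx:hessian-L2-2d} is a complex isomorphic to $C_\bullet(\Delta,\mathcal P_1;\partial\Delta)$ as cochain complexes (up to reindexing). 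Then its cohomology is $\mathcal H_\bullet(\Delta,\mathcal P_1;\partial\Delta)\cong\mathcal H_\bullet(\Delta;\partial\Delta)\otimes\mathcal P_1\cong\mathcal H^{2-\bullet}_{dR}(\Omega)\otimes\mathcal P_1$, which is the claim.

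The first thing I would verify carefully is that the three vertical maps are genuinely bijective. For $V^0_-$ this is immediate. For $\bm V^1_-$ and $\bm V^2_-$ the surjectivity is by definition of the spaces, so the content is injectivity: I must show that the assignment $p_e\mapsto\hat{\bm v}^1_e[p_e]$ (summed over $e\in\mathsf E_0$) is injective, i.e. that the functionals $\hat{\bm v}^1_e[p]$ for varying $e$ and $p\in\mathcal P_1$ are linearly independent. This follows by testing against $\bm\sigma\in C_c^\infty(\Omega;\mathbb S)$ supported near a single edge $e$: the two-term expression in \eqref{eq:hatv1_e-2d}, namely $\int_e(\div\sym\bm\sigma\cdot\bm n_e)p-\sym\bm\sigma:(\nabla p\otimes\bm n_e)$, can be made to detect both the value and the tangential-derivative data of $p$ along $e$ (choosing $\bm\sigma$ first to kill the $\div$ term and vary $\sym\bm\sigma$, then the reverse), so $\hat{\bm v}^1_e[p]=0$ forces $p=0$. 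Similarly $\hat{\bm v}^2_x[p]=0$ for all test $\bm w$ forces $\nabla p=0$ and $p(x)=0$ hence $p=0$, and disjoint vertices give independent functionals. These are short local computations.

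The main obstacle — really the only nontrivial point — is the bookkeeping of orientations and of interior-versus-boundary indexing, making sure the commuting squares hold with the correct signs and that the target complex is the *relative* one $C_\bullet(\Delta;\partial\Delta)$ (whence $\mathcal O$, not $\mathcal O_0$) rather than the absolute simplicial chain complex. Here I would lean on the remark already in the text that $\mathsf F$ denotes the top-dimensional cells and that $\bm V^1_-,\bm V^2_-$ are indexed by interior edges and interior vertices: a boundary edge $e\in\mathsf E_\partial$ has $\mathcal O^{\mathcal H_\bullet(\Delta;\partial\Delta)}(e,f)=0$ by construction, consistent with dropping it, and likewise for boundary vertices. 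With the dictionary $\|f\|\leftrightarrow$ top cell, $\|e\|\leftrightarrow$ interior edge, $\|x\|\leftrightarrow$ interior vertex and the relative boundary $\partial$, Lemmas \ref{lem:dist-hess-L2-2d}, \ref{lem:dist-rot-L2-2d} literally state that the squares commute. Once the isomorphism of complexes is in place, the computation of the cohomology is a one-line application of Theorems \ref{UCT} and \ref{thm:deRham}.
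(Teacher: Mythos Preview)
Your proposal is correct and follows essentially the same approach as the paper: both set up the bijections $\kappa^i_-$ from $V^i_-$ to the relative chain groups $C_{2-i}(\Delta,\mathcal P_1;\partial\Delta)$, observe that Lemmas \ref{lem:dist-hess-L2-2d} and \ref{lem:dist-rot-L2-2d} make the squares commute, and then invoke Theorems \ref{UCT} and \ref{thm:deRham}. The paper simply asserts that the $\kappa^i_-$ are bijective, whereas you sketch the injectivity argument by local testing; this is a harmless elaboration rather than a different route.
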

\begin{proof}
  The above calculation can be summarized in the following diagram.
  \begin{equation}\label{cd:mapping-kappa-2d}
    \begin{tikzcd}
   0 \arrow{r} & V^0_- \arrow{r}{\hess} \arrow{d}{\kappa^0_-} &\bm V^1_- \arrow{r}{\rot} \arrow{d}{\kappa^1_-} & \bm V^2_- \arrow{r}{ } \arrow{d}{\kappa^2_-}& 0\\
    0 \arrow{r}&C_2(\Delta, \mathcal P_1;\partial \Delta) \arrow{r}{\partial}&C_1(\Delta, \mathcal P_1;\partial \Delta)  \arrow{r}{\partial} & C_0(\Delta, \mathcal P_1;\partial \Delta) \arrow{r}{} & 0.
     \end{tikzcd}
    \end{equation}
Here $C_k(\Delta, \mathcal P_1;\partial \Delta)$ is the space of simplicial  $k$-chains with $\mathcal{P}_{1}$ coefficients, see \Cref{sec:topology}. % \kh{the ``relative version'' of chains seems not defined. }\lt{added.}
% space of simiplicial complex (related to the boundary).
For each element in $C_2(\Delta, \mathcal P_1; \partial \Delta)$, we can represent it as 
\begin{equation}
\omega = \sum_{f \in \mathsf F} p_f \|f\|,
\end{equation}
where $\|f\|$ is the free element associated with the face $f$. Similar expressions will be used later for 1-chain and 0-chain. 

The vertical mapping $\kappa^0_-$ maps $v \in V^0_-$ to 
$$\kappa^0_-(v) = \sum_{f \in \mathsf F} v|_{f} \|f\|.$$
We can similarly define linear maps $\kappa^1_-$ and $\kappa^2_-$ by evaluating the coefficients, i.e., 
$$
\kappa^1_-(\hat{\bm v}_{e}^1[p]):= p \|e\|, \quad  \kappa^2_-(\hat{\bm v}_{x}^2[q]):=q \|x\|.
$$
Obviously, $\kappa_{-}^{i}, =0, 1, 2$ are bijective and the diagram \eqref{cd:mapping-kappa-2d} commutes. %\kh{make sure that signs are correct according to the changes above.}\lt{done.}
 As a consequence, $\kappa_{-}^{\bs}$ induces bijective maps on cohomology, and therefore the cohomology $\mathcal H^{k}(V^{\bs}_{-})$ is isomorphic to $\mathcal H_{2-k}(\Delta, \mathcal P_1;\partial \Delta)$, which is further isomorphic to $\mathcal H_{2-k}(\Delta; \partial \Delta)\otimes \mathcal P_1\cong \mathcal H^{k}_{dR}(\Omega)\otimes \mathcal P_1$ by Theorems \ref{UCT} and \ref{thm:deRham}.
%By universal coefficient theorem, this is further isomorphic to $\mathcal P_1 \otimes \mathcal H_{2-k}.$
\end{proof}
\begin{remark}
In fact, the complex \eqref{cplx:hessian-L2-2d} can be regarded as the ``skeleton'' of the finite element divdiv complex \cite{hu2021family}, playing the role of Whitney forms for high-order de~Rham complexes. Similar structures have been used in \cite{hu2023local,hu2023local2}.

\end{remark}

Now we consider the cohomology of the original complex \eqref{cplx:hessian-2d}. The key is to regard \eqref{cplx:hessian-2d} as the kernel of \eqref{cplx:hessian-L2-2d} under certain maps. This leads to short exact sequences and the following diagram: 
\begin{equation}\label{cd:mapping-g-2d}
  \begin{tikzcd}
    0 \arrow{r} & 0 \arrow{r}{\hess} \arrow{d} &\bm V^1 \arrow{r}{\rot} \arrow{d} & \bm V^2 \arrow{r} \arrow{d}& 0 \\
 0 \arrow{r} & V^0_- \arrow{r}{\hess} \arrow{d}{g^0} &\bm V^1_- \arrow{r}{\rot} \arrow{d}{g^1} & \bm V^2_- \arrow{r} \arrow{d}{g^2}& 0\\
  0 \arrow{r}& \bigoplus\limits_{f \in \mathsf F} \mathcal P_1(f) \arrow{r}{\widetilde{\partial}}&\bigoplus\limits_{e \in \mathsf E_0} \mathcal P_1(e)  \arrow{r}{\widetilde{\partial}} &\bigoplus\limits_{x \in \mathsf V_0} \mathcal P_1(x)  \arrow{r}{} & 0.
   \end{tikzcd}
  \end{equation}
% We now construct appropriate vertical mappings, to make the cochain $V$ as a kernel of these vertical mappings.

First, we introduce the last row of \eqref{cd:mapping-g-2d}: 
\begin{equation}
  \label{cplx:tildepartial-2d}
  \begin{tikzcd}
    0 \arrow{r}& \bigoplus\limits_{f \in \mathsf F} \mathcal P_1(f) \arrow{r}{\widetilde{\partial}}&\bigoplus\limits_{e \in \mathsf E_0} \mathcal P_1(e)  \arrow{r}{\widetilde{\partial}} &\bigoplus\limits_{x \in \mathsf V_0} \mathcal P_1(x)  \arrow{r}{} & 0.
  \end{tikzcd}
\end{equation}
Here  $\widetilde{\partial}$ is a composition of the boundary operator and the restriction operators, defined as 
\begin{equation}
\widetilde{\partial}( \sum_{f \in \mathsf F} p_{f} \|f\|) = \sum_{e \in \mathsf E_0}  \sum_{f \in \mathsf F}\mathcal O(e,f) (p_f)|_e \|e\|,
\end{equation}
\begin{equation}
  \widetilde{\partial}( \sum_{e \in \mathsf E_0} p_{e} \|e\|) = \sum_{x \in \mathsf V_0}  \sum_{e \in \mathsf E_0} \mathcal O(x,e) (p_e)|_x \|x\|.
  \end{equation}

The following results were proved in \cite[Lemma 4.9]{licht2017complexes}.
\begin{proposition}
\label{prop:exactness-tildepartial-2d}
The sequence \eqref{cplx:tildepartial-2d} is a complex, and its cohomology is $V^0$, $0$ and $0$, respectively.
\end{proposition}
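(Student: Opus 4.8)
The plan is to check the complex property, then read off the cohomology at the first slot directly, and finally prove exactness at the remaining two slots by decomposing \eqref{cplx:tildepartial-2d} into local complexes indexed by the vertices of $\Delta$. First, $\widetilde{\partial}\circ\widetilde{\partial}=0$: since $\widetilde{\partial}$ is the relative simplicial boundary operator (built from the orientations $\mathcal O$) composed with the restriction maps $\mathcal P_1(f)\to\mathcal P_1(e)$ and $\mathcal P_1(e)\to\mathcal P_1(x)$, and these restrictions are compatible, $(p|_e)|_x=p|_x$ whenever $x\subset e\subset f$, the composition $\widetilde{\partial}\circ\widetilde{\partial}$ carries the scalar factor $\sum_{e}\mathcal O(x,e)\,\mathcal O(e,f)$, which vanishes because $\partial^{\mathcal H_{\bs}(\Delta;\partial\Delta)}$ squares to zero. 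The cohomology at $\bigoplus_{f\in\mathsf F}\mathcal P_1(f)$ is simply the kernel of $\widetilde{\partial}$, and $\sum_f p_f\|f\|$ lies there iff $\sum_{f\supset e}\mathcal O(e,f)\,p_f|_e=0$ on every interior edge $e$, i.e.\ the affine traces of the $p_f$ match across each interior edge. This is precisely the statement that $\sum_f p_f\|f\|$ is the image of a continuous piecewise-linear function under $u\mapsto\sum_f u|_f\|f\|$, so this cohomology group equals $V^0$.

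For the other two slots I would use the nodal (barycentric) basis: $\mathcal P_1(\sigma)=\bigoplus_{v}\mathbb R\,\lambda_v^\sigma$, the sum over vertices $v$ of the simplex $\sigma$ (a triangle or an edge), and $\mathcal P_1(x)=\mathbb R$. Restriction sends $\lambda_v^\sigma$ to $\lambda_v^\tau$ when $v$ is a vertex of the subsimplex $\tau\subset\sigma$ and to $0$ otherwise, so $\widetilde{\partial}$ preserves the grading by $v$ and \eqref{cplx:tildepartial-2d} splits as $\bigoplus_{v\in\mathsf V}\mathcal C_v$, where $\mathcal C_v$ is the three-term complex $\bigoplus_{f\ni v}\mathbb R\to\bigoplus_{e\ni v,\,e\in\mathsf E_0}\mathbb R\to(\mathbb R\text{ if }v\in\mathsf V_0,\ \text{else }0)$ with maps induced by $\widetilde{\partial}$. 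The key step is to recognize $\mathcal C_v$ as the relative simplicial chain complex $C_{\bs}(\mathrm{St}(v),\partial\,\mathrm{St}(v))$ placed in degrees $2,1,0$, where $\mathrm{St}(v)$ denotes the closed star of $v$: the simplices of $\mathrm{St}(v)$ meeting $v$ are exactly the triangles at $v$, the radial edges at $v$, and $v$ itself, and the ``interior'' constraint deletes precisely the radial edges lying on $\partial\Omega$ and, for a boundary vertex, the vertex $v$ — these, together with the link $\mathrm{lk}(v)$, form the boundary circle $\partial\,\mathrm{St}(v)$. Since $\overline\Omega$ is a surface with boundary, each closed star is a topological $2$-disk, so $H_{\bs}(\mathrm{St}(v),\partial\,\mathrm{St}(v))\cong\mathbb R$, concentrated in degree $2$; hence $\mathcal C_v$ has cohomology $\mathbb R,0,0$, and summing over $v$ yields cohomology $\bigoplus_{v\in\mathsf V}\mathbb R,\,0,\,0$, the top summand being matched with $V^0$ through the nodal basis functions, consistently with the direct computation above.

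The main obstacle is the bookkeeping in the identification of $\mathcal C_v$: tracking which radial edges and which vertex count as interior, treating the boundary-vertex case correctly, and matching the orientation signs $\mathcal O(\cdot,\cdot)$. All of this can instead be done by elementary linear algebra: labelling the triangles $f_1,\dots,f_m$ and the radial edges of $\mathrm{St}(v)$ cyclically for an interior vertex, or linearly for a boundary vertex, the two maps of $\mathcal C_v$ become the telescoping difference operators $f_i\mapsto e_{i+1}-e_i$ and $e_i\mapsto\pm v$, whose kernels and images are immediate and give cohomology $\mathbb R,0,0$ in both cases. I would present this elementary version and merely remark on the topological interpretation; the statement is, specialized to $\mathcal P_1$ coefficients, \cite[Lemma~4.9]{licht2017complexes}.
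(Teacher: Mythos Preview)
Your proof is correct and follows essentially the same approach as the paper's: both decompose \eqref{cplx:tildepartial-2d} into vertex-indexed pieces via the barycentric basis $\mathcal P_1(\sigma)=\bigoplus_{v\in\sigma}\mathbb R\lambda_v$ and then identify each piece $\mathcal C_v$ with the relative chain complex of the star of $v$. Your treatment is more explicit about the boundary-vertex case and supplements the topological identification with an elementary telescoping argument, but the core idea is identical.
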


  For completeness, we provide a proof here. The proof is based on the following lemma.

  \begin{lemma}
  The complex \eqref{cplx:tildepartial-2d} can be identified with a direct sum of the following vertex patch complexes for each vertex $v$,
\begin{equation}
\label{cplx:tildepartial-2d-vertexpatch}
\begin{tikzcd}
    0 \arrow{r}& 
    % \bigoplus\limits_{K \in \mathsf K, K \ni v} \mathbb R \arrow{r}{{\partial}}
    % &
    \bigoplus\limits_{f \in \mathsf F, f\ni v} \mathbb R \arrow{r}{{\partial}}&\bigoplus\limits_{e \in \mathsf E_0, e \ni v} \mathbb R  \arrow{r}{{\partial}} & \mathbb R  \arrow{r}{} & 0.
  \end{tikzcd}
\end{equation}
Here $\partial$ is induced by the boundary operator (the relative homology version).
\end{lemma}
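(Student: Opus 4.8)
The plan is to decompose each term of \eqref{cplx:tildepartial-2d} along the vertices of $\Delta$ using the Lagrange (nodal) coordinates on $\mathcal P_1$. For any simplex $\sigma\in\Delta$ of dimension at most two, evaluation at the vertices of $\sigma$ gives a canonical isomorphism
\[
\mathcal P_1(\sigma)\ \xrightarrow{\ \sim\ }\ \bigoplus_{v\in\mathsf V,\ v\in\sigma}\mathbb R,\qquad p\longmapsto \big(p(v)\big)_{v\in\sigma},
\]
i.e.\ the nodal basis for a triangle, the two endpoint values for an edge, and the single value $p(x)$ for a vertex $x$. The crucial compatibility is that if $\tau$ is a face of $\sigma$, then under these identifications the restriction map $\mathcal P_1(\sigma)\to\mathcal P_1(\tau)$ is exactly the coordinate projection onto the vertices of $\tau$, since $(p|_\tau)(w)=p(w)$ for every vertex $w$ of $\tau$.

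First I would regroup the direct sums by vertex label:
\[
\bigoplus_{f\in\mathsf F}\mathcal P_1(f)\ \cong\ \bigoplus_{v\in\mathsf V}\ \bigoplus_{f\ni v}\mathbb R,\qquad \bigoplus_{e\in\mathsf E_0}\mathcal P_1(e)\ \cong\ \bigoplus_{v\in\mathsf V}\ \bigoplus_{e\in\mathsf E_0,\ e\ni v}\mathbb R,\qquad \bigoplus_{x\in\mathsf V_0}\mathcal P_1(x)\ \cong\ \bigoplus_{v\in\mathsf V_0}\mathbb R.
\]
Then I would check that both copies of $\widetilde{\partial}$ preserve this $\mathsf V$-grading. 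Because $\widetilde{\partial}$ is a signed combination of the restriction maps and the $w$-coordinate of $p_\sigma$ restricts only into the $w$-coordinate of faces of $\sigma$ still containing $w$, no vertex labels are mixed: concretely, the $w$-component of $\widetilde{\partial}\big(\sum_{f}p_f\|f\|\big)$ is supported on interior edges $e\ni w$ and equals $\sum_{f\supset e}\mathcal O(e,f)\,p_f(w)$, and the $w$-component of the second map sends $\sum_e p_e\|e\|$ to $\sum_{e\ni w}\mathcal O(w,e)\,p_e(w)\,\|w\|$ when $w\in\mathsf V_0$ and to $0$ when $w\in\mathsf V_\partial$. Hence \eqref{cplx:tildepartial-2d} splits as the direct sum over $v\in\mathsf V$ of its $v$-graded subcomplexes.

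Finally I would read off that the $v$-graded subcomplex is exactly \eqref{cplx:tildepartial-2d-vertexpatch}: its three slots are $\bigoplus_{f\ni v}\mathbb R$, $\bigoplus_{e\in\mathsf E_0,\ e\ni v}\mathbb R$, and $\mathbb R$ when $v\in\mathsf V_0$ (and nothing, i.e.\ $0$, when $v\in\mathsf V_\partial$ — which is also precisely what makes the two-step composition vanish in the boundary case), while the differentials are the signed incidence numbers $\mathcal O(e,f)$ and $\mathcal O(v,e)$, that is, the simplicial boundary operator of the closed star of $v$ taken relative to $\partial\Delta$ (only interior subsimplices retained), as the statement asserts.

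I do not expect a serious obstacle here; the proof is essentially a change of basis plus regrouping. The only point requiring care is the bookkeeping with boundary versus interior edges and vertices — so that the patch complex truncates correctly at a boundary vertex, and so that one really verifies that the vertex-value coordinates intertwine $\widetilde{\partial}$ with the relative simplicial boundary operator rather than merely asserting it. (This lemma is the hinge of Proposition~\ref{prop:exactness-tildepartial-2d}: once the decomposition is in place, each interior patch is the relative chain complex of a disk modulo its bounding circle and each boundary patch is a fan, so the homology can be computed patchwise.)
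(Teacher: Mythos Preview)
Your proposal is correct and essentially the same as the paper's proof: both decompose $\mathcal P_1(\sigma)$ via the barycentric/nodal basis (your vertex-evaluation coordinates are exactly the coefficients in the basis $\{\lambda_v\}$ the paper uses), regroup by vertex, and observe that $\widetilde\partial$ respects this grading because $\lambda_v$ restricts to zero on any subface not containing $v$. Your explicit remark that the last slot collapses to $0$ when $v\in\mathsf V_\partial$ is a useful clarification the paper leaves implicit.
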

\begin{proof}
The key observation is that for each simplex $f\in\mathsf F$, the space $\mathcal P_1(f)$ is generated by the barycenter coordinates $\lambda_v$, where $v$ is a vertex of $f$. By this, we can rewrite $\mathcal P_1(f) \cong \bigoplus_{v \in f}  \mathbb R$.
Therefore,
\begin{equation}
\bigoplus_{f\in\mathsf F} \mathcal P_1(f) \cong \bigoplus_{f \in \mathsf F} \bigoplus_{v \in f} \mathbb R \cong \bigoplus_{v \in \mathsf V} \bigoplus_{f \in \mathsf F, f\ni v} \mathbb R.
\end{equation}
For $f \in \mathsf F$ and $\lambda_v \in \mathcal P_1(f)$ with $v \in f$, by definition it holds that 
\begin{equation}\label{sum-v}
\widetilde{\partial} \lambda_v|_{f} = \sum_{e \in \mathsf E_0, f\ni e} \mathcal O(e, f) \lambda_v|_{e}.
\end{equation}
For $e$ such that $v \not\in e$,   $\lambda_v$ vanishes on $e$. {Therefore, the right-hand side of \eqref{sum-v} contains only terms that are in the patch of $v$. This proves that \eqref{cplx:tildepartial-2d} can be identified as a complex on each vertex patch}.% This proves the lemma.
\end{proof}

\begin{proof}[Proof of \Cref{prop:exactness-tildepartial-2d}]
Note that the homology of \eqref{cplx:tildepartial-2d-vertexpatch} is identical to the relative homology of the vertex patch of $v$, where $v$ is either an interior or a boundary vertex. As a vertex patch is contractible, the homology vanishes except for at index zero, where $\mathcal H_0 \cong \operatorname{span}\{ \lambda_v\}$, the span of the Lagrange basis (hat) function at $v$.
%\kh{I reformulated the proof: removed the vertex patch notation which seems not necessary. }
% This concludes the result.
% $N_v$. \kh{give a definition of vertex patch?} For $v \in \mathsf V_0$, the relative homology is $\mathcal H(N_v; \partial N_v)$; for $v \in \mathsf V_{\partial}$, the relative homology is also $\mathcal H(N_v; \partial N_v)$. Note that in both case, the homology vanishes but leaves the zero-th index $\mathcal H_0 = \operatorname{span}\{ \lambda_v\}$, the Lagrange basis function. This concludes the result.
\end{proof}

Now we are ready to finish the proof of \Cref{lem:closedness-hessian-2d}. 

\begin{proof}[Proof of \Cref{lem:closedness-hessian-2d} (part II)]
Recall the diagram \eqref{cd:mapping-g-2d}. 
We now define the vertical maps in \eqref{cd:mapping-g-2d} as the restriction to subcells, i.e., $g^{0}=I$, $g^1: \hat{\bm v}_{e}^1[p]\mapsto p|_{e}\|e\|$, and $g^2: \hat{\bm v}_{x}^2[p]\mapsto p|_{x} \|x\|$.
It is not difficult to see that each $g^k$ is onto. 
Now we prove that the space $\bm V^1=\ker(g^{1})$ and $\bm V^2=\ker(g^{2})$.  The former is implied by \Cref{rmk:ker-g1}. To see the latter, note that, by \eqref{eq:hatv2-2d}, $p|_{x} = 0$ implies  
$$\langle \hat{\bm v}_{x}^2[p_{x}], \bm w\rangle = (\bm w \cdot \nabla p)(x),$$
which is exactly all the vertex delta. Therefore, the kernel of $g^2$ is $\bm V^2$.

The above argument shows that columns of \eqref{cd:mapping-g-2d} are short exact sequences. Thus, it induces a long exact sequence of homologies:
\begin{equation}
  \begin{tikzcd}
    0 \arrow{r} & 0 \arrow{r} \arrow{d} &\ker(\rot:\bm V^1\to \bm V^2) \arrow{r}\arrow{d} & \mathcal H^2(V^{\bs}) \arrow{r} \arrow{d}& 0 \\
 0 \arrow{r} &   \mathcal H^0_{dR}(\Omega)\otimes\mathcal P_1 \arrow[r, ""{coordinate, name=Z}] \arrow{d} & \mathcal  H^1_{dR}(\Omega) \otimes\mathcal P_1  \arrow[r, ""{coordinate, name=Y}] \arrow{d} &  \mathcal H^2_{dR}(\Omega) \otimes\mathcal P_1 \arrow{r} \arrow{d}& 0\\
  0 \arrow{r}& V^0 \arrow{r} \arrow[uur, rounded corners, dashed, to path={ -- ([yshift=-4ex]\tikztostart.south)
  -| (Z) [near end]\tikztonodes
  |- ([yshift=4ex]\tikztotarget.north)
  -- (\tikztotarget)}]
  & 0 \arrow{r} \arrow[uur, rounded corners, dashed, to path={ -- ([yshift=-4ex]\tikztostart.south)
  -| (Y) [near end]\tikztonodes
  |- ([yshift=4ex]\tikztotarget.north)
  -- (\tikztotarget)}] &0  \arrow{r}{} & 0.
   \end{tikzcd}
  \end{equation}
With a diagram chase, we can verify that the connecting map $V^0 \to \ker(\rot : \bm V^1 \to \bm V^2)$ is Hessian.  

Now we compute the cohomology. For $\mathcal H^0(V^{\bs}) = \ker(\hess : V^0 \to \bm V^1)$, by the exactness of the long sequence, $\mathcal H^0(V^{\bs}) \cong \mathcal H^0_{dR}(\Omega)\otimes \mathcal P_1 $. For $\mathcal H^1(V^{\bs}) = {\ker(\rot: \bm V^1 \to \bm V^2)}/{\hess V^0}$, 
note that the following part from the long sequence
$$
\begin{tikzcd}
V^{0} \arrow{r}{\hess}& \ker(\rot: \bm V^1 \to \bm V^2)  \arrow{r}{j}& \mathcal H^1_{dR}(\Omega)\otimes\mathcal P_1 \to 0
\end{tikzcd}
$$
is exact. 
Therefore, ${\ker(\rot: \bm V^1 \to \bm V^2)}/{\hess V^0} \cong \ran(j) =   \mathcal H^1_{dR}(\Omega)\otimes\mathcal P_1.$ {Here we used the fact that $f: X\to Y$ induces an isomorphism $X/\ker(f) \cong \ran(f).$}

Finally, it holds that $\mathcal H^2(V^{\bs}) \cong \mathcal H^2_{dR}(\Omega)\otimes\mathcal P_1  $.
\end{proof}
%\kh{to continue from here.}

\subsection{2D Hessian complex with homogeneous boundary condition}

In this section, we introduce the discrete Hessian complex with homogeneous boundary conditions (HBCs) and compute its cohomology.

\begin{lemma}
The sequence \eqref{cplx:hessian0-2d} is a complex.
\end{lemma}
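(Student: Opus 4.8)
The plan is to verify directly that $\rot_0 \circ \hess_0 = 0$ on $V^0_0$ by computing the distributional derivatives, using test functions in $C_c^\infty(\mathbb R^2)$ rather than $C_c^\infty(\Omega)$. The computation parallels the proof of \Cref{lem:closedness-hessian-2d} (part I), but now boundary edges and boundary vertices enter the picture because test functions need not vanish on $\partial\Omega$.

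\begin{proof}
We identify $u \in V^0_0$ with $\sum_{f \in \mathsf F} u|_f \|f\|$. For $u \in V^0_0$ and $\varphi \in C_c^\infty(\mathbb R^2; \mathbb R^{2\times 2})$, integration by parts twice gives, using that $u$ is piecewise linear and continuous across all edges (including boundary edges, where continuity is with the zero extension),
\begin{align*}
\langle \hess_0 u, \varphi \rangle = \langle u, \div\div\varphi \rangle = \sum_{f \in \mathsf F} \int_f u\, \div\div\varphi
= -\sum_{e \in \mathsf E} \sum_{f : f\supset e} \mathcal O(e,f) \int_e \nabla u|_f \cdot \varphi \bm n_e,
\end{align*}
where for a boundary edge $e$ the inner sum has a single term and $\nabla u|_f$ is the (one-sided) value of the gradient. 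Since $u$ is continuous with zero boundary values, the jump $\sum_{f : f\supset e}\mathcal O(e,f)\nabla u|_f$ is parallel to $\bm n_e$ on every edge, so
\begin{equation*}
\hess_0 u = -\sum_{e \in \mathsf E} \Big(\sum_{f : f\supset e} \mathcal O(e,f)\, \nabla u|_f \cdot \bm n_e\Big)\delta_e[\bm n_e \bm n_e^{\mathrm T}] \in \bm V^1_0.
\end{equation*}
For $\sigma = \sum_{e \in \mathsf E}\sigma_e \delta_e[\bm n_e\bm n_e^{\mathrm T}] \in \bm V^1_0$ and $\varphi \in C_c^\infty(\mathbb R^2; \mathbb R^2)$, the same manipulation as in the proof of \Cref{lem:closedness-hessian-2d}, now summing over all vertices,
\begin{align*}
-\langle \rot_0 \sigma, \varphi \rangle = \langle \sigma, \curl\varphi \rangle = \sum_{e \in \mathsf E}\sigma_e \int_e \bm n_e \cdot \curl\varphi\, \bm n_e
= \sum_{e \in \mathsf E}\sigma_e \int_e \frac{\partial}{\partial \bm t_e}(\nabla\varphi\,\bm t_e \cdot \bm n_e)\ \text{-type terms},
\end{align*}
yields $\rot_0\sigma = -\sum_{v \in \mathsf V}\sum_{e : v \subset e}\mathcal O(v,e)\sigma_e\,\delta_v[\bm n_e] \in \bm V^2_0$. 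Composing, $\rot_0(\hess_0 u)$ collects, at each vertex $v$, the quantity $\sum_{e : v\subset e}\sum_{f : f\supset e}\pm(\nabla u|_f\cdot\bm n_e)\bm n_e$; this telescopes around $v$ exactly as in the interior case (the argument is local to the vertex star and does not see whether $v$ is interior or boundary), giving $\rot_0\hess_0 = 0$.
\end{proof}

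The main (and only) subtlety I expect is bookkeeping the boundary contributions: one must check that integrating by parts with non-vanishing test functions on $\partial\Omega$ produces exactly the boundary edge deltas in $\bm V^1_0$ and boundary vertex deltas in $\bm V^2_0$ that were put into the spaces by design — i.e., that no extra terms (e.g. lower-dimensional boundary integrals of $\varphi$ itself, as opposed to its derivatives) survive. This is guaranteed because $u$ is piecewise \emph{linear} (so second derivatives vanish cellwise) and globally continuous with zero trace, but it should be stated carefully. The composition $\rot_0\hess_0 = 0$ then follows from the same local telescoping identity at each vertex star that underlies \Cref{lem:closedness-hessian-2d}, since that identity is purely combinatorial and insensitive to the interior/boundary distinction.
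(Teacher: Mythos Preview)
Your proposal is correct and follows essentially the same approach as the paper: compute $\hess_0 u$ and $\rot_0\sigma$ by integration by parts against test functions in $C_c^\infty(\mathbb R^2)$, observing that the homogeneous boundary condition forces the tangential derivative of $u$ to vanish on boundary edges so that the jumps of $\nabla u$ remain purely normal there. Two minor points: (i) by the paper's convention the symbol $\mathcal O$ denotes the \emph{relative} orientation (vanishing for boundary subsimplices), so in the HBC computation you should write $\mathcal O_0$ as the paper does; (ii) your telescoping verification of $\rot_0\hess_0=0$ is unnecessary --- this composition vanishes automatically for any distribution because $\div\div\curl\varphi=0$ for smooth $\varphi$, and indeed the paper does not verify it explicitly in either the interior or HBC case.
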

\begin{proof}
We check the complex property by definition. 
For $u \in V^0_0$ and $\bm \sigma \in C^{\infty}_c(\mathbb R^2;\mathbb S)$, we have 
\begin{equation}
\begin{split}
 \langle \hess_0 u , \bm \sigma \rangle & = \langle  u, \div\div \bm \sigma \rangle \\
 & =  \sum_{f \in \mathsf F} \int_f u \div\div \bm \sigma \\ 
 & = - \sum_{f \in \mathsf F} \int_f\nabla u\cdot \div \bm \sigma  \\
 & = - \sum_{e \in \mathsf E} \sum_{f :f \supset e}  \int_e\Big[\mathcal O_0(e,f) (\nabla u|_f) \cdot \bm n_e\Big] \left(\bm n_e\cdot \bm \sigma\cdot \bm n_e\right). 
\end{split}
\end{equation}
{The last line uses the fact that the jump of $\nabla u$ only has the normal component.}
Note that $v|_{\partial \Omega} = 0$ implies that $\frac{\partial v}{\partial \bm t}|_{\partial \Omega} = 0.$
The remaining proof is similar to those in \Cref{lem:closedness-hessian-2d}.
\end{proof}

%  =========

%  \lt{Can we directly start from this? Seems that it is better to put the other part in Sec 2.}
% To be specific, let $\nabla^2_0$ and $\rot_0$ be defined as 
% \begin{equation}
% \langle \nabla^2_0 u, \bm \sigma \rangle = \langle u, \div\div \bm \sigma \rangle, \quad \forall \bm \sigma \in C^{\infty}(\Omega; \mathbb S),
% \end{equation}
% and 
% \begin{equation}
%     \langle \rot_0 \bm \xi, \bm v \rangle = \langle \bm \xi, \sym\curl \bm v \rangle, \quad \forall \bm v \in C^{\infty}(\Omega; \mathbb R^2),
% \end{equation}
% respectively.

Now we consider the cohomology of \eqref{cplx:hessian0-2d}, and intuitively we can guess the result is $\mathcal H_{2-k}(\Delta)\otimes \mathcal P_1$, where $\mathcal  H_{\bs}(\Delta)$ is the simplicial homology. We verify this claim below.
\begin{theorem}
\label{thm:hom-hessian0-2d}
The cohomology of \eqref{cplx:hessian0-2d} is isomorphic to $ \mathcal H_{2 - k}(\Delta)\otimes\mathcal P_1$, therefore to $ \mathcal H^{k}_{dR,c}(\Omega)\otimes \mathcal P_1.$
\end{theorem}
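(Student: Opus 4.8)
The plan is to mirror, in the homogeneous-boundary setting, the two-step strategy already used for the interior complex \eqref{cplx:hessian-2d} in the proof of \Cref{lem:closedness-hessian-2d} (part II). First I would introduce the ``broken'' auxiliary complex with $L^2$-discontinuous shape functions — the same $V^0_-, \bm V^1_-, \bm V^2_-$ as in \eqref{cplx:hessian-L2-2d} — but now set up the vertical maps $\kappa_-^{\bs}$ toward the \emph{absolute} simplicial chain complex $C_{\bs}(\Delta,\mathcal P_1)$ instead of the relative one. Concretely, I would observe that the distributional formulas \Cref{lem:dist-hess-L2-2d} and \Cref{lem:dist-rot-L2-2d}, derived by integration by parts against $C_c^\infty(\mathbb R^2;\bS)$ (resp.\ $C_c^\infty(\mathbb R^2;\mathbb R^2)$) rather than $C_c^\infty(\Omega;\cdots)$, now produce sums over \emph{all} edges and \emph{all} vertices (including boundary ones), because the test functions no longer vanish on $\partial\Omega$. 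This gives a commuting diagram
\begin{equation*}
\begin{tikzcd}
0 \arrow{r} & V^0_- \arrow{r}{\hess_0} \arrow{d}{\kappa^0_-} &\bm V^1_{-,0} \arrow{r}{\rot_0} \arrow{d}{\kappa^1_-} & \bm V^2_{-,0} \arrow{r}{ } \arrow{d}{\kappa^2_-}& 0\\
0 \arrow{r}&C_2(\Delta, \mathcal P_1) \arrow{r}{\partial}&C_1(\Delta, \mathcal P_1) \arrow{r}{\partial} & C_0(\Delta, \mathcal P_1) \arrow{r}{} & 0
\end{tikzcd}
\end{equation*}
with all $\kappa^i_-$ bijective, so that the cohomology of the top row is $\mathcal H_{2-k}(\Delta)\otimes\mathcal P_1$ by \Cref{UCT}, which by \Cref{thm:deRham}(2) equals $\mathcal H^k_{dR,c}(\Omega)\otimes\mathcal P_1$.

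Second, I would realize \eqref{cplx:hessian0-2d} as the subcomplex of this broken complex cut out by the restriction-to-subcells maps $g^{\bs}$, exactly as in diagram \eqref{cd:mapping-g-2d}, but with the bottom row replaced by
\begin{equation*}
\begin{tikzcd}
0 \arrow{r}& \bigoplus\limits_{f \in \mathsf F} \mathcal P_1(f) \arrow{r}{\widetilde{\partial}}&\bigoplus\limits_{e \in \mathsf E} \mathcal P_1(e) \arrow{r}{\widetilde{\partial}} &\bigoplus\limits_{x \in \mathsf V} \mathcal P_1(x) \arrow{r}{} & 0,
\end{tikzcd}
\end{equation*}
i.e.\ the sum now ranges over \emph{all} edges and \emph{all} vertices (no interior restriction). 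I would check $g^0 = I$, $g^1:\hat{\bm v}^1_e[p]\mapsto p|_e\|e\|$, $g^2:\hat{\bm v}^2_x[p]\mapsto p|_x\|x\|$ are surjective and that $\ker g^1 = \bm V^1_0$, $\ker g^2 = \bm V^2_0$; the kernel computations are identical to \Cref{rmk:ker-g1} and to the kernel-of-$g^2$ argument in the part-II proof (a polynomial in $\mathcal P_1$ vanishing on an edge has gradient parallel to $\bm n_e$; one vanishing at a vertex kills the $\frac12\div\bm w\cdot p$ term, leaving a vertex delta). The columns are then short exact, and the snake/long-exact-sequence argument runs verbatim as before once I know the cohomology of the new bottom row.

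The one genuinely new ingredient, and the step I expect to be the main obstacle, is the analogue of \Cref{prop:exactness-tildepartial-2d} for the \emph{full} (non-relative) complex $\bigoplus_{f}\mathcal P_1(f)\to\bigoplus_{e}\mathcal P_1(e)\to\bigoplus_{x}\mathcal P_1(x)$. I would again decompose it via the barycentric-coordinate basis $\mathcal P_1(f)\cong\bigoplus_{v\in f}\mathbb R$ into a direct sum over all vertices $v\in\mathsf V$ of vertex-patch subcomplexes $0\to\bigoplus_{f\ni v}\mathbb R\to\bigoplus_{e\ni v}\mathbb R\to\mathbb R\to 0$, but now with the \emph{absolute} simplicial boundary operator and including, for boundary vertices $v$, the full star of $v$ rather than the interior part. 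Since the closed star of any vertex is contractible, each such patch complex has vanishing homology except $\mathcal H_0\cong\mathbb R$, spanned by the hat function $\lambda_v$; summing over $v\in\mathsf V$ gives cohomology $V^0_0$ at the first slot — wait, more precisely one gets $\operatorname{span}\{\lambda_v : v\in\mathsf V\}$, which is the \emph{full} Lagrange space $V^0$, not $V^0_0$. The diagram chase then correctly produces $\ker\hess_0 = \mathcal H^0_{dR,c}(\Omega)\otimes\mathcal P_1$ (which is $0$ unless $\Omega$ is, up to the compact-support convention, trivial) at the top and matches the remaining cohomologies; I would need to be careful that the connecting homomorphism $V^0\to\ker(\rot_0:\bm V^1_0\to\bm V^2_0)$ is again $\hess_0$, so that no spurious cohomology survives, exactly as in the part-II computation. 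The final identification $\mathcal H_{2-k}(\Delta)\otimes\mathcal P_1\cong\mathcal H^k_{dR,c}(\Omega)\otimes\mathcal P_1$ is then immediate from \Cref{UCT} and \Cref{thm:deRham}.
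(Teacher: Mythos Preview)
Your overall strategy is correct and matches the paper's approach exactly: build the broken auxiliary complex mapping isomorphically to the absolute simplicial chain complex $C_{\bs}(\Delta,\mathcal P_1)$, then realize \eqref{cplx:hessian0-2d} as the kernel of the restriction maps $g^{\bs}$ and run the long exact sequence. The one place you go astray is the cohomology of the bottom row $\bigoplus_f\mathcal P_1(f)\to\bigoplus_{e\in\mathsf E}\mathcal P_1(e)\to\bigoplus_{x\in\mathsf V}\mathcal P_1(x)$: you first write $V^0_0$ and then talk yourself into $V^0$, but your first instinct was right.

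The error is in the patch analysis for boundary vertices. The patch complex $0\to\bigoplus_{f\ni v}\mathbb R\to\bigoplus_{e\ni v}\mathbb R\to\mathbb R\to 0$ does \emph{not} compute the absolute homology of the closed star of $v$; it computes the \emph{relative} homology of the closed star modulo the link, since the cells appearing are precisely those containing $v$, i.e., those not in the link. For an interior vertex the link is a circle, so the pair (disk, $S^1$) gives top homology $\mathbb R$ and the hat function $\lambda_v$ survives. For a boundary vertex the link is an arc, hence contractible, and the pair (half-disk, arc) has trivial relative homology in every degree. Thus only interior hat functions contribute and the kernel of the first $\widetilde\partial_0$ is $V^0_0$, not $V^0$. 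You can also see this directly: for $u\in V^0$ and a boundary edge $e$ with its unique adjacent face $f$, the term $\mathcal O_0(e,f)(u|_f)|_e=\pm u|_e$ in $\widetilde\partial_0 u$ has nothing to cancel against, so $\widetilde\partial_0 u=0$ forces $u|_{\partial\Omega}=0$.

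This matters for the chase: with $V^0$ in the bottom-row cohomology, your proposed connecting map $\hess_0:V^0\to\bm V^1_0$ is not even well-defined, since for $u\in V^0\setminus V^0_0$ the distribution $\hess_0 u$ contains boundary-edge terms $\hat{\bm v}^1_e[u|_f]$ with $(u|_f)|_e\neq 0$, hence $g^1(\hess_0 u)\neq 0$ and $\hess_0 u\notin\ker g^1=\bm V^1_0$. With the correct value $V^0_0$, the connecting map is $\hess_0:V^0_0\to\ker(\rot_0)$ and the argument concludes verbatim as in the non-HBC case.
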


The proof is analogous to the Hessian complex without homogeneous boundary conditions. 
We first introduce $V^0_{0,-}: = V^0_{-}$,
$$\bm V^1_{0,-} :=  \Big\{ \sum_{e \in \mathsf E}\hat{\bm v}^1_{e}[p_e]: p_e \in \mathcal P_1\Big\},
\text{  and  }
\bm V^2_{0,-} :=  \Big\{ \sum_{x \in \mathsf V}\hat{\bm v}^2_{x}[p_x]: p_x \in \mathcal P_1\Big\}.$$

Using the same technique, we can prove that 
\begin{proposition}
\label{prop:hom-hessian0-L2-2d}
The sequence
\begin{equation}
\begin{tikzcd}
0 \ar[r] &  V^0_{0,-} \ar[r, "\hess_0"] &  \bm V^1_{0,-} \ar[r, "\rot_0"]&  \bm  V^2_{0,-} \ar[r] & 0
\end{tikzcd}
\end{equation}
is a complex, and the cohomology is $  \mathcal H^{\bs}_{dR,c}(\Omega)\otimes\mathcal P_1.$
\end{proposition}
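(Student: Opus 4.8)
The plan is to repeat the argument of \Cref{prop:hom-hessian-L2-2d} almost verbatim, with the relative simplicial homology $\mathcal H_\bs(\Delta;\partial\Delta)$ replaced by the absolute simplicial homology $\mathcal H_\bs(\Delta)$. First I would record the distributional formulas for $\hess_0$ and $\rot_0$. These are obtained exactly as in \Cref{lem:dist-hess-L2-2d} and \Cref{lem:dist-rot-L2-2d} by integration by parts, the only difference being that the test functions now lie in $C_c^\infty(\mathbb R^2)$ and need not vanish on $\partial\Omega$; consequently the boundary edges and boundary vertices contribute as well, giving
\[
\hess_0 u=\sum_{e\in\mathsf E}\sum_{f:f\supset e}\mathcal O_0(e,f)\,\hat{\bm v}^1_e[u|_f],\qquad
\rot_0\hat{\bm v}^1_e[p]=\sum_{x\in\mathsf V}\mathcal O_0(x,e)\,\hat{\bm v}^2_x[p],
\]
where $\hat{\bm v}^1_e$ and $\hat{\bm v}^2_x$ are as in \eqref{eq:hatv1_e-2d} and \eqref{eq:hatv2-2d} and $\mathcal O_0$ is the orientation of the absolute chain complex. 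These images lie in $\bm V^1_{0,-}$ and $\bm V^2_{0,-}$ respectively, and $\rot_0\circ\hess_0=0$ because $\partial\circ\partial=0$ on the coefficient chains; hence the sequence is a complex.

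Next I would form the commuting diagram
\[
\begin{tikzcd}
0 \arrow{r} & V^0_{0,-} \arrow{r}{\hess_0} \arrow{d}{\kappa^0} & \bm V^1_{0,-} \arrow{r}{\rot_0} \arrow{d}{\kappa^1} & \bm V^2_{0,-} \arrow{r} \arrow{d}{\kappa^2} & 0\\
0 \arrow{r} & C_2(\Delta,\mathcal P_1) \arrow{r}{\partial} & C_1(\Delta,\mathcal P_1) \arrow{r}{\partial} & C_0(\Delta,\mathcal P_1) \arrow{r} & 0,
\end{tikzcd}
\]
with the vertical maps reading off coefficients, $\kappa^0(u)=\sum_{f} u|_f\|f\|$, $\kappa^1(\hat{\bm v}^1_e[p])=p\|e\|$, $\kappa^2(\hat{\bm v}^2_x[p])=p\|x\|$. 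By the very definitions of $V^0_{0,-}$, $\bm V^1_{0,-}$, $\bm V^2_{0,-}$ each $\kappa^i$ is a bijection, and the formulas above show the squares commute; hence $\kappa^\bs$ is an isomorphism of complexes and $\mathcal H^k(V^\bs_{0,-})\cong\mathcal H_{2-k}(\Delta,\mathcal P_1)$.

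Finally I would invoke Theorem~\ref{UCT} to get $\mathcal H_{2-k}(\Delta,\mathcal P_1)\cong\mathcal H_{2-k}(\Delta)\otimes\mathcal P_1$ and Theorem~\ref{thm:deRham}(2) to identify this with $\mathcal H^k_{dR,c}(\Omega)\otimes\mathcal P_1$, which is the claim. The only point genuinely new compared with \Cref{prop:hom-hessian-L2-2d} is the bookkeeping of boundary contributions in the integration by parts: one must check that the terms produced on $\partial\Delta$ are exactly the distributions $\hat{\bm v}^1_e$, $\hat{\bm v}^2_x$ attached to boundary edges and vertices, so that the bottom row is the full (absolute) chain complex rather than the relative one. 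I expect this to be the only mild obstacle; the homological algebra is otherwise identical.
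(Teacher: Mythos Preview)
Your proposal is correct and follows essentially the same approach as the paper: the paper's proof (relegated to the Appendix) sets up exactly the commuting diagram you describe, with the absolute chain complex $C_\bs(\Delta,\mathcal P_1)$ and boundary operator $\partial_0$ in the bottom row, and concludes via the universal coefficient theorem and Theorem~\ref{thm:deRham}(2). Your identification of the only new bookkeeping---that the integration by parts with test functions in $C_c^\infty(\mathbb R^2)$ now produces the boundary terms populating $\bm V^1_{0,-}$ and $\bm V^2_{0,-}$---is exactly the point.
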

The detailed proof of \Cref{prop:hom-hessian0-L2-2d} can be found in Appendix.

To complete, we now introduce the boundary version of \eqref{cplx:tildepartial-2d}:
\begin{equation}
  \label{cplx:tildepartial0-2d}
  \begin{tikzcd}
    0 \arrow{r}& \bigoplus_{f \in \mathsf F} \mathcal P_1(f) \arrow{r}{\widetilde{\partial}_0}&\bigoplus_{e \in \mathsf E} \mathcal P_1(e)  \arrow{r}{\widetilde{\partial}_0} &\bigoplus_{x \in \mathsf V} \mathcal P_1(x)  \arrow{r}{} & 0.
  \end{tikzcd}
\end{equation}

Here  $\widetilde{\partial}$ is defined as 
\begin{equation}
\widetilde{\partial}_0( \sum_{F \in \mathsf F} p_{f} \|f\|) = \sum_{f \in \mathsf F} \sum_{e \in \mathsf E} \mathcal O_0(e,f) (p_f)|_e \|e\|,
\end{equation}
\begin{equation}
  \widetilde{\partial}_0( \sum_{e \in \mathsf E} p_{e} \|e\|) = \sum_{x \in \mathsf V} \sum_{e \in \mathsf E} \mathcal O_0(x,e) (p_e)|_x \|x\|.
  \end{equation}

\begin{proposition}
\label{prop:exactness-tildepartial0-2d}
The sequence \eqref{cplx:tildepartial-2d} is  a complex, and its cohomology is $V_0^0$, $0$ and $0$, respectively.
\end{proposition}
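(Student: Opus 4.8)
The plan is to adapt the proof of \Cref{prop:exactness-tildepartial-2d} (the analogous statement without boundary conditions) to the absolute orientation $\mathcal O_0$ and to the fact that the sequence \eqref{cplx:tildepartial0-2d} now involves \emph{all} edges and \emph{all} vertices. That \eqref{cplx:tildepartial0-2d} is a complex follows verbatim from the relative case: $\widetilde\partial_0\circ\widetilde\partial_0=0$ is a consequence of $\partial\circ\partial=0$ for the ordinary simplicial boundary operator together with transitivity of restriction, the only difference being that no term is discarded here.

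Next I would split \eqref{cplx:tildepartial0-2d} into vertex-patch subcomplexes, exactly as in \Cref{prop:exactness-tildepartial-2d}. Since $\mathcal P_1(f)=\bigoplus_{v\in f}\mathbb R\,\lambda_v|_f$ and $\lambda_w$ vanishes identically on every simplex not containing $w$, the operator $\widetilde\partial_0$ respects the decomposition $\bigoplus_{f}\mathcal P_1(f)\cong\bigoplus_{w\in\mathsf V}\bigoplus_{f\ni w}\mathbb R$ (and similarly for edges and vertices), so $\eqref{cplx:tildepartial0-2d}\cong\bigoplus_{w\in\mathsf V}P_w^{\bullet}$, where
\[
P_w^{\bullet}:\quad 0 \to \bigoplus_{f\in\mathsf F,\,f\ni w}\mathbb R \xrightarrow{\ \partial\ } \bigoplus_{e\in\mathsf E,\,e\ni w}\mathbb R \xrightarrow{\ \partial\ }\mathbb R\to 0 .
\]
The point is that, in contrast to the relative case, \emph{all} spokes $e\ni w$ (including boundary edges) now appear and the last $\mathbb R$ (the copy attached to $w$) is always present. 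One checks that the maps are exactly the simplicial boundary coefficients $\mathcal O_0(\cdot,\cdot)$, so $P_w^{\bullet}$ is the relative simplicial chain complex $C_{2-\bullet}(\overline{\mathrm{st}}(w),\mathrm{lk}(w);\mathbb R)$ of the closed star of $w$ modulo its link, read in the cochain direction.

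Finally I would compute $H^{\bullet}(P_w^{\bullet})$ topologically. As $\overline{\mathrm{st}}(w)=w*\mathrm{lk}(w)$ is a cone it is contractible, so the long exact sequence of the pair gives $H^{i}(P_w^{\bullet})\cong H_{2-i}(\overline{\mathrm{st}}(w),\mathrm{lk}(w);\mathbb R)\cong\widetilde H_{1-i}(\mathrm{lk}(w);\mathbb R)$. For an interior vertex $w$ the link $\mathrm{lk}(w)$ is a circle, whence $H^0(P_w^{\bullet})\cong\mathbb R$ — represented by the constant assignment $f\mapsto\lambda_w$ — and $H^1=H^2=0$; for a boundary vertex $w$ the link $\mathrm{lk}(w)$ is an arc, hence contractible, so $H^{\bullet}(P_w^{\bullet})=0$ in all degrees. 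Summing over $w\in\mathsf V$ and identifying a class in $\ker(\widetilde\partial_0$ at the first slot$)$ with the continuous piecewise-linear function $u$ given by $u|_f=p_f$ — which the vanishing of $\widetilde\partial_0$ on each boundary edge forces to vanish on $\partial\Omega$ — yields $H^0(\eqref{cplx:tildepartial0-2d})\cong\operatorname{span}\{\lambda_w:w\in\mathsf V_0\}=V^0_0$ and $H^1=H^2=0$.

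The algebra is routine; the one genuinely new point compared with \Cref{prop:exactness-tildepartial-2d} is the interior/boundary dichotomy, and I expect it to be the only real obstacle: one must verify that for a boundary vertex the patch complex $P_w^{\bullet}$ is exact — intuitively, adjoining the two boundary spokes kills the constant mode $\lambda_w$ — which is precisely what turns the $\mathbb R$ contribution of each boundary vertex in the non-homogeneous case into $0$ here. If invoking the homotopy type of $\overline{\mathrm{st}}(w)$ for a general Lipschitz-domain triangulation feels delicate, the same conclusion can be obtained by writing the fan around $w$ explicitly and computing the kernels and images of the two copies of $\partial$ directly, pinning down $\dim H^1$ by an Euler-characteristic count; that is the fallback I would use.
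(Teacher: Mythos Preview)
Your proposal is correct and follows essentially the same route as the paper: decompose \eqref{cplx:tildepartial0-2d} into vertex-patch complexes via the barycentric basis, identify each patch with a relative simplicial chain complex of the closed star, and distinguish interior from boundary vertices. The paper's appendix proof is terser---it simply asserts that for an interior vertex the patch homology is the relative homology of the patch with respect to its boundary (nonzero only at index zero), and for a boundary vertex it is the relative homology with respect to the portion of the patch boundary not lying on $\partial\Delta$ (zero in all indices)---while you make the same computation explicit by passing to $\widetilde H_{1-i}(\mathrm{lk}(w))$ via the long exact sequence of the pair $(\overline{\mathrm{st}}(w),\mathrm{lk}(w))$ and reading off the answer from the homotopy type of the link (circle versus arc).
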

\begin{proof}
The proof is similar to that of \Cref{prop:exactness-tildepartial-2d}, and can be found in Appendix. 
\end{proof}

With these two results, we can prove \Cref{thm:hom-hessian0-2d} by investigating the diagram and its cohomology below:
\begin{equation}\label{cd:mapping-g-2d-0}
  \begin{tikzcd}
    0 \arrow{r} & 0 \arrow{r}{\hess_0} \arrow{d} &\bm V^1_0 \arrow{r}{\rot_0} \arrow{d} & \bm V^2_0 \arrow{r} \arrow{d}& 0 \\
 0 \arrow{r} & V^0_{0,-} \arrow{r}{\hess_0} \arrow{d}{g^0} &\bm V^1_{0,-} \arrow{r}{\rot_0} \arrow{d}{g^1} & \bm V^2_{0,-} \arrow{r} \arrow{d}{g^2}& 0\\
  0 \arrow{r}& \bigoplus\limits_{f \in \mathsf F} \mathcal P_1(f) \arrow{r}{\widetilde{\partial}_0}&\bigoplus\limits_{e \in \mathsf E} \mathcal P_1(e)  \arrow{r}{\widetilde{\partial}_0} &\bigoplus\limits_{x \in \mathsf V} \mathcal P_1(x)  \arrow{r}{} & 0.
   \end{tikzcd}
  \end{equation}
  Here $g^k$'s are the same as those in the proof of \Cref{lem:closedness-hessian-2d}. The only difference comes from the homology of the lower row. The desired result follows from a chase on the following diagram:
  \begin{equation}
    \begin{tikzcd}
      0 \arrow{r} & 0 \arrow{r} \arrow{d} &\ker(\rot_0:\bm V^1_0\to \bm V^2_0) \arrow{r}\arrow{d} & \mathcal H^2(V^{\bs}_0) \arrow{r} \arrow{d}& 0 \\
   0 \arrow{r} &   \mathcal H^0_{dR,c}(\Omega)\otimes \mathcal P_1\arrow[r, ""{coordinate, name=Z}] \arrow{d} &  \mathcal  H^1_{dR,c}(\Omega) \otimes\mathcal P_1 \arrow[r, ""{coordinate, name=Y}] \arrow{d} &  \mathcal H^2_{dR,c}(\Omega)\otimes\mathcal P_1  \arrow{r} \arrow{d}& 0\\
    0 \arrow{r}& V^0_0 \arrow{r} \arrow[uur, rounded corners, dashed, to path={ -- ([yshift=-4ex]\tikztostart.south)
    -| (Z) [near end]\tikztonodes
    |- ([yshift=4ex]\tikztotarget.north)
    -- (\tikztotarget)}]
    & 0 \arrow{r} \arrow[uur, rounded corners, dashed, to path={ -- ([yshift=-4ex]\tikztostart.south)
    -| (Y) [near end]\tikztonodes
    |- ([yshift=4ex]\tikztotarget.north)
    -- (\tikztotarget)}] &0  \arrow{r}{} & 0.
     \end{tikzcd}
    \end{equation}

\subsection{2D divdiv complex}

In this section, we focus on the divdiv complex in 2D, showing that the sequence \eqref{cplx:divdiv-2d} is a complex and proving the cohomology is isomorphic to $ \mathcal H^{\bs}_{dR}(\Omega)\otimes \mathcal{RT}$ (\Cref{thm:hom-divdiv-2d}, part I).

\begin{proposition}
The sequence \eqref{cplx:divdiv-2d} is  a complex.
\end{proposition}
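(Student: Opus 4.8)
The plan is to verify, one step at a time and entirely by testing against $C_c^\infty(\Omega)$ as in the proof of \Cref{lem:closedness-hessian-2d} (part I), that $\sym\curl$ maps $\bm U^0$ into $\bm U^1$, that $\div\div$ maps $\bm U^1$ into $U^2$, and that the composition $\div\div\circ\sym\curl$ vanishes; together these give that \eqref{cplx:divdiv-2d} is a complex.

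First I would record that for $\bm u\in\bm U^0$ the distributional $\curl\bm u$ (taken rowwise) has no singular part: $\bm u$ is Lipschitz, so $\curl\bm u$ is just the cellwise constant field, and hence $\sym\curl\bm u\in L^2(\Omega;\mathbb S)$ is piecewise constant and symmetric. It remains to check normal–normal continuity. Applying the identity $\curl s\cdot\bm n_e=\grad s\cdot\bm t_e$ (stated in \Cref{sec:preliminaries}) rowwise gives $(\curl\bm u|_f)\,\bm n_e=\partial_{\bm t_e}(\bm u|_f)$ along any edge $e$ of a cell $f$, and therefore
$$\bm n_e\cdot\sym\curl\bm u|_f\cdot\bm n_e=\bm n_e\cdot(\curl\bm u|_f)\,\bm n_e=\partial_{\bm t_e}\bigl(\bm u|_f\cdot\bm n_e\bigr).$$
Since $\bm u$ is continuous its trace on $e$ is single-valued, and because $\bm u|_f$ is affine, its $\bm t_e$-directional derivative is determined by that trace; hence the right-hand side is independent of the cell $f\supset e$, so $\bm n_e\cdot\sym\curl\bm u\cdot\bm n_e$ is continuous across every edge and $\sym\curl\bm u\in\bm U^1$.

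Next, for $\bm\sigma=\sum_f\bm\sigma|_f\|f\|\in\bm U^1$ and $v\in C_c^\infty(\Omega)$ I would integrate by parts twice. Since $\bm\sigma|_f$ is constant,
$$\langle\div\div\bm\sigma,v\rangle=\sum_f\int_f\bm\sigma:\hess v=\sum_f\int_{\partial f}(\bm\sigma|_f\bm n)\cdot\nabla v=\sum_{e\in\mathsf E_0}\int_e[\![\bm\sigma\bm n_e]\!]\cdot\nabla v,$$
the boundary edges contributing nothing because $v$ is compactly supported in $\Omega$. Writing $[\![\bm\sigma\bm n_e]\!]=[\![\bm n_e\cdot\bm\sigma\bm n_e]\!]\bm n_e+[\![\bm t_e\cdot\bm\sigma\bm n_e]\!]\bm t_e$ and using the normal–normal continuity of $\bm\sigma$ kills the first term, so the integrand is $[\![\bm t_e\cdot\bm\sigma\bm n_e]\!]\,\partial_{\bm t_e}v$; as $[\![\bm t_e\cdot\bm\sigma\bm n_e]\!]$ is constant along $e$, a one-dimensional integration by parts along $e$ produces a difference of vertex values. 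Regrouping by vertex gives $\langle\div\div\bm\sigma,v\rangle=\sum_{x\in\mathsf V_0}c_x\,v(x)$ with $c_x=\sum_{e\ni x}\mathcal O(x,e)[\![\bm t_e\cdot\bm\sigma\bm n_e]\!]$, the boundary vertices dropping out since $v(x)=0$ there; thus $\div\div\bm\sigma=\sum_{x\in\mathsf V_0}c_x\delta_x\in U^2$.

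Finally, for the complex property I would take $\bm u\in\bm U^0$ and $v\in C_c^\infty(\Omega)$ and use that $\hess v$ is symmetric, so cellwise $\sym\curl\bm u:\hess v=\curl\bm u:\hess v$; running the previous computation with $\curl\bm u$ in place of $\sym\curl\bm u$,
$$\langle\div\div\sym\curl\bm u,v\rangle=\sum_f\int_{\partial f}\bigl((\curl\bm u|_f)\,\bm n\bigr)\cdot\nabla v=\sum_{e\in\mathsf E_0}\int_e[\![(\curl\bm u)\,\bm n_e]\!]\cdot\nabla v .$$
By the identity above, $(\curl\bm u|_f)\,\bm n_e=\partial_{\bm t_e}(\bm u|_f)$ depends only on the single-valued trace of $\bm u$ on $e$, so every jump vanishes and $\div\div\sym\curl\bm u=0$. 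The one delicate point here is precisely this step: one must keep the skew part of $\curl\bm u$ inside the cell integral, where it disappears against the symmetric $\hess v$, before passing to edge jumps — it is \emph{not} true that $[\![\sym\curl\bm u\,\bm n_e]\!]$ vanishes edge by edge; equivalently, the skew contributions telescope to zero around each $\partial f$. The rest is bookkeeping of orientation signs and of the compact support of $v$ (which removes all boundary edges and boundary vertices), exactly as in the proof of \Cref{lem:closedness-hessian-2d}.
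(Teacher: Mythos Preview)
Your proof is correct and follows the same integration-by-parts strategy as the paper, which verifies $\div\div\,\bm U^1\subset U^2$ by the identical computation and deems $\sym\curl\,\bm U^0\subset\bm U^1$ ``straightforward''. Your added explicit check that $\div\div\circ\sym\curl=0$ is correct but not needed in the paper's framework: since both operators are distributional, pairing with $v\in C_c^\infty(\Omega)$ gives $\langle\div\div\sym\curl\bm u,v\rangle=\langle\curl\bm u,\hess v\rangle=-\langle\bm u,\rot\hess v\rangle=0$ automatically, which is why the paper omits this step.
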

\begin{proof}
It is straightforward to see that $\sym\curl$ maps $\bm U^0$ to $\bm U^1$. For the $\div\div$ part, it suffices to note that for $u \in C_c^{\infty}(\Omega)$, 
\begin{equation}
    \label{eq:dist-divdiv-2d}
    \begin{split}
\langle \div\div \bm \sigma, u \rangle = & \langle \bm \sigma, \nabla^2 u \rangle \\
= & \sum_{ f \in \mathsf F}\int_{ f} \bm \sigma : \nabla^2 u \\
=& \sum_{e \in \mathsf E_0} \sum_{f : f \supset e} \int_{e}\Big[ \mathcal O(e,f) [\bm \sigma|_f \bm n_e] \Big] \cdot \nabla u \\
= & \sum_{e \in \mathsf E_0}  \sum_{f : f \supset e} \int_e\Big[ \mathcal O(e,f)  [ \bm t_e \cdot \bm \sigma|_f \cdot  \bm n_e] \Big]  \frac{\partial u}{\partial \bm t_e} \\
= &  \sum_{e \in \mathsf E_0}  \sum_{f : f \supset e}  \Big[ \mathcal O(e,f) [ \bm t_e \cdot \bm \sigma|_f \cdot  \bm n_e] \Big]  (u(x_2) - u(x_1)),
    \end{split}
\end{equation} 
where $x_1$ and $x_2$ are the two vertices of $e$. 
The third line is due to the normal-normal continuity of $\bm \sigma$.  
This implies that $\div\div \bm U^1 \subset U^2$.
\end{proof}

It is nontrivial to compute the cohomology of \eqref{cplx:divdiv-2d} directly. Nevertheless, we can regard the divdiv complex \eqref{cplx:divdiv-2d} as the dual of the Hessian complex with homogeneous boundary conditions \eqref{cplx:hessian0-2d}. This allows us to conclude the cohomology of \eqref{cplx:divdiv-2d} from the results for the Hessian complex that we have obtained. The duality is explained in the following diagram and will be explained below:
%However, it is difficult to compute the homology of \eqref{cplx:divdiv-2d} directly. But we can regard the divdiv complex \eqref{cplx:divdiv-2d} as the dual of the hessian complex with boundary \eqref{cplx:hessian0-2d}.
\begin{equation}\label{cd:dual-pairing-V0-U}
    \begin{tikzcd}[column sep = large]
   0 \arrow{r} & V^0_0 \arrow{r}{\hess_0} \arrow[d, phantom, "\ast"] &\bm V^1_0 \arrow{r}{\rot_0} \arrow[d, phantom, "\ast"] & \bm V^2_0 \arrow{r}{} \arrow[d, phantom, "\ast"]& 0\\
    0 &U^2 \arrow{l}& \bm U^1 \arrow{l}[swap]{\div\div} & \bm U^0 \arrow{l}[swap]{\sym\curl}  & 0\arrow{l}{},
     \end{tikzcd}
\end{equation}
where $\ast$ denotes duality.
Observe that in each of the vertical lines in \eqref{cd:dual-pairing-V0-U}, the degrees of freedom and the delta functions change their roles:
\begin{itemize}
    \item[-] $V^0_0$ is the scalar Lagrange finite element, whose degrees of freedom are at each vertex. Correspondingly, the distribution space $U^2$ consists of the vertex deltas. The relationship of $\bm V^2_0$ and $\bm U^0$ shares the same idea, but is in a vector setting.
    \item[-] For the tensor case,  the distribution space $\bm V^1_0$ is spanned by the normal-normal deltas, while $\bm U^1$ is the piecewise constant function with normal-normal continuity.
\end{itemize}

This invokes us using dual pairing to show that the two horizontal complexes of \eqref{cd:dual-pairing-V0-U} have the same homology. 
    With slight abuse of notation, we still use $\langle \cdot , \cdot\rangle$ to denote the family of dual pairing, but we do not require that the functions are smooth. For example, for $\hat{v} = \sum_{x \in \mathsf V_0} a_{x} \delta_{x}$, we define the dual pairing
    $
    \langle v, \hat{v} \rangle = \sum_{x \in \mathsf V_0} a_{x} v(x).
    $ The expression is well-defined whenever $v$ is single-valued at each internal vertex. Restricting the dual pairing to the spaces $\bm U^2 \times V^0_0$ will give a non-degenerate one. Similarly, we can define the dual pairing between $\bm U^0$ and $\bm V^2_0$. For the tensor case, for $\hat{\bm \sigma} = \sum_{e \in \mathsf E} a_{e} \delta_{e}[\bm n_{e} \otimes \bm n_{e}]$, we define 
    \begin{equation}
    \langle \bm \sigma, \hat{\bm \sigma}\rangle = \sum_{e \in \mathsf E} a_{e} \int_e (\bm n_{e} \cdot \bm \sigma \cdot\bm n_{e})|_{e}.
    \end{equation}
    The dual pairing is well-defined since $\bm n_e \cdot \bm \sigma \cdot \bm n_e$ is single-valued.

% Specifically, 
% we introduce the following dual pairing between the spaces. Here, we use double-angled bracket $\langle\!\langle~, ~\rangle \!\rangle$ to denote the dual pairings, to distinguish those for distributions and smooth functions.
% We set, for $\hat{v} = \sum_{\bm x \in \mathsf V} a_{x} \delta_{\bm x}$, the dual pairing
% $
% \langle v, \hat{v} \rangle_{V^0 \times U^2} = \sum_{\bm x \in \mathsf V} a_{\bm x} v(\bm x).
% $
% Similarly, we can define the dual pairing between $\bm U^0$ and $\bm V^2_0$. 
% For the tensor case, for $\hat{\bm \sigma} = \sum_{e \in \mathsf E} a_{e} \delta_{e}[\bm n_{e} \otimes \bm n_{e}]$, we define 
% \begin{equation}
% \langle \bm \sigma, \hat{\bm \sigma}\rangle_{\bm V^1_0 \times \bm U^1} = \sum_{e \in \mathsf E} a_{e} (\bm n_{e} \cdot \bm \sigma \cdot\bm n_{e})|_{e}.
% \end{equation}

% \begin{remark}
% Roughly speaking, there is no difference  
% \end{remark}
%The dual pairing relationship is shown in the following diagram. 
The next proposition shows that with the pairs specified above, the two complexes are adjoint to each other.
% \begin{equation}\label{cd:dual-pairing-V0-U}
%     \begin{tikzcd}
%    0 \arrow{r} & V^0_0 \arrow{r}{\hess} \arrow[dash]{d}{\langle\rangle} &\bm V^1_0 \arrow{r}{\rot} \arrow[dash]{d}{\langle\rangle} & \bm V^2_0 \arrow{r}{} \arrow[dash]{d}{\langle\rangle}& 0\\
%     0 &U^2 \arrow{l}& \bm U^1 \arrow{l}{\div\div} & \bm U^0 \arrow{l}{\sym\curl}  & 0\arrow{l}{}.
%      \end{tikzcd}
%     \end{equation}

% \begin{remark}

% \end{remark}
\begin{proposition}
The divdiv complex \eqref{cplx:divdiv-2d} is adjoint to the Hessian complex {with boundary conditions} in \eqref{cplx:hessian0-2d}, namely, 
\begin{equation}
    \label{eq:dual-2d-id1}
    \langle v, \div\div \bm \sigma\rangle_{V^0_0 \times \bm U^2} = \langle \hess_0 v, \bm \sigma \rangle_{\bm V^1_0 \times \bm U^1}, \quad v \in V^0_0, \bm \sigma \in \bm U^1,
\end{equation}
and 
\begin{equation}
    \label{eq:dual-2d-id2}
    \langle \bm \sigma, \sym\curl \bm u\rangle _{V^1_0 \times \bm U^1} = \langle {-}\rot_0 \sigma, \bm u\rangle_{V^2_0 \times \bm U^0}, \quad \bm \sigma \in \bm V^1_0, \bm u \in \bm U^0.
\end{equation}
\end{proposition}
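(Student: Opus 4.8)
\emph{Plan.} Both identities are ``discrete integration-by-parts'' statements, and I would treat them in the same spirit: expand the two sides using the explicit distributional formulas already derived for the operators, reduce everything to a sum of element-wise (triangle-wise) integrals, integrate by parts, and then observe that the jump terms across interelement edges and the boundary-edge terms either cancel in pairs or vanish outright. The conceptual content is that the surviving contributions are precisely the ones appearing in the two pairings, and this works because of (i) the normal--normal continuity built into $\bm U^1$, (ii) the $C^0$-continuity of $V^0_0$ and $\bm U^0$, which makes the tangential derivative along each edge single-valued, and (iii) the homogeneous boundary conditions on $V^0_0$, matched on the distributional side by the fact that $\bm V^1_0$ and $\bm V^2_0$ carry deltas on \emph{all} edges and vertices, including boundary ones.

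\emph{Identity \eqref{eq:dual-2d-id1}.} Here I would use the auxiliary broken bilinear form $b_h(v,\bm\sigma):=\sum_{f\in\mathsf F}\int_f\bm\sigma:\nabla^2 v$ for $v\in V^0_0$ and $\bm\sigma\in\bm U^1$, and compute it two ways. First, $b_h(v,\bm\sigma)=0$, since $v$ is piecewise linear so $\nabla^2 v$ vanishes on each triangle. Second, integrate by parts once on each $f$; because $\bm\sigma$ is piecewise constant, $\div\bm\sigma|_f=0$, so $b_h(v,\bm\sigma)=\sum_f\int_{\partial f}(\bm\sigma\bm n_f)\cdot\nabla v$. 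Regroup by edges and decompose $\bm\sigma\bm n_e$ and $\nabla v$ into normal and tangential parts along each $e$. In the $nn$-part the factor $\bm n_e\cdot\bm\sigma\cdot\bm n_e$ is single-valued, so summing over the adjacent faces with their orientations produces the jump $[\![\partial_{\bm n_e}v]\!]_e$ (on boundary edges the single-face term survives, but the $\partial_{\bm t_e}v$ factor on it vanishes because $v|_{\partial\Omega}=0$), and via the distributional formula for $\hess_0$ obtained in the verification that \eqref{cplx:hessian0-2d} is a complex together with the dual pairing $\bm V^1_0\times\bm U^1$, this piece is exactly $-\langle\hess_0 v,\bm\sigma\rangle_{\bm V^1_0\times\bm U^1}$. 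In the $tn$-part the factor $\partial_{\bm t_e}v$ is single-valued, so summing gives the jump $[\![\bm t_e\cdot\bm\sigma\cdot\bm n_e]\!]_e$ times $\int_e\partial_{\bm t_e}v=v(x_2^e)-v(x_1^e)$, and by \eqref{eq:dist-divdiv-2d} this piece equals $\langle v,\div\div\bm\sigma\rangle_{V^0_0\times U^2}$. Setting $b_h(v,\bm\sigma)=0$ yields \eqref{eq:dual-2d-id1}.

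\emph{Identity \eqref{eq:dual-2d-id2}.} This I would prove by a direct computation. Writing $\bm\sigma=\sum_{e\in\mathsf E}a_e\delta_e[\bm n_e\otimes\bm n_e]$ and using the dual pairing between $\bm V^1_0$ and $\bm U^1$, the left side is $\sum_e a_e\int_e\bm n_e\cdot\sym\curl\bm u\cdot\bm n_e$; the skew part drops from $\bm n_e\cdot(\cdot)\bm n_e$, and the 2D relation $(\curl\bm u)\bm n_e=\partial_{\bm t_e}\bm u$ (the same $\perp$-calculus as in the proof of \Cref{lem:closedness-hessian-2d}, valid since $\bm n_e=\bm t_e^\perp$) turns $\bm n_e\cdot\curl\bm u\cdot\bm n_e$ into $\partial_{\bm t_e}(\bm n_e\cdot\bm u)$; integrating along $e$ (using that $\bm u$ is continuous, piecewise linear, and $\bm n_e$ constant on $e$) gives $\sum_e a_e\,\bm n_e\cdot\big(\bm u(x_2^e)-\bm u(x_1^e)\big)$. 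For the right side, the distributional formula for $\rot_0$ on $\bm V^1_0$ — the boundary-condition analogue of $\rot\sigma=-\sum_{v}\sum_{e\ni v}\mathcal O(v,e)\sigma_e\delta_v[\bm n_e]$ from the proof of \Cref{lem:closedness-hessian-2d} — writes $-\rot_0\bm\sigma$ as a sum of vector vertex deltas; evaluating against $\bm u$ through the $\bm V^2_0\times\bm U^0$ pairing and interchanging the vertex and edge sums recovers the same quantity $\sum_e a_e\,\bm n_e\cdot\big(\bm u(x_2^e)-\bm u(x_1^e)\big)$, proving \eqref{eq:dual-2d-id2}.

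\emph{Main obstacle.} There is no conceptual difficulty: the routine parts are the per-triangle integrations by parts and the elementary $\perp$-algebra. The delicate point is bookkeeping the orientations consistently — $\mathcal O$ versus $\mathcal O_0$, the choice $\bm n_e=\bm t_e^\perp$, the head/tail convention in $\int_e\partial_{\bm t_e}$, and the sign in $-\rot_0$ in \eqref{eq:dual-2d-id2} — so that all the signs line up, and being careful about exactly which jump and boundary terms vanish. In particular, this is the reason the divdiv complex \eqref{cplx:divdiv-2d} is adjoint to the \emph{homogeneous} Hessian complex \eqref{cplx:hessian0-2d} and not to \eqref{cplx:hessian-2d}: the boundary-edge contributions in the argument for \eqref{eq:dual-2d-id1} are killed precisely by $v|_{\partial\Omega}=0$, and dually $\bm V^1_0$, $\bm V^2_0$ must carry the boundary deltas.
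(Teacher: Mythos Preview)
Your proposal is correct and follows essentially the same route as the paper. For \eqref{eq:dual-2d-id1} the paper starts from the pointwise identity $\div\div(v\bm\sigma)=0$ on each triangle (since $v$ is linear and $\bm\sigma$ constant) and applies the divergence theorem, while you start from $\sum_f\int_f\bm\sigma:\nabla^2 v=0$ and integrate by parts once; both routes produce the same edge sum $\sum_e\sum_f\mathcal O_0(e,f)\int_e(\bm\sigma|_f\bm n_e)\cdot\nabla v|_f$, after which the normal/tangential splitting and the identification of the two pieces with $\langle\hess_0 v,\bm\sigma\rangle$ and $\langle v,\div\div\bm\sigma\rangle$ are identical. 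For \eqref{eq:dual-2d-id2} your computation is line-for-line the paper's. One small slip in your write-up: the parenthetical about $\partial_{\bm t_e}v$ vanishing on boundary edges belongs to the $tn$-part, not the $nn$-part where you placed it; the boundary-edge $nn$-contributions do \emph{not} vanish, but are precisely absorbed by the boundary-edge deltas present in $\bm V^1_0$.
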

\begin{remark}
These identities resemble the definition of distributional derivatives. The difference is that the differential operators on both sides here are in the sense of distributions. As a result, the identities hold valid only on the given finite/distributional element pair, which relies on the fact that the functions are piecewise constant or linear.
% the piecewise constant or linear structures. 
\end{remark}
\begin{proof}

% We first prove \eqref{eq:dual-2d-id1}. The left-hand side is 
% $$\langle v, \div\div \bm \sigma \rangle = \langle v, \sum_{x \in \mathsf V_0} \sum_{e \in \mathsf E_0} \mathcal O(x,e) \jump{\bm t_e \cdot \bm \sigma \cdot \bm n_{e}} \delta_{x} \rangle =  \sum_{e \in \mathsf E_0}  \mathcal O(x,e) \jump{\bm t_e \cdot \bm \sigma \cdot \bm n_{e}}_e  \frac{\partial v}{\partial \bm t}.$$
% The last identity comes from that $v$ is continuous.

% The right-hand side is 
% $$\langle \hess v, \bm \sigma \rangle = \langle \sum_{e \in \mathsf E} \jump{\frac{\partial u}{\partial \bm n}} \delta_{e}[\bm n_{e} \otimes \bm n_e], \bm \sigma \rangle = \sum_{e \in \mathsf E} \jump{\frac{\partial u}{\partial \bm n}}_e(\bm n_e \cdot \bm \sigma \cdot \bm n_e).$$
We first show \eqref{eq:dual-2d-id1}. 
For $u\in V^0_0$ and $\bm \sigma\in  \bm U^1$, note that $\int_f \div\div(u \bm \sigma) = 0$ on each face $f$ as $u$  is linear and $\sigma$ is constant (here $\div\div$ is defined on $f$ for smooth functions in the usual sense). This implies 
% \begin{equation}
% \begin{split}
% 0 & = \sum_{e \in \mathsf E} \int_e \jump{\div(u\bm \sigma)}_e \cdot \bm n_e \\
% & = \sum_{e \in \mathsf E} \int_e \jump{\frac{\partial}{\partial \bm t} (u\bm \sigma\bm t) + \frac{\partial}{\partial \bm n} (u\bm \sigma\bm n)}\bm n_e \\
% & = \sum_{e \in \mathsf E} \int_e \jump{\frac{\partial}{\partial \bm n_e} (u\bm \sigma\bm n_e)\cdot \bm n_e} + \int_e \jump{\frac{\partial}{\partial \bm t_e} (u\bm \sigma\bm t_e)\cdot \bm n_e} \\ 
% & = \sum_{e \in \mathsf E} \int_e \jump{\frac{\partial}{\partial \bm n} u}_e (\bm n_e \cdot \bm \sigma \cdot \bm n_e) + \int_e \jump{\bm t_e \cdot \bm \sigma \cdot \bm n_e} \frac{\partial}{\partial \bm t_e} u.
% \end{split}
% \end{equation}
{
\begin{equation}
\begin{split}
0 & = \sum_{e \in \mathsf E} \sum_{f: f\supset \mathsf F} \int_e\Big[ \mathcal O_0(e,f) \div(u|_f \bm \sigma) \Big]\cdot \bm n_e \\
% & = \sum_{e \in \mathsf E} \int_e \jump{\frac{\partial}{\partial \bm t} (u\bm \sigma\bm t) + \frac{\partial}{\partial \bm n} (u\bm \sigma\bm n)}\bm n_e \\
& = \sum_{e \in \mathsf E} \int_e \Big[ \mathcal O_0(e,f) \frac{\partial}{\partial \bm n_e} (u|_f \bm \sigma\bm n_e)\cdot \bm n_e \Big] + \Big[ \mathcal O_0(e,f) \frac{\partial}{\partial \bm t_e} (u\bm \sigma\bm t_e)\cdot \bm n_e  \Big] \\ 
& = \sum_{e \in \mathsf E}\sum_{f: f\supset e} \int_e \Big[ \mathcal O_0(e,f) \frac{\partial}{\partial \bm n_e} u|_f \Big] (\bm n_e \cdot \bm \sigma \cdot \bm n_e) + \int_e \Big[ \mathcal O_0(e,f) \bm t_e \cdot \bm \sigma|_f \cdot \bm n_e \Big] \frac{\partial}{\partial \bm t_e} u.
\end{split}
\end{equation}}
Here the last line is due to the fact that $u$ and $\bm n_e \cdot \bm \sigma \cdot \bm n_e$ are continuous across the edge. It then follows from \eqref{eq:dist-divdiv-2d} (together with the fact that $u$ vanishes on $\mathsf E_{\partial}$), and \eqref{eq:dist-gradgrad-2d} that 
$$ 0 = \langle v, \div\div \bm \sigma \rangle_{V_0^0 \times \bm U^2} - \langle \hess_0 v, \bm \sigma \rangle_{\bm V_0^1 \times \bm U^1}.$$
% The first term is equal to 
% $$\sum_{e \in \mathsf E_0} \int_e \jump{\frac{\partial}{\partial \bm n} u }(\bm n_e \cdot \bm \sigma \cdot \bm n_e).$$
% The second term is equal to $\langle v, \div\div \bm \sigma \rangle$, by [ref]. 
This proves \eqref{eq:dual-2d-id1}.

Next, we have for $\bm u \in \bm U^0$, and $e \in \mathsf E$,
\begin{equation}
\begin{split}
\langle \sym\curl \bm u, \delta_{e}[\bm n_{e} \otimes \bm n_{e}] \rangle = & \int_{e} \bm n_{e} \cdot \curl \bm u \cdot \bm n_{e} \\ 
= & \int_{e} \bm n_{e} \cdot \nabla \bm u \cdot \bm t_{e} \\ 
= & \bm u \cdot \bm  n_{e}(x_2) - \bm u \cdot \bm n_{e}(x_1) \\ 
= & \langle \bm u, {-}\rot \delta_{e}[\bm n_{e} \otimes \bm n_{e}]\rangle,
\end{split}
\end{equation}
where $e=[x_1,x_2]$. 
This proves the second identity.
\end{proof}
% \begin{proof}

% Next, we have for $\bm \sigma \in \bm U^1$, 
% \begin{equation}
% \begin{split}
% \langle u, \div\div \bm \sigma \rangle - \langle \nabla^2 u , \bm \sigma \rangle = & 
% \sum_{e \in \mathsf E} [\![\bm t \cdot \bm \sigma \cdot \bm n]\!] \frac{\partial u}{\partial \bm t} + (\bm n \cdot \bm \sigma \cdot \bm n) [\![\frac{\partial u}{\partial \bm n}]\!].
% \end{split}
% \end{equation}
% The right-hand side term vanishes since 
% \begin{equation}
% \int_{\partial \bm f} (\bm \sigma \cdot \bm n) : \nabla u = 0.
% \end{equation}
% This completes the proof.
% \end{proof}

% \begin{theorem}
% \label{thm:hom-divdiv-2d}
% The cohomology $\mathcal H^{k}(U^{\bs})$ is isomorphic to $ \mathcal H^{2-k}_{dR,c}(\Omega)\otimes\mathcal{RT} $, and therefore to $\mathcal H^{k}_{dR}(\Omega)\otimes \mathcal{RT}$.
% \end{theorem}

\begin{proof}[The proof of \Cref{thm:hom-divdiv-2d} (part I)]

We first define $\pi_{\bm V^0_0 \to U^2}$ such that $\pi_{\bm V^0_0 \to U^2}(v) = \sum_{x\in \mathsf V} v(x)\delta_x.$ Similarly, we define $\pi_{\bm V^1_0 \to \bm U^1}$, $\pi_{\bm V^2_0 \to \bm U^0}$. Define $\pi_{\bm U^k \to \bm 
V^{2-k}_0}, k = 0,1,2,$ to be their inverses. 

Next, we define suitable inner products on the spaces $U^{k}$ and $V^{2-k}_0$.
For $V^0_0$, we define the inner product 
\begin{equation}
(u, u')_{V^0} = \sum_{x \in \mathsf V_0} u(x) u'(x).
\end{equation}
For $U^2$, we define the inner product 
\begin{equation}
(\sum_{x \in \mathsf V_0} u_x \delta_x, \sum_{x \in \mathsf V_0} u'_x\delta_x)_{U^2 } = \sum_{x \in \mathsf V_0} u_xu'_x.
\end{equation}

Therefore, we have 
\begin{equation}
(u, u')_{V^0_0} = \langle u, \pi_{V^0_0 \to \bm U^2}u' \rangle_{V^0_0 \times \bm U^2} = (\pi_{V^0_0 \to \bm U^2}u, \pi_{V^0_0 \to \bm U^2}u')_{U^2}.
\end{equation}
Similarly, we can define the inner product and the one-to-one mappings for the remaining spaces. 
Now we can regard the complexes \eqref{cd:dual-pairing-V0-U} as two Hilbert complexes (with finite dimensional spaces) to compute the (co)homology.% \kh{Hilbert complexes require more than that the spaces are Hilbert spaces. The is some closedness condition. Has this been checked? Initially I thought we can avoid using the term Hilbert complexes, but as harmonic forms are used, perhaps this is necessary (maybe not). }\lt{I think the notorious issue may not come up since all the space are finite-dimensional. Maybe we can write down a more systematic proof in Appendix?}

% Note that it can be checked that the mappings $\pi$ makes the diagram commute. 

% \begin{equation}\label{dual-pairing-Vb-U-pi}
%     \begin{tikzcd}
%    0 \arrow{r} & V^0_0 \arrow{r}{\hess} \arrow[d, shift left, "\pi"] &\bm V^1_0 \arrow{r}{\rot} \arrow[d, shift left, "\pi"] & \bm V^2_0 \arrow{r}{} \arrow[d, shift left, "\pi"]& 0\\
%     0 &U^2 \arrow{l} \arrow[u, shift left, "\pi"]& \bm U^1 \arrow{l}{\div\div} \arrow[u, shift left, "\pi"] & \bm U^0 \arrow{l}{\sym\curl}\arrow[u, shift left, "\pi"] & 0\arrow{l}{}.
%      \end{tikzcd}
%     \end{equation}
Our goal is to show that $\mathcal H^2(U^{\bs}) \cong \mathcal H^{2-k}(V_0^{\bs})$. 
The key is that when specifying an inner product, the cohomology can be represented by the harmonic forms. For example, the cohomology $\mathcal H^1(V^{\bs}_0)$ can be represented via $\bm \sigma \in \bm V_0^1$, satisfying
$
\rot_0 \bm \sigma = 0
$
and 
$
(\bm \sigma, \hess u)_{V^1_0} = 0.
$
Such a $\bm \sigma$ is called a harmonic form of $\bm V^1_0$. 

Now we show that $\bm \sigma$ is a harmonic form of $\bm V^1_0$ if and only if $\pi_{\bm V^1_0 \to \bm U^1} \bm \sigma$ is a {harmonic form} of $\bm U^1$. This follows from direct calculations.
\begin{equation}
\begin{split}
& \rot_0 \bm \sigma  = 0 \\
\Longleftrightarrow& \langle \rot_0 \bm \sigma, \bm \xi\rangle_{\bm V^1_0 \times \bm U^1} = 0,\quad \forall \bm \xi \in \bm U^1\\
\Longleftrightarrow& \langle \bm \sigma, \sym\curl \bm \xi\rangle_{\bm V^1_0 \times \bm U^1} = 0,\quad \forall \bm \xi \in \bm U^1\\
% \Longleftrightarrow&  (\pi_{\bm V^2_0 \to \bm U^0} \rot_0 \bm \sigma, \bm \xi)_{\bm U^1} = 0, \quad \forall \bm\xi \in \bm U^1 \\ 
 \Longleftrightarrow &(\pi_{\bm V^1_0 \to \bm U^1}, \bm \sigma, \sym \curl \bm U^1)_{\bm U^1} = 0. 
\end{split}
\end{equation}
And 
\begin{equation}
\begin{split}
& (\bm \sigma, \hess_0 u) = 0 ,\quad \forall u \in V^0_0\\ 
 \Longleftrightarrow & 
 \langle \pi_{\bm V^1_0 \to \bm U^1} \bm \sigma, \hess_0 u \rangle_{\bm U^1 \times \bm V^1_0} = 0, \quad \forall u \in V^0_0
 \\ 
 \Longleftrightarrow & 
 \langle \div\div \pi_{\bm V^1_0 \to \bm U^1} \bm \sigma, u \rangle_{U^2\times V^0_0} = 0, \quad \forall u \in V^0_0
 \\  \Longleftrightarrow &  \div\div \pi_{\bm V^1_0 \to \bm U^1} \bm \sigma = 0. 
\end{split}
\end{equation}
This indicates that $\pi_{\bm V^1_0 \to \bm U^1}$ induces an isomorphism. We can show that the other $\pi_\bs$ operators are isomorphisms similarly.

It follows that $\mathcal H^{k}(U^{\bs}) \cong \mathcal H^{2-k}(V^{\bs}_0) \cong  \mathcal H^{2-k}_{dR,c}(\Omega)\otimes\mathcal P_1 $. The theorem is proved by noticing that $\mathcal H^{2-k}_{dR,c}(\Omega) \cong \mathcal H^{k}_{dR}(\Omega).$ %\kh{cohomology of $U^{\bs}$ should be identical to cohomology of $V^{\bs}_0$, rather than $V^{\bs}$? If this is the case, then the latter is not equal to de Rham cohomology with  $\mathcal P_1$ coefficients? The above theorem seems to have claimed the wrong cohomology. Complexes without boundary conditions should not have the same cohomology as the compactly supported de Rham complex.}\lt{Fixed. Duality implies $U^{\bs} -> V_0^{\bs}$ $\to$ compact de Rham $\to$ de RHam.}
\end{proof}

 \subsection{2D divdiv complex with homogeneous boundary condition}

We can also consider a divdiv complex with homogeneous boundary condition:
\begin{equation}
    \label{cplx:divdiv0-2d}
    \begin{tikzcd}[column sep = large]
0\ar[r] & \bm U^0_0 \ar[r,"\sym\curl_0"] &  \bm U^1_0 \ar[r,"\div\div_0"]  & U^2_0 \ar[r] &  0,
    \end{tikzcd}
\end{equation}
where $\bm U^0_0$ is the (vector) piecewise Lagrange element with zero boundary conditions, 
\begin{equation}
    \bm U^1_0 := \{ \bm \sigma \in \bm U^1: (\bm n_{e} \cdot \bm \sigma \cdot \bm n_{e})|_{\partial\Omega} = 0\},
    \end{equation}
    and
    \begin{equation}
    U^2_0 := \operatorname{span}\{ \delta_{x}: x \in \mathsf V\}.
    \end{equation}

Regarding \eqref{cplx:divdiv0-2d} as the dual complex of \eqref{cplx:hessian-2d}, we can prove \Cref{thm:hom-divdiv-2d} (part II), which is re-summarized in the following theorem. The detailed proof can be found in Appendix.
\begin{theorem}
\label{thm:hom-divdiv0-2d}
The sequence \eqref{cplx:divdiv0-2d} is a complex, with cohomology isomorphic to $ \mathcal H^{\bs}_{dR,c}(\Omega)\otimes \mathcal{RT}$. %\kh{Have we defined notations like RM, RT etc.? Should we use RT or $\mathbb{R}^2\oplus \mathbb{R}$? We should be consistent.}
\end{theorem}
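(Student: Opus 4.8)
The plan is to mirror the strategy already used for Theorem \ref{thm:hom-divdiv-2d} (part I), but now pairing the homogeneous divdiv complex \eqref{cplx:divdiv0-2d} against the Hessian complex \emph{without} boundary conditions \eqref{cplx:hessian-2d} rather than the homogeneous one. First I would record the distributional formulas for the operators $\sym\curl_0$ and $\div\div_0$ on the relevant finite/distributional element pairs. For $\bm u \in \bm U^0_0$ (vector Lagrange, vanishing on $\partial\Omega$), $\sym\curl \bm u$ restricted against $\delta_e[\bm n_e\otimes\bm n_e]$ for \emph{all} edges $e$ (interior and boundary) produces an element of $\bm V^1$ (for boundary edges the contribution vanishes because $\bm u=0$ there, so genuinely landing in the no-boundary-condition space $\bm V^1$ spanned by interior-edge deltas). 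Similarly, $\div\div$ of $\bm\sigma\in\bm U^1_0$ tested against Lagrange functions $v\in V^0$ produces vertex deltas at interior vertices, i.e.\ lands in $\bm V^2$; here the hypothesis $(\bm n_e\cdot\bm\sigma\cdot\bm n_e)|_{\partial\Omega}=0$ is what kills the boundary-vertex terms. This establishes that \eqref{cplx:divdiv0-2d} is a complex and, more importantly, sets up the duality diagram
\begin{equation*}
\begin{tikzcd}[column sep = large]
0 \arrow{r} & V^0 \arrow{r}{\hess} \arrow[d, phantom, "\ast"] &\bm V^1 \arrow{r}{\rot} \arrow[d, phantom, "\ast"] & \bm V^2 \arrow{r}{} \arrow[d, phantom, "\ast"]& 0\\
0 &U^2_0 \arrow{l}& \bm U^1_0 \arrow{l}[swap]{\div\div_0} & \bm U^0_0 \arrow{l}[swap]{\sym\curl_0}  & 0\arrow{l}{}.
\end{tikzcd}
\end{equation*}

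Second, I would prove the adjointness identities, the analogues of \eqref{eq:dual-2d-id1}–\eqref{eq:dual-2d-id2}: namely $\langle v,\div\div_0\bm\sigma\rangle_{V^0\times U^2_0}=\langle \hess v,\bm\sigma\rangle_{\bm V^1\times \bm U^1_0}$ for $v\in V^0$, $\bm\sigma\in\bm U^1_0$, and $\langle\bm\sigma,\sym\curl_0\bm u\rangle_{\bm V^1\times\bm U^1_0}=\langle{-}\rot\bm\sigma,\bm u\rangle_{\bm V^2\times\bm U^0_0}$ for $\bm\sigma\in\bm V^1$, $\bm u\in\bm U^0_0$. The argument is the same as in the non-homogeneous case: use that $\div\div(u\bm\sigma)=0$ on each face (since $u$ is affine and $\bm\sigma$ constant), integrate by parts edge-wise, and use the normal-normal continuity of $\bm\sigma$ together with the continuity/single-valuedness of $u$ on interior edges; the only bookkeeping difference is which boundary terms survive, and the homogeneous conditions on $\bm U^0_0,\bm U^1_0$ ensure the boundary contributions vanish so that the pairing with $\bm V^1,\bm V^2$ (built on \emph{interior} cells) is exactly matched. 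The dual pairings themselves are non-degenerate: $\langle\cdot,\cdot\rangle$ between $V^0$ and $U^2_0$ (vertex evaluation vs.\ vertex delta, over \emph{all} vertices — well-defined since Lagrange functions are single-valued at every vertex), between $\bm V^2$ and $\bm U^0_0$ similarly in the vector setting, and between $\bm V^1$ and $\bm U^1_0$ via $\sum_e a_e\int_e(\bm n_e\cdot\bm\sigma\cdot\bm n_e)$, which is non-degenerate because the normal-normal trace is single-valued and the interior-edge deltas see it.

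Third, I would equip all six spaces with inner products making the identification maps $\pi$ isometries (vertex-wise sums, edge-wise $L^2$ of the $nn$-component, etc.), turn both rows into finite-dimensional Hilbert complexes, and run the harmonic-forms argument verbatim: $\bm\sigma\in\bm U^1_0$ is harmonic iff $\div\div_0\bm\sigma=0$ and $\bm\sigma\perp\sym\curl_0\bm U^0_0$; transporting these two conditions through the adjointness identities shows $\bm\sigma$ is harmonic in $\bm U^1_0$ exactly when $\pi(\bm\sigma)$ is harmonic in $\bm V^1$, hence $\mathcal H^k(U^{\bs}_0)\cong\mathcal H^{2-k}(V^{\bs})$, which by Theorem \ref{lem:closedness-hessian-2d} (part II) equals $\mathcal H^{2-k}_{dR}(\Omega)\otimes\mathcal P_1$. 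Finally I would invoke $\mathcal H^{2-k}_{dR}(\Omega)\cong\mathcal H^{k}_{dR,c}(\Omega)$ (Lefschetz/Poincaré duality, Theorem \ref{thm:deRham}) and the fact that $\mathcal P_1$ is, as a coefficient space, isomorphic to $\mathcal{RT}$ in dimension $2$ (both are $3$-dimensional; the divdiv complex resolves $\mathcal{RT}=\ker\sym\curl$, matching the BGG coefficient bookkeeping), to conclude the cohomology of \eqref{cplx:divdiv0-2d} is $\mathcal H^{\bs}_{dR,c}(\Omega)\otimes\mathcal{RT}$.

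The main obstacle I anticipate is \emph{not} the diagram chase but getting the boundary-term bookkeeping exactly right in the adjointness identities: one must check that with $\bm U^1_0$ having vanishing $nn$-trace on $\partial\Omega$ and $\bm U^0_0$ vanishing on $\partial\Omega$, every boundary edge and boundary vertex term in the two integration-by-parts computations genuinely cancels, so that the pairings against the interior-cell-based spaces $\bm V^1$, $\bm V^2$, $V^0$ (which include \emph{all} vertices for $V^0$ but only \emph{interior} edges/vertices for $\bm V^1,\bm V^2$) are non-degenerate on the nose. This is routine but error-prone, and is the step where the choice of "which complex carries the boundary conditions" is pinned down.
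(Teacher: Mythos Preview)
Your proposal is correct and follows essentially the same approach as the paper: the paper's proof (in the Appendix) sets up exactly the duality diagram you wrote, pairing \eqref{cplx:divdiv0-2d} against the non-homogeneous Hessian complex \eqref{cplx:hessian-2d}, introduces the same vertex/edge inner products and identification maps $\pi$, and invokes the harmonic-forms argument to obtain $\mathcal H^{k}(U^{\bs}_0)\cong\mathcal H^{2-k}(V^{\bs})$. Your write-up is in fact more detailed than the paper's own sketch, particularly on the boundary-term bookkeeping and the non-degeneracy of the pairings, which the paper leaves implicit.
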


\section{Complexes in 3D}
\label{sec:3d}

\subsection{3D Hessian complex}
\begin{comment}
We first show that \eqref{cplx:hessian} is a complex. Recall the Hessian complex in 3D:
\begin{equation}
    \label{eq:hessian_3d}
    \begin{tikzcd}
    0 \ar[r] &  V^0 \ar[r,"\hess"] &  \bm V^1 \ar[r,"\curl"] & \bm V^2 \ar[r,"\div"] & \bm V^3 \ar[r] & 0.
    \end{tikzcd}
\end{equation}
Here $V^0$ is the $\mathcal P_1$ Lagrange space, $$\bm V^1 := \Span\Big\{ \delta_{f}[\bm n_{f} \otimes \bm n_{f}] : f \in \mathsf F_0\Big\},$$
$$
    \bm V^2 := \Span\Big\{ \delta_{e}[\bm m_{e} \otimes \bm t_{e}] :~ \bm m_{e} \text{ is normal to } e, ~e \in \mathsf E_{0}\Big\},
$$
and $\bm V^3$ consist of the vertex deltas of all the interior vertices.
\end{comment}
% \kh{There was a paragraph here repeating the definition of the $V^\bs$ complex (already defined in the main result section). I removed that.} 

Note that $\delta_{f}[\bm A]$ is linear with respect to the tensor $\bm A$, therefore, we can rewrite 
$c_{f}\delta_{f}[\bm n_{f} \otimes \bm n_{f}] = \delta_{f }[c_{f} \bm n_{f} \otimes \bm n_{f}]$ for any real number $c_{f} \in \mathbb R$. Moreover, the element $\bm \sigma \in \bm V^1$ can be expressed as 
\begin{equation}
    \label{eq:V1repr-3d}
\bm \sigma = \sum_{f \in \mathsf F_0} \delta_{f}[\bm m_{f} \otimes \bm n_{f}],
\end{equation}
where $\bm m_{f} \in \mathbb R^3$ is a normal vector to the face $f$ (not necessarily unit).

\begin{proposition}
The sequence \eqref{cplx:hessian-3d} is a complex.
\end{proposition}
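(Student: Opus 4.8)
The plan is to verify directly from the definitions of distributional derivatives that $\curl \circ \hess = 0$ and $\div \circ \curl = 0$ on the respective spaces, using the explicit formulas for $\hess u$, $\curl \bm\sigma$, and $\div \bm v$ given in the excerpt (equations for $\hess u = \sum_{f} \delta_f[\jump{\nabla u}_f \otimes \bm n_f]$, \eqref{eq:curlV1}, and \eqref{eq:div-formula}). Since the excerpt already asserts the containments $\curl \bm V^1 \subseteq \bm V^2$ and $\div \bm V^2 \subseteq \bm V^3$ as consequences of those formulas, the remaining content of the proposition is precisely the two composition identities. I would treat this as a purely algebraic/combinatorial check on the simplicial complex, parallel to the classical fact $\partial \circ \partial = 0$ for simplicial chains.

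First I would establish $\curl(\hess u) = 0$ for $u \in V^0$. Applying \eqref{eq:curlV1} to $\bm\sigma = \hess u = \sum_{f \in \mathsf F_0} \delta_f[\jump{\nabla u}_f \otimes \bm n_f]$ gives $\curl \hess u = \sum_{e \in \mathsf E_0} \sum_{f : f \supset e} \mathcal O(e,f)\, \delta_e[\jump{\nabla u}_f \otimes \bm t_e]$; the coefficient of $\delta_e[\,\cdot \otimes \bm t_e]$ along a fixed interior edge $e$ is $\sum_{f : f \supset e} \mathcal O(e,f) \jump{\nabla u}_f$. The key observation is that $\jump{\nabla u}_f = \sum_{K : K \supset f} \mathcal O(f,K) \nabla u|_K$, so the full coefficient becomes $\sum_{f : f \supset e}\sum_{K : K \supset f} \mathcal O(e,f)\mathcal O(f,K)\, \nabla u|_K = \sum_K \big(\sum_{f : e \subset f \subset K} \mathcal O(e,f)\mathcal O(f,K)\big)\nabla u|_K$, and each inner sum vanishes because it is exactly the matrix entry of $\partial \circ \partial = 0$ restricted to the interval of simplices between $e$ and $K$. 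One must take a small amount of care with boundary faces: a face $f \supset e$ lying on $\partial\Omega$ does not appear in $\hess u$ (the sum in $\hess u$ runs over $\mathsf F_0$), but since $u \in V^0$ is globally continuous, the tangential jump vanishes and the relevant contribution is governed by the relative-homology orientation $\mathcal O$, so the cancellation still goes through; alternatively, one reproves the identity directly via the distributional pairing $\langle \hess u, \div(\cdot) \rangle$ against test functions in $C_c^\infty(\Omega;\mathbb M)$.

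Next I would establish $\div(\curl \bm\sigma) = 0$ for $\bm\sigma \in \bm V^1$. From $\curl \bm\sigma = \sum_{e \in \mathsf E_0} \delta_e[\bm m_e \otimes \bm t_e]$ with $\bm m_e = \sum_{f : f \supset e} \mathcal O(e,f)\bm m_f$, formula \eqref{eq:div-formula} gives $\div\curl \bm\sigma = -\sum_{x \in \mathsf V_0}\sum_{e : e \ni x} \mathcal O(x,e)\, \delta_x[\bm m_e]$, and substituting $\bm m_e$ the coefficient at a fixed interior vertex $x$ is $-\sum_{e : e \ni x}\sum_{f : f \supset e}\mathcal O(x,e)\mathcal O(e,f)\,\bm m_f = -\sum_f \big(\sum_{e : x \subset e \subset f}\mathcal O(x,e)\mathcal O(e,f)\big)\bm m_f$, which vanishes term by term again by $\partial\circ\partial = 0$. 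The cleanest and most robust way to present both steps uniformly is actually to test against smooth compactly supported fields: write $\langle \div\curl\bm\sigma, \varphi\rangle = \langle \bm\sigma, \curl\grad\varphi\rangle$ for $\varphi \in C_c^\infty(\Omega;\mathbb V)$ (resp. $\langle\curl\hess u,\psi\rangle = \langle u, \div\div\,\text{(something)}\rangle$... more precisely $\langle \curl \hess u, \psi\rangle = \langle \hess u, \curl^{\mathsf T}\psi\rangle$) and use the identities $\curl\grad = 0$, $\div\curl = 0$ at the level of smooth fields — but since $\bm\sigma$ and $\hess u$ are genuine distributions one must be slightly careful that the formal adjoint compositions are the ones that annihilate them; I would state it via the explicit coefficient cancellation above, which is self-contained.

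The main obstacle is bookkeeping the orientation signs $\mathcal O(\cdot,\cdot)$ correctly across the two relative-boundary levels and confirming that the combination $\mathcal O(x,e)\mathcal O(e,f)$ summed over the intermediate simplex is genuinely the $(\partial\circ\partial)$-entry (including the sign conventions fixed in \Cref{sec:topology}), together with checking that discarding boundary subsimplices in the definitions of $\bm V^1, \bm V^2, \bm V^3$ does not spoil the cancellation — i.e. that a boundary face/edge that would have contributed is either absent from both sides or contributes zero by the continuity of $u$ / the structure of $\bm\sigma$. Everything else is a routine substitution of the three derivative formulas into one another.
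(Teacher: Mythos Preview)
Your proposal is correct, but it and the paper prove complementary halves of the statement. The paper's proof is devoted entirely to \emph{deriving} the three operator formulas via integration by parts against test functions in $C_c^\infty(\Omega)$---that is, to establishing the containments $\hess V^0\subseteq\bm V^1$, $\curl\bm V^1\subseteq\bm V^2$, $\div\bm V^2\subseteq\bm V^3$. It never writes out $\curl\circ\hess=0$ or $\div\circ\curl=0$, because these hold automatically for distributions (each row of $\hess u$ is a distributional gradient, each row of $\curl\bm\sigma$ is a distributional curl). You, conversely, take the formulas as already established in the main-results section and verify the vanishing of the compositions combinatorially via $\partial\circ\partial=0$. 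Both are valid; the paper's emphasis is on showing the discrete spaces are closed under the operators, yours on the chain-complex identity.

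Two small remarks. First, your boundary worry is unnecessary: if $e\in\mathsf E_0$ then every face $f\supset e$ is automatically interior (a boundary face has all its subsimplices in $\partial\Delta$), so for each $K\supset e$ the inner sum $\sum_{f:e\subset f\subset K}$ always runs over exactly two faces and the cancellation is clean; the same holds one level down for interior vertices. Second, your distributional shortcut is simpler than you suggest: $\curl\hess u$ is the rowwise curl of a matrix whose rows are gradients, and $\div\curl\bm\sigma$ is the rowwise divergence of rowwise curls, so both vanish directly without invoking the adjoint BGG operators.
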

\begin{proof}
Let $u \in V^0$. For any  $\bm \varphi \in C_{c}^{\infty}(\Omega; \mathbb R^{3\times 3})$, it holds that
\begin{equation}
\begin{split}
    \langle \hess u, \bm \varphi \rangle & = \langle u, \div \div \bm \varphi \rangle = \int_{\Omega} u \div \div \bm \varphi  \\ 
    & = - \int_{\Omega} \grad u \cdot \div \bm \varphi \\ 
    & = - \sum_{f\in \mathsf F_0} \sum_{K : K \supset f} \int_{f} \Big[ \mathcal O(f,K) \nabla u|_K  \Big] \cdot (\bm \varphi\cdot \bm n_{f})  \\
    &= -\sum_{f\in \mathsf F_0} \sum_{K: K \supset f} \int_{f} \Big[ \mathcal O(f,K)  \frac{\partial u|_K}{\partial \bm n} \Big] \cdot (\bm n_{f} \otimes \bm n_{f}: \bm \varphi).
\end{split}
\end{equation} 
%\kh{A minus sign is missing in the second integration by parts?}
% where $[\![ \grad u]\!]_{f}$ is the (vector-valued) jump of the gradient of $u$ on face $f$, and $[\![ \frac{\partial u}{\partial \bm n}]\!]_{f}$ is the (scalar-valued) jump of the normal derivative of $u$ on face $f$.
Here, the second line uses the fact that $u$ is continuous, and the third line uses the fact that the piecewise gradient of $\grad u$ is zero. The last line is due to that $u$ is continuous, and thus, the jump of $u$ across the face has vanishing tangential components.
% Therefore, we have 
% $$\hess u = \sum_{f\in \mathsf F_0} [\![\frac{\partial u}{\partial \bm n}]\!]|_{f} \delta_{f}^{\bm n,\bm n}.$$ 
Therefore, we have 
\begin{equation}
\hess u = -\sum_{f \in \mathsf F_0} \sum_{K : K \supset f} \Big[ \mathcal O(f,K) \frac{\partial u|_K}{\partial \bm n} \Big]\, \delta_{f}[\bm n_{f} \otimes \bm n_{f}].
\end{equation}

% \begin{remark}
%     We might use $\{\bm n_{f}\}_{f \in \mathsf F_0}$, where $\bm n_{f}$ is a normal vector (not necessary unit) of the face $f$. We can now express an element in $\bm \Sigma_h$ as $\sum_{f \in \mathsf F_0} \delta_{f}^{\bm n_f, \bm n}.$
% \end{remark}

% \begin{proposition}
    Let $\bm \sigma \in \bm V^1$, which has the expression
    \begin{equation}
        \bm \sigma = \sum_{f \in \mathsf F_0} \delta_{f}[\bm m_{f} \otimes \bm n_{f}],
    \end{equation}
    where $\bm m_{f}$ is normal to face $f$, $f \in \mathsf F_0$.
    Then it holds that
    \begin{equation}
        \label{eq:dist-curl-3d}
        \curl \bm \sigma = \sum_{e \in \mathsf E_0} \sum_{f: f \supset  e} \mathcal O(e, f) \delta_{e}[\bm m_{f} \otimes \bm t_{e}].
    \end{equation}
    % Here we use $\mathcal O(e, f)$ to represent the orientation of $e$ with respect to $f$.
    % Note that the condition $f \supset e$ implies that $\bm m_{f}$ is normal to the edge $e$. 
  Consequently, we have $\curl \bm V^1 \subseteq \bm V^2$.
% \end{proposition}

We now show \eqref{eq:dist-curl-3d}. By linearity, it suffices to show that the identity \eqref{eq:dist-curl-3d} holds for a single term $\delta_{f} [\bm m_{f} \otimes \bm n_{f}]$. We follow the definition: for $\bm \sigma \in C^{\infty}(\mathbb R^3; \mathbb R^{3\times 3})$, it holds that,
\begin{equation}
\begin{split}
    \langle \curl \delta_{f}[\bm m_{f} \otimes \bm n_{f}], \bm \sigma\rangle  = &\langle \delta_{f}[\bm m_{f} \otimes \bm n_{f}], \curl \bm \sigma\rangle 
    = 
    \int_{f} \bm m_f \cdot \curl \bm \sigma \cdot \bm n_f \\
    = & \int_{\partial f} \bm m_{f} \cdot \bm \sigma \cdot \bm t_{\partial f}.
\end{split}
\end{equation}
Taking $\bm \sigma \in C_c^{\infty}(\Omega; \mathbb R^{3\times 3})$ completes the proof of \eqref{eq:dist-curl-3d}.

    Let $\bm \tau \in \bm V^2$, and denote 
    $$ \bm \tau = \sum_{e \in \mathsf E_0} \delta_{e}[\bm m_{e} \otimes \bm t_{e}],$$
    where $\bm m_{e}$ is normal to the edge $e$.
    Then, it holds that 
    \begin{equation}\label{eq:dist-div-3d}
        \div \bm \tau = -\sum_{x \in \mathsf V_0} \sum_{e: e \ni x} \mathcal O(x, e)\delta_{x}[\bm m_{e}] .
    \end{equation}
    This implies that, $\div \bm V^2 \subseteq \bm V^3.$
    % Equivalently,
    % $$\div(\{\bm n_{e}\}_{e}) = \{\sum_{e} \bm n_{e} [x: e]\}_{x}.$$

It suffices to check the case for $\div \delta_{e}[\bm m_{e} \otimes \bm t_{e}]$. For $\bm u \in C^{\infty}_c(\Omega)$, it holds that
\begin{equation}
    \begin{split}
\langle \div \delta_{e}[\bm m_{e} \otimes \bm t_{e}], \bm u \rangle = & \langle \delta_{e}[\bm m_{e} \otimes \bm t_{e}], - \nabla \bm u\rangle \\
= & - \int_{e} \bm m_{e} \cdot \frac{\partial \bm u}{\partial \bm t_{e}} \\ 
= & (\bm m_{e} \cdot \bm u)(x_1) - (\bm m_{e} \cdot \bm u)(x_2).
    \end{split}
\end{equation}
Here $e = [x_1, x_2]$. This completes the proof.

\end{proof}

Here we use the following lemma, which is a direct consequence of the Stokes' formula.
\begin{lemma}
For an oriented face $f$ and a vector-valued function $\bm u$, we have 
\begin{equation}
\int_{f} \curl \bm u \cdot \bm n = \int_{\partial f} \bm u \cdot \bm t,
\end{equation}
where the direction of the unit normal vector $\bm n$ of $f$, and that of the unit tangential vector $\bm t$ of $\partial f$, are determined by the given orientation.
\end{lemma}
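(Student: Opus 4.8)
The statement is nothing more than the classical Kelvin–Stokes (curl) theorem specialized to a flat $2$-simplex, so the plan is simply to record the reduction to Green's theorem in the affine plane containing $f$. First I would fix an orthonormal frame $\{\bm e_1,\bm e_2,\bm n\}$ of $\mathbb R^3$ so that $\bm e_1,\bm e_2$ span the plane of $f$, $\bm n$ is the chosen unit normal, and $(\bm e_1,\bm e_2,\bm n)$ is positively oriented; introduce Cartesian coordinates $(\xi,\eta,\zeta)$ along this frame. Writing $\bm u = u_1\bm e_1 + u_2\bm e_2 + u_3\bm n$, a one-line computation of the curl in this frame gives $\curl\bm u\cdot\bm n = \partial_\xi u_2 - \partial_\eta u_1$ on $f$, so that $\int_f\curl\bm u\cdot\bm n = \int_f(\partial_\xi u_2-\partial_\eta u_1)\,d\xi\,d\eta$, the face integral now being an honest planar integral over the triangle $f$.

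Next I would invoke Green's theorem on the triangle $f$, viewed as a domain in the $(\xi,\eta)$-plane with the induced boundary orientation: $\int_f(\partial_\xi u_2-\partial_\eta u_1)\,d\xi\,d\eta = \oint_{\partial f}(u_1\,d\xi + u_2\,d\eta)$. Along $\partial f$ one has $d\xi = t_1\,ds$ and $d\eta = t_2\,ds$, where $\bm t = t_1\bm e_1 + t_2\bm e_2$ is the unit tangent and $ds$ the arclength element; since $\bm t$ lies in the plane, the $\bm n$-component $u_3$ contributes nothing to $\bm u\cdot\bm t$, and the right-hand side becomes $\int_{\partial f}(u_1 t_1 + u_2 t_2)\,ds = \int_{\partial f}\bm u\cdot\bm t\,ds$, which is the asserted identity. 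An equivalent coordinate-free route is to identify $\bm u$ with the $1$-form $\bm u^\flat = u_1\,dx^1 + u_2\,dx^2 + u_3\,dx^3$, observe that the pullback of $d(\bm u^\flat)$ to $f$ equals $(\curl\bm u\cdot\bm n)\,dA$, and apply the general Stokes theorem $\int_f d(\bm u^\flat) = \int_{\partial f}\bm u^\flat$. (Applied rowwise, with $\bm u = \bm \sigma^{\mathrm T}\bm m_f$ for a constant normal $\bm m_f$, this is exactly the form used just above to derive the distributional $\curl$ formula.)

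There is no substantive obstacle; the only point requiring care is the orientation bookkeeping. One must check that the pairing of the chosen unit normal $\bm n$ of $f$ with the induced boundary orientation of $\partial f$ for which Green's/Stokes' theorem holds with the stated signs is precisely the right-hand-rule pairing — that is, $(\bm e_1,\bm e_2,\bm n)$ positively oriented and $\partial f$ traversed so $f$ lies to its left — and that this convention is consistent with the orientation function $\mathcal O$ used elsewhere in the paper. Because $f$ is an affine simplex, all of this is immediate once the frame is chosen positively oriented, and the lemma follows.
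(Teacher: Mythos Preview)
Your proposal is correct; the paper itself gives no detailed proof, stating only that the lemma ``is a direct consequence of the Stokes' formula.'' Your reduction to Green's theorem in an adapted frame (or equivalently the differential-form Stokes theorem) is precisely what that remark is pointing to, so your write-up simply fills in what the paper leaves implicit.
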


Analogous to the two-dimensional case, we introduce the following auxiliary Hessian complex starting with a discrete $L^2$ space:
\begin{equation}
    \label{cplx:hessian-L2-3d}
    \begin{tikzcd}
   0  \ar[r] &   V^0_- \ar[r,"\hess"] &  \bm V^1_- \ar[r,"\curl"] & \bm V^2_-  \ar[r,"\div"] & \bm V^3_- \ar[r] & 0.
    \end{tikzcd}
  \end{equation}
Here, the superscript $-$ denotes that this complex starts with a discontinuous element.

\subsubsection*{Construction of $V^0_-$} The space is chosen as the discontinuous piecewise linear function space, i.e., 
\begin{equation}
   V^0_- = C^{-1}\mathcal P_1 := \{ u \in L^2(\Omega) : u|_{K} \in \mathcal P_1(K) \text{ for all cells } K \in \mathsf K\}.
\end{equation}
{We can identify the space to $\bigoplus_{K \in \mathsf K} \mathcal P_1 = C_3(\Delta, \mathcal P_1; \partial \Delta)$ by $\kappa^0_-(u) := \sum_{K \in \mathsf K} u|_K \|K\|. $}
\subsubsection*{Construction of $\bm V^1_-$} 

For each face $f \in\mathsf F$ and $p \in \mathcal P_1$, we first define the following distribution:

\begin{equation}\label{eq:hatv1-3d}
    \langle \hat{\bm v}^1_{f}[p], \bm \sigma \rangle = \int_{f} (\div \sym(\bm \sigma) \cdot \bm n_f) p - \int_{f} \sym \bm \sigma:   (\nabla p\otimes\bm n_f) ,
\end{equation}
for $\bm \sigma \in C^{\infty}_c(\Omega; \mathbb R^{3\times 3})$.

% Note here $p \in \mathscr P_1$ is a linear function on the whole space $\mathbb R^3$, not a linear function just defined on the face $f$. In the latter case, we will use the notation $p \in \mathscr P_1(f)$ instead.

% By definition, $\hat{\bm v}^1_{f}[p]$ is clearly symmetric.
We now define the space $\bm V^1_-$ to be the span of all the distribution $\hat{\bm v}_{f}^1[p]$, namely,
\begin{equation}
\bm V^1_- = \Big\{ \sum_{f \in \mathsf F_0} \hat{\bm v}^1_{f}[p_{f}] : f \in \mathsf F_0, p_{f} \in \mathcal P_1 \Big\}.
\end{equation}

% By linearity, the element in $\bm \Sigma_h^{-1}$ has the form: 
% \begin{equation}
% \bm S = \sum_{f \in \mathsf F_0} \hat{\bm S}_{f}[p_{f}],
% \end{equation}
% where $p_{f} \in P_1$ is the linear function. 

The following proposition states that $\hess V^0_- \subseteq \bm V^1_-$.

\begin{proposition}
\label{prop:dist-hess-L2-3d}
For $u \in V^0_-$, we have 
\begin{equation}\label{eq:dist-hess-L2-3d}
    \hess u =
    %  - \sum_{f \in \mathsf F_0} \hat{\bm v}^1_{f}\big[ [\![u]\!]_{f}\big] 
    \sum_{f \in \mathsf F_0}
     \sum_{K : K \supset f}  \mathcal O(f, K) \hat{\bm v}^1_{f}[u|_{K}] \end{equation} 
in the sense of distributions.
% , where $[\![ u ]\!]_{f} = u|_{K^+} - u|_{K^-} \in \mathcal P_1$ (not $\mathcal P_1(f)!$). 
Consequently, we have $\hess V^0_- \subseteq \bm V^1_-.$
\end{proposition}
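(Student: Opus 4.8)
The plan is to verify \eqref{eq:dist-hess-L2-3d} directly from the definition of the distributional Hessian, exactly mirroring the two-dimensional computation that led to \Cref{lem:dist-hess-L2-2d}. So I would fix $u \in V^0_-$ and a test function $\bm\sigma \in C^\infty_c(\Omega;\mathbb R^{3\times 3})$, and start from $\langle \hess u, \bm\sigma\rangle = \langle u, \div\div\bm\sigma\rangle = \sum_{K\in\mathsf K}\int_K u\,\div\div\bm\sigma$. Since $\hess$ only sees the symmetric part of $\bm\sigma$, I may replace $\bm\sigma$ by $\sym\bm\sigma$ throughout. Then I integrate by parts once on each cell $K$: $\int_K u\,\div\div\bm\sigma = \int_{\partial K} u\,(\div\bm\sigma\cdot\bm n) - \int_K \nabla u\cdot\div\bm\sigma$, and integrate by parts a second time on the bulk term: $\int_K \nabla u\cdot\div\bm\sigma = \int_{\partial K}(\bm\sigma\bm n)\cdot\nabla u - \int_K \bm\sigma:\hess u$. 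The last term vanishes because $u$ is piecewise linear, so $\hess u|_K = 0$.

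The key step is then to reorganize the resulting sum of face integrals $\sum_{K}\int_{\partial K}\big[ u|_K\,(\div\bm\sigma\cdot\bm n) - (\bm\sigma\bm n)\cdot\nabla u|_K\big]$ into a sum over faces. For an interior face $f$ shared by two cells, the two contributions combine with the orientation signs $\mathcal O(f,K)$, and since $\bm\sigma$ is smooth the normal trace $\bm n_f$ may be taken consistently; for a boundary face $f\in\mathsf F_\partial$ the single contribution survives, but $\bm\sigma$ has compact support so these drop out — this is exactly why the sum runs over $\mathsf F_0$. Comparing the resulting face-by-face expression with the definition \eqref{eq:hatv1-3d} of $\hat{\bm v}^1_f[p]$, with $p = u|_K$ restricted to $f$ and $\nabla p$ the (constant) gradient of $u|_K$ restricted to $f$, gives precisely $\sum_{f\in\mathsf F_0}\sum_{K:K\supset f}\mathcal O(f,K)\hat{\bm v}^1_f[u|_K]$, which is \eqref{eq:dist-hess-L2-3d}. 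The inclusion $\hess V^0_-\subseteq\bm V^1_-$ is then immediate since each summand lies in $\bm V^1_-$ by construction.

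The main subtlety — not really an obstacle, but the place to be careful — is the bookkeeping of orientations and the precise meaning of $\nabla p$ in \eqref{eq:hatv1-3d}: $p\in\mathcal P_1$ is a genuine trivariate affine function (the restriction of $u|_K$), so $\nabla p$ is its constant $\mathbb R^3$ gradient, and in the face integral both $p$ and $\nabla p$ are to be understood as restricted to $f$ (with the usual abuse of notation flagged after \eqref{eq:hatv1-3d} and in the remark following \eqref{eq:hatv1_e-2d}). One should note that $\sum_{K:K\supset f}\mathcal O(f,K)u|_K$ is the jump $[\![u]\!]_f$, and $\sum_{K:K\supset f}\mathcal O(f,K)\nabla u|_K$ the jump of the gradient, so the formula \eqref{eq:dist-hess-L2-3d} can equivalently be written with these jumps; this also makes the consistency check with the continuous case (when $u\in V^0$ so that $[\![u]\!]_f=0$) transparent, analogous to \Cref{rmk:ker-g1}, though that verification is not needed here. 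I would also remark that, just as in 2D, it is harmless that $\hat{\bm v}^1_f[p]$ is defined via $\sym\bm\sigma$, since the distributional $\hess$ pairs trivially against the skew part.
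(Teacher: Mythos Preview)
Your proposal is correct and follows essentially the same approach as the paper: integrate by parts twice cell-by-cell, collect the resulting face terms with the orientation signs $\mathcal O(f,K)$, drop the boundary faces using compact support of the test function, and match against the definition \eqref{eq:hatv1-3d}. The only cosmetic difference is that you symmetrize the test function at the outset whereas the paper carries a general $\bm\sigma$ through the integration by parts and invokes $\langle\hess u,\bm\sigma\rangle=\langle\hess u,\sym\bm\sigma\rangle$ at the end; the content is identical.
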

\begin{proof}
By definition, for symmetric-matrix valued function $\bm \sigma \in C_c^{\infty}(\Omega; \mathbb S)$ we have 
\begin{equation}
\begin{split}
\langle \hess u, \bm \sigma \rangle = & \langle u, \div\div \bm \sigma \rangle =  \int_{\Omega} u \div\div \bm \sigma  \\ 
= & - \int_{\Omega } \nabla u \cdot \div \bm\sigma + \sum_{f \in \mathsf F_0}\sum_{K : K \supset f}   \int_{f}\Big[ \mathcal O(f,K) u|_{K} \Big] \div \bm \sigma \cdot \bm n_{f}  \\ 
= & -\sum_{f \in \mathsf F_0}\sum_{K : K \supset f} \int_f \Big[\mathcal O(f,K) \nabla u|_K \Big]\cdot (\bm \sigma \bm n_{f}) + \sum_{f \in \mathsf F_0} \sum_{K : K \supset f} \int_f \Big[\mathcal O(f,K) u|_K \Big]\div \bm \sigma \cdot \bm n_{f} \\
= & - \sum_{f \in \mathsf F_0}  \sum_{K : K \supset f}\int_f \bm \sigma : ( \Big[\mathcal O(f,K) \nabla u|_K \Big] \otimes\bm n_{f} ) + \sum_{f \in \mathsf F_0} \sum_{K : K \supset f} \int_f \Big[\mathcal O(f,K) u|_K \Big] \div \bm \sigma \cdot \bm n_{f}.
\end{split}
\end{equation}
% Here the gradient is piecewise gradient, implying that $\nabla [\![u]\!]_f =  [\![\nabla u]\!]_f.$
Since $\hess u$ is symmetric, it then holds that 
\begin{equation}
\begin{split}
    \langle \hess u, \bm \sigma \rangle = & \langle \hess u, \sym \bm \sigma \rangle \\ 
    = & - \sum_{f \in \mathsf F_0} \sum_{K: K\supset f}  \int_f \bm \sigma : \sym ( \Big[ \mathcal O(f,K) \nabla u|_K \Big]\otimes\bm n_{f} )\\
    & \quad\quad\quad+ \sum_{f \in \mathsf F_0}\sum_{K: K\supset f}  \int_f \Big[ \mathcal O(f,K) \nabla u|_K \Big] \div \sym \bm \sigma \cdot \bm n_{f}.
\end{split}
\end{equation}
This completes the proof of \eqref{eq:dist-hess-L2-3d}, and therefore $\hess V^0_- \subseteq \bm V^1_-.$
\end{proof}

We define the following mapping: 
$$\kappa^1_- : \bm V^1_- \to \bigoplus_{f \in \mathsf F_0} \mathcal P_1 = C_2(\Delta, \mathcal P_1; \partial \Delta), $$
by
\begin{equation}
\kappa^1_-(\sum_{f \in \mathsf F_0} \hat{\bm v}_{f}^1[p_{f}]) = \sum_{f \in \mathsf F_0} p_{f} \| f\|.
\end{equation}
Here $\|f\|$ is the free element associated to the face $f$.

\subsubsection*{Construction of $\bm V^2_-$} The space $\bm V^2_-$ contains the following traceless distribution $\hat{\bm v}_{e}^2[p]$ for each internal edge $e \in \mathsf E_0$ and $p \in \mathcal P_1$:

\begin{equation}
\langle \hat{\bm v}_{e}^2[p], \bm \sigma \rangle  = -\int_{e} \dev \bm \sigma : ( \nabla p \otimes\bm t_{e}) + \frac{1}{2}\int_{e} (\div \dev \bm \sigma \cdot \bm t_{e} ) p,
\end{equation}
for any test function $\bm \sigma \in C^{\infty}_c(\mathbb R^3;\mathbb R^{3\times 3}).$

% Similar to discussions in [ref], here we need to impose that the distribution $\hat{\bm \Xi}_{e}[p]$ is a traceless distribution. This means, for all $u \in C^{\infty}(\mathbb R)$, we have 
% \begin{equation}
%     \langle \hat{\bm \Xi}_{e}[p], uI \rangle = 0.
% \end{equation}
By definition, $\hat{\bm v}_{e}^2[p]$ is a traceless distribution. %\kh{traceless distribution is not rigorously defined... perhaps straightforward because we can define trace in the sense of distributions.}\lt{Added.}
\begin{proposition}
\label{prop:dist-curl-L2-3d}
For $\bm \sigma \in \bm V^1_-$,  denote that $\bm \sigma= \sum_{f \in \mathsf F_0} \hat{\bm v}^1_{f}[p_{f}]$. Then we have 
\begin{equation}
 \curl \bm \sigma  = \sum_{e \in \mathsf E_0} \sum_{f : f \supset e} \mathcal O(e, f) \hat{\bm v}_{e}^2[p_{f}].
\end{equation}
This implies
$\curl \bm \sigma \in \bm V^2_-$
in the sense of distributions.
\end{proposition}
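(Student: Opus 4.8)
The plan is to transcribe the two-dimensional computation of $\rot\hat{\bm v}^1_e[p]$ (the display preceding \Cref{lem:dist-rot-L2-2d}) to three dimensions, with a face $f$ now playing the role of the top cell and its edges the role of the boundary vertices. By the linearity of $p\mapsto\hat{\bm v}^1_f[p]$ and of $\curl$, it suffices to establish, for a single face $f\in\mathsf F_0$ and $p\in\mathcal P_1$,
\[
\curl\hat{\bm v}^1_f[p]=\sum_{e\in\mathsf E_0,\, f\supset e}\mathcal O(e,f)\,\hat{\bm v}^2_e[p];
\]
summing over $f\in\mathsf F_0$ and exchanging the order of summation then gives the stated identity.

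First I would unfold the distributional $\curl$: since $\curl$ acting rowwise on matrix fields is formally self-adjoint, $\langle\curl\hat{\bm v}^1_f[p],\bm\tau\rangle=\langle\hat{\bm v}^1_f[p],\curl\bm\tau\rangle$ for $\bm\tau\in C_c^\infty(\Omega;\mathbb R^{3\times 3})$, which by \eqref{eq:hatv1-3d} equals $\int_f(\div\sym\curl\bm\tau\cdot\bm n_f)\,p-\int_f\sym\curl\bm\tau:(\nabla p\otimes\bm n_f)$. Because $\bm\tau$ is compactly supported in $\Omega$, it vanishes near $\partial\Omega$, hence on every boundary edge; this is why only interior edges of $f$ survive in the boundary terms extracted below. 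I would also record that $\langle\curl\hat{\bm v}^1_f[p],\cdot\rangle$ is automatically trace-free: since $\sym\curl(\iota q)=0$ for any scalar field $q$, one may replace $\bm\tau$ by $\dev\bm\tau$ in the pairing without changing its value, which is exactly what is needed to match the $\dev$ appearing in $\hat{\bm v}^2_e$.

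The core step is a surface integration by parts on $f$. Fixing an orthonormal frame $\{\bm t_f^1,\bm t_f^2,\bm n_f\}$ adapted to $f$, I would decompose $\sym\curl\bm\tau|_f$ into its intrinsic tangential block, its normal--tangential part, and its normal--normal part, and rewrite the two face integrals in terms of the intrinsic surface gradient and divergence on $f$ --- the direct analogue of the $2\times2$ manipulation $\sym\curl\bm v:(\nabla p\otimes\bm n_e)=\curl\bm v:(\nabla p\otimes\bm n_e)-\tfrac12[\,\cdot\,]:(\nabla p\otimes\bm n_e)$ used in 2D, with the skew part of $\curl\bm\tau$ again producing the coefficient $\tfrac12$. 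Applying Stokes' theorem on $f$ (together with $\int_f\div_f\bm w=\int_{\partial f}\bm w\cdot\bm\nu$, where $\bm\nu$ is the in-face outward co-normal of $\partial f$) converts the surviving derivatives into tangential derivatives $\partial/\partial\bm t_e$ along the edges; on each edge $e$ the oriented co-normal equals $\mathcal O(e,f)$ times a unit vector compatible with the edge orientation, which supplies the factor $\mathcal O(e,f)$. Regrouping the resulting edge integrals and using the trace-free reduction noted above, one recovers $-\int_e\dev\bm\tau:(\nabla p\otimes\bm t_e)+\tfrac12\int_e(\div\dev\bm\tau\cdot\bm t_e)\,p=\langle\hat{\bm v}^2_e[p],\bm\tau\rangle$ on each interior edge of $f$, completing the reduction.

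I expect the main obstacle to be precisely this surface differential-calculus bookkeeping: decomposing $\sym\curl\bm\tau$ on $f$ into its intrinsic pieces, integrating by parts on $f$ without generating spurious boundary contributions, and checking that the symmetric and trace-free projections conspire to leave exactly the combination (with the coefficient $\tfrac12$) defining $\hat{\bm v}^2_e[p]$, all while tracking signs so that the orientation factor $\mathcal O(e,f)$ comes out correctly. The restrictions are harmless --- $p\in\mathcal P_1$ and $\nabla p\in\mathbb R^3$ restrict unambiguously to each edge --- so once the 2D identities are carried over to the face the computation is routine; the real work is in organizing it.
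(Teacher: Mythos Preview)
Your overall strategy---reduce to a single $\hat{\bm v}^1_f[p]$, unfold the distributional pairing, observe the symmetry and trace-freeness, and convert the face integrals into edge integrals via Stokes---is sound and coincides with the paper's setup. The difference is in how the face integral is pushed to $\partial f$.

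You propose to choose an orthonormal frame on $f$, split $\sym\curl\bm\tau|_f$ into tangential, normal--tangential, and normal--normal blocks, and then invoke the intrinsic surface divergence theorem. The paper avoids all of this by testing against trace-free $\bm\sigma$ from the outset and using two coordinate-free identities:
\[
(\sym\curl\bm\sigma)\,\bm n_f=(\curl\bm\sigma)\,\bm n_f-\tfrac12\,\div\bm\sigma\times\bm n_f,
\qquad
\div(\sym\curl\bm\sigma)=\tfrac12\,\curl(\div\bm\sigma).
\]
Substituting these, the integrand becomes $-(\curl\bm\sigma\cdot\bm n_f)\cdot\nabla p+\tfrac12\curl(p\,\div\bm\sigma)\cdot\bm n_f$ (after combining the cross-product term with the divergence term via the product rule $\curl(p\bm w)=\nabla p\times\bm w+p\,\curl\bm w$). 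Both summands are now of the form $\int_f\curl(\cdot)\cdot\bm n_f$, so a single application of the classical Stokes formula $\int_f\curl\bm u\cdot\bm n_f=\int_{\partial f}\bm u\cdot\bm t$ lands you directly on $-\int_{\partial f}(\bm\sigma\cdot\bm t_{\partial f})\cdot\nabla p+\tfrac12\int_{\partial f}(\div\bm\sigma\cdot\bm t_{\partial f})\,p$, which is exactly $\hat{\bm v}^2_e[p]$ edge by edge.

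Your route should also close, but the ``bookkeeping obstacle'' you anticipate is real: you will end up re-deriving the two identities above in frame components and must be careful that the surface divergence theorem (with co-normal $\bm\nu$) and Stokes (with tangent $\bm t$) are applied to genuinely tangent fields. The paper's approach sidesteps this entirely---no frame, no surface operators, no $\bm\nu$ versus $\bm t$ reconciliation---so it is both shorter and less error-prone.
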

\begin{proof}
It suffices to check a single term $\hat{\bm v}^1_{f}[p] \in \bm V^1_-$. By definition, for a matrix-valued function $\bm \sigma \in C^{\infty}_c(\Omega; \mathbb T)$, it follows  that
\begin{equation}
\begin{split}
\langle \curl \hat{\bm v}^1_{f}[p], \bm \sigma \rangle =& \langle \hat{\bm v}^1_{f}[p], \sym \curl \bm \sigma \rangle \\
= & \int_{f} - (\sym \curl \bm \sigma \cdot \bm n_f) \cdot \nabla p + (\div (\sym \curl \bm \sigma) \cdot \bm n_f) p \\
= & \int_{f} -  (\curl \bm \sigma \cdot \bm n_f) \cdot \nabla p + \frac{1}{2} \int_{f} (\div \bm \sigma \times \bm n_{f}) \cdot \nabla p + \frac{1}{2} \int_{f} (\curl \div \bm \sigma \cdot \bm n_{f} ) p  \\ 
= & -\int_{\partial f} ( \bm \sigma\cdot\bm t_{\partial f} ) \cdot \nabla p + \frac{1}{2} \int_{f} \curl(\div \bm \sigma p) \cdot \bm n_f \\
= & -\int_{\partial f} (\bm \sigma \cdot \bm t_{\partial f}) \cdot \nabla p + \frac{1}{2} \int_{\partial f} (\div \bm \sigma \cdot \bm t_{\partial f}) p.
\end{split}
\end{equation}
Here the third line comes from the identity
$ (\sym \curl \bm \sigma)\bm n_f = (\curl \bm \sigma) \bm n_f - \frac{1}{2} \div \bm \sigma \times \bm n_f$, and the fourth line is due to the identity $\curl(p \bm w) = \nabla p \times \bm w + p \curl \bm w$.
Consequently, we have $\curl \hat{\bm v}^1_{f}[p] \in \bm V^2_-$.
\end{proof}

For $\bm \tau = \sum_{e \in \mathsf E_0} \hat{\bm v}^2_{e}[p_{e}] \in \bm V^2_-$, we define $\kappa^2_-: \bigoplus_{e \in \mathsf E_0} \mathcal P_1(e) = C_1(\Delta,\mathcal P_1; \partial \Delta)$ as 
\begin{equation}
\kappa^2_-(\bm \tau) = \sum_{e \in \mathsf E_0} p_{e} \|e\|.
\end{equation}

\subsubsection*{Construction of $\bm V^3_-$} Finally, let us consider the construction of $\bm V^3_-$. The space $\bm V^3_-$ consists of the following distribution $\hat{\bm v}^3_{x}[p]$ for each internal vertex $x \in \mathsf V_0$ and $p \in \mathcal P_1$:
\begin{equation}
\langle \hat{\bm v}^3_{x}[p], \bm q\rangle = (\bm q \cdot \nabla p)(x) - \frac{1}{3}(\div \bm q)p(x),
\end{equation}
for all $\bm q \in C_c^{\infty}(\mathbb R^3; \mathbb R^3)$. 
\begin{proposition}
\label{prop:dist-div-L2-3d}
For $\bm \tau = \sum_{e} \hat{\bm v}^2_{e}[p_{e}] \in \bm V^2_-$, we have 
$$\div \bm \tau = \sum_{x \in \mathsf V_0} \sum_{e: e \ni x} \mathcal O(x, e) \hat{\bm v}^3_{ x}[p_{e}]$$
in the distributional sense.
\end{proposition}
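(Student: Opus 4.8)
The plan is to follow the template already used for Proposition~\ref{prop:dist-hess-L2-3d} and Proposition~\ref{prop:dist-curl-L2-3d}, and for the two-dimensional counterpart Lemma~\ref{lem:dist-rot-L2-2d}: reduce to a single generating distribution, unwind the distributional divergence, substitute the explicit bilinear form defining $\hat{\bm v}^2_e[p]$, and integrate by parts along the one-dimensional simplex $e$. By linearity of $\div$ and of $p\mapsto\hat{\bm v}^2_e[p]$ it suffices to show, for a fixed interior edge $e=[x_1,x_2]\in\mathsf E_0$ and a fixed $p\in\mathcal P_1$, that $\div\hat{\bm v}^2_e[p]=\sum_{x\in\partial e}\mathcal O(x,e)\,\hat{\bm v}^3_x[p]$; summing this over $e\in\mathsf E_0$ and regrouping the double sum by vertices then yields the stated expression over $\mathsf V_0$, provided the terms attached to a boundary endpoint $x\in\mathsf V_\partial$ of an interior edge drop out — which they do, exactly as in Lemma~\ref{lem:dist-rot-L2-2d}, since $\hat{\bm v}^3_x[p]$ annihilates every test field that is compactly supported in $\Omega$ together with its derivatives.

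For the single-edge identity I would fix a test field $\bm q$, write $\langle\div\hat{\bm v}^2_e[p],\bm q\rangle=-\langle\hat{\bm v}^2_e[p],\nabla\bm q\rangle$, substitute $\bm\sigma=\nabla\bm q$ into the definition of $\hat{\bm v}^2_e[p]$, and expand the deviator $\dev\nabla\bm q=\nabla\bm q-\tfrac13(\div\bm q)I$. Because $p$ is affine, $\nabla p$ and $\partial_{\bm t_e}p=\nabla p\cdot\bm t_e$ are constant along $e$; hence $-\int_e\dev(\nabla\bm q):(\nabla p\otimes\bm t_e)$ splits into the total tangential derivative $-\int_e\partial_{\bm t_e}(\nabla p\cdot\bm q)$ and a trace correction proportional to $(\partial_{\bm t_e}p)\int_e\div\bm q$, while $\tfrac12\int_e(\div\dev(\nabla\bm q)\cdot\bm t_e)\,p$ reduces — under the convention for the divergence of a matrix field used throughout, for which $\div(\nabla\bm q)$ and $\div((\div\bm q)I)$ both equal $\nabla(\div\bm q)$, so that $\div\dev(\nabla\bm q)=\tfrac23\nabla(\div\bm q)$ — to a multiple of $\int_e\partial_{\bm t_e}(\div\bm q)\,p$. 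Integrating this last term by parts along $e$ (again $\partial_{\bm t_e}p$ is constant), the interior integrals cancel and only the endpoint values of $\nabla p\cdot\bm q-\tfrac13(\div\bm q)\,p$ remain; these are precisely $\langle\hat{\bm v}^3_{x_2}[p],\bm q\rangle$ and $-\langle\hat{\bm v}^3_{x_1}[p],\bm q\rangle$ up to the orientation signs, which proves the single-edge formula and hence $\div\bm V^2_-\subseteq\bm V^3_-$.

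The hard part is bookkeeping rather than conceptual. Three points require care: (i) the interplay between the $\tfrac13$'s coming from the three-dimensional deviator and the $\tfrac12$ prefactor in the definition of $\hat{\bm v}^2_e$, which must combine to produce exactly the $\tfrac13$ appearing in the definition of $\hat{\bm v}^3_x$ (the analogous cancellation in 2D left a $\tfrac12$); (ii) the convention for the divergence of a matrix field — it is this choice that makes $\div\dev(\nabla\bm q)$ a genuine gradient $\tfrac23\nabla(\div\bm q)$ rather than one involving a transverse Laplacian $\Delta\bm q$, and only in the former case do the integrals over $e$ collapse to endpoint terms; and (iii) matching the orientation signs $\mathcal O(x,e)$ to the simplicial boundary $\partial e$. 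Finally, as in Proposition~\ref{prop:hom-hessian-L2-2d}, I would record the isomorphism $\kappa^3_- : \bm V^3_-\to\bigoplus_{x\in\mathsf V_0}\mathcal P_1=C_0(\Delta,\mathcal P_1;\partial\Delta)$, $\hat{\bm v}^3_x[p]\mapsto p\,\|x\|$, so that this proposition, together with Proposition~\ref{prop:dist-hess-L2-3d} and Proposition~\ref{prop:dist-curl-L2-3d}, says exactly that $\kappa^\bullet_-$ is a cochain map from \eqref{cplx:hessian-L2-3d} to the relative simplicial chain complex, which is the structure needed to compute the cohomology.
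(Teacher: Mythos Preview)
Your proposal is correct and follows essentially the same route as the paper: reduce to a single edge, pair $\div\hat{\bm v}^2_e[p]$ against a test field $\bm q$, substitute $\bm\sigma=\nabla\bm q$ into the definition of $\hat{\bm v}^2_e[p]$, expand $\dev\nabla\bm q=\nabla\bm q-\tfrac13(\div\bm q)I$ and use $\div\dev\nabla\bm q=\tfrac23\nabla(\div\bm q)$, then integrate along $e$ to pick up the endpoint values of $\nabla p\cdot\bm q-\tfrac13(\div\bm q)p$. Your explicit flagging of the convention issue in point~(ii) and of the boundary-vertex drop-out is more careful than the paper's own presentation, which glosses over these (and contains a couple of typographical slips in the very identities you single out).
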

\begin{proof}
By definition, we have 
\begin{equation}
\begin{split}
\langle \div \hat{\bm v}^2_{e}[p_{e}], \bm q \rangle & = - \langle \hat{\bm v}^2_{e}[p_{e}], \dev\nabla \bm q \rangle \\ 
& = \int_{e} (\dev \nabla \bm q \cdot \bm t_{e}) \cdot \nabla p - \frac{1}{2}\int_{e} (\nabla \div \dev \bm q \cdot \bm t_{e})p 
\\ 
& = \int_{e} (\nabla \bm q \cdot \bm t_{e}) \cdot \nabla p - \frac{1}{3} \int_{e} \div q (\bm t_{e} \cdot \nabla p) - \frac{1}{3}\int_{e} (\nabla \div \bm q \cdot \bm t_{e})p 
\\ 
& = (\bm q \cdot \nabla p)(x_2) - \frac{1}{3} (p \div \bm q )(x_2) -  (\bm q \cdot \nabla p)(x_1) + \frac{1}{3} (p \div \bm q)(x_1).
\end{split}
\end{equation}
Here we use the fact that $\dev \nabla \bm q = \nabla \bm q - \frac{1}{3} \div \bm q I$ and $\div \dev \nabla \bm q = \frac{2}{3} \div \bm q.$
Here $e = [x_1, x_2]$.
\end{proof}
For $\bm \xi = \sum_{x \in \mathsf V_0} \hat{\bm v}_{x}^3[p_{x}] \in \bm V^3_-$, define 
$\displaystyle
\kappa^3_-(\bm \xi) = \sum_{x \in \mathsf V_0} p_{x}\|x\|.
$

% \subsubsection{Exactness of the discrete complex}
% Thanks for the propositions in the above discussion, the complex \eqref{eq:hessian_L2_discrete} is closed. The goal of this part is to show that the sequence \eqref{eq:hessian_L2_discrete} is also exact, provided that $\Omega$ is topologically trivial. 

% Clearly, we have 
% \begin{equation}
% \begin{split}
%     \dim U_h^{[2]} - \dim \bm \Sigma_h^{[2]} + \dim \bm V_h^{[2]} - \dim \bm Q_h^{[2]} & = 4\mathsf K - 4\mathsf F_0 + 4 \mathsf E_0 - 4\mathsf V_0 \\
%      = 4.
% \end{split}
% \end{equation}
% Then the exactness is implied by the following two propositions. 
% \begin{proposition}
% The divergence operator $\div : \bm V_h^{[2]} \to \bm Q_h^{[2]}$ is surjecitve.
% \end{proposition}

% \begin{proposition}
% For $\bm S \in \bm \Sigma_h^{[2]}$, if $\curl \bm S = 0$, then there exists $u \in U_h^{[2]}$ such that 
% $$ \bm S = \hess u $$ 
% in the distributional sense.
% \end{proposition}
% \begin{proof}
% The two fact can be deduced by the fact that the relative homology $H_{*}(\Omega, \partial \Omega)$ is trivial, and the universal coefficient theorem.
% \end{proof}
% \begin{remark}
% Actually, the cohomology can be related to the standard de Rham cohomology, by the Poincare duality.
% \end{remark}

We have the following result about the cohomology of the auxiliary complex.
\begin{theorem}
The sequence \eqref{cplx:hessian-L2-3d} is a complex. The cohomology
is isomorphic to $\mathcal H^{\bs}_{dR}(\Omega)\otimes \mathcal P_1$. %\kh{sometimes we use cohomology sometimes homology.}\lt{We change to cohomology, unless it is a simplicial one.}
\end{theorem}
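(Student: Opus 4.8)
The plan is to mimic exactly the 2D argument used in the proof of \Cref{prop:hom-hessian-L2-2d}: build a commuting ladder between \eqref{cplx:hessian-L2-3d} and the relative simplicial chain complex of $\Delta$ with $\mathcal P_1$ coefficients, with vertical maps that are isomorphisms. The vertical maps $\kappa^0_-, \kappa^1_-, \kappa^2_-, \kappa^3_-$ have already been written down above (reading off the $\mathcal P_1$-coefficients attached to cells, faces, edges and vertices), and they are manifestly bijective onto $C_3(\Delta,\mathcal P_1;\partial\Delta)$, $C_2(\Delta,\mathcal P_1;\partial\Delta)$, $C_1(\Delta,\mathcal P_1;\partial\Delta)$, $C_0(\Delta,\mathcal P_1;\partial\Delta)$, respectively. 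So the three things to establish are: (i) \eqref{cplx:hessian-L2-3d} is a complex, i.e. $\curl\circ\hess=0$ and $\div\circ\curl=0$; (ii) the ladder
\begin{equation*}
\begin{tikzcd}
0 \arrow{r} & V^0_- \arrow{r}{\hess} \arrow{d}{\kappa^0_-} & \bm V^1_- \arrow{r}{\curl} \arrow{d}{\kappa^1_-} & \bm V^2_- \arrow{r}{\div} \arrow{d}{\kappa^2_-} & \bm V^3_- \arrow{r} \arrow{d}{\kappa^3_-} & 0\\
0 \arrow{r} & C_3(\Delta,\mathcal P_1;\partial\Delta) \arrow{r}{\partial} & C_2(\Delta,\mathcal P_1;\partial\Delta) \arrow{r}{\partial} & C_1(\Delta,\mathcal P_1;\partial\Delta) \arrow{r}{\partial} & C_0(\Delta,\mathcal P_1;\partial\Delta) \arrow{r} & 0
\end{tikzcd}
\end{equation*}
commutes; and (iii) conclude via \Cref{UCT} and \Cref{thm:deRham} that $\mathcal H^k$ of the top row is $\mathcal H_{3-k}(\Delta;\partial\Delta)\otimes\mathcal P_1\cong \mathcal H^{\bs}_{dR}(\Omega)\otimes\mathcal P_1$.

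For (ii), the commutativity is essentially already contained in \Cref{prop:dist-hess-L2-3d}, \Cref{prop:dist-curl-L2-3d}, and \Cref{prop:dist-div-L2-3d}: each of these propositions expresses $\hess$, $\curl$, $\div$ applied to a basis distribution as a signed sum over the codimension-one sub-simplices with the \emph{same} $\mathcal P_1$-coefficient, and the signs $\mathcal O(f,K)$, $\mathcal O(e,f)$, $\mathcal O(x,e)$ are precisely the incidence numbers defining the simplicial boundary operator $\partial^{\mathcal H_{\bs}(\Delta;\partial\Delta)}$. Thus applying $\kappa^{i+1}_-$ after the differential yields $\partial$ applied after $\kappa^i_-$; this is a one-line check per square once the propositions are quoted. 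Statement (i) then follows immediately from (ii) together with the fact that the bottom row is a complex ($\partial\circ\partial=0$) and the $\kappa^i_-$ are injective — so I would deduce $\curl\circ\hess=0$ and $\div\circ\curl=0$ from $\kappa^{i+2}_-\circ(\text{two differentials}) = \partial\circ\partial\circ\kappa^i_- = 0$ rather than computing directly. (Alternatively one checks $\mathcal O$-sign identities $\sum_{e}\mathcal O(x,e)\mathcal O(e,f)=0$ etc., but routing through $\partial\circ\partial=0$ is cleaner.)

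Finally, for (iii): since all vertical maps are isomorphisms of (co)chain complexes, they induce isomorphisms on cohomology, so $\mathcal H^k\big(\eqref{cplx:hessian-L2-3d}\big)\cong \mathcal H_{3-k}(\Delta,\mathcal P_1;\partial\Delta)$. By the universal coefficient theorem (\Cref{UCT}) this is $\mathcal H_{3-k}(\Delta;\partial\Delta)\otimes\mathcal P_1$, and by \Cref{thm:deRham}(1) this is $\mathcal H^{k}_{dR}(\Omega)\otimes\mathcal P_1$, as claimed. The only real work is bookkeeping: making sure the orientation conventions $\mathcal O$ used in \Cref{prop:dist-hess-L2-3d}–\Cref{prop:dist-div-L2-3d} agree with those defining $\partial^{\mathcal H_{\bs}(\Delta;\partial\Delta)}$ in \Cref{sec:topology}, and that the restriction of a linear polynomial on a cell to its face (appearing implicitly when the same symbol $u|_K$ is fed to $\hat{\bm v}^1_f$) matches the "restriction then coefficient" reading of $\kappa^1_-$ — this subtlety is the analogue of the $\widetilde\partial$ versus $\partial$ distinction in the 2D proof, and I expect it to be the main (though minor) obstacle; it is resolved exactly as in \Cref{rmk:subspace-diff} and the discussion around \eqref{cd:mapping-kappa-2d}, because here the polynomial degree is low enough that no supersmoothness obstruction arises. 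The compactly-supported / homogeneous–boundary variant is identical with $\partial\Delta$-relative chains replaced by absolute chains and $\mathsf F_0,\mathsf E_0,\mathsf V_0$ replaced by $\mathsf F,\mathsf E,\mathsf V$.
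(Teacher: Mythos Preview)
Your proposal is correct and follows exactly the paper's argument: the commuting ladder \eqref{cd:mapping-kappa-3d} with the isomorphisms $\kappa^i_-$, commutativity supplied by \Cref{prop:dist-hess-L2-3d}--\Cref{prop:dist-div-L2-3d}, and the identification of the cohomology via \Cref{UCT} and \Cref{thm:deRham}. One small remark: your closing worry about ``restriction of a linear polynomial to a face'' and the $\widetilde\partial$ versus $\partial$ distinction is misplaced here --- in the $\kappa$-diagram the bottom row carries the \emph{full} trivariate $\mathcal P_1$ as coefficient group (no restriction), so the boundary is the plain simplicial $\partial$; the restricted spaces $\mathcal P_1(f),\mathcal P_1(e),\mathcal P_1(x)$ and the operator $\widetilde\partial$ only enter in the subsequent diagram \eqref{cd:mapping-g-3d}, which is used for the cohomology of \eqref{cplx:hessian-3d}, not of \eqref{cplx:hessian-L2-3d}.
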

%\kh{This theorem has been stated? As we have added the main result section, the rest sections should be considered as only proofs. we should avoid defining and stating things again. we should check this through these sections.}\lt{Not really. This is the cohomology of the auxiliary complex.}
\begin{proof}
%Recall the definition of $\kappa^{\bs}_-$. Then
By \Cref{prop:dist-hess-L2-3d}, \Cref{prop:dist-curl-L2-3d} and \Cref{prop:dist-div-L2-3d}, the following diagram commutes. %\kh{can we figure out the sign?}\lt{Now, the signs are correct.}

{
\begin{equation}\label{cd:mapping-kappa-3d}
    \begin{tikzcd}[column sep = small]
   0 \arrow{r} & V^0_- \arrow{r}{\hess} \arrow{d}{\kappa^0_-} &\bm V^1_- \arrow{r}{\curl} \arrow{d}{\kappa^1_-} & \bm V^2_- \arrow{r}{\div} \arrow{d}{\kappa^2_-} &\bm V^3_- \arrow{r}{} \arrow{d}{\kappa^3_-} & 0\\
    0 \arrow{r}&C_3(\Delta, \mathcal P_1; \partial\Delta) \arrow{r}{\partial}&C_2(\Delta, \mathcal P_1; \partial\Delta)\arrow{r}{\partial}&C_1(\Delta, \mathcal P_1; \partial\Delta)\ar[r,"\partial"] & C_0(\Delta, \mathcal P_1; \partial\Delta) \arrow{r}{} & 0.
     \end{tikzcd}
    \end{equation}
}

% Note here the lower row $C_k(\mathscr P_1)$ are chain used to compute (simplicial) relative homology $H_*(\Delta; \partial \Delta)$. 
Clearly, $\kappa^{\bs}_-$ are (cochain) isomorphisms. Consequently, we have 
\begin{equation}
\mathcal H^k(V^{\bs}_-) \cong \mathcal H_{3-k}(C{\bs}(\Delta, \mathcal P_1; \partial \Delta)).
\end{equation}

It then follows from the universal coefficient theorem  that $\mathcal H_{3-k}(C{\bs}(\Delta, \mathcal P_1; \partial \Delta)) \cong  \mathcal H_{3-k}(\Delta; \partial \Delta)\otimes\mathcal P_1  \cong  \mathcal H_{dR}^k(\Omega)\otimes \mathcal P_1.$
\end{proof}

Now we consider the cohomology of \eqref{cplx:hessian-3d}. In 3D, consider the following diagram: 
\begin{equation}\label{cd:mapping-g-3d}
  \begin{tikzcd}
    0 \arrow{r} & 0 \arrow{r}{\hess} \arrow{d} &\bm V^1 \arrow{r}{\curl} \arrow{d} & \bm V^2 \arrow{r}{\div} \ar[d]& \bm V^3 \arrow{d} \arrow{r}& 0 \\
 0 \arrow{r} & V^0_- \arrow{r}{\hess} \arrow{d}{g^0} &\bm V^1_- \arrow{r}{\curl} \arrow{d}{g^1} & \bm V^2_-  \arrow{r}{\div}\arrow{d}{g^2} & \bm V^3 \arrow{r} \arrow{d}{g^3} & 0\\
  0 \arrow{r}& \bigoplus\limits_{K \in \mathsf K} \mathcal P_1 \ar[r, "\widetilde{\partial}"] & \bigoplus\limits_{f \in \mathsf F_0} \mathcal P_1(f) \arrow{r}{\widetilde{\partial}}&\bigoplus\limits_{e \in \mathsf E_0} \mathcal P_1(e)  \arrow{r}{\widetilde{\partial}} &\bigoplus\limits_{x \in \mathsf V_0} \mathcal P_1(x)  \arrow{r}{} & 0.
   \end{tikzcd}
  \end{equation}

  Here the lower row is defined similarly to \eqref{cplx:tildepartial-2d}. We have the following result.
  \begin{proposition}
    \label{prop:exactness-tildepartial-3d}
    The sequence 
    $$
    \begin{tikzcd}[column sep = small]  0 \arrow{r}& \bigoplus_{K \in \mathsf K} \mathcal P_1 \ar[r, "\widetilde{\partial}"] & \bigoplus_{f \in \mathsf F_0} \mathcal P_1(f) \arrow{r}{\widetilde{\partial}}&\bigoplus_{e \in \mathsf E_0} \mathcal P_1(e)  \arrow{r}{\widetilde{\partial}} &\bigoplus_{x \in \mathsf V_0} \mathcal P_1(x)  \arrow{r}{} & 0
    \end{tikzcd}
    $$
    is a complex, and its cohomology is $V^0$, $0$ and $0$, respectively.
    \end{proposition}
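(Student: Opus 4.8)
The plan is to mirror the argument for the two–dimensional case (\Cref{prop:exactness-tildepartial-2d}): localize the complex to vertex patches by expanding $\mathcal P_1$ in barycentric coordinates. For each simplex $\sigma$ use the basis $\{\lambda_v : v\in\sigma\}$ of $\mathcal P_1(\sigma)$, so that $\bigoplus_{\sigma}\mathcal P_1(\sigma)\cong\bigoplus_{v\in\mathsf V}\bigoplus_{\sigma\ni v}\mathbb R\lambda_v$. The key observation is that $\widetilde\partial$ respects this decomposition: in a simplex $\sigma$, the function $\lambda_v$ restricts to a sub‑simplex $\sigma'$ as $\lambda_v|_{\sigma'}$ when $v\in\sigma'$ and as $0$ otherwise, since every face of $\sigma$ not containing $v$ lies in the facet opposite $v$, on which $\lambda_v\equiv0$. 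Hence the whole sequence splits as $\bigoplus_{v\in\mathsf V}$ of the ``patch complex at $v$''
\[
0\to\bigoplus_{K\ni v}\mathbb R \xrightarrow{\partial} \bigoplus_{\substack{f\in\mathsf F_0\\ f\ni v}}\mathbb R \xrightarrow{\partial} \bigoplus_{\substack{e\in\mathsf E_0\\ e\ni v}}\mathbb R \xrightarrow{\partial} \begin{cases}\mathbb R,& v\in\mathsf V_0,\\ 0,& v\in\mathsf V_\partial,\end{cases}\to0,
\]
where, after matching the factor $\mathcal O$ with the usual induced orientations, $\partial$ is the standard simplicial boundary operator restricted to the displayed generators.

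Next I would identify this patch complex with a relative simplicial chain complex of the closed star $\operatorname{st}(v)$ and compute it via the cone structure $\operatorname{st}(v)=v*\operatorname{lk}(v)$. The generators above are exactly the simplices of $\operatorname{st}(v)$ that contain $v$ and do \emph{not} lie in $\partial\Delta$. For an interior vertex no simplex through $v$ meets $\partial\Delta$, so the patch complex is literally $C_\bullet(\operatorname{st}(v),\operatorname{lk}(v))$; contractibility of the cone and the long exact sequence of the pair give $H_k\cong\widetilde H_{k-1}(\operatorname{lk}(v))$, and $\operatorname{lk}(v)\cong S^2$ yields $\mathbb R$ in degree $3$ and $0$ otherwise. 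For a boundary vertex the simplices through $v$ lying on $\partial\Delta$ form the closed star $B=\operatorname{st}_{\partial\Delta}(v)\cong D^2$, the patch complex equals $C_\bullet(\operatorname{st}(v),\operatorname{lk}(v)\cup B)$, and since $\operatorname{lk}(v)\cong D^2$, $B\cong D^2$ and $\operatorname{lk}(v)\cap B=\operatorname{lk}_{\partial\Delta}(v)\cong S^1$, we get $\operatorname{lk}(v)\cup B\cong S^2$ and $\operatorname{st}(v)\cong D^3$; the long exact sequence of $(D^3,S^2)$ again gives $\mathbb R$ in degree $3$ and $0$ otherwise. (These identifications use that $\Delta$ triangulates a manifold with boundary so that links are spheres, resp.\ disks — the same standing assumption already implicit in \Cref{prop:exactness-tildepartial-2d}.)

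Finally, summing over $v\in\mathsf V$: the cohomology of the sequence vanishes at the $\mathsf F_0$‑, $\mathsf E_0$‑ and $\mathsf V_0$‑terms, while at the $\mathsf K$‑term (the left end, so the kernel of the first $\widetilde\partial$) it is $\bigoplus_{v\in\mathsf V}\mathbb R\lambda_v$. A short direct check of this kernel — using $\mathcal O(f,K_1)=-\mathcal O(f,K_2)$ across a shared interior face and the connectedness of the star through faces containing $v$ — shows it is spanned by the elements $\sum_{K\ni v}\lambda_v|_K\|K\|$, i.e.\ by the Lagrange hat functions, so this cohomology is $V^0$. This is precisely the form needed in diagram \eqref{cd:mapping-g-3d}.

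The step I expect to be the main obstacle is the bookkeeping in the second paragraph: carefully matching the generators of the patch complex — whose top term ranges over \emph{all} $K\in\mathsf K$ while the lower terms range over interior sub‑simplices only — with the cells of $C_\bullet(\operatorname{st}(v),\operatorname{lk}(v)\cup B)$, and verifying that the induced differential is the simplicial boundary with the correct signs. For a boundary vertex this asymmetry (all tetrahedra through $v$ survive, but the boundary faces, the boundary edges and $v$ itself are quotiented out) is exactly what closes $\operatorname{lk}(v)\cup B$ into a $2$‑sphere and makes the homology identical to the interior case; getting this gluing and the signs right is the only delicate point, the rest being a routine transcription of the $2$D argument.
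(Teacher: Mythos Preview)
Your proposal is correct and follows the same approach as the paper: split the complex into vertex patch complexes via the barycentric basis $\{\lambda_v\}$, exactly as in the 2D case. The paper's proof (in the appendix) consists of a single displayed patch complex and an implicit appeal to the 2D argument; your explicit identification with $C_\bullet(\operatorname{st}(v),\operatorname{lk}(v))$ for interior $v$ and $C_\bullet(\operatorname{st}(v),\operatorname{lk}(v)\cup B)$ for boundary $v$, together with the long exact sequence computation, fills in details that the paper leaves to the reader.
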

    The proof of \Cref{prop:exactness-tildepartial-3d} can be found in Appendix.
Following the proof of \Cref{lem:closedness-hessian-2d}, we can prove \Cref{thm:hom-hessian-3d} using the above theorem.% \kh{Because the theorem is stated twice, there is a reference issue here - to remove the theorem stated in this section, and refer to the one in the main result section.}
 The cohomology can be calculated from the following long exact sequence.  
\begin{equation}
    \begin{tikzcd}[column sep=small]
      0 \arrow{r} & 0 \arrow{r} \arrow{d} &\ker(\curl:\bm V^1\to \bm V^2) \arrow{r}\arrow{d} & \mathcal H^2(V^{\bs}) \arrow{r} \arrow{d} & \mathcal H^3(V^{\bs}) \arrow{r} \arrow{d} & 0 \\
   0 \arrow{r} &  \mathcal H^0_{dR}(\Omega)\otimes \mathcal P_1 \arrow[r, ""{coordinate, name=Z}] \arrow{d} &  \mathcal  H^1_{dR}(\Omega)\otimes \mathcal P_1 \arrow[r, ""{coordinate, name=Y}] \arrow{d} &  \mathcal H^2_{dR}(\Omega) \otimes\mathcal P_1 \arrow[r, ""{coordinate, name=X}] \arrow{d}&  \mathcal H^3_{dR}(\Omega) \otimes \mathcal P_1\ar[r] \ar[d]&  0\\
    0 \arrow{r}& V^0 \arrow{r} \arrow[uur, rounded corners, dashed, to path={ -- ([yshift=-4ex]\tikztostart.south)
    -| (Z) [near end]\tikztonodes
    |- ([yshift=4ex]\tikztotarget.north)
    -- (\tikztotarget)}]
    & 0 \arrow{r} \arrow[uur, rounded corners, dashed, to path={ -- ([yshift=-4ex]\tikztostart.south)
    -| (Y) [near end]\tikztonodes
    |- ([yshift=4ex]\tikztotarget.north)
    -- (\tikztotarget)}] &0  \arrow{r} \arrow[uur, rounded corners, dashed, to path={ -- ([yshift=-4ex]\tikztostart.south)
    -| (X) [near end]\tikztonodes
    |- ([yshift=4ex]\tikztotarget.north)
    -- (\tikztotarget)}] & 0 \ar[r] & 0.
     \end{tikzcd}
    \end{equation}

\subsection{3D Hessian complex with homogeneous boundary conditions}
% In this section, we consider the homogeneous boundary condition:
% \begin{equation}
%     \label{cplx:hessian0-3d}
%     \begin{tikzcd}
%     0 \ar[r] &  V^0_0 \ar[r,"\hess_0"] &  \bm V^1_0 \ar[r,"\curl_0"] & \bm V^2_0 \ar[r,"\div_0"] & \bm V^3_0 \ar[r] & 0.
%     \end{tikzcd}
% \end{equation}

% Here $V^0_0$ is the $\mathcal P_1$ Lagrange space with zero boundary conditions, $$\bm V^1_0 := \Span\Big\{ \delta_{f}[\bm n_{f} \otimes \bm n_{f}] ; f \in \mathsf F\Big\},$$
% $$
%     \bm V^2_0 := \Span\Big\{ \delta_{e}[\bm m_{e} \otimes \bm t_{e}] ;~ \bm m_{e} \text{ is normal to } e, ~e \in \mathsf E\Big\},
% $$
% and $\bm V^3_0$ consists of the vertex deltas at all the vertices $x \in \mathsf V$.
{This subsection focuses on the cohomology of \eqref{cplx:hessian0-3d}. We first introduce an auxiliary complex, which corresponds to the HBC version of \eqref{cplx:hessian-L2-3d}.}
We define $V^0_{0,-} = V^0_{-}$, $$\bm V^1_{0,-} = \Span\Big\{\hat{\bm v}^1_f[p] : f\in \mathsf F, p \in \mathcal P_1\Big\},$$
 $$\bm V^2_{0,-} = \Span\Big\{\hat{\bm v}^2_e[p]: e \in \mathsf E, p \in \mathcal P_1\Big\},$$
 and 
 $$\bm V^3_{0,-} = \Span\Big\{\hat{\bm v}^3_x[p]: x \in \mathsf V, p \in \mathcal P_1\Big\}.$$

 Then we have 
 \begin{theorem}
\label{thm:hom-hessian0-L2-3d}
The sequence 
\begin{equation}
\begin{tikzcd}
0 \ar[r] & V^0_{0,-} \ar[r,"\hess_0"] & \bm V^1_{0,-} \ar[r,"\curl_0"] & \bm V^2_{0,-} \ar[r,"\div_0"] & \bm V^3_{0,-} \ar[r] & 0
\end{tikzcd}
\end{equation}
is a complex, with cohomology isomorphic to $  \mathcal H^{k}_{dR,c}(\Omega)\otimes\mathcal P_1$.
 \end{theorem}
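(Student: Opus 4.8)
The plan is to mirror, in three dimensions and with homogeneous boundary conditions, the structure that was used for Theorem for the interior complex \eqref{cplx:hessian-L2-3d}. The essential point is that the distributions $\hat{\bm v}^1_f[p]$, $\hat{\bm v}^2_e[p]$, $\hat{\bm v}^3_x[p]$ were \emph{defined} using test functions in $C_c^\infty(\mathbb R^3;\cdot)$, so the computations of \Cref{prop:dist-hess-L2-3d}, \Cref{prop:dist-curl-L2-3d}, \Cref{prop:dist-div-L2-3d} already carry the boundary terms. Hence exactly the same formulas
$$\hess_0 u = \sum_{f \in \mathsf F}\sum_{K : K\supset f} \mathcal O_0(f,K)\, \hat{\bm v}^1_f[u|_K], \quad \curl_0\bm\sigma = \sum_{e\in\mathsf E}\sum_{f:f\supset e}\mathcal O_0(e,f)\,\hat{\bm v}^2_e[p_f], \quad \div_0\bm\tau = \sum_{x\in\mathsf V}\sum_{e:e\ni x}\mathcal O_0(x,e)\,\hat{\bm v}^3_x[p_e]$$
hold on these larger spaces (now summing over \emph{all} subsimplices, with the absolute-homology orientation $\mathcal O_0$), so the sequence is a complex. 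This verifies the complex property and, more importantly, produces a commuting diagram with the \emph{absolute} chain complex $C_\bs(\Delta,\mathcal P_1)$:
\begin{equation*}
\begin{tikzcd}[column sep = small]
0 \arrow{r} & V^0_{0,-} \arrow{r}{\hess_0} \arrow{d}{\kappa^0_-} &\bm V^1_{0,-} \arrow{r}{\curl_0} \arrow{d}{\kappa^1_-} & \bm V^2_{0,-} \arrow{r}{\div_0} \arrow{d}{\kappa^2_-} &\bm V^3_{0,-} \arrow{r}{} \arrow{d}{\kappa^3_-} & 0\\
0 \arrow{r}&C_3(\Delta, \mathcal P_1) \arrow{r}{\partial}&C_2(\Delta, \mathcal P_1)\arrow{r}{\partial}&C_1(\Delta, \mathcal P_1)\ar[r,"\partial"] & C_0(\Delta, \mathcal P_1) \arrow{r}{} & 0,
\end{tikzcd}
\end{equation*}
where $\kappa^i_-$ reads off the $\mathcal P_1$-coefficient on each subsimplex. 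The key distinction from the interior case is that, with HBCs, \emph{boundary} subsimplices are retained, which is precisely what turns the relative chain complex $C_\bs(\Delta,\mathcal P_1;\partial\Delta)$ into the absolute one $C_\bs(\Delta,\mathcal P_1)$.

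The next step is to check that each $\kappa^i_-$ is a bijection. Surjectivity is immediate from the definition of the spaces as spans of the $\hat{\bm v}^i_\sigma[p]$; injectivity requires showing that the map $p\mapsto \hat{\bm v}^i_\sigma[p]$ is injective and that distributions attached to distinct subsimplices are linearly independent. This is the same argument as in the interior case (testing against suitably localized smooth functions separates the edges/faces/vertices, and within one subsimplex the two functionals in \eqref{eq:hatv1-3d} etc.\ recover $p$ from its value and gradient along $\sigma$), and the fact that we now allow boundary subsimplices does not change it. Once $\kappa^\bs_-$ is a cochain isomorphism, we conclude
$$\mathcal H^k\big(V^\bs_{0,-}\big) \cong \mathcal H_{3-k}\big(C_\bs(\Delta,\mathcal P_1)\big) \cong \mathcal H_{3-k}(\Delta)\otimes \mathcal P_1 \cong \mathcal H^k_{dR,c}(\Omega)\otimes \mathcal P_1,$$
using Theorem \ref{UCT} (universal coefficient theorem) for the middle isomorphism and Theorem \ref{thm:deRham}(2) together with Lefschetz duality for the last one. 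This establishes \Cref{thm:hom-hessian0-L2-3d} as stated.

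I anticipate that the only genuinely delicate point is bookkeeping the boundary terms and orientations consistently: one must make sure that integration by parts in \Cref{prop:dist-hess-L2-3d}–\Cref{prop:dist-div-L2-3d}, when carried out with test functions that do \emph{not} vanish near $\partial\Omega$, produces exactly the absolute boundary operator with orientation $\mathcal O_0$ (and not a mixture, nor spurious terms from $\partial\Omega$ itself). Concretely, when the first $\div$ is integrated by parts on a boundary cell $K$, the contribution on a boundary face $f\subset\partial\Omega$ is genuinely there (unlike in the interior complex where $\varphi$ vanished on $\partial\Omega$), and one has to recognize that $\mathcal O_0(f,K)$ for a boundary face has only a single contributing cell, consistent with $\partial$ on $C_\bs(\Delta)$. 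Similar care is needed for the face-to-edge and edge-to-vertex steps. A secondary, but routine, obstacle is the linear-independence/unisolvency argument for the $\hat{\bm v}^i_\sigma[p]$ at boundary subsimplices; this follows the pattern already used and can be relegated to the appendix as was done for \Cref{prop:exactness-tildepartial-3d} and \Cref{prop:hom-hessian0-L2-2d}. Given these, the diagram chase needed for the full complex \eqref{cplx:hessian0-3d} (as opposed to the auxiliary one) proceeds exactly as in the proof of \Cref{thm:hom-hessian-3d}, using the HBC version of \Cref{prop:exactness-tildepartial-3d}.
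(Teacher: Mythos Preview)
Your proposal is correct and follows essentially the same approach as the paper: construct the cochain isomorphism $\kappa^\bs_-$ from the auxiliary complex to the absolute simplicial chain complex $C_\bs(\Delta,\mathcal P_1)$ (with boundary operator $\partial_0$ induced by $\mathcal O_0$), then invoke the universal coefficient theorem and \Cref{thm:deRham}(2). The paper's appendix proof is in fact just the commuting diagram you wrote down, so your exposition of why the boundary terms appear (test functions in $C_c^\infty(\mathbb R^3)$ no longer kill contributions on $\partial\Omega$, producing the absolute rather than relative chain complex) is a welcome elaboration; your final paragraph about the diagram chase for \eqref{cplx:hessian0-3d} actually pertains to \Cref{thm:hom-hessian0-3d} rather than the statement at hand, but that is harmless.
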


Similarly to the previous section, we can prove \Cref{thm:hom-hessian0-3d}, see Appendix for more details.

%\kh{I think this whole subsection can be removed - it repeated definitions from the main result section, but the proof is omitted... check this issue in section 4, 5.}\lt{Two theorems introduced here are $-$ version. }
%\lt{Do we need to expand on this? Say, adding some diagrams.}\kh{depends on how different the proof is. The paper is already long and we should avoid repeating argument for different cases. But we may consider drop some diagrams (like the one here) to appendix.}\lt{I think all procedures are the same. So maybe at most one diagram chasing.}

% \begin{theorem}
%     The sequence 
%     \begin{equation}
%     \begin{tikzcd}
%     0 \ar[r] & V^0_{0} \ar[r,"\hess_0"] & \bm V^1_{0} \ar[r,"\curl_0"] & \bm V^2_{0} \ar[r,"\div_0"] & \bm V^3_{0} \ar[r] & 0
%     \end{tikzcd}
%     \end{equation}
%     is a complex, with homology isomorphic to $\mathcal P_1 \otimes \mathcal H^{k}_{dR,c}(\Omega)$.
% \end{theorem}

\subsection{3D divdiv complex}

We first prove \Cref{thm:unisolvency-U1}, the unisolvency of the $\mathbb T \oplus \bm x \times \mathbb S$ element. 
% Recall that the degrees of freedom are 
% \begin{enumerate}
% \item $\int_{e}  \bm n_{e,\pm}\cdot \bm u \cdot \bm t_e$ for each edge $e$;
% \item $\int_{K} \curl \bm u : b$ for $b \in \mathbb B_K$.  
% \end{enumerate}
% Here $\mathbb B_K = \{ \bm \sigma \in \mathbb S: \bm n_f \cdot \bm \sigma \cdot \bm n_f = 0, \text{ for all } f \subset K\}$.

\begin{proof}[Proof of \Cref{thm:unisolvency-U1}]
    The number of the proposed degrees of freedom is equal to the number of shape functions. Therefore, to show the unisolvency, we only show that if all the degrees of freedom \ref{dof:symcurl1} and \ref{dof:symcurl2} vanish on a function $\bm u\in \mathbb T \oplus \bm x\times \mathbb S$, then $\bm u$ vanishes.
    %  In fact, by the first group of degrees of freedom and the observation that the tangential-normal components of $u$ on edges are constant, we have that $\bm t_e\cdot \bm u\cdot\bm  n_e=0$ for any edge $e$.

    Let $\bm u=\bm v+\bm x\times \bm \sigma$. We have for any face $f$,
    \begin{equation}\label{sigma-nn}
    -2\int_f \bm n_f\cdot\bm  \sigma\cdot\bm  n_f=\int_f \bm n_f\cdot \curl \bm u\cdot\bm n_f=\int_{\partial f}  \bm n_f \cdot  \bm u \cdot \bm n_f \times \bm \nu_{\partial f, f}=\int_{\partial f}\bm n_f\cdot \bm u\cdot \bm t_{\partial f}=0.
     \end{equation}
    Now \eqref{sigma-nn} and the fact that $\int_e \bm n_e\cdot \bm u\cdot \bm t_e =0$ imply that $\curl \bm u \in \mathbb B_{K}$. By the second set of degrees of freedom \ref{dof:symcurl2}, $\curl \bm u = 0$, which means that $\bm u$ is a constant.  Furthermore, we can conclude that $\bm u = 0$.
    
    % Similarly, we can consider the space $\hat{\bm V^1}$ by only consider the first set of degrees of freedom. Clearly the dimension of $\hat{\bm V^1}$ is $2e_{\partial}$.
\end{proof}

% {\color{blue}Now we set $\widehat{\bm U}^1$ to be the span of the dual basis with respect to the first set of degrees of freedom \ref{dof:symcurl1}. For convenience, we set the bubble function space $\mathbb B_{K}^*$ to be the span of the dual basis of the second set of degrees of freedom in each cell $K$.
% }
{We set the bubble function space $\mathbb B_{K}^*$ to be the span of the dual basis of \ref{dof:symcurl2}. 
}
%\kh{this paragraph is already defined?}\lt{Fixed. Only introduce bubble here.}
% \subsection{3D divdiv complex}

In Section \ref{sec:main-divdiv3d}, we introduced the divdiv complex \eqref{cplx:divdiv-3d}, which consists of a distributional $\dev\grad$ operator, a piecewise $\sym\curl$ operator and a discrete $\div\div$ operator. The first several spaces are finite elements with polynomial shape functions and locally defined degrees of freedom. To obtain a neat discrete topological interpretation, we eliminated the interior degrees of freedom and obtained \eqref{cplx:divdiv-}. To show the cohomology of these complexes, in this section, we view \eqref{cplx:divdiv-} from a different point of view as the dual of the distributional Hessian complex \eqref{cplx:hessian0-3d}.
%consider another sequence (\eqref{cplx:divdiv-3d-trimmed} below), which is the dual of the Hessian complex \eqref{}. 
We will use the duality structure and the cohomology of the Hessian complex (Theorem \ref{thm:hom-hessian0-3d}) to derive the cohomology of \eqref{cplx:divdiv-3d-trimmed}. %Then we show that the operators in \eqref{} defined by duality actually coincide with the operators in \eqref{}. 
This will prove  Theorem \ref{thm:hom-divdiv-3d}.

%First, we define the following adjoint complex of \eqref{eq:hessian0_3d}:

We observe that the spaces in \eqref{cplx:divdiv-} are dual to the spaces in the Hessian complex \eqref{cplx:hessian0-3d}. This inspires us to consider the following sequence with dual operators:
\begin{equation}
    \label{cplx:divdiv-3d-trimmed}
    \begin{tikzcd}[column sep = large]
    0 \arrow{r}& \bm U^0 \arrow{r}{\widetilde{\dev\grad}}& \widehat{\bm U}^1 \arrow{r}{\widetilde{\sym\curl}} & \widehat{\bm U}^2 \arrow{r}{\widetilde{\div\div}} & U^3\arrow{r}& 0.
    \end{tikzcd}
    \end{equation}
%is defined as the adjoint complex of \eqref{eq:hessian0_3d}. 

Here, the operators are defined as the dual of $\div_0$, $\curl_0$, and $\hess_0$ (in the sense of distributions) in the Hessian complex \eqref{cplx:hessian0-3d}. More specifically,
%It follows that the dual pairs defined above yield an isomorphism to each pair of spaces. Therefore, we can define the adjoint subspace as follows: 
%We can use the dual pairs and the $\div_0$, $\curl_0$, and $\hess_0$ operators already defined for the Hessian complex \eqref{cplx:hessian}  to define adjoint operators: 
\begin{itemize}
\item[-] For $\bm u \in \bm U^0$, define $\widetilde{\dev\grad} \bm u \in \bm U^1$ such that $$\langle \widetilde{\dev\grad} \bm u, \bm \tau \rangle_{\bm U^1 \times \bm V^2_0} := {-} \langle \bm u, \div_0 \bm \tau \rangle_{\bm U^0 \times \bm V^3_0}, \quad \forall \bm \tau \in \bm V^2_0.$$ 
\item[-] For $\bm \xi \in \bm U^1$, define $\widetilde{\sym\curl} \bm \xi \in \bm U^2$ such that 
$$\langle \widetilde{\sym\curl} \bm \xi, \bm \sigma \rangle_{\bm U^2 \times \bm V^1_0} := \langle \bm \xi, \curl_0 \bm \sigma \rangle_{\bm U^1 \times \bm V^2_0},\quad \forall \bm \sigma \in \bm V^1_0.$$ 
\item[-] For $\bm \eta \in \bm U^2$, define $\widetilde{\div\div} \bm \eta \in \bm U^3$ such that 
$$\langle \widetilde{\div\div} \bm \eta, u \rangle_{\bm U^3 \times \bm V^0_0} := \langle \bm \eta, \hess_0 u \rangle_{\bm U^2 \times \bm V^1_0},\quad \forall u \in \bm V^0_0.$$ 
\end{itemize}
%Here, the $\div_0$, $\curl_0$, and $\hess_0$ operators are already defined for the Hessian complex \eqref{cplx:hessian} in the sense of distributions. 
Since  these dual pairs are non-degenerate, the dual operators $\widetilde{\dev\grad}$, $\widetilde{\sym\curl}$, $\widetilde{\div\div}$ are well-defined by the right-hand side.
Similar to the proof of \Cref{thm:hom-divdiv-2d}, we can introduce an inner product on each space and prove the following result, see Appendix for more details.
\begin{theorem}
    \label{hom:divdiv-3d-trimmed}
    The sequence \eqref{cplx:divdiv-3d-trimmed}, with the spaces and operators are defined above, is a complex. The cohomology is isomorphic to $\mathcal H^{\bs}_{dR}(\Omega)\otimes \mathcal{RT}.$
\end{theorem}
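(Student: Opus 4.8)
The plan is to realize \eqref{cplx:divdiv-3d-trimmed} as the adjoint of the Hessian complex with homogeneous boundary conditions \eqref{cplx:hessian0-3d}, whose cohomology was computed in Theorem~\ref{thm:hom-hessian0-3d}, and then to transfer that cohomology through the adjunction exactly as in the two-dimensional argument proving Theorem~\ref{thm:hom-divdiv-2d} (part~I). First I would record the four ``evaluation'' pairings $\langle\cdot,\cdot\rangle_{\bm U^0\times\bm V^3_0}$, $\langle\cdot,\cdot\rangle_{\widehat{\bm U}^1\times\bm V^2_0}$, $\langle\cdot,\cdot\rangle_{\widehat{\bm U}^2\times\bm V^1_0}$, $\langle\cdot,\cdot\rangle_{U^3\times V^0_0}$, obtained by applying a distribution to a finite element function, and show each is non-degenerate. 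The point is that each $U$-space is precisely the finite element space whose degrees of freedom are the Dirac distributions spanning the matching $V_0$-space: $\bm U^0$ is the vector Lagrange space with vertex evaluations dual to the vector vertex deltas of $\bm V^3_0$; $U^3$ is the scalar vertex deltas dual to the scalar Lagrange space $V^0_0$; $\widehat{\bm U}^1$ is spanned by the dual basis of the edge functionals \ref{dof:symcurl1}, which pair with the normal--tangential edge deltas of $\bm V^2_0$; and $\widehat{\bm U}^2$ is spanned by the dual basis of the face functionals \ref{dof:divdiv1}, pairing with the normal--normal face deltas of $\bm V^1_0$. Non-degeneracy of these pairings is the unisolvency of the corresponding finite elements (Theorem~\ref{thm:unisolvency-U1} for $\widehat{\bm U}^1$, and elementary counterparts for the others).

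With the pairings in place I would check that \eqref{cplx:divdiv-3d-trimmed} is a complex. The operators $\widetilde{\dev\grad}$, $\widetilde{\sym\curl}$, $\widetilde{\div\div}$ are defined as the adjoints of $\div_0$, $\curl_0$, $\hess_0$ through these pairings; their well-definedness into $\widehat{\bm U}^1$, $\widehat{\bm U}^2$, $U^3$ is the dual of the inclusions $\div_0\bm V^2_0\subseteq\bm V^3_0$, $\curl_0\bm V^1_0\subseteq\bm V^2_0$, $\hess_0 V^0_0\subseteq\bm V^1_0$ established for \eqref{cplx:hessian0-3d}. Then, for $\bm u\in\bm U^0$ and $\bm\sigma\in\bm V^1_0$, one has $\langle\widetilde{\sym\curl}\,\widetilde{\dev\grad}\bm u,\bm\sigma\rangle=\langle\widetilde{\dev\grad}\bm u,\curl_0\bm\sigma\rangle=-\langle\bm u,\div_0\curl_0\bm\sigma\rangle=0$, using that $\curl_0\bm\sigma\in\bm V^2_0$ and that \eqref{cplx:hessian0-3d} is a complex; likewise $\widetilde{\div\div}\circ\widetilde{\sym\curl}=0$ from $\curl_0\circ\hess_0=0$.

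Next I would put an inner product on every space for which the canonical degree-of-freedom bijections $\pi$ from the $V_0$-spaces onto the $U$-spaces are isometries---e.g.\ $\big(\sum_x u_x\delta_x,\sum_x u'_x\delta_x\big)_{U^3}=\sum_x u_xu'_x$ and $(v,v')_{V^0_0}=\sum_x v(x)v'(x)$, with analogous coefficientwise products on the edge and face levels in a fixed basis of functionals---so that $(\bm\sigma,\bm\sigma')_{V^i_0}=\langle\bm\sigma',\pi\bm\sigma\rangle=(\pi\bm\sigma,\pi\bm\sigma')_{U^{3-i}}$. Viewing \eqref{cplx:divdiv-3d-trimmed} and \eqref{cplx:hessian0-3d} as finite-dimensional Hilbert cochain complexes, cohomology is represented by harmonic forms, and the compatibility of the pairings with the inner products (together with the adjointness of the two families of differentials) shows, exactly as in the 2D proof, that $\bm\sigma$ is harmonic in $\bm V^i_0$ if and only if $\pi\bm\sigma$ is harmonic in $\bm U^{3-i}$: closedness under one differential becomes co-closedness under the dual one, and orthogonality to an image becomes membership in the kernel of the dual differential. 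Hence $\pi$ induces isomorphisms $\mathcal H^k(U^{\bs})\cong\mathcal H^{3-k}(V^{\bs}_0)$. Invoking Theorem~\ref{thm:hom-hessian0-3d}, Poincar\'e duality $\mathcal H^{3-k}_{dR,c}(\Omega)\cong\mathcal H^{k}_{dR}(\Omega)$, and the vector space isomorphism $\mathcal P_1\cong\mathcal{RT}$ (both four-dimensional in $3$D), we conclude $\mathcal H^k(U^{\bs})\cong\mathcal H^{k}_{dR}(\Omega)\otimes\mathcal{RT}$.

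I expect the main obstacle to be the first step: verifying that after deleting the interior degrees of freedom \ref{dof:symcurl2} and \ref{dof:divdiv2} the trimmed spaces $\widehat{\bm U}^1$ and $\widehat{\bm U}^2$ pair \emph{bijectively} with $\bm V^2_0$ and $\bm V^1_0$, and that the adjoint operators are consistent with this trimming and with the (non-homogeneous versus homogeneous) boundary conventions of \eqref{cplx:hessian0-3d}. This rests on the explicit shape-function structure---$\mathbb T\oplus\bm x\times\mathbb S$ for $\bm U^1$ with the Poincar\'e-type decomposition behind \eqref{seq:polynomial-divdiv}, and $\mathbb S=\mathbb B_K\oplus\mathcal I_K$ for $\bm U^2$---and on Theorem~\ref{thm:unisolvency-U1}. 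Once the non-degenerate pairings are secured, the remaining diagram-chase and harmonic-form arguments are purely formal and identical to those already carried out in the 2D and Hessian cases.
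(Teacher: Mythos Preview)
Your proposal is correct and follows essentially the same route as the paper's proof: set up the four non-degenerate evaluation pairings between the trimmed $U$-spaces and the $V_0$-spaces, equip each side with the coefficientwise inner product so that the degree-of-freedom bijections $\pi$ are isometries, and then transport harmonic forms through the adjointness relations to obtain $\mathcal H^k(U^{\bs})\cong\mathcal H^{3-k}(V^{\bs}_0)$; the conclusion follows from Theorem~\ref{thm:hom-hessian0-3d}, Poincar\'e duality, and $\dim\mathcal P_1=\dim\mathcal{RT}=4$. Your write-up is in fact more detailed than the paper's appendix sketch, but the strategy and the identification of the delicate point (bijectivity of the trimmed pairings, resting on Theorem~\ref{thm:unisolvency-U1}) are the same.
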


In fact, we can verify that the operators in \eqref{cplx:divdiv-3d-trimmed} coincide with the operators in \eqref{cplx:divdiv-3d}, i.e., $\widetilde{\dev\grad} = \dev\grad$, $\widetilde{\sym\curl} = \sym\curl_h$, and $\widehat{\div\div}= \widetilde{\div\div}$. Thus, the two complexes are identical. The details can be found in the Appendix. {As a consequence,  the sequence \eqref{cplx:divdiv-} is a complex with the same cohomology.}

Note that the divdiv complex \eqref{cplx:divdiv-3d} is a direct sum of \eqref{cplx:divdiv-} and 
\begin{equation}
\begin{tikzcd}[column sep=large]
0 \ar[r] & 0 \ar[r] & \mathbb B_K^* \ar[r,"\sym\curl_h"] & \mathbb B_K \ar[r] & 0 \ar[r] & 0.
\end{tikzcd}
\end{equation}
This concludes with the cohomology of \eqref{cplx:divdiv-3d} (\Cref{thm:hom-divdiv-3d}).

% \begin{theorem}
% The finite element sequence
% \begin{equation}
%     \label{cplx:divdiv-3d-untrimmed-rewrite}
%     \begin{tikzcd}
%     0 \arrow{r}& \bm U^0 \arrow{r}{\dev\grad}& {\bm U^1} \arrow{r}{\sym\curl_h} & {\bm U^2} \arrow{r}{\widehat{\div\div}} & U^3\arrow{r}& 0
%     \end{tikzcd}
%     \end{equation}
%     is a complex, with homology isomorphic to $\mathcal{RT} \otimes \mathcal H^k_{dR,c}(\Omega)$.
% \end{theorem}

For the divdiv complex with homogeneous boundary conditions \eqref{cplx:divdiv0-3d}, the construction and the proof of the cohomology (\Cref{thm:hom-divdiv0-3d}) are similar. The key is to eliminate the interior degrees of freedom and identify the resulting complex as the dual of the Hessian complex \eqref{cplx:hessian-3d}, see Appendix.

\section{Conclusions and outlook}\label{sec:conclusion}

In this paper, we constructed some distributional BGG complexes. The canonical degrees of freedom of the resulting spaces allow a discrete topological and geometric interpretation. Although we focus on 2D and 3D, we hope the discussions will shed light on discretizing high dimensional and high order tensors on triangulation with potential applications in broad areas such as numerical geometric PDEs and exterior calculus on graphs and in graphics \cite{lim2020hodge,Wang:2023:ECIG}. This paper also developed a strategy for showing the cohomology of distributional complexes by constructing auxiliary sequences and using diagram chase. Consequently, we demonstrated the cohomology of the Regge finite element/sequence in 2D, which was open to the best of our knowledge. A similar idea may also be used to compute the cohomology of the 3D Regge sequence. However, further investigation is beyond the scope of this paper. 

Unless Whitney forms for the de~Rham complexes, the resulting spaces for the BGG complexes are less regular. 
% it is difficult (if not impossible) to achieve high conformity (regularity) with canonical degrees of freedom, due to supermoothness requirements imposed by the tensor symmetries. This paper relaxed the conformity conditions.
  Solving PDEs with these schemes calls for further numerical analysis. 

Another example of the BGG construction is the conformal (deformation) complex \cite{arnold2021complexes,vcap2022bgg}. Although there has been progress on conforming finite element discretizations \cite{conformal2023}, distributional versions encoding discrete conformal geometric structures remain open. 

We investigated the connections between distributional finite elements and Discrete Exterior Calculus. This shift of point of view might provide another perspective for establishing convergence analysis for DEC and other lattice methods.

\appendix
\section{Technical Proofs}

In the Appendix, we include some technical proofs. 

\subsection{Operators in the 3D divdiv complex}

We verify that the operators in \eqref{cplx:divdiv-3d-trimmed} defined by duality are identical to those in \eqref{cplx:divdiv-}.
%We defined the operators in \eqref{cplx:divdiv-3d-trimmed} as adjoint of some operators that we already defined. A natural question is whether these operators coincide with the corresponding differential operators in the sense of distributions. In the subsequent lemmas, we address this issue.
%Next, we show the following lemmas to clarify these adjoint operators. 
\begin{lemma}
In $\bm U^0$, we have $\widetilde{\dev\grad} = \dev\grad$. 
\end{lemma}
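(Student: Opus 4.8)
The claim is that on $\bm U^0$ (the vector Lagrange $\mathcal P_1$ element), the operator $\widetilde{\dev\grad}$ defined abstractly by the duality relation $\langle \widetilde{\dev\grad}\,\bm u, \bm \tau\rangle_{\bm U^1 \times \bm V^2_0} = -\langle \bm u, \div_0 \bm \tau\rangle_{\bm U^0 \times \bm V^3_0}$ for all $\bm \tau \in \bm V^2_0$ coincides with the classical pointwise operator $\dev\grad$. The strategy is to compute both sides against a spanning set of $\bm V^2_0$ and check that they agree, using the non-degeneracy of the pairing $\langle \cdot,\cdot\rangle_{\bm U^1 \times \bm V^2_0}$ to conclude. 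Concretely, $\bm V^2_0$ is spanned by the normal-tangential edge deltas $\delta_e[\bm m_e \otimes \bm t_e]$ with $\bm m_e$ normal to $e$, ranging over all edges $e \in \mathsf E$ (including boundary edges), and the pairing with an element $\bm \xi \in \bm U^1$ is by the edge degrees of freedom \ref{dof:symcurl1}, i.e. $\langle \bm \xi, \delta_e[\bm m_e \otimes \bm t_e]\rangle = \int_e \bm m_e \cdot \bm \xi \cdot \bm t_e$.

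First I would compute the right-hand side. By the formula \eqref{eq:dist-div-3d} (its $\div_0$ analogue, i.e.\ using $\mathcal O_0$ and all vertices), $\div_0 \delta_e[\bm m_e \otimes \bm t_e] = -\sum_{x : x \in e} \mathcal O_0(x,e)\,\delta_x[\bm m_e]$, so $-\langle \bm u, \div_0 \delta_e[\bm m_e\otimes \bm t_e]\rangle = \sum_{x \in e} \mathcal O_0(x,e)\, \bm m_e \cdot \bm u(x)$. Writing $e = [x_1, x_2]$ this is $\bm m_e \cdot (\bm u(x_2) - \bm u(x_1))$. Next I would compute the left-hand side with $\widetilde{\dev\grad}$ replaced by the classical $\dev\grad$: since $\dev\grad\,\bm u$ is a piecewise-constant $\mathbb T$-valued field (constant on each cell because $\bm u$ is piecewise linear), and the edge DOF of $\bm U^1$ only sees, roughly, a well-defined tangential-normal quantity on the edge, I expect $\int_e \bm m_e \cdot (\dev\grad\,\bm u) \cdot \bm t_e = \int_e \bm m_e \cdot (\grad\,\bm u)\cdot \bm t_e$ (the $\mathrm{tr}$-correction $\tfrac13(\div\bm u)\bm m_e\cdot\bm t_e$ vanishes since $\bm m_e \perp \bm t_e$) and then $\int_e (\grad\,\bm u)\bm t_e = \bm u(x_2) - \bm u(x_1)$ by the fundamental theorem of calculus along $e$. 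Dotting with $\bm m_e$ gives exactly the right-hand side.

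Thus for every spanning element $\bm \tau = \delta_e[\bm m_e \otimes \bm t_e]$ of $\bm V^2_0$ we get $\langle \dev\grad\,\bm u, \bm \tau\rangle = -\langle \bm u, \div_0 \bm \tau\rangle = \langle \widetilde{\dev\grad}\,\bm u, \bm \tau\rangle$; by linearity this holds for all $\bm \tau \in \bm V^2_0$, and since the pairing $\langle\cdot,\cdot\rangle_{\bm U^1 \times \bm V^2_0}$ is non-degenerate (the edge DOFs are a basis of the dual of $\widehat{\bm U}^1$, after noting that $\dev\grad\,\bm u$ lies in $\widehat{\bm U}^1$ because it has vanishing interior DOFs \ref{dof:symcurl2} as $\curl(\dev\grad\,\bm u)=0$ on each cell), we conclude $\widetilde{\dev\grad}\,\bm u = \dev\grad\,\bm u$. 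The only mild subtlety — and the step most likely to need care — is checking that $\dev\grad\,\bm u \in \widehat{\bm U}^1$ rather than merely $\bm U^1$, i.e.\ that the interior bubble DOFs \ref{dof:symcurl2} vanish on it; this follows from $\curl\,\dev\grad\,\bm u = \curl\,\grad\,\bm u = 0$ cell-by-cell, so the pairing $\int_K \curl(\dev\grad\,\bm u) : \bm b$ is zero for every $\bm b \in \mathbb B_K$. Everything else is a direct computation.
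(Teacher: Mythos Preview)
Your proof is correct and follows essentially the same approach as the paper: verify the defining duality relation on the spanning set of edge deltas, and check that $\dev\grad\,\bm u$ actually lands in $\widehat{\bm U}^1$. Your version is more explicit than the paper's terse argument --- you spell out the edge integral via the fundamental theorem of calculus, note that the trace correction vanishes because $\bm m_e \perp \bm t_e$, and explicitly verify that the interior DOFs \ref{dof:symcurl2} vanish (since $\curl\dev\grad\,\bm u = 0$ cellwise), whereas the paper simply records that $\dev\grad\,\bm u \in \mathbb T$ lies in the local shape function space of $\widehat{\bm U}^1$ and invokes the $\div$ formula.
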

\begin{proof}
    Since $\bm u$ is $\bm H^1$ conforming, the $\dev\grad$ operator is piecewise. It follows that $\dev \grad \bm u \in \dev \grad \mathcal P_1 = \mathbb T$, which is in the local shape function space of $\widehat{\bm U}^1$. 
    It follows from \eqref{eq:div-formula} that $ \langle {\dev\grad} \bm u, \delta_{e}[ \bm n_{e,\pm}\otimes \bm t_{e}] \rangle_{\widehat{\bm U}^1 \times \bm V^2_0} = \langle \bm u, -\div \delta_{e}[ \bm n_{e,\pm}\otimes \bm t_{e}]\rangle_{\bm U^0 \times \bm V^3_0}.$
    % A simple calculation yields  
    % \begin{equation}
    % \begin{split}
    %     \langle {\dev\grad} \bm u, \delta_{e}[\bm t_{e} \otimes \bm n_{e,\pm}] \rangle_{\widehat{\bm U}^1 \times \bm V^2_0} =& \int_{e} \bm n_{e,\pm}\cdot \nabla \bm u \cdot \bm t_{e} \\  = & (\bm u \cdot \bm n_{e,\pm})(\bm x_2) - (\bm u \cdot \bm n_{e,\pm})(\bm x_1) \\ 
    %     = & \langle \bm u, -\div \delta_{e}[\bm t_{e} \otimes \bm n_{e,\pm}]\rangle_{\bm U^0 \times \bm V^3_0}.
    % \end{split}
    % \end{equation}
    This implies that, $\widetilde{\dev\grad} = \dev\grad$ on $\bm U^0$.
    
\end{proof}

\begin{lemma}
In $\bm U^1$, we have $\widetilde{\sym\curl} = \sym\curl_h$, the piecewise $\sym\curl$ operator.
\end{lemma}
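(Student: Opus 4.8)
The plan is to show that the cellwise operator $\sym\curl_h\bm\xi$ (for $\bm\xi\in\bm U^1$) produces a bona fide element of $\bm U^2$, and that this element satisfies the defining duality relation of $\widetilde{\sym\curl}\bm\xi$; non-degeneracy of the pairing $\langle\cdot,\cdot\rangle_{\bm U^2\times\bm V^1_0}$ then forces $\widetilde{\sym\curl}\bm\xi=\sym\curl_h\bm\xi$.

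First I would check $\sym\curl_h\bm\xi\in\bm U^2$. On each cell $K$, $\bm\xi|_K\in\mathcal P_0(\mathbb T)+\bm x\times\mathcal P_0(\mathbb S)$, so by the polynomial exactness \eqref{seq:polynomial-divdiv} we have $\sym\curl\bm\xi|_K\in\mathcal P_0(\mathbb S)$, i.e.\ a constant symmetric matrix, which lies in the shape function space of $\bm U^2$. It remains to verify the normal-normal continuity. Fix a face $f$ with adjacent cells $K_1,K_2$. Using $\bm n\cdot W\cdot\bm n=\bm n\cdot\sym W\cdot\bm n$ for any matrix $W$, the constancy of $\sym\curl\bm\xi|_{K_i}$, the rowwise Stokes identity $\int_f\curl\bm u\cdot\bm n=\int_{\partial f}\bm u\cdot\bm t$ (valid since $\bm n_f$ is constant), and the definition of $\mathcal O_0$, one gets
\[
\int_f\bm n_f\cdot(\sym\curl\bm\xi)|_{K_i}\cdot\bm n_f=\int_f\bm n_f\cdot\curl(\bm\xi|_{K_i})\cdot\bm n_f=\int_{\partial f}\bm n_f\cdot\bm\xi|_{K_i}\cdot\bm t_{\partial f}=\sum_{e:\, e\subset f}\mathcal O_0(e,f)\int_e\bm n_f\cdot\bm\xi|_{K_i}\cdot\bm t_e.
\]
For each edge $e\subset f$, the vector $\bm n_f$ is normal to $e$, hence a linear combination of the edge basis normals $\bm n_{e,+},\bm n_{e,-}$; since $\bm\xi|_{K_i}$ is linear, $\int_e\bm n_f\cdot\bm\xi|_{K_i}\cdot\bm t_e$ is a corresponding combination of the degrees of freedom \ref{dof:symcurl1}, which are single-valued across cells. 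Thus the right-hand side is independent of $i$, and since $\sym\curl\bm\xi|_{K_i}$ is constant this yields $\bm n_f\cdot(\sym\curl\bm\xi)|_{K_1}\cdot\bm n_f=\bm n_f\cdot(\sym\curl\bm\xi)|_{K_2}\cdot\bm n_f$, i.e.\ $\sym\curl_h\bm\xi\in\bm U^2$.

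Next I would verify the duality identity $\langle\sym\curl_h\bm\xi,\bm\sigma\rangle_{\bm U^2\times\bm V^1_0}=\langle\bm\xi,\curl_0\bm\sigma\rangle_{\bm U^1\times\bm V^2_0}$ for all $\bm\sigma\in\bm V^1_0$; it suffices to take $\bm\sigma=\delta_f[\bm n_f\otimes\bm n_f]$ for an arbitrary face $f$. On the left, by the definition of the pairing $\bm U^2\times\bm V^1_0$ and the computation above, $\langle\sym\curl_h\bm\xi,\delta_f[\bm n_f\otimes\bm n_f]\rangle=\int_f\bm n_f\cdot\sym\curl\bm\xi\cdot\bm n_f=\sum_{e:\,e\subset f}\mathcal O_0(e,f)\int_e\bm n_f\cdot\bm\xi\cdot\bm t_e$. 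On the right, the boundary-inclusive analogue of \eqref{eq:dist-curl-3d} gives $\curl_0\delta_f[\bm n_f\otimes\bm n_f]=\sum_{e:\,e\subset f}\mathcal O_0(e,f)\delta_e[\bm n_f\otimes\bm t_e]$, and pairing with $\bm\xi\in\bm U^1$ via the $\bm U^1\times\bm V^2_0$ duality yields precisely $\sum_{e:\,e\subset f}\mathcal O_0(e,f)\int_e\bm n_f\cdot\bm\xi\cdot\bm t_e$. The two sides coincide, so by non-degeneracy of $\langle\cdot,\cdot\rangle_{\bm U^2\times\bm V^1_0}$ we conclude $\widetilde{\sym\curl}\bm\xi=\sym\curl_h\bm\xi$.

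The main obstacle is the normal-normal continuity step: one must be careful that although the pointwise tangential-normal trace of $\bm\xi$ on an edge need not be single-valued, its integral against $\bm n_f$ over the edge is, and this is exactly where the choice of degrees of freedom \ref{dof:symcurl1}—together with the linearity of the shape functions, so that expressing $\bm n_f$ in the basis $\{\bm n_{e,\pm}\}$ is enough—comes in. The remaining manipulations are routine bookkeeping with orientations and the Stokes identity already recorded in the paper.
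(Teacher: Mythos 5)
Your argument is correct and is essentially the expanded version of the paper's one-line proof (``It follows from \eqref{eq:curlV1}''): both reduce the claim to the fact that $\curl_0$ of a face normal--normal delta is the signed sum of edge normal--tangential deltas, which pairs with $\bm\xi$ to give the single-valued edge degrees of freedom \ref{dof:symcurl1}, while Stokes' theorem converts that same sum into $\int_f \bm n_f\cdot\sym\curl\bm\xi\cdot\bm n_f$. Your extra verification that $\sym\curl_h\bm\xi$ has normal--normal continuity (so that it genuinely lies in $\bm U^2$) is a detail the paper leaves implicit, and it coincides with the computation \eqref{sigma-nn} already used in the unisolvency proof of \Cref{thm:unisolvency-U1}.
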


\begin{proof}
It follows from \eqref{eq:curlV1}.
    % We have 
    % \begin{equation}
    % \begin{split}
    %     \langle {\sym\curl}_h \bm \tau, \delta_{f}[\bm n_{f} \otimes \bm n_{f}] \rangle_{\widehat{\bm U}^2 \times \bm V^1_0} = &  \int_{f} \bm n_{f} \cdot \curl \bm \tau \cdot \bm n_{f} \\
    %     = & \sum_{e \in \partial f} \bm n_{f} \cdot \bm \tau \cdot \bm t_{e} \\
    %     = &\langle  \bm \tau, \curl \delta_{f}[\bm n_{f} \otimes \bm n_{f}] \rangle_{\bm U^1 \times \bm V^2_0}.
    % \end{split}
    % \end{equation}
    % This proves the claim. 
\end{proof}

\begin{lemma}
For any function $\bm \sigma \in \bm U^2$, it holds that $\widehat{\div\div} \bm \sigma = \widetilde{\div\div} \bm \sigma.$
\end{lemma}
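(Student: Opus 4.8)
The plan is to unwind both definitions and show that the two functionals agree when tested against the finite element space $\bm U^3$ (the Lagrange $\mathcal P_1$ space in the trimmed complex, or $V^0_0$). Recall that $\widehat{\div\div}$ is defined directly in \eqref{eq:hat-divdiv-3d} by the face sum
\[
\langle \widehat{\div \div} \bm \sigma, v \rangle  =  \sum_{f \in \mathsf F} \sum_{K: K \supset f}\int_{f} \big[\mathcal O(f,K)\,\bm n_f  \times \bm \sigma|_K \cdot \bm n_f \big] \cdot (\bm n_f\times\nabla v ),
\]
while $\widetilde{\div\div}$ is defined by duality: $\langle \widetilde{\div\div}\bm \sigma, v\rangle_{\bm U^3\times \bm V^0_0} = \langle \bm \sigma, \hess_0 v\rangle_{\bm U^2 \times \bm V^1_0}$. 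So the task reduces to computing $\langle \bm \sigma, \hess_0 v\rangle$ for $\bm \sigma \in \bm U^2$ (a piecewise constant symmetric matrix field with normal-normal continuity) and $v \in V^0_0$ (continuous piecewise linear, vanishing on $\partial\Omega$), where the dual pairing $\langle\cdot,\cdot\rangle_{\bm U^2\times \bm V^1_0}$ is the one induced by normal-normal components on faces.

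First I would recall from the proof of \Cref{thm:hom-hessian0-3d} (equivalently the computation preceding \eqref{eq:hat-divdiv-3d}) that for $v$ continuous piecewise linear with homogeneous boundary values, $\hess_0 v = \sum_{f \in \mathsf F}\sum_{K: K\supset f} \big[\mathcal O(f,K)\,(\partial v|_K/\partial \bm n_f)\big]\,\delta_f[\bm n_f\otimes \bm n_f]$, i.e. $\hess_0 v$ lies in $\bm V^1_0$ and is a sum of normal-normal face deltas with coefficients given by the jump of $\partial v/\partial \bm n_f$. Then the dual pairing $\langle \bm \sigma, \hess_0 v\rangle_{\bm U^2\times \bm V^1_0}$ picks out, on each face $f$, the coefficient of $\delta_f[\bm n_f\otimes\bm n_f]$ times $\bm n_f\cdot \bm \sigma\cdot \bm n_f$ (which is single-valued by the normal-normal continuity of $\bm U^2$), giving
\[
\langle \bm \sigma, \hess_0 v\rangle = \sum_{f\in \mathsf F}\sum_{K:K\supset f} \int_f \big[\mathcal O(f,K)\, \tfrac{\partial v|_K}{\partial \bm n_f}\big]\,(\bm n_f\cdot \bm \sigma\cdot \bm n_f).
\]
Second, I would show this equals the right-hand side of \eqref{eq:hat-divdiv-3d}. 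On each face $f$, decompose $\nabla v|_K = (\partial v|_K/\partial \bm n_f)\bm n_f + \nabla_f (v|_K)$ where $\nabla_f$ is the surface (tangential) gradient; since $v$ is continuous and linear, its tangential gradient on $f$ is single-valued, so $\sum_{K: K\supset f}\mathcal O(f,K)\nabla v|_K$ has only a normal component — this is exactly the observation already used in the Hessian complex computations. Dually, writing $\bm n_f\times \bm \sigma|_K\cdot \bm n_f$ and pairing against $\bm n_f\times \nabla v$, one checks by the elementary identity $(\bm n_f\times \bm a)\cdot(\bm n_f\times \bm b) = (\bm a\cdot\bm b) - (\bm a\cdot\bm n_f)(\bm b\cdot\bm n_f)$ applied with $\bm a = \bm \sigma|_K\cdot\bm n_f$, $\bm b = \nabla v$, that $\big[\bm n_f\times \bm \sigma|_K\cdot\bm n_f\big]\cdot(\bm n_f\times\nabla v)$ equals the tangential part of $(\bm \sigma|_K\cdot\bm n_f)\cdot\nabla v$; combined with the fact that after summing $\mathcal O(f,K)$ over the two cells the surviving contribution of $\nabla v$ is its normal part, a short bookkeeping of which legs of $\bm \sigma$ are normal/tangential reduces the face integral to $\int_f \mathcal O(f,K)(\partial v|_K/\partial \bm n_f)(\bm n_f\cdot\bm \sigma|_K\cdot\bm n_f)$, matching the expression above. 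I would also note, as is done in the text just below \eqref{eq:hat-divdiv-3d}, that the terms on boundary faces $f\in \mathsf F_\partial$ vanish because $v\in V^0_0$ has zero trace, so summing over all of $\mathsf F$ versus $\mathsf F_0$ makes no difference.

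The main obstacle — really the only nontrivial point — is keeping the cross-product and normal/tangential decomposition bookkeeping straight: one must verify that the $\bm n_f\times(\cdot)\cdot\bm n_f$ twist in \eqref{eq:hat-divdiv-3d} together with the $\bm n_f\times\nabla v$ factor reproduces precisely $(\bm n_f\cdot\bm\sigma\cdot\bm n_f)(\partial v/\partial\bm n_f)$ and not some other combination of components of $\bm\sigma$, using that $\bm\sigma$ is symmetric and that the jump of $\nabla v$ is purely normal. Once this elementary linear-algebra identity on each face is settled, summing over faces gives the claimed equality $\widehat{\div\div}\bm\sigma = \widetilde{\div\div}\bm\sigma$ on $\bm U^2$, and the lemma follows.
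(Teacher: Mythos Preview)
Your outline follows the paper's approach, and the cross-product identity you invoke is the right tool. The gap is in the step you call ``a short bookkeeping of which legs of $\bm\sigma$ are normal/tangential'': that bookkeeping cannot close the argument face by face.

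Concretely, applying your identity with $\bm a=\bm\sigma|_K\bm n_f$ and $\bm b=\nabla v|_K$ (legitimate since $\bm n_f\times\nabla v$ is single-valued) splits the $\widehat{\div\div}$ sum into
\[
\langle\widehat{\div\div}\bm\sigma,v\rangle
=\sum_{f}\sum_{K\supset f}\int_f\mathcal O(f,K)\,(\bm\sigma|_K\bm n_f)\cdot\nabla v|_K
\;-\;\sum_{f}\sum_{K\supset f}\int_f\mathcal O(f,K)\,(\bm n_f\!\cdot\!\bm\sigma\!\cdot\!\bm n_f)\,\tfrac{\partial v|_K}{\partial\bm n_f}.
\]
The second piece is $\langle\bm\sigma,\hess_0 v\rangle$ by normal--normal continuity of $\bm\sigma$ (your displayed formula for $\langle\bm\sigma,\hess_0 v\rangle$ is off by a sign, but that is cosmetic). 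The first piece, however, does \emph{not} vanish by a tangential/normal split on a single face: there it equals $\int_f[\![(\bm\sigma\bm n_f)_T]\!]_f\cdot\nabla_f v$, the jump of the tangential--normal part of $\bm\sigma$ paired with the surface gradient of $v$, which is generically nonzero. Your observation that the jump of $\nabla v$ is purely normal does not help, because the other factor $\bm\sigma|_K\bm n_f$ is \emph{not} single-valued across $f$, so you cannot pull it out and let $\mathcal O(f,K)$ act on $\nabla v$ alone.

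What kills the first term is a \emph{global} argument: regroup by cells and integrate by parts,
\[
\sum_f\sum_{K\supset f}\int_f\mathcal O(f,K)\,(\bm\sigma|_K\bm n_f)\cdot\nabla v|_K
=\sum_K\int_{\partial K}(\bm\sigma|_K\bm n)\cdot\nabla v|_K
=\sum_K\int_K\div(\bm\sigma\,\nabla v)=0,
\]
the last step because on each $K$ the field $\bm\sigma$ is constant and $v$ is linear, so $\bm\sigma\nabla v$ is a constant vector. This cell-wise divergence identity is exactly the step the paper supplies, and it is the missing ingredient in your proof.
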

\begin{proof}
It follows from 
\begin{equation}
\begin{split}
\langle \bm \sigma , \hess  u \rangle_{\bm U^2 \otimes \bm V^1_0} = & {-} \langle \bm \sigma, \sum_{f \in \mathsf F_0} \sum_{K: K \supset f} \Big[ \mathcal O(f,K) \frac{\partial u|_K}{\partial \bm n} \Big] \delta_f[{\bm n_f \otimes \bm n_f}] \rangle_{\bm U^2 \otimes \bm V^1_0} \\ 
= & {-}\sum_{f \in \mathsf F_0} \sum_{K: K \supset f} \Big[ \mathcal O(f,K) \frac{\partial u|_K}{\partial \bm n} \Big]  \int_f (\bm n_f \cdot \bm \sigma \cdot \bm n_f).
\end{split}
\end{equation}

Subtracting the right-hand side of \eqref{eq:hat-divdiv-3d} from the above equation leads to 
\begin{equation}
\langle \div \div \sigma, \bm u \rangle-\langle \bm \sigma , \hess  u \rangle_{\bm U^2 \otimes \bm V^1_0}=\sum_{f \in \mathsf F_0} \sum_{K : K \supset f } \int_f \Big[\mathcal O(f,K) \nabla u|_K \cdot \bm \sigma \cdot \bm n_f \Big] = 0.
\end{equation}
Here the first identity comes from a tangential-normal decomposition of $\nabla u$ and $\bm \sigma$, and the second is due to $\div(\bm \sigma \cdot \nabla u) = 0$ in each element $K$, as $u$ is linear and $\bm \sigma$ is constant. 
%The identity follows from $\div(\bm \sigma \cdot \nabla u) = 0$ in each element $K$.
\end{proof}

\subsection{Diagrams illustrating the proof}

In the main body of this paper, some theorems follow from a similar argument. For example, to prove the cohomology of the Hessian complex \eqref{cplx:hessian-2d}, we first consider an auxiliary Hessian complex \eqref{cplx:hessian-L2-3d}, and relate it to a simplicial homology. Then, we use diagram chasing to obtain the cohomology of the original Hessian complex \eqref{cplx:hessian-2d}. For the divdiv complex \eqref{cplx:divdiv-2d}, we use a duality argument, and using isomorphisms between harmonic forms to derive the cohomology. For brevity, we omitted some details in the main text. In this appendix, we provide diagrams and a sketch of the omitted proofs.

% for the omitted proofs and provide a sketchy proof. 

\begin{proof}[Proof of \Cref{prop:hom-hessian0-L2-2d}]
The proof is based on the following diagram:
    \begin{equation}
        \begin{tikzcd}
       0 \arrow{r} & V^0_{0,-} \arrow{r}{\hess} \arrow{d}{\kappa^0_-} &\bm V^1_{0,-} \arrow{r}{\rot} \arrow{d}{\kappa^1_-} & \bm V^2_{0,-} \arrow{r}{ } \arrow{d}{\kappa^2_-}& 0\\
        0 \arrow{r}&C_2(\Delta, \mathcal P_1) \arrow{r}{\partial_0}&C_1(\Delta, \mathcal P_1)  \arrow{r}{\partial_0} & C_0(\Delta, \mathcal P_1) \arrow{r}{} & 0.
         \end{tikzcd}
        \end{equation}
The vertical maps induce an isomorphism between $\mathcal H(V^{k}_{0,-})$ and $\mathcal H_{2-k}(\Delta, \mathcal P_1)$, which is isomorphic to $\mathcal H^{\bs}_{dR,c}(\Omega) \otimes \mathcal P_1$.
\end{proof}

\begin{proof}[Proof of \Cref{prop:exactness-tildepartial0-2d}]
The homology can be identified with a direct sum of 
    \begin{equation}
        \begin{tikzcd}
            0 \arrow{r}& 
            % \bigoplus\limits_{K \in \mathsf K, K \ni v} \mathbb R \arrow{r}{{\partial}}
            % &
            \bigoplus\limits_{f \in \mathsf F, f\ni v} \mathbb R \arrow{r}{{\partial}}&\bigoplus\limits_{e \in \mathsf E, e \ni v} \mathbb R  \arrow{r}{{\partial}} & \mathbb R  \arrow{r}{} & 0.
          \end{tikzcd}
        \end{equation}
        For an interior vertex $v$, the homology is the relative homology (with respect to boundary) of the local patch of $v$, which vanishes except for at index zero. For a boundary vertex $v$, the homology is the relative homology (with respect to boundaries that are not the boundary of $\Delta$) of the local patch of $v$, which vanishes for all indices. Therefore, we conclude the result.
\end{proof}

\begin{proof}[Proof of \Cref{prop:exactness-tildepartial-3d}]
Using the fact that the original complex can be identified with the direct sum of 
    \begin{equation}
        \begin{tikzcd}
            0 \arrow{r}& 
            \bigoplus\limits_{K \in \mathsf K, K\ni v} \mathbb R \ar[r] & 
            \bigoplus\limits_{f \in \mathsf F, f\ni v} \mathbb R \arrow{r}{{\partial}}&\bigoplus\limits_{e \in \mathsf E, e \ni v} \mathbb R  \arrow{r}{{\partial}} & \mathbb R  \arrow{r}{} & 0.
          \end{tikzcd}
        \end{equation}
\end{proof}

\begin{proof}[Proof of \Cref{thm:hom-divdiv0-2d}]
We introduce the following dual pairs,
    \begin{equation}
        \begin{tikzcd}[column sep = large]
       0 \arrow{r} & V^0 \arrow{r}{\hess} \arrow[d, phantom, "\ast"] &\bm V^1 \arrow{r}{\rot} \arrow[d, phantom, "\ast"] & \bm V^2 \arrow{r}{} \arrow[d, phantom, "\ast"]& 0\\
        0 &U^2_0 \arrow{l}& \bm U^1_0 \arrow{l}[swap]{\div\div_0} & \bm U^0_0 \arrow{l}[swap]{\sym\curl_0}  & 0\arrow{l}{},
         \end{tikzcd}
    \end{equation}
and the inner products. For example, the dual pair of $v \in V^0$ and $\sum_{x \in \mathsf v} a_x \delta_x \in U^2_0$ is $\langle v, \sum_{x \in \mathsf V} a_x \delta_x \rangle = \sum_{x \in \mathsf V} a_x v(x)$, and the inner products are $(v,v')_{V^0} = \sum_{x \in \mathsf V} v(x)v'(x)$, and $(\sum_{x \in \mathsf V} a_x \delta_x, \sum_{x \in \mathsf V} a_x' \delta_x) = \sum_{x \in \mathsf V} a_x a_x'.$ Moreover, we introduce $\pi_{V^0 \to U^2_0}(v) := \sum_{x\in \mathsf V} v(x)\delta_x$ and vice versa. 

Similarly, we define other linear maps. It can be shown that $\pi_{V^k \to U^{2-k}_0}$ induces isomorphisms $\mathcal H^{k}(U^{\bs}_0) \cong \mathcal H^{2-k}(V^{\bs})$, which concludes the result.
\end{proof}

\begin{proof}[Proof of \Cref{thm:hom-hessian0-L2-3d}]
By the following diagram:
\begin{equation}
    \begin{tikzcd}[column sep = small]
   0 \arrow{r} & V^0_{-,0} \arrow{r}{\hess} \arrow{d}{\kappa^0_-} &\bm V^1_{-,0} \arrow{r}{\curl} \arrow{d}{\kappa^1_-} & \bm V^2_{-,0} \arrow{r}{\div} \arrow{d}{\kappa^2_-} &\bm V^3_{-,0} \arrow{r}{} \arrow{d}{\kappa^3_-} & 0\\
    0 \arrow{r}&C_3(\Delta, \mathcal P_1) \arrow{r}{\partial_0}&C_2(\Delta, \mathcal P_1)\arrow{r}{\partial_0}&C_1(\Delta, \mathcal P_1)\ar[r,"\partial_0"] & C_0(\Delta, \mathcal P_1) \arrow{r}{} & 0.
     \end{tikzcd}
    \end{equation}

\end{proof}

\begin{proof}[Proof of \Cref{thm:hom-hessian0-3d}]
The following diagram 
\begin{equation}
    \begin{tikzcd}
      0 \arrow{r} & 0 \arrow{r}{\hess_0} \arrow{d} &\bm V^1_0 \arrow{r}{\curl_0} \arrow{d} & \bm V^2_0 \arrow{r}{\div_0} \ar[d]& \bm V^3_0 \arrow{d} \arrow{r}& 0 \\
   0 \arrow{r} & V^0_{-,0} \arrow{r}{\hess_0} \arrow{d}{g^0} &\bm V^1_{-,0} \arrow{r}{\curl_0} \arrow{d}{g^1} & \bm V^2_{-,0}  \arrow{r}{\div_0}\arrow{d}{g^2} & \bm V^3 \arrow{r} \arrow{d}{g^3} & 0\\
    0 \arrow{r}& \bigoplus\limits_{K \in \mathsf K} \mathcal P_1 \ar[r, "\widetilde{\partial}_0"] & \bigoplus\limits_{f \in \mathsf F} \mathcal P_1(f) \arrow{r}{\widetilde{\partial}_0}&\bigoplus\limits_{e \in \mathsf E} \mathcal P_1(e)  \arrow{r}{\widetilde{\partial}_0} &\bigoplus\limits_{x \in \mathsf V} \mathcal P_1(x)  \arrow{r}{} & 0,
     \end{tikzcd}
    \end{equation}
    induces the long exact sequence below:
    \begin{equation}
        \begin{tikzcd}[column sep=small]
          0 \arrow{r} & 0 \arrow{r} \arrow{d} &\ker(\curl_0:\bm V^1_0\to \bm V^2_0) \arrow{r}\arrow{d} & \mathcal H^2(V^{\bs}_0) \arrow{r} \arrow{d} & \mathcal H^3(V^{\bs}_0) \arrow{r} \arrow{d} & 0 \\
       0 \arrow{r} &  \mathcal H^0_{dR,c}(\Omega)\otimes \mathcal P_1 \arrow[r, ""{coordinate, name=Z}] \arrow{d} &  \mathcal  H^1_{dR,c}(\Omega)\otimes \mathcal P_1 \arrow[r, ""{coordinate, name=Y}] \arrow{d} &  \mathcal H^2_{dR,c}(\Omega) \otimes\mathcal P_1 \arrow[r, ""{coordinate, name=X}] \arrow{d}&  \mathcal H^3_{dR,c}(\Omega) \otimes \mathcal P_1\ar[r] \ar[d]&  0\\
        0 \arrow{r}& V^0_0 \arrow{r} \arrow[uur, rounded corners, dashed, to path={ -- ([yshift=-4ex]\tikztostart.south)
        -| (Z) [near end]\tikztonodes
        |- ([yshift=4ex]\tikztotarget.north)
        -- (\tikztotarget)}]
        & 0 \arrow{r} \arrow[uur, rounded corners, dashed, to path={ -- ([yshift=-4ex]\tikztostart.south)
        -| (Y) [near end]\tikztonodes
        |- ([yshift=4ex]\tikztotarget.north)
        -- (\tikztotarget)}] &0  \arrow{r} \arrow[uur, rounded corners, dashed, to path={ -- ([yshift=-4ex]\tikztostart.south)
        -| (X) [near end]\tikztonodes
        |- ([yshift=4ex]\tikztotarget.north)
        -- (\tikztotarget)}] & 0 \ar[r] & 0.
         \end{tikzcd}
        \end{equation}
    The result follows from a diagram chase.
\end{proof}

\begin{proof}[Proof of \Cref{hom:divdiv-3d-trimmed}]
The proof is based on the following  dual pairs:
\begin{equation}
    \begin{tikzcd}[column sep = large]
   0 \arrow{r} & V^0_0 \arrow{r}{\hess_0} \arrow[d, phantom, "\ast"] &\bm V^1_0 \arrow{r}{\curl_0} \arrow[d, phantom, "\ast"] & \bm V^2_0 \arrow{r}{\div_0} \arrow[d, phantom, "\ast"]& \bm V^3_0 \arrow{r}{} \arrow[d, phantom, "\ast"] &0\\
    0 &U^3 \arrow{l}& \widehat{\bm U}^2 \arrow{l}[swap]{\widetilde{\div\div}} & \widehat{\bm U}^1 \arrow{l}[swap]{\widetilde{\sym\curl}}  & \bm U^0 \arrow{l}[swap]{\widetilde{\dev\grad}} &0 \arrow{l}{}.
     \end{tikzcd}
    \end{equation}
% We associate the following dual pairing $$\langle \bm u, \sum_{x \in \mathsf V} \bm v_x \delta_x\rangle_{\bm U^0 \times \bm V^3_0} = \sum_{x \in \mathsf V} \bm u(x)v_x.$$
% \begin{equation}
%     \begin{split}
%     \langle \bm \sigma , &\sum_{e \in \mathsf E}  a_{e,+} \delta_e[\bm n_{e,+} \otimes \bm t_e] +a_{e,-} \delta_e[\bm n_{e,-} \otimes \bm t_e] \rangle_{\bm U^1 \times \bm V^2_0} \\ =& \sum_{e \in \mathsf E} a_{e,+} \int_{e} (\bm n_{e,+}\cdot \bm \sigma \cdot \bm t_e) + a_{e,-} \int_{e} (\bm n_{e,-}\cdot \bm \sigma \cdot \bm t_e).   
%     \end{split}
% \end{equation}
% $$ \langle \bm \sigma, \sum_{f \in \mathsf F} a_f \delta_f[\bm n_f \otimes \bm n_f] \rangle_{\bm U^2 \times \bm V^1_0} = \sum_{f \in \mathsf F} a_f \int_{f} (\bm n_f \cdot \bm \sigma \cdot \bm n_f).$$
% The final dual pair is defined similarly to the first one. 
We can define $\pi_{U^{k} \to V^{3-k}_0}$ and the inner products of each space. It can be shown that these $\pi$'s induce an isomorphism between $\mathcal H^{k}(U^{\bs})$ and $\mathcal H^{3-k}(V^{\bs}_0)$, which concludes with the result.
\end{proof}

\begin{proof}[Proof of \Cref{thm:hom-divdiv0-3d} ]
Combining the dual pairing method
    \begin{equation}
        \begin{tikzcd}[column sep = large]
       0 \arrow{r} & V^0 \arrow{r}{\hess} \arrow[d, phantom, "\ast"] &\bm V^1 \arrow{r}{\curl} \arrow[d, phantom, "\ast"] & \bm V^2 \arrow{r}{\div} \arrow[d, phantom, "\ast"]& \bm V^3 \arrow{r}{} \arrow[d, phantom, "\ast"] &0\\
        0 &U^3_0 \arrow{l}& \widehat{\bm U}^2_0 \arrow{l}[swap]{\widehat{\div\div}_0} & \widehat{\bm U}^1_0 \arrow{l}[swap]{{\sym\curl}_h}  & \bm U^0_0 \arrow{l}[swap]{{\dev\grad}} &0 \arrow{l}{},
         \end{tikzcd}
        \end{equation}
        and the exactness of \begin{equation}
            \begin{tikzcd}[column sep=large]
            0 \ar[r] & 0 \ar[r] & \mathbb B_K^* \ar[r,"\sym\curl_h"] & \mathbb B_K \ar[r] & 0 \ar[r] & 0
            \end{tikzcd}
            \end{equation}
            yields the result.
\end{proof}

\section*{Acknowledgement}

The work of KH was supported by a Royal Society University Research Fellowship (URF$\backslash$ R1$\backslash$221398).

 \bibliographystyle{plain}
 \bibliography{ref}

% \subsection{Not sure part}
% \lt{There might be some other possibilities. For example, can we start from an H(div) conforming element? As a resuult, the traceless element will be distributional. It seems that RT finite element space can fit into this framework, but not sure.}
\end{document}